%% LyX 2.3.3 created this file.  For more info, see http://www.lyx.org/.
%% Do not edit unless you really know what you are doing.
\documentclass[10pt,english]{amsart}
\usepackage[T1]{fontenc}
\usepackage[latin9]{inputenc}
\usepackage{geometry}
\geometry{verbose}
\setcounter{secnumdepth}{-2}
\setcounter{tocdepth}{5}
\usepackage{color}
\usepackage{babel}
\usepackage{float}
\usepackage{amsbsy}
\usepackage{amstext}
\usepackage{amsthm}
\usepackage{amssymb}
\usepackage{cancel}
\makeindex
\usepackage{graphicx}
\usepackage[unicode=true,pdfusetitle,
 bookmarks=true,bookmarksnumbered=false,bookmarksopen=false,
 breaklinks=false,pdfborder={0 0 1},backref=false,colorlinks=false,pdfpagemode=FullScreen]
 {hyperref}

\makeatletter
%%%%%%%%%%%%%%%%%%%%%%%%%%%%%% Textclass specific LaTeX commands.
\theoremstyle{plain}
\newtheorem{thm}{\protect\theoremname}
\theoremstyle{plain}
\newtheorem{cor}[thm]{\protect\corollaryname}
\theoremstyle{remark}
\newtheorem{rem}[thm]{\protect\remarkname}
\theoremstyle{plain}
\newtheorem{assumption}[thm]{\protect\assumptionname}
\theoremstyle{definition}
\newtheorem{defn}[thm]{\protect\definitionname}
\theoremstyle{plain}
\newtheorem{lem}[thm]{\protect\lemmaname}
\theoremstyle{plain}
\newtheorem{prop}[thm]{\protect\propositionname}

\@ifundefined{date}{}{\date{}}
%%%%%%%%%%%%%%%%%%%%%%%%%%%%%% User specified LaTeX commands.
\usepackage{tikz}
\usepackage{pgfplots}
\usetikzlibrary{matrix,arrows,decorations.pathmorphing}
\usepackage{verbatim}
\usepackage{pgf}
\usepackage{color}
\definecolor{BLACK}{RGB}{0, 0, 0}
\definecolor{green}{RGB}{0, 0, 0}
\definecolor{cyan}{RGB}{0,100, 0}
\definecolor{yellow}{RGB}{0,100,0}
\definecolor{red}{RGB}{0,100,0}
\definecolor{BLUE}{RGB}{0,100,0}

\usetikzlibrary[patterns]
\definecolor{blue}{RGB}{0,100,0}
\colorlet{linkequation}{BLACK}

\usepackage{scalerel,stackengine}
\stackMath
\newcommand\widecheck[1]{%
\savestack{\tmpbox}{\stretchto{%
  \scaleto{%
    \scalerel*[\widthof{\ensuremath{#1}}]{\kern-.6pt\bigwedge\kern-.6pt}%
    {\rule[-\textheight/2]{1ex}{\textheight}}%WIDTH-LIMITED BIG WEDGE
  }{\textheight}% 
}{0.5ex}}%
\stackon[1pt]{#1}{\scalebox{-1}{\tmpbox}}%
}
\DeclareRobustCommand{\oddevenhead} {%
\scshape
\ifodd\value{page}
Naturality of the Contact Invariant %
\else
Mariano Echeverria%
\fi
}

\makeatother

\providecommand{\assumptionname}{Assumption}
\providecommand{\corollaryname}{Corollary}
\providecommand{\definitionname}{Definition}
\providecommand{\lemmaname}{Lemma}
\providecommand{\propositionname}{Proposition}
\providecommand{\remarkname}{Remark}
\providecommand{\theoremname}{Theorem}

\begin{document}
\title[\oddevenhead ]{Naturality of the Contact Invariant in Monopole Floer Homology Under
Strong Symplectic Cobordisms}
\author{\textcolor{black}{Mariano Echeverria}}
\begin{abstract}
The contact invariant is an element in the monopole Floer homology
groups of an oriented closed three manifold canonically associated
to a given contact structure. A non-vanishing contact invariant implies
that the original contact structure is tight, so understanding its
behavior under symplectic cobordisms is of interest if one wants to
further exploit this property. 

By extending the gluing argument of Mrowka and Rollin to the case
of a manifold with a cylindrical end, we will show that the contact
invariant behaves naturally under a strong symplectic cobordism. 

As quick applications of the naturality property, we give alternative
proofs for the vanishing of the contact invariant in the case of an
overtwisted contact structure, its non-vanishing in the case of strongly
fillable contact structures and its vanishing in the reduced part
of the monopole Floer homology group in the case of a planar contact
structure. We also prove that a strong filling of a contact manifold
which is an $L$-space must be negative definite.
\end{abstract}

\maketitle

\section{1. Stating the Result and Some Applications}

Monopole Floer Homology associates to a closed, oriented, connected
3-manifold $Y$ three abelian groups $\widecheck{HM}_{\bullet}(Y)$
, $\widehat{HM}_{\bullet}(Y)$, $\overline{HM}_{\bullet}(Y)$, pronounced
$HM$-to, $HM$-from and $HM$-bar respectively. They admit a direct
sum decomposition over spin-c structures of $Y$, in the sense that
\begin{align*}
\widecheck{HM}_{\bullet}(Y)=\bigoplus_{\mathfrak{s}}\widecheck{HM}_{\bullet}(Y,\mathfrak{s})\\
\widehat{HM}_{\bullet}(Y)=\bigoplus_{\mathfrak{s}}\widehat{HM}_{\bullet}(Y,\mathfrak{s})\\
\overline{HM}_{\bullet}(Y)=\bigoplus_{\mathfrak{s}}\overline{HM}_{\bullet}(Y,\mathfrak{s})
\end{align*}

In fact, the previous decomposition is finite \cite[Proposition 3.1.1]{MR2388043}.
The chain complexes whose homology are the previous groups are built
using solutions of a perturbed version of the three dimensional Seiberg-Witten
equations, which are at the same time critical points of a perturbed
Chern-Simons-Dirac functional \cite[Section 4]{MR2388043}. There
are three different types of solutions (the boundary stable, boundary
unstable and irreducible solutions) and each group uses two of the
three types in their corresponding construction. 

Now suppose that $Y$ is equipped with a co-orientable contact structure
$\xi$ compatible with the orientation of the manifold. In practice
this means that there exists a globally defined one form $\theta$
on $Y$ for which $\xi=\ker\theta$ and $\theta\wedge d\theta$ is
positive everywhere \cite[Lemma 1.1.1]{MR2397738}. As we will review
momentarily, $\xi$ determines a spin-c structure $\mathfrak{s}_{\xi}$
and one can exploit the additional structure provided by $\xi$ in
order to construct an element $\mathbf{c}(\xi)\in\ensuremath{\widecheck{HM}_{\bullet}(-Y,\mathfrak{s}_{\xi})}$
known as the contact invariant of $(Y,\xi)$.

It is important to observe that $\mathbf{c}(\xi)$ belongs to the
monopole Floer homology groups of the manifold $-Y$, that is, $Y$
with the opposite orientation. This is because the contact invariant
$\mathbf{c}(\xi)$ should actually be regarded as a cohomology element
$\mathbf{c}(\xi)\in\widehat{HM}^{\bullet}(Y,\mathfrak{s}_{\xi})$,
and there is a natural isomorphism between $\widehat{HM}^{\bullet}(Y,\mathfrak{s}_{\xi})$
and $\widecheck{HM}_{\bullet}(-Y,\mathfrak{s}_{\xi})$ \cite[Section 22.5]{MR2388043}.
However, we will work with the homology version of the contact invariant
since most of the formulas in \cite{MR2388043} are given explicitly
for the homology groups.

Monopole Floer homology also has TFQT-like features, which concretely
means that given a cobordism $W:Y\rightarrow Y'$ between two three
manifolds, there are group homomorphisms between the corresponding
homology groups 
\begin{align*}
\widecheck{HM}_{\bullet}(W,\mathfrak{s}_{W}):\widecheck{HM}_{\bullet}(Y,\mathfrak{s}_{Y})\rightarrow\widecheck{HM}_{\bullet}(Y',\mathfrak{s}_{Y'})\\
\widehat{HM}_{\bullet}(W,\mathfrak{s}_{W}):\widehat{HM}_{\bullet}(Y,\mathfrak{s}_{Y})\rightarrow\widehat{HM}_{\bullet}(Y',\mathfrak{s}_{Y'})\\
\overline{HM}_{\bullet}(W,\mathfrak{s}_{W}):\overline{HM}_{\bullet}(Y,\mathfrak{s}_{Y})\rightarrow\overline{HM}_{\bullet}(Y',\mathfrak{s}_{Y'})
\end{align*}
Here $\mathfrak{s}_{W}$ denotes a spin-c structure which restricts
in an appropriate sense to the given spin-c structures on $Y$ and
$Y'$. Just as in the contact case, if $(W,\omega):(Y,\xi)\rightarrow(Y',\xi')$
is equipped with a symplectic form $\omega$, it determines a spin-c
structure $\mathfrak{s}_{\omega}$ , and so it makes sense to ask
the \textit{naturality} \textit{question}, that is, whether or not
\begin{equation}
\widecheck{HM}_{\bullet}(W^{\dagger},\mathfrak{s}_{\omega})\mathbf{c}(\xi')\overset{?}{=}\mathbf{c}(\xi)\label{naturality question}
\end{equation}
where $W^{\dagger}:-Y'\rightarrow-Y$ denotes the cobordism turned
``upside-down''. The main result of this work is that the answer
to the previous question is positive in the case of a strong symplectic
cobordism: 
\begin{thm}
\label{thm: Naturality}Let $(W,\omega):(Y,\xi)\rightarrow(Y',\xi')$
be a strong symplectic cobordism between two contact manifolds $(Y,\xi)$
and $(Y',\xi')$. Then 
\[
\widecheck{HM}_{\bullet}(W^{\dagger},\mathfrak{s}_{\omega})\mathbf{c}(\xi';\mathbb{F}_{2})=\mathbf{c}(\xi;\mathbb{F}_{2})
\]
\end{thm}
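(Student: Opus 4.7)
The plan is to realize both sides of the equality as counts of Seiberg--Witten solutions on a single non-compact symplectic four-manifold $X$, obtained by attaching the symplectization of $(Y',\xi')$ to the top of $W$ and adding a cylindrical end to the bottom $-Y$. Since $(W,\omega)$ is a \emph{strong} symplectic cobordism, $\omega$ extends smoothly across $Y'$ to a symplectic form on $X$ whose positive end is a standard symplectization of $(Y',\xi')$ and whose negative end is of contact-type for $(Y,\xi)$. Thus $X$ can be viewed simultaneously as the geometry computing $\mathbf{c}(\xi;\mathbb{F}_2)$ and as the composition of $W^{\dagger}$ with the relative invariant geometry for $\mathbf{c}(\xi';\mathbb{F}_2)$, and the theorem follows by identifying these two interpretations of the same moduli count.

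First I would review the Kronheimer--Mrowka construction of $\mathbf{c}(\xi)$: on the half-symplectization $(Y\times[0,\infty), d(e^t\theta))$ equipped with the spin-c structure $\mathfrak{s}_\xi$, the canonical section of $S^+=\underline{\mathbb{C}}\oplus\Lambda^{0,2}$ gives a distinguished reducible solution $(\Phi_0,A_0)$ at $+\infty$, and a count (in the blown-up configuration space) of finite-energy Seiberg--Witten trajectories asymptotic to $(\Phi_0,A_0)$ determines a cycle in $\widecheck{HM}_\bullet(-Y,\mathfrak{s}_\xi)$ whose class is $\mathbf{c}(\xi)$. The key analytic input is that any finite-energy solution on the symplectization end decays exponentially to the canonical one, a fact established via the positivity of $\theta\wedge d\theta$ together with the Weitzenb\"ock identity, in the spirit of Taubes and Mrowka--Rollin.

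Next, consider the moduli space $\mathcal{M}(X)$ of solutions on $X$ with the canonical asymptotic condition at the symplectization end and arbitrary asymptotics on the cylindrical end of $-Y$. On the one hand, stretching the collar of $Y'$ in $X$ decomposes the manifold into $W$ together with the half-symplectization of $(Y',\xi')$; by the standard neck-stretching/composition argument underlying the TQFT structure, the element of $\widecheck{HM}_\bullet(-Y,\mathfrak{s}_\omega|_Y)$ represented by $\mathcal{M}(X)$ equals $\widecheck{HM}_\bullet(W^{\dagger},\mathfrak{s}_\omega)\,\mathbf{c}(\xi';\mathbb{F}_2)$. On the other hand, since $W$ is strong, the full manifold $X$ is itself a valid geometry of the type used to define the contact invariant of $(Y,\xi)$, and the same count directly produces $\mathbf{c}(\xi;\mathbb{F}_2)$.

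The principal obstacle is extending the Mrowka--Rollin a priori estimates, originally proved for compact symplectic fillings, to the non-compact setting of $X$ which has a genuine \emph{cylindrical} end along $-Y$. This requires reworking their compactness and gluing arguments in an appropriate weighted Sobolev framework, handling the symplectization end by Taubes-type exponential decay while simultaneously handling the cylindrical end via the Kronheimer--Mrowka blown-up configuration space formalism, and verifying that the two sets of asymptotic analyses remain uniform under the neck-stretching used in the identification above. Once this hybrid gluing package is in place, matching the two interpretations of $\mathcal{M}(X)$ yields the theorem; the restriction to $\mathbb{F}_2$ coefficients sidesteps the orientation/sign ambiguities that would otherwise have to be tracked through the gluing.
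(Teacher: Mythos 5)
Your proposal follows the same broad outline as the paper: form the hybrid manifold $X=W^{+}_{\xi',Y}=(\mathbb{R}^{+}\times(-Y))\cup W^{\dagger}\cup([1,\infty)\times Y')$, stretch the neck at $Y'$ to identify the count with $\widecheck{HM}_{\bullet}(W^{\dagger},\mathfrak{s}_{\omega})\mathbf{c}(\xi')$, and then argue that the same count computes $\mathbf{c}(\xi)$. The neck-stretching half is right and matches the paper's argument. The gap is in the other half: you assert that, because $W$ is strong, ``the full manifold $X$ is itself a valid geometry of the type used to define the contact invariant of $(Y,\xi)$, and the same count directly produces $\mathbf{c}(\xi;\mathbb{F}_{2})$.'' This is not direct. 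The contact invariant $\mathbf{c}(\xi)$ is defined by a count on $Z_{Y,\xi}^{+}=(\mathbb{R}^{+}\times(-Y))\cup([1,\infty)\times Y)$, whose symplectic end is the standard almost-K\"ahler cone over $(Y,\xi)$ itself; $X$ is a different manifold whose symplectic region is $W\cup([1,\infty)\times Y')$, an AFAK end with contact-type boundary $Y$. Identifying the counts on these two non-isomorphic geometries is precisely the content of the Mrowka--Rollin excision theorem, and adapting that theorem to the presence of a cylindrical end over $-Y$ is the technical core of the paper.

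Moreover, the mechanism for that identification is not another neck-stretch. It is a ``dilating the cone'' argument: one performs a connected sum along $Y$ by identifying an annulus in the cone $[1,\infty)\times Y$ with a dilated annulus in the AFAK end $Z$, producing a family $M_{\tau}$ that interpolates between $Z_{Y,\xi}^{+}$ (at $\tau=\infty$) and manifolds diffeomorphic to $X$, and one must then build a gluing map from an implicit-function-theorem argument with estimates uniform in $\tau$. Your last paragraph does flag that Mrowka--Rollin needs reworking, but framing the obstacle as ``extending a priori estimates'' and keeping estimates ``uniform under the neck-stretching used in the identification above'' conflates the two distinct gluing steps and misses what is actually new: the Kronheimer--Mrowka abstract perturbations on the cylindrical end introduce terms in the linearization that were absent in Mrowka--Rollin, so one must check that their leading contribution is quadratic (else the contraction-mapping step fails), and the required uniform invertibility of the Seiberg--Witten Laplacian $\triangle_{2,\mathfrak{q},(A,\varPhi)^{\#}}$ cannot be obtained by comparison with the Dirac operator as in the original paper. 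Without articulating this excision step and its mechanism, the identification of the count on $X$ with $\mathbf{c}(\xi)$ remains unjustified.
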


Here we have added an $\mathbb{F}_{2}$ to our notation of the contact
invariant to emphasize that we are using the coefficient field $\mathbb{F}=\mathbb{Z}/2$
so that we can ignore orientation issues for the moduli spaces. However,
throughout the paper we will drop the $\mathbb{F}_{2}$ from our notation
for simplicity. Clearly one can also ask whether or not one there
is an analogous statement in the case of integer coefficients. Unfortunately,
Theorem $H$ in \cite{Lin-Ruberman-Saveliev[2018]} shows that there
is no canonical choice of sign in the definition of the contact invariant,
so the best naturality statement one could hope for in this case is
one given up to a sign.

At this point it is important to specify that our notion of a \textbf{strong
symplectic cobordism} is that of a symplectic cobordism for which
the symplectic form is given in collar neighborhoods of the concave
\textit{and} convex boundaries by symplectizations of the corresponding
contact structures.

To give some context we should point out that this theorem appears
stated as Theorem 2.4 in \cite{MR2928982}, though the reference given
is a paper by Mrowka and Rollin in preparation that was never published.
Also, as will be discussed later in this paper the ``special'' condition
imposed on the cobordism in \cite{MR2928982} and \cite{MR2199446}
can be removed.

One can also ask what is known in the twin versions of monopole Floer
homology, namely, embedded contact homology and Heegaard Floer homology.
It is not by any means obvious that the corresponding homology groups
from Heegaard Floer and ECH are isomorphic to the ones coming from
monopole Floer homology and the proof can be found in \cite{Kutluhan-Lee-Taubes[2010b],Kutluhan-Lee-Taubes[2010c],Kutluhan-Lee-Taubes[2011a],Kutluhan-Lee-Taubes[2011d],Kutluhan-Lee-Taubes[2012e],Colin-Ghiggini-Honda[2012a],Colin-Ghiggini-Honda[2012b],Colin-Ghiggini-Honda[2012c],MR2746723,MR2746724,MR2746725,MR2746726,MR2746727}.
Also, the corresponding contact invariants in each version are isomorphic
to each other. 

In Heegaard Floer homology naturality holds (for example) if $(Y',\xi')$
is obtained from $(Y,\xi)$ by Legendrian surgery along a Legendrian
knot $L$ \cite[Theorem 2.3]{MR2059189}. This is an interesting case
because a 1-handle surgery, or a 2-handle surgery along a Legendrian
knot $K$ with framing $-1$ relative to the canonical framing gives
rise to a strong symplectic cobordism. On the ECH side the contact
invariant is known to be well behaved with respect to weakly exact
symplectic cobordisms \cite[Remark 1.11]{MR3190296}. Moreover, Michael
Hutchings has communicated to the author that he can improve this
result to the case of a strong symplectic cobordism, with the additional
advantage that the contact manifolds can be disconnected \cite{Hutchings[FieldTheory]}.

As we will see, the contact invariant with mod-2 coefficients is a
useful tool for understanding contact structures and our naturality
result is good enough to find properties of this invariant, though
the properties we discuss in this work were previously proven by other
means. Before we mention these applications, however, we will give
some brief history that puts into perspective the construction of
the contact invariant and why the following results were natural things
to look for.

In \cite{MR1474156} Kronheimer and Mrowka used the contact structure
of $Y$ to extend the definition of the Seiberg-Witten invariants
to the case of a compact oriented four manifold $X$ bounding it.

More precisely, one considers the non-compact four manifold $X^{+}=X\cup_{Y}([1,\infty)\times Y)$,
where $[1,\infty)\times Y$ is given the structure of an almost Kähler
cone using a symplectization $\omega$ of a contact form $\theta$
defining $\xi$. In particular, the symplectic form induced by $\theta$
determines a canonical spin-c structure $\mathfrak{s}_{\omega}$ on
$[1,\infty)\times Y$ , which we can think of as a complex vector
bundle $S=S^{+}\oplus S^{-}$ together with a Clifford multiplication
$\rho:T^{*}\left([1,\infty)\times Y\right)\rightarrow\hom_{\mathbb{C}}(S,S)$
satisfying certain conditions. 

The canonical spin-c structure $\mathfrak{s}_{\omega}$ carries a
canonical section $\varPhi_{0}$ of $S^{+}$ together with a canonical
spin-c connection $A_{0}$ on the spinor bundle. Kronheimer and Mrowka
then study solutions of the Seiberg Witten equations on $X^{+}$ which
are asymptotic to $(A_{0},\varPhi_{0})$ on the conical end. These
solutions end up having uniform exponential decay with respect to
the canonical configuration $(A_{0},\varPhi_{0})$ (Proposition 3.15
in \cite{MR1474156} or Propositions 5.7 and 5.10 in \cite{Zhang[2016]}
for a similar situation), which means that the Seiberg Witten equations
on $X^{+}$ behave very similar to how they would if the manifold
were compact, more specifically, the moduli spaces of gauge equivalence
classes of such solutions are compact. This allows as in the closed
manifold case to define a map
\[
SW_{(X,\xi)}:\text{Spin}^{c}(X,\xi)\rightarrow\mathbb{Z}
\]
 where $\text{Spin}^{c}(X,\xi)$ denotes the set of isomorphism classes
of relative spin-c structures on $X$ that restrict to the spin-c
structure $\mathfrak{s}_{\xi}$ on $Y$ determined by the contact
structure $\xi$. This map can be used to detect properties of contact
structures on three manifolds. For example, Theorem 1.3 in \cite{MR1474156}
shows that for any closed three manifold $Y$ there are only finitely
many homotopy classes of 2-plane fields which are realized as semi-fillable
contact structures. In section 1.3 of the same paper Kronheimer and
Mrowka mention as well that if $(X,\xi)$ is a 4-manifold with an
overtwisted contact structure on its boundary, then $SW_{(X,\xi)}$
vanishes identically. 

The latter result is Corollary $B$ in a different paper \cite{MR2199446}
by Mrowka and Rollin, where they analyzed how the map $SW_{(X,\xi)}$
behaves under a symplectic cobordism $(W,\omega):(Y,\xi)\rightarrow(Y',\xi')$
which they called a \textbf{special symplectic cobordism} \cite[Page 4]{MR2199446}.
Theorem D in \cite{MR2199446} shows that
\begin{equation}
SW_{(X,\xi)}=\pm SW_{(X\cup W,\xi')}\circ\jmath\label{Mrowka Rollin nat}
\end{equation}
where $\jmath:\text{Spin}^{c}(X,\xi)\rightarrow\text{Spin}^{c}(X\cup W,\xi')$
is a canonical map that extends the spin-c structure of $X$ across
the cobordism $W$. With respect to $\mathbb{Z}/2\mathbb{Z}$ coefficients,
the previous theorem can be interpreted as saying that the mod 2 Seiberg-Witten
invariants are the same. 

In order to detect more properties of the contact structure, we need
to use the machinery of Monopole Floer Homology, whose canonical reference
is \cite{MR2388043}. 

As first defined in section 6.3 of \cite{MR2299739} , one constructs
the contact invariant $\mathbf{c}(\xi)\in\ensuremath{\widecheck{HM}_{\bullet}(-Y,\mathfrak{s}_{\xi})}$
by studying the Seiberg Witten equations on $(\mathbb{R}^{+}\times-Y)\cup([1,\infty)\times Y)$
which are asymptotic on the symplectic cone to the canonical configuration
$(A_{0},\varPhi_{0})$ mentioned before and asymptotic on the half-cylinder
to a solution of the (perturbed) three dimensional Seiberg-Witten
equations. We will give more details about this construction in the
next section. However, it should be clear that based on the analogy
with the numerical Seiberg-Witten invariants $SW_{(X,\xi)}$, one
would expect the naturality property (our main theorem \ref{thm: Naturality})
as well as the vanishing of the contact invariant for an overtwisted
structure. It is the latter which we now indicate how to prove.
\begin{cor}
Let $(Y,\xi)$ be an overtwisted contact 3 manifold. Then the contact
invariant of $\xi$ vanishes, that is, $\mathbf{c}(\xi)=0$. 
\end{cor}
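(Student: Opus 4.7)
The plan is to follow Mrowka-Rollin's proof of Corollary B in \cite{MR2199446} for the numerical invariant $SW_{(X,\xi)}$, with Theorem \ref{thm: Naturality} playing the Floer-theoretic role of equation \eqref{Mrowka Rollin nat}. Concretely, I would produce a strong symplectic cobordism $(W,\omega):(Y,\xi)\to(Y',\xi')$ whose target $(Y',\xi')$ has a contact invariant that is already known or readily seen to vanish, and then transport this vanishing back to $\mathbf{c}(\xi)$ via naturality.

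The geometric input would come from Eliashberg's normal form for an overtwisted contact structure: using the Lutz tube around a transverse knot inside $(Y,\xi)$, a Lutz-type modification can be realized as a strong symplectic cobordism to a contact manifold $(Y',\xi')$ whose description splits, up to contact connected sum, as a standard overtwisted model such as $(S^{3},\xi_{\text{ot}})$ together with a simpler factor. Theorem \ref{thm: Naturality} then yields
\[
\mathbf{c}(\xi)=\widecheck{HM}_{\bullet}(W^{\dagger},\mathfrak{s}_{\omega})\,\mathbf{c}(\xi'),
\]
and the connect-sum formula for monopole Floer homology combined with the multiplicativity of the contact invariant under contact connect sum reduces the problem to verifying $\mathbf{c}(\xi_{\text{ot}})=0$ on the standard overtwisted $3$-sphere. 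On $S^{3}$ the Floer groups are explicit and one can argue directly on the moduli space defining the contact chain that the overtwisted class vanishes.

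The principal obstacle, in my view, is the construction in the first step. The strict notion of strong symplectic cobordism demanded by Theorem \ref{thm: Naturality} requires symplectization collars at \emph{both} boundary components, a condition more restrictive than having a symplectic form merely compatible with the contact structures at the boundary. Producing a Lutz-type cobordism in exactly this rigid collar form, and identifying the resulting spin-c structure $\mathfrak{s}_{\omega}$ with the canonical extension of $\mathfrak{s}_{\xi}$ and $\mathfrak{s}_{\xi'}$, is the delicate geometric ingredient. Once this is secured, the remainder is a formal combination of the naturality theorem with the model computation on $(S^{3},\xi_{\text{ot}})$, exactly paralleling how Mrowka-Rollin derived the overtwisted vanishing of $SW_{(X,\xi)}$ from their naturality identity in \cite{MR2199446}.
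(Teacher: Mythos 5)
Your high-level strategy — combine Theorem \ref{thm: Naturality} with the vanishing of $\mathbf{c}$ on a standard overtwisted model — is exactly the paper's. But the geometric ingredient you propose to produce the required cobordism does not work, and you have flagged this yourself without resolving it. A Lutz twist is not induced by a strong symplectic cobordism: there is no symplectic $W$ with symplectization collars at both ends that effects a Lutz modification, and indeed such a cobordism would contradict the fact that Lutz twists destroy fillability. So the ``principal obstacle'' you identify is not a technicality to be finessed; it is a genuine obstruction to your chosen route. The paper sidesteps this entirely by invoking Etnyre--Honda \cite[Theorem 1.2]{MR1906906}, which produces a \emph{Stein} (hence strong) cobordism $(W,\omega):(Y,\xi)\to(S^3,\xi_{ot})$ from \emph{any} overtwisted $(Y,\xi)$ to a standard overtwisted $S^3$. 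No connected-sum decomposition, and hence no connected-sum formula for $\widecheck{HM}$ or multiplicativity of the contact invariant, is needed; those would be significant extra inputs that the paper does not use and that you would have to establish separately in the monopole setting.

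The second gap is in your treatment of the base case $\mathbf{c}(\xi_{ot})=0$ on $S^3$, which you leave as ``argue directly on the moduli space.'' The paper's argument is cleaner and avoids any moduli-space computation: the Floer groups are graded by homotopy classes of oriented plane fields, and by \cite[Proposition 3.3.1]{MR2388043} there is a homotopy class $[\xi]$ with $\widecheck{HM}_{[\xi]}(S^3)=0$. Eliashberg's classification \cite[Theorem 1.6.1]{MR1022310} provides an overtwisted structure $\xi_{ot}$ in that class, and then $\mathbf{c}(\xi_{ot})$ lives in a vanishing group, so it is zero for free. Replacing your Lutz construction with Etnyre--Honda and your unspecified $S^3$ computation with this grading argument closes both gaps.
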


\begin{proof}
First we show that the 3-sphere $S^{3}$ admits an overtwisted structure
$\xi_{ot}$ for which $\mathbf{c}(\xi_{ot})=0$. For this we will
use Eliashberg's theorem \cite[Theorem 1.6.1]{MR1022310} on the existence
of an overtwisted contact structure in every homotopy class of oriented
plane field, and the fact that the Floer groups of any three manifold
$Y$ are graded by the set of homotopy classes of oriented plane fields
\cite[Section 3.1]{MR2388043}. 

Thanks to Proposition 3.3.1 in \cite{MR2388043}, which describes
the Floer homology groups of $S^{3}$, we can find a homotopy class
of plane field $[\xi]$ for which $\widecheck{HM}_{[\xi]}(S^{3})=0$.
Notice that in this case we are not specifying the spin-c structure
because $S^{3}$ has only one up to isomorphism. By Eliashberg's theorem
we can choose an overtwisted structure $\xi_{ot}$ in the homotopy
class $[\xi]$. Now, $\mathbf{c}(\xi_{ot})$ is supported in $\widecheck{HM}_{[\xi]}(-S^{3})\simeq\widecheck{HM}_{[\xi]}(S^{3})=0$
and so it will automatically vanish, i.e, $\mathbf{c}(\xi_{ot})=0$. 

Now, if $(Y,\xi)$ is an arbitrary overtwisted contact 3 manifold,
using Theorem 1.2 in \cite{MR1906906}, we can find a Stein cobordism
$(W,\omega):(Y,\xi)\rightarrow(S^{3},\xi_{ot})$. Such cobordisms
are in fact strong cobordisms so we can conclude that 
\[
\mathbf{c}(\xi)=\widecheck{HM}_{\bullet}(W^{\dagger},\mathfrak{s}_{\omega})\mathbf{c}(\xi_{ot})=\widecheck{HM}_{\bullet}(W^{\dagger},\mathfrak{s}_{\omega})(0)=0
\]
and therefore $\mathbf{c}(\xi)$ vanishes.
\end{proof}
\begin{rem}
For a proof that does not use the naturality property see Theorem
4.2 in \cite{MR2496048}. The vanishing of the contact invariant for
overtwisted contact structures is also known on the Heegaard Floer
side \cite[Theorem 1.4]{MR2153455}. For a proof on the ECH side see
Michael Hutchings' blog \cite{HutchingsBlog}. In fact, in the case
of ECH one can show that the contact invariant vanishes in the case
of planar torsion \cite{MR3102479}. The same is also true in the
Monopole Floer Homology side thanks to our naturality result and Theorem
1 in \cite{MR3128981}.
\end{rem}

\begin{cor}
Let $(X,\omega)$ be a strong filling of $(Y,\xi)$. Then the contact
invariant of $\xi$ is non-vanishing, that is, $\mathbf{c}(\xi)\neq0$.
\end{cor}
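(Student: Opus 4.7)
The plan is to convert the strong filling into a strong symplectic cobordism with source $S^{3}$ equipped with its standard tight contact structure, and then feed this into Theorem \ref{thm: Naturality}, the naturality result that has been stated.

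More precisely, I would first choose a Darboux ball $B \subset \operatorname{int}(X)$ centered at some interior point and let $W = X \setminus \operatorname{int}(B)$. Then $(W,\omega)$ is a strong symplectic cobordism
\[
(W,\omega):(S^{3},\xi_{\mathrm{std}})\longrightarrow (Y,\xi),
\]
where the concave boundary $S^{3}$ carries the standard tight contact structure (because a Darboux ball is, up to scaling, a piece of the symplectization of $(S^{3},\xi_{\mathrm{std}})$) and the convex boundary is $(Y,\xi)$ by hypothesis. Theorem \ref{thm: Naturality} applied to this cobordism yields
\[
\widecheck{HM}_{\bullet}(W^{\dagger},\mathfrak{s}_{\omega})\,\mathbf{c}(\xi)=\mathbf{c}(\xi_{\mathrm{std}}).
\]
Since $\widecheck{HM}_{\bullet}(W^{\dagger},\mathfrak{s}_{\omega})$ is a group homomorphism, it suffices to show that the right-hand side is nonzero: this would force $\mathbf{c}(\xi)\neq 0$ on the left.

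The remaining step, and the only real content beyond invoking naturality, is therefore the base case $\mathbf{c}(\xi_{\mathrm{std}})\neq 0$ for the standard tight contact structure on $S^{3}$. This is a direct computation using the very definition of the contact invariant recalled earlier in the excerpt: the standard $4$-ball $B^{4}$ is a strong filling of $(S^{3},\xi_{\mathrm{std}})$ whose almost K\"ahler cone construction coincides with the one used to define $\mathbf{c}(\xi_{\mathrm{std}})$, and the relevant moduli space of Seiberg--Witten solutions asymptotic to the canonical configuration $(A_{0},\varPhi_{0})$ consists of a single reducible-free point (the canonical solution itself), by the same calculation Kronheimer and Mrowka use to evaluate $SW_{(B^{4},\xi_{\mathrm{std}})}$. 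This produces a nonzero cycle in the to-chain complex of $-S^{3}$ representing $\mathbf{c}(\xi_{\mathrm{std}})$.

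The main obstacle is therefore not the naturality step, which is immediate once the Darboux ball trick is performed, but rather making the base case $\mathbf{c}(\xi_{\mathrm{std}})\neq 0$ rigorous: one needs to verify that the cycle produced from the trivial moduli space on the cone over $S^{3}$ is actually a cycle in the to-version of the chain complex and not a boundary, which amounts to checking that it does not lie in the image of the relevant differential. Alternatively, one can bypass this by quoting the explicit computation of $\widecheck{HM}_{\bullet}(-S^{3})$ together with the fact that $\mathbf{c}(\xi_{\mathrm{std}})$ is the distinguished generator in the top nontrivial grading, as follows from the description of the contact invariant on Stein fillable manifolds in \cite{MR2388043}. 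Either route gives $\mathbf{c}(\xi_{\mathrm{std}})\neq 0$, and combined with the naturality identity above the conclusion $\mathbf{c}(\xi)\neq 0$ follows.
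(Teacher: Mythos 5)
Your proof is essentially the same as the paper's: remove a Darboux ball to obtain a strong cobordism $(W,\omega):(S^{3},\xi_{\mathrm{std}})\to(Y,\xi)$, apply Theorem \ref{thm: Naturality}, and use $\mathbf{c}(\xi_{\mathrm{std}})\neq 0$. The paper simply asserts this last fact as known, which is the right call; your extended sketch of the base case is tangential, and its moduli-space description is a bit imprecise for the cylindrical-plus-conical setup of the contact invariant (the fallback you mention --- quoting the known computation of $\widecheck{HM}_{\bullet}(S^{3})$ and the identification of $\mathbf{c}(\xi_{\mathrm{std}})$ as a generator --- is the cleaner route).
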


\begin{proof}
By Darboux's theorem we can remove a standard small ball $B$ of $X$
to obtain a strong cobordism $(W,\omega):(S^{3},\xi_{tight})\rightarrow(Y,\xi)$.
Naturality says that $\mathbf{c}(\xi_{tight})=\widecheck{HM}_{\bullet}(W^{\dagger},\mathfrak{s}_{\omega})\mathbf{c}(\xi)$
but the left hand side is non-vanishing and so we conclude that $\mathbf{c}(\xi)$
is non-vanishing as well.
\end{proof}
\begin{rem}
The Heegaard Floer version of this fact appears as Theorem 2.13 in
\cite{MR2206641}. That same paper contains an example of a weak filling
where the contact invariant vanishes.
\end{rem}

To explain the next corollary we do a quick review of some of the
properties of the monopole Floer homology groups. Formally they behave
like the ordinary homology groups $H_{*}(Z)$, $H_{*}(Z,A)$ and $H_{*}(A)$
for a pair of spaces in that they are related by a long exact sequence
\cite[Section 3.1]{MR2388043}
\begin{equation}
\cdots\overset{i_{*}}{\longrightarrow}\widecheck{HM}_{\bullet}(Y,\mathfrak{s})\overset{j_{*}}{\longrightarrow}\widehat{HM}_{\bullet}(Y,\mathfrak{s})\overset{p_{*}}{\longrightarrow}\overline{HM}_{\bullet}(Y,\mathfrak{s})\overset{i_{*}}{\longrightarrow}\widecheck{HM}_{\bullet}(Y,\mathfrak{s})\overset{j_{*}}{\longrightarrow}\cdots\label{eq:LONG EXACT}
\end{equation}
An important subgroup of $\widehat{HM}_{\bullet}(Y,\mathfrak{s})$
is the image of $j_{*}:\widecheck{HM}_{\bullet}(Y,\mathfrak{s})\rightarrow\widehat{HM}_{\bullet}(Y,\mathfrak{s})$
which is known as the \textbf{reduced Floer homology group }$HM_{\bullet}(Y,\mathfrak{s})$,
and in general it is of great interest to determine whether or not
a particular element belongs to it. For example, if $j_{*}=0$ we
say that $Y$ is an \textbf{$L$-space} in analogy with the terminology
from Heegaard Floer \cite[Section 42.6]{MR2388043}. To relate this
question to the naturality of the contact invariant, we need to use
the fact that for a cobordism $(W^{\dagger},\mathfrak{s}_{W}):(-Y',\mathfrak{s}_{Y})\rightarrow(-Y,\mathfrak{s}_{Y'})$
there is a commutative diagram 
\begin{equation}
\begin{array}{ccccccccc}
\cdots & \widecheck{HM}_{\bullet}(-Y',\mathfrak{s}_{Y'}) & \overset{j_{*}}{\longrightarrow} & \widehat{HM}_{\bullet}(-Y',\mathfrak{s}_{Y'}) & \overset{p_{*}}{\longrightarrow} & \overline{HM}_{\bullet}(-Y',\mathfrak{s}_{Y'}) & \overset{i_{*}}{\longrightarrow} & \widecheck{HM}_{\bullet}(-Y',\mathfrak{s}_{Y'}) & \cdots\\
\\
 & \Bigl\downarrow_{\check{HM}_{\bullet}(W^{\dagger},\mathfrak{s}_{W})} &  & \Bigl\downarrow_{\widehat{HM}_{\bullet}(W^{\dagger},\mathfrak{s}_{W})} &  & \Bigl\downarrow_{\overline{HM}_{\bullet}(W^{\dagger},\mathfrak{s}_{W})} &  & \Bigl\downarrow_{\check{HM}_{\bullet}(W^{\dagger},\mathfrak{s}_{W})}\\
\\
\cdots & \widecheck{HM}_{\bullet}(-Y,\mathfrak{s}_{Y}) & \overset{j_{*}}{\longrightarrow} & \widehat{HM}_{\bullet}(-Y,\mathfrak{s}_{Y}) & \overset{p_{*}}{\longrightarrow} & \overline{HM}_{\bullet}(-Y,\mathfrak{s}_{Y}) & \overset{i_{*}}{\longrightarrow} & \widecheck{HM}_{\bullet}(-Y,\mathfrak{s}_{Y}) & \cdots
\end{array}\label{commutative}
\end{equation}

\begin{cor}
\label{cor:Gompf} Let $(X,\omega)$ be a strong filling of $(Y',\xi')$.
Assume in addition that $Y'$ is an $L$-space. Then $X$ must be
negative definite. 
\end{cor}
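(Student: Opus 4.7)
The plan is to derive $b^{+}(X)=0$, which combined with the fact that $Y'$ is a rational homology sphere (a consequence of being an $L$-space) implies that $X$ is negative definite. The strategy is to show that $b^{+}(X)\geq1$ would contradict the non-vanishing of the contact invariant $\mathbf{c}(\xi_{tight})$ on $S^{3}$.

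First, as in the proof of the previous corollary, I would remove a standard Darboux ball from $X$ to exhibit it as the composition of a strong symplectic cobordism $(W,\omega):(S^{3},\xi_{tight})\to(Y',\xi')$ with the standard symplectic filling of $(S^{3},\xi_{tight})$; note that $b^{+}(W)=b^{+}(X)$. Since $-Y'$ is also an $L$-space (the vanishing of $j_{*}$ passes to the orientation reversal via the duality of \cite[Section 22.5]{MR2388043}), exactness in \eqref{eq:LONG EXACT} applied to $-Y'$ yields a class $\bar{c}\in\overline{HM}_{\bullet}(-Y',\mathfrak{s}_{\xi'})$ with $i_{*}(\bar{c})=\mathbf{c}(\xi')$. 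Applying Theorem \ref{thm: Naturality} together with commutativity of \eqref{commutative}, I would obtain
\[
\mathbf{c}(\xi_{tight})=\widecheck{HM}_{\bullet}(W^{\dagger},\mathfrak{s}_{\omega})\mathbf{c}(\xi')=i_{*}\overline{HM}_{\bullet}(W^{\dagger},\mathfrak{s}_{\omega})(\bar{c}).
\]

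I would then invoke the following vanishing result, the monopole Floer analogue of Ozsv\'ath-Szab\'o's $HF^{\infty}$ vanishing theorem: for any spin-c cobordism $V$ between rational homology spheres with $b^{+}(V)\geq1$, the induced map $\overline{HM}_{\bullet}(V,\mathfrak{s}_{V})$ is identically zero. Granting this, if $b^{+}(X)\geq1$ then $\overline{HM}_{\bullet}(W^{\dagger},\mathfrak{s}_{\omega})(\bar{c})=0$, and hence $\mathbf{c}(\xi_{tight})=0$, contradicting the corollary on strong fillings applied to the standard filling of $(S^{3},\xi_{tight})$. Therefore $b^{+}(X)=0$ and $X$ is negative definite.

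The main obstacle is justifying the $\overline{HM}$-vanishing step in the Kronheimer-Mrowka framework. For a rational homology sphere $Y$, the group $\overline{HM}_{\bullet}(Y,\mathfrak{s})$ is a free module of rank one over the Laurent polynomial ring in the $U$-variable, and cobordism maps respect the $U$-module structure with a specific degree shift; once $b^{+}(V)\geq1$, an index and wall-crossing analysis for the reducible Seiberg-Witten solutions on $V$ should show that no nonzero $U$-equivariant map of the required degree can be induced. Locating or adapting the precise statement in \cite{MR2388043} is where the real effort of the argument lies.
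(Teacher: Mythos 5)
Your argument is correct and coincides structurally with the paper's: remove a Darboux ball to get $(W,\omega):(S^{3},\xi_{tight})\to(Y',\xi')$, lift $\mathbf{c}(\xi')$ through $i_{*}$ using the vanishing of $j_{*}$, and combine naturality with commutativity of (\ref{commutative}) and the vanishing of $\overline{HM}_{\bullet}(W^{\dagger},\mathfrak{s}_{\omega})$ to force $\mathbf{c}(\xi_{tight})=0$. The one step you flag as ``the main obstacle'' — that the cobordism map on $\overline{HM}$ vanishes when $b^{+}\geq 1$ — is not one in this framework: it is exactly Proposition 3.5.2 in \cite{MR2388043}, which the paper cites directly, so no index or wall-crossing analysis needs to be located or adapted. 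Beyond that, your explicit remark that $-Y'$ is also an $L$-space via the duality of \cite[Section 22.5]{MR2388043} is a small clarification the paper leaves implicit, but the underlying argument is the same.
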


\begin{proof}
Suppose by contradiction that $b^{+}(X)\geq1$. Remove a Darboux ball
as before to obtain a cobordism $(W,\omega):(S^{3},\xi_{tight})\rightarrow(Y',\xi')$.
By proposition 3.5.2 in \cite{MR2388043} we have that $\overline{HM}_{\bullet}(W^{\dagger},\mathfrak{s}_{\omega})=0$.
By the commutative diagram and the fact that $j_{*}$ vanishes for
$Y'$ we have that $\mathbf{c}(\xi')\in\ker j_{*}=\text{im}i_{*}$.
Hence $\mathbf{c}(\xi')=i_{*}\left(\left[\varPsi'\right]\right)$
for some $[\varPsi']\in\overline{HM}_{\bullet}(-Y',\mathfrak{s}_{\xi'})$
and the commutativity together with the naturality says that
\[
0=i_{*}\overline{HM}(W^{\dagger},\mathfrak{s}_{\omega})\left([\varPsi']\right)=\widecheck{HM}_{\bullet}(W^{\dagger},\mathfrak{s}_{\omega})\mathbf{c}(\xi')=\mathbf{c}(\xi_{tight})
\]
which is a contradiction.
\end{proof}
\begin{rem}
Corollary \ref{cor:Gompf} appears as Theorem 1.4 in \cite{MR2023281}.
\end{rem}

\begin{cor}
Suppose that $(Y,\xi)$ is a planar contact manifold. Then $j_{*}\mathbf{c}(\xi)=0$
and in particular any strong filling of a planar contact manifold
must be negative definite.
\end{cor}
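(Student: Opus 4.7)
The plan is to exploit the planar open book structure to build a strong symplectic cobordism $(W,\omega)\colon(Y,\xi)\to(S^{3},\xi_{std})$ and then combine Theorem \ref{thm: Naturality} with the fact that $S^{3}$ is an $L$-space. First I would fix a compatible planar open book on $(Y,\xi)$ whose page $\Sigma_{0,b+1}$ has genus zero and $b+1$ binding components, and, by Giroux's correspondence, arrange that its monodromy is a product of right-handed Dehn twists.

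Next I would invoke the symplectic capping-off construction (in the form developed by Eliashberg and Gay, and adapted to open books by Baker--Etnyre--Van Horn-Morris): attaching a symplectic $2$-handle along a chosen binding component with the page framing produces a strong symplectic cobordism from the old contact manifold to the new one whose open book has that binding circle capped off by a disk. Iterating this $b$ times reduces the page to $\Sigma_{0,1}=D^{2}$; since every simple closed curve in $D^{2}$ is null-homotopic, the Dehn twists making up the monodromy all become isotopic to the identity, so the convex boundary of the concatenated cobordism is tautologically $(S^{3},\xi_{std})$. This yields the desired $(W,\omega)\colon(Y,\xi)\to(S^{3},\xi_{std})$.

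The rest is a diagram chase. Theorem \ref{thm: Naturality} gives $\widecheck{HM}_{\bullet}(W^{\dagger},\mathfrak{s}_{\omega})\mathbf{c}(\xi_{std})=\mathbf{c}(\xi)$. Because $S^{3}$ is an $L$-space, $j_{*}$ vanishes on $\widecheck{HM}_{\bullet}(-S^{3})$, so by exactness of (\ref{eq:LONG EXACT}) one can write $\mathbf{c}(\xi_{std})=i_{*}(\eta_{0})$ for some $\eta_{0}\in\overline{HM}_{\bullet}(-S^{3},\mathfrak{s}_{\xi_{std}})$. Chasing the commutative diagram (\ref{commutative}) then gives
\[
\mathbf{c}(\xi)=\widecheck{HM}_{\bullet}(W^{\dagger},\mathfrak{s}_{\omega})\,i_{*}(\eta_{0})=i_{*}\bigl(\overline{HM}_{\bullet}(W^{\dagger},\mathfrak{s}_{\omega})(\eta_{0})\bigr)\in\text{im}(i_{*})=\ker(j_{*}),
\]
so $j_{*}\mathbf{c}(\xi)=0$. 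The ``in particular'' statement then follows by repeating verbatim the proof of Corollary \ref{cor:Gompf}: that argument only used $\mathbf{c}(\xi')\in\ker(j_{*})$, not the stronger $L$-space hypothesis, and this is exactly what we have just established for every planar $(Y,\xi)$.

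The main obstacle is the symplectic-topological input in the second step: one must verify both that the capping-off cobordism along a single binding component is genuinely \emph{strong} (and not merely weak) symplectic, and that finitely many iterations land at $(S^{3},\xi_{std})$ as a connected cobordism. Both points are in the literature, going back to Gay and Eliashberg and used in this form by Wendl and Baker--Etnyre--Van Horn-Morris, but they are the geometric substance of the proof; once available, everything else reduces to the formal diagram chase above.
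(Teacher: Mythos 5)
Your proposal has the same overall architecture as the paper's proof: produce a strong symplectic cobordism $(W,\omega)\colon(Y,\xi)\to(S^3,\xi_{std})$, combine Theorem~\ref{thm: Naturality} with the vanishing of $j_*$ on $-S^3$ and the commutative diagram~(\ref{commutative}) to conclude $j_*\mathbf{c}(\xi)=0$, and finally observe that the argument of Corollary~\ref{cor:Gompf} used only $\mathbf{c}(\xi')\in\ker j_*$. Your diagram chase (factoring $\mathbf{c}(\xi_{std})$ through $i_*$) is correct, though slightly roundabout: it is equivalent to, and could be shortened to, the one-line chase $j_*\mathbf{c}(\xi)=j_*\widecheck{HM}_\bullet(W^\dagger,\mathfrak{s}_\omega)\mathbf{c}(\xi_{std})=\widehat{HM}_\bullet(W^\dagger,\mathfrak{s}_\omega)\,j_*\mathbf{c}(\xi_{std})=0$, which is what the paper leaves implicit.

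Where you differ from the paper is in how the cobordism is obtained. The paper simply invokes Theorem 4 of Wendl's \emph{Non-exact symplectic cobordisms between contact $3$-manifolds} (and the remarks following it), which packages the entire geometric input into one citation. You instead sketch the Eliashberg--Gay capping-off construction, which is a legitimate and more hands-on route, and you correctly identify the verification that each cap-off cobordism is genuinely \emph{strong} as the crux; this is indeed established in the references you name, so the approach can be made to work, though it is doing more than the paper asks of it.

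There is, however, one concrete error in your write-up that you should fix: you claim that ``by Giroux's correspondence'' one may arrange the monodromy of the planar open book to be a product of \emph{right-handed} Dehn twists. This is false for a general planar contact manifold --- by the Giroux--Loi--Piergallini theorem, the existence of a supporting open book with positive monodromy is \emph{equivalent} to Stein fillability, and there are planar contact structures (for instance, planar overtwisted ones) that are not Stein fillable. Fortunately the positivity of the monodromy plays no role anywhere in your argument: the mapping class group of $(D^2,\partial D^2)$ is trivial regardless of the signs of the Dehn twists, so the capped-off open book with disk page always yields $(S^3,\xi_{std})$, and the cap-off construction itself does not require positive monodromy. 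So you should simply delete the ``right-handed'' qualifier and the appeal to Giroux's correspondence for it, after which the proof is sound modulo the literature input you have flagged.
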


\begin{proof}
Observe that the last statement is exactly the proof of the previous
corollary, which only used the fact that $\mathbf{c}(\xi)\in\ker j_{*}$.
If $(Y,\xi)$ is a planar contact manifold Theorem 4 in \cite{MR3128981}
(and the remarks after it) shows that there is a strong symplectic
cobordism $(W,\omega):(Y,\xi)\rightarrow(S^{3},\xi_{tight})$. The
result follows using the commutative diagram \ref{commutative} and
the fact that $j_{*}$ vanishes on $S^{3}$ because it admits a metric
of positive scalar curvature \cite[Proposition 36.1.3]{MR2388043}.
\end{proof}
\begin{rem}
Theorem 1.2 in \cite{MR2200085} shows that if the contact structure
$\xi$ on $Y$ is compatible with a planar open book decomposition
then its contact invariant vanishes when regarded as an element of
the quotient group $HF_{red}(-Y,\mathfrak{s}_{\xi})$. The second
part of our corollary should be compared with Theorem 1.2 in \cite{MR2126827},
where it is shown (among other things) that any symplectic filling
of a planar contact manifold is negative definite.
\end{rem}

The proof of the previous corollary can be extended to the case when
$Y'$ admits a metric with positive scalar curvature. First of all,
it should be pointed out that this class of manifolds is not very
large. Thanks to results of Schoen and Yau an orientable 3-manifold
with positive scalar curvature can always be obtained from a manifold
with positive scalar curvature with $b_{1}=0$ by making a connected
sum of a number of copies of $S^{1}\times S^{2}$.
\begin{cor}
Suppose that $(W,\omega):(Y,\xi)\rightarrow(Y',\xi')$ is a strong
symplectic cobordism with $Y'$ (hence $-Y'$) admitting a metric
with positive scalar curvature. Then 

a) If $c_{1}(\mathfrak{s}_{\xi'})$ is not torsion, then the contact
invariant $\mathbf{c}(\xi')$ vanishes automatically and by naturality
so will the contact invariant $\mathbf{c}(\xi)$. 

b) If $c_{1}(\mathfrak{s}_{\xi'})$ is torsion, then $j_{*}\mathbf{c}(\xi')=0$
and so by naturality $j_{*}\mathbf{c}(\xi)=0$. In particular, if
there exists a strong cobordism $(W,\omega):(Y,\xi)\rightarrow(S^{3},\xi_{tight})$
we must have that $j_{*}\mathbf{c}(\xi)=0$. 
\end{cor}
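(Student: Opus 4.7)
The plan is to reduce both parts to standard vanishing results for the monopole Floer homology of three-manifolds with positive scalar curvature, and then apply the naturality Theorem~\ref{thm: Naturality} (together with the commutative diagram~\ref{commutative} in part (b)).

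First I would invoke \cite[Proposition 36.1.3]{MR2388043}, which analyzes the monopole Floer groups of a closed oriented 3-manifold equipped with a metric of positive scalar curvature. The key input is of Weitzenb\"ock-type: positive scalar curvature forces the (suitably perturbed) Seiberg--Witten equations to admit only reducible solutions. This yields two regimes depending on the torsion-ness of $c_{1}(\mathfrak{s})$. When $c_{1}(\mathfrak{s}_{\xi'})$ is not torsion, there are no solutions at all, because even the reducibles would require a flat spin-c connection, and the component $\widecheck{HM}_{\bullet}(-Y',\mathfrak{s}_{\xi'})$ vanishes. When $c_{1}(\mathfrak{s}_{\xi'})$ is torsion one still has reducibles, but the statement in \cite{MR2388043} is that the resulting spin-c component of $-Y'$ behaves as an $L$-space, so the map $j_{*}:\widecheck{HM}_{\bullet}(-Y',\mathfrak{s}_{\xi'})\to\widehat{HM}_{\bullet}(-Y',\mathfrak{s}_{\xi'})$ vanishes.

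For part (a), since $\mathbf{c}(\xi')$ lies in $\widecheck{HM}_{\bullet}(-Y',\mathfrak{s}_{\xi'})=0$, we immediately have $\mathbf{c}(\xi')=0$, and naturality gives
\[
\mathbf{c}(\xi)=\widecheck{HM}_{\bullet}(W^{\dagger},\mathfrak{s}_{\omega})\mathbf{c}(\xi')=0.
\]
For part (b), the positive scalar curvature analysis yields $j_{*}\mathbf{c}(\xi')=0$, and chaining naturality with the commutativity of diagram~\ref{commutative} gives
\[
j_{*}\mathbf{c}(\xi)=j_{*}\widecheck{HM}_{\bullet}(W^{\dagger},\mathfrak{s}_{\omega})\mathbf{c}(\xi')=\widehat{HM}_{\bullet}(W^{\dagger},\mathfrak{s}_{\omega})j_{*}\mathbf{c}(\xi')=0.
\]
The particular case $(Y',\xi')=(S^{3},\xi_{tight})$ is then immediate, since $S^{3}$ admits a positive scalar curvature metric and $c_{1}=0$ is torsion.

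There is no real obstacle in this argument beyond citing the Kronheimer--Mrowka positive scalar curvature analysis correctly and keeping the orientation conventions straight (note that if $Y'$ carries a positive scalar curvature metric, so does $-Y'$, and torsion-ness of $c_{1}$ is preserved under orientation reversal). The substantive ingredient, Theorem~\ref{thm: Naturality}, has already been established, and the use of diagram~\ref{commutative} is purely formal TQFT-style bookkeeping.
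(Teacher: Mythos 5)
Your proof is correct and follows the same approach as the paper: both reduce the corollary to Proposition 36.1.3 of Kronheimer--Mrowka on positive scalar curvature (vanishing of the Floer group when $c_{1}$ is non-torsion, vanishing of $j_{*}$ when $c_{1}$ is torsion) and then invoke the naturality theorem together with the commutative diagram~(\ref{commutative}). Your version simply spells out more of the details that the paper compresses into ``from which the corollary follows immediately.''
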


\begin{proof}
Proposition 36.1.3 in \cite{MR2388043} shows that $j_{*}$ vanishes
when $c_{1}(\mathfrak{s})$ is torsion and that the Floer groups are
zero when $c_{1}(\mathfrak{s})$ is not torsion, from which the corollary
follows immediately. 
\end{proof}
In the next section we will sketch the main argument in the proof
of Theorem (\ref{thm: Naturality}). It is our hope that this summary
captures the essential ideas of the proof of our main theorem, since
the remaining (and more technical) part of the paper will follow in
large part the paper \cite{MR2199446}, which is ``required reading''
for someone interested in understanding why the naturality theorem
will be true.

\subsubsection*{\textbf{Acknowledgments}}

I would like to thank my advisor Thomas Mark for suggesting this problem
to me and for all of his indispensable help and support. Also, I would
like to thank Tomasz Mrowka for advising me with the gluing argument
and other technical issues, Jianfeng Lin for many useful conversations
and Boyu Zhang for explaining to me several aspects of his paper \cite{Zhang[2016]}
and other discussions key for this paper.

Finally, I would like to thank the referee for many helpful comments
and corrections.

\section{2. Summary of the Proof}

As stated before, we now give a brief summary of the main ideas involved
in the proof of Theorem \ref{thm: Naturality}. In a nutshell, to
show that $\mathbf{c}(\xi)$ equals $\widecheck{HM}_{\bullet}(W^{\dagger},\mathfrak{s}_{\omega})\mathbf{c}(\xi')$,
we will define an intermediate ``hybrid'' invariant $\mathbf{c}(\xi',Y)\in\widecheck{HM}_{\bullet}(-Y,\mathfrak{s}_{\xi})$
which will work as bridge between $\mathbf{c}(\xi)$ and $\widecheck{HM}_{\bullet}(W^{\dagger},\mathfrak{s}_{\omega})\mathbf{c}(\xi')$.
Namely, using a ``stretching the neck'' argument we will show that
\[
\widecheck{HM}_{\bullet}(W^{\dagger},\mathfrak{s}_{\omega})\mathbf{c}(\xi')=\mathbf{c}(\xi',Y)
\]
while adapting the strategy of \cite{MR2199446} (which as we will
explain momentarily involves a ``dilating the cone'' argument) we
will show that 
\[
\mathbf{c}(\xi',Y)=\mathbf{c}(\xi)
\]
giving us the desired naturality result. 

First we review the definition of the contact invariant, following
section 6.2 in \cite{MR2299739} (in their paper the contact invariant
was denoted $[\check{\psi}_{Y,\xi}]$ but we have decided to switch
to the more standard notation used in Heegaard Floer homology). As
mentioned in the introduction, given a contact manifold $(Y,\xi)$
we construct the manifold
\[
Z_{Y,\xi}^{+}=\left(\mathbb{R}^{+}\times(-Y)\right)\cup\left([1,\infty)\times Y\right)
\]
and study the Seiberg-Witten equations which are asymptotic to the
canonical solution $(A_{0},\varPhi_{0})$ on the conical end $[1,\infty)\times Y$
and to a critical point $\mathfrak{c}$ of the three dimensional Seiberg
Witten equations on the cylindrical end $\mathbb{R}^{+}\times(-Y)$.
To write the Seiberg Witten equations a choice of spin-c structure
needs to be made, and in this case the contact structure $\xi$ determines
a canonical spin-c structure $\mathfrak{s}$ on $Z_{Y,\xi}^{+}$ which
we will describe later.

There is a gauge group action on such solutions and we define the
moduli space $\mathcal{M}(Z_{Y,\xi}^{+},\mathfrak{s},[\mathfrak{c}])$
as the gauge equivalence classes of the solutions to the Seiberg-Witten
equations on $Z_{Y,\xi}^{+}$. As a matter of notation, $[\cdot]$
will represent the gauge-equivalence class of a configuration so $[\mathfrak{c}]$
in this case denotes the gauge equivalence class of the critical point
$\mathfrak{c}$. The moduli space $\mathcal{M}(Z_{Y,\xi}^{+},\mathfrak{s},[\mathfrak{c}])$
is not equidimensional, in fact, it admits a partition into components
of different topological type
\[
\mathcal{M}(Z_{Y,\xi}^{+},\mathfrak{s},[\mathfrak{c}])=\bigcup_{z}\mathcal{M}_{z}(Z_{Y,\xi}^{+},\mathfrak{s},[\mathfrak{c}])
\]
where $z$ indexes the different connected components of $\mathcal{M}(Z_{Y,\xi}^{+},\mathfrak{s},[\mathfrak{c}])$.
We count points in the zero dimensional moduli spaces (which will
be compact, hence finite) and define 
\[
m_{z}(Z_{Y,\xi}^{+},\mathfrak{s},[\mathfrak{c}])=\begin{cases}
|\mathcal{M}_{z}(Z_{Y,\xi}^{+},\mathfrak{s},[\mathfrak{c}])|\;\;\;\mod2 & \text{if }\dim\mathcal{M}_{z}(Z_{Y,\xi}^{+},\mathfrak{s},[\mathfrak{c}])=0\\
0 & \text{otherwise}
\end{cases}
\]
The contact invariant is then defined at the chain level as 
\begin{equation}
c(\xi)=(c^{o}(\xi),c^{s}(\xi))\in\check{C}_{*}(-Y,\mathfrak{s}_{\xi})=\mathfrak{C}^{o}(-Y,\mathfrak{s}_{\xi})\oplus\mathfrak{C}^{s}(-Y,\mathfrak{s}_{\xi})\label{chain level def}
\end{equation}
by 
\begin{align*}
c^{o}(\xi)=\sum_{[\mathfrak{a}]\in\mathfrak{C}^{o}(-Y,\mathfrak{s}_{\xi})}\sum_{z}m_{z}(Z_{Y,\xi}^{+},\mathfrak{s},[\mathfrak{a}])e_{[\mathfrak{a}]}\\
c^{s}(\xi)=\sum_{[\mathfrak{a}]\in\mathfrak{C}^{s}(-Y,\mathfrak{s}_{\xi})}\sum_{z}m_{z}(Z_{Y,\xi}^{+},\mathfrak{s},[\mathfrak{a}])e_{[\mathfrak{a}]}
\end{align*}
In the above notation $\check{C}_{*}(-Y,\mathfrak{s}_{\xi})$ is the
free abelian group generated by the irreducible critical points $[\mathfrak{a}]\in\mathfrak{C}^{o}(-Y,\mathfrak{s}_{\xi})$
and the boundary stable critical points $[\mathfrak{a}]\in\mathfrak{C}^{s}(-Y,\mathfrak{s}_{\xi})$.
Also, $e_{[\mathfrak{a}]}$ is a bookkeeping device for each critical
point considered as a generator in the group. Lemma 6.6 in \cite{MR2299739}
then shows that $c(\xi)$ is a cycle, that is, it defines an element
$\mathbf{c}(\xi)$ of the Monopole Floer Homology group $\widecheck{HM}_{\bullet}(-Y,\mathfrak{s}_{\xi})$. 

Returning to the naturality question, suppose we have a symplectic
cobordism $(W,\omega):(Y,\xi)\rightarrow(Y',\xi')$ and we want to
decide whether or not $\widecheck{HM}_{\bullet}(W^{\dagger},\mathfrak{s}_{\omega})\mathbf{c}(\xi')=\mathbf{c}(\xi)$.
Clearly this is equivalent to showing that at the \textit{chain} level
\[
\check{m}c(\xi')-c(\xi)\in\text{im}\check{\partial}_{-Y}
\]
where $\check{\partial}_{-Y}:\check{C}_{*}(-Y,\mathfrak{s}_{\xi})\rightarrow\check{C}_{*}(-Y,\mathfrak{s}_{\xi})$
is the differential that generates $\widecheck{HM}_{\bullet}(-Y,\mathfrak{s}_{\xi})$.
Here $\check{m}$ is the chain map \cite[Definition 25.3.3]{MR2388043}
\[
\check{m}=\left(\begin{array}{cc}
m_{o}^{o} & -m_{o}^{u}\bar{\partial}_{u}^{s}-\partial_{o}^{u}\bar{m}_{u}^{s}\\
m_{s}^{o} & \bar{m}_{s}^{s}-m_{s}^{u}\bar{\partial}_{u}^{s}-\partial_{s}^{u}\bar{m}_{u}^{s}
\end{array}\right):\check{C}_{\bullet}(-Y',\mathfrak{s}_{\xi'})\rightarrow\check{C}_{\bullet}(-Y,\mathfrak{s}_{\xi})
\]
To see what $\check{m}$ does, we will explain the meaning of $m_{s}^{o}$
and $\bar{\partial}_{u}^{s}$, since the action of the remaining terms
can be inferred easily from these two examples. The map $m_{s}^{o}$
counts solutions on $W^{\dagger}:-Y'\rightarrow-Y$ with a half-cylinder
attached on each end:
\[
W_{*}^{\dagger}=\left(\mathbb{R}^{-}\times-Y'\right)\cup W^{\dagger}\cup(\mathbb{R}^{+}\times-Y)
\]
which are asymptotic on $\mathbb{R}^{-}\times-Y'$ to an irreducible
critical point $[\mathfrak{a}]\in\mathfrak{C}^{o}(-Y',\mathfrak{s}_{\xi'})$
and asymptotic on $\mathbb{R}^{+}\times-Y$ to a boundary stable critical
point $[\mathfrak{b}]\in\mathfrak{C}^{s}(-Y,\mathfrak{s}_{\xi})$
. 

\begin{figure}[H]
\begin{centering}
\includegraphics[scale=0.5]{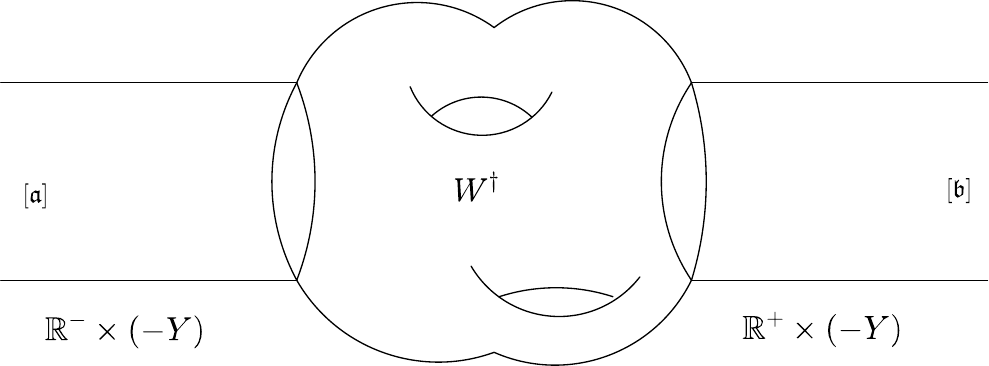}
\par\end{centering}
\caption{Manifold $W_{*}^{\dagger}$ with two cylindrical ends used to define
the cobordism maps.}

\end{figure}

Again, we obtain a moduli space $\mathcal{M}([\mathfrak{a}],W_{*}^{\dagger},\mathfrak{s}_{\omega},[\mathfrak{b}])$
and as before we can define 
\[
n_{z}([\mathfrak{a}],W_{*}^{\dagger},\mathfrak{s}_{\omega},[\mathfrak{b}])=\begin{cases}
|\mathcal{M}_{z}([\mathfrak{a}],W_{*}^{\dagger},\mathfrak{s}_{\omega},[\mathfrak{b}])|\;\;\;\mod2 & \text{if }\dim\mathcal{M}_{z}([\mathfrak{a}],W_{*}^{\dagger},\mathfrak{s}_{\omega},[\mathfrak{b}])=0\\
0 & \text{otherwise }
\end{cases}
\]
 On the other hand, the map $\bar{\partial}_{u}^{s}$ counts solutions
on $\mathbb{R}\times(-Y')$ which are asymptotic to a boundary stable
critical point $[\mathfrak{a}]\in\mathfrak{C}^{s}(-Y',\mathfrak{s}_{\xi'})$
as $t\rightarrow-\infty$ and to a boundary unstable critical point
$[\mathfrak{b}]\in\mathfrak{C}^{u}(-Y',\mathfrak{s}_{\xi'})$ as $t\rightarrow\infty$
(in our context a map like $\partial_{o}^{u}$ would count solutions
on $\mathbb{R}\times-Y$ instead). The bar indicates that we are only
considering reducible solutions, i.e, solutions where the spinor vanishes
identically. In the case of a cylinder there is a natural $\mathbb{R}$
action and the corresponding moduli space after we quotient out by
this action is denoted $\check{\mathcal{M}}([\mathfrak{a}],\mathfrak{s}_{\xi'},[\mathfrak{b}])$
(the notation in \cite{MR2388043} for this moduli space is $\check{M}_{z}([\mathfrak{a}],\mathfrak{s}_{\xi'},[\mathfrak{b}])$).
In this case we define 
\[
n_{z}([\mathfrak{a}],\mathfrak{s}_{\xi'},[\mathfrak{b}])=\begin{cases}
|\check{\mathcal{M}}_{z}([\mathfrak{a}],\mathfrak{s}_{\xi'},[\mathfrak{b}])|\;\;\;\mod2 & \text{if }\dim\check{\mathcal{M}}_{z}([\mathfrak{a}],\mathfrak{s}_{\xi'},[\mathfrak{b}])=0\\
0 & \text{otherwise}
\end{cases}
\]
From the formula one can see that $\check{m}c(\xi')$ has two terms,
and since are working $\mod2$ we will write them without the signs
to simplify the expression. The term corresponding to 
\[
m_{o}^{o}c^{o}(\xi')+m_{o}^{u}\bar{\partial}_{u}^{s}c^{s}(\xi')+\partial_{o}^{u}\bar{m}_{u}^{s}c^{s}(\xi')
\]
 is equivalent to 
\[
\begin{aligned} & \sum_{[\mathfrak{a}]\in\mathfrak{C}^{o}(-Y'),[\mathfrak{c}]\in\mathfrak{C}^{o}(-Y)}\;\;\sum_{z_{1},z_{2}}m_{z_{1}}(Z_{Y',\xi'}^{+},\mathfrak{s}',[\mathfrak{a}])n_{z_{2}}([\mathfrak{a}],W_{*}^{\dagger},\mathfrak{s}_{\omega},[\mathfrak{c}])e_{[\mathfrak{c}]}\\
+ & \sum_{[\mathfrak{a}]\in\mathfrak{C}^{s}(-Y'),[\mathfrak{b}]\in\mathfrak{C}^{u}(-Y'),[\mathfrak{c}]\in\mathfrak{C}^{o}(-Y)}\;\;\sum_{z_{1},z_{2},z_{3}}m_{z_{1}}(Z_{Y',\xi'}^{+},\mathfrak{s}',[\mathfrak{a}])\bar{n}_{z_{2}}([\mathfrak{a}],\mathfrak{s}_{\xi'},[\mathfrak{b}])n_{z_{3}}([\mathfrak{b}],W_{*}^{\dagger},\mathfrak{s}_{\omega},[\mathfrak{c}])e_{[\mathfrak{c}]}\\
+ & \sum_{[\mathfrak{a}]\in\mathfrak{C}^{s}(-Y'),[\mathfrak{b}]\in\mathfrak{C}^{u}(-Y),[\mathfrak{c}]\in\mathfrak{C}^{o}(-Y)}\;\;\sum_{z_{1},z_{2},z_{3}}m_{z_{1}}(Z_{Y',\xi'}^{+},\mathfrak{s}',[\mathfrak{a}])\bar{n}_{z_{2}}([\mathfrak{a}],W_{*}^{\dagger},\mathfrak{s}_{\omega},[\mathfrak{b}])n_{z_{3}}([\mathfrak{b}],\mathfrak{s}_{\xi},[\mathfrak{c}])e_{[\mathfrak{c}]}
\end{aligned}
\]

Notice that if we fix a critical point $[\mathfrak{c}]\in\mathfrak{C}^{o}(-Y,\mathfrak{s}_{\xi})$
we can consider the coefficient
\begin{align}
 & \sum_{[\mathfrak{a}]\in\mathfrak{C}^{o}(-Y')}\;\;\sum_{z_{1},z_{2}}m_{z_{1}}(Z_{Y',\xi'}^{+},\mathfrak{s}',[\mathfrak{a}])n_{z_{2}}([\mathfrak{a}],W_{*}^{\dagger},\mathfrak{s}_{\omega},[\mathfrak{c}])\label{sum naturality 1}\\
+ & \sum_{[\mathfrak{a}]\in\mathfrak{C}^{s}(-Y'),[\mathfrak{b}]\in\mathfrak{C}^{u}(-Y')}\;\;\sum_{z_{1},z_{2},z_{3}}m_{z_{1}}(Z_{Y',\xi'}^{+},\mathfrak{s}',[\mathfrak{a}])\bar{n}_{z_{2}}([\mathfrak{a}],\mathfrak{s}_{\xi'},[\mathfrak{b}])n_{z_{3}}([\mathfrak{b}],W_{*}^{\dagger},\mathfrak{s}_{\omega},[\mathfrak{c}])\nonumber \\
+ & \sum_{[\mathfrak{a}]\in\mathfrak{C}^{s}(-Y'),[\mathfrak{b}]\in\mathfrak{C}^{u}(-Y)}\;\;\sum_{z_{1},z_{2},z_{3}}m_{z_{1}}(Z_{Y',\xi'}^{+},\mathfrak{s}',[\mathfrak{a}])\bar{n}_{z_{2}}([\mathfrak{a}],W_{*}^{\dagger},\mathfrak{s}_{\omega},[\mathfrak{b}])n_{z_{3}}([\mathfrak{b}],\mathfrak{s}_{\xi},[\mathfrak{c}])\nonumber 
\end{align}
Similarly, for each critical point $[\mathfrak{c}]\in\mathfrak{C}^{s}(-Y,\mathfrak{s}_{\xi})$
, the coefficient of $e_{[\mathfrak{c}]}$ in 
\[
m_{s}^{o}c^{o}(\xi')+\bar{m}_{s}^{s}c^{s}(\xi')+m_{s}^{u}\bar{\partial}_{u}^{s}c^{s}(\xi')+\partial_{s}^{u}\bar{m}_{u}^{s}c^{s}(\xi')
\]
 is given by 
\begin{align}
 & \sum_{[\mathfrak{a}]\in\mathfrak{C}^{o}(-Y')}\;\;\sum_{z_{1},z_{2}}m_{z_{1}}(Z_{Y',\xi'}^{+},\mathfrak{s}',[\mathfrak{a}])n_{z_{2}}([\mathfrak{a}],W_{*}^{\dagger},\mathfrak{s}_{\omega},[\mathfrak{c}])\label{sum naturality 2}\\
+ & \sum_{[\mathfrak{a}]\in\mathfrak{C}^{s}(-Y')}\;\;\sum_{z_{1},z_{2}}m_{z}(Z_{Y',\xi'}^{+},\mathfrak{s}',[\mathfrak{a}])\bar{n}_{z_{2}}([\mathfrak{a}],W_{*}^{\dagger},\mathfrak{s}_{\omega},[\mathfrak{c}])\nonumber \\
+ & \sum_{[\mathfrak{a}]\in\mathfrak{C}^{s}(-Y'),[\mathfrak{b}]\in\mathfrak{C}^{u}(-Y')}\;\;\sum_{z_{1},z_{2},z_{3}}m_{z_{1}}(Z_{Y',\xi'}^{+},\mathfrak{s}',[\mathfrak{a}])\bar{n}_{z_{2}}([\mathfrak{a}],\mathfrak{s}_{\xi'},[\mathfrak{b}])n_{z_{3}}([\mathfrak{b}],W_{*}^{\dagger},\mathfrak{s}_{\omega},[\mathfrak{c}])\nonumber \\
+ & \sum_{[\mathfrak{a}]\in\mathfrak{C}^{s}(-Y'),[\mathfrak{b}]\in\mathfrak{C}^{u}(-Y)}\;\;\sum_{z_{1},z_{2},z_{3}}m_{z_{1}}(Z_{Y',\xi'}^{+},\mathfrak{s}',[\mathfrak{a}])\bar{n}_{z_{2}}([\mathfrak{a}],W_{*}^{\dagger},\mathfrak{s}_{\omega},[\mathfrak{b}])n_{z_{3}}([\mathfrak{b}],\mathfrak{s}_{\xi},[\mathfrak{c}])\nonumber 
\end{align}
Therefore, we want to show that up to a boundary term, $\sum_{z}m_{z}(Z_{Y,\xi}^{+},\mathfrak{s},[\mathfrak{c}])$
is equal to (\ref{sum naturality 1}) (if $[\mathfrak{c}]$ is irreducible)
or (\ref{sum naturality 2}) (if $[\mathfrak{c}]$ is boundary stable).

If there is any hope of showing the equality between these two quantities
we need to find a geometric interpretation to the sums (\ref{sum naturality 1}),
(\ref{sum naturality 2}). In order to do this we will consider the
Seiberg-Witten equations on a slightly more general scenario, one
that combines the construction of the contact invariant with the cobordism.
More precisely, we will study the Seiberg Witten equations on 
\[
W_{\xi',Y}^{+}=\left([1,\infty)\times Y'\right)\cup W^{\dagger}\cup\left(\mathbb{R}^{+}\times-Y\right)
\]
which are asymptotic on $[1,\infty)\times Y'$ to the canonical solution
coming from the contact structure $\xi'$ and asymptotic on $\mathbb{R}^{+}\times-Y$
to a critical point $[\mathfrak{c}]\in\check{C}_{*}(-Y,\mathfrak{s}_{\xi})$.
The moduli space of such solutions will naturally be denoted $\mathcal{M}(W_{\xi',Y}^{+},\mathfrak{s}_{\omega},[\mathfrak{c}])$
.

\begin{figure}[H]
\begin{centering}
\includegraphics[scale=0.5]{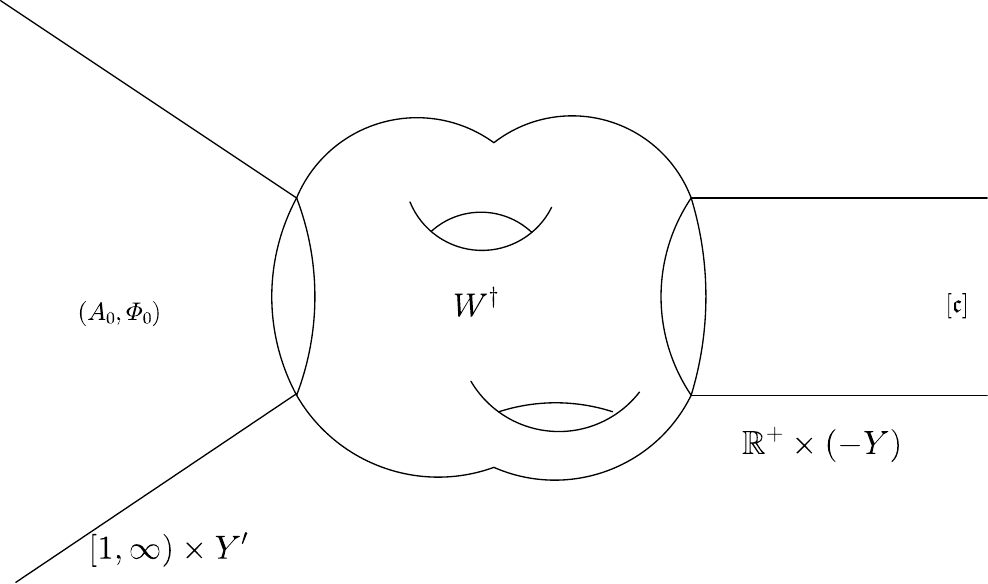}
\par\end{centering}
\caption{\label{fig:Hybrid invariant}Manifold $W_{\xi',Y}^{+}$ used to define
the ``hybrid'' invariant $\mathbf{c}(\xi',Y)$. }
\end{figure}

Thanks to the compactness arguments in \cite{MR1474156,MR2388043}
and \cite{Zhang[2016]} (which guarantee uniform exponential decay
along the conical end) we can proceed as before and define
\[
m_{z}(W_{\xi',Y}^{+},\mathfrak{s}_{\omega},[\mathfrak{c}])=\begin{cases}
|\mathcal{M}_{z}(W_{\xi',Y}^{+},\mathfrak{s}_{\omega},[\mathfrak{c}])|\;\;\;\mod2 & \text{if }\dim\mathcal{M}_{z}(W_{\xi',Y}^{+},\mathfrak{s}_{\omega},[\mathfrak{c}])=0\\
0 & \text{otherwise}
\end{cases}
\]
These numbers give rise to the hybrid invariant $\mathbf{c}(\xi',Y)$
mentioned at the beginning of this section. In order to show the equality
$\widecheck{HM}_{\bullet}(W^{\dagger},\mathfrak{s}_{\omega})\mathbf{c}(\xi')=\mathbf{c}(\xi',Y)$
we must consider the parametrized moduli space
\begin{equation}
\bigcup_{L\in[0,\infty)}\{L\}\times\mathcal{M}(W_{\xi',Y}^{+}(L),\mathfrak{s}_{\omega},[\mathfrak{c}])\label{parametrized}
\end{equation}
where $\mathcal{M}(W_{\xi',Y}^{+}(L),\mathfrak{s}_{\omega},[\mathfrak{c}])$
denotes the moduli space of solutions to the Seiberg-Witten equations
on the manifold 
\[
W_{\xi',Y}^{+}(L)=\left([1,\infty)\times Y'\right)\cup\left([0,L]\times-Y'\right)\cup W^{\dagger}\cup\left(\mathbb{R}^{+}\times-Y\right)
\]

\begin{figure}[H]
\begin{centering}
\includegraphics[scale=0.5]{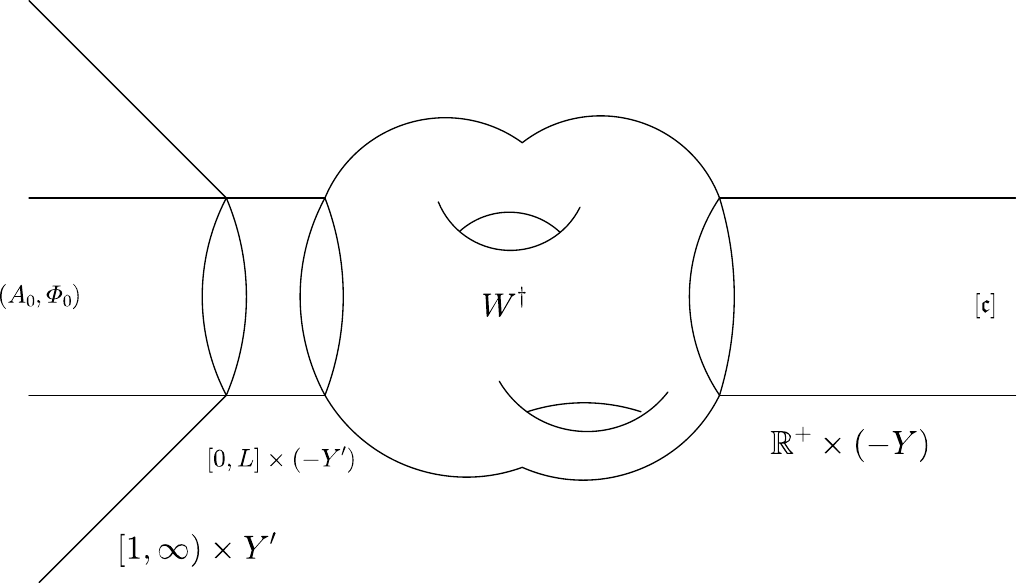} 
\par\end{centering}
\caption{$\check{HM}(W^{\dagger},\mathfrak{s}_{\omega})\mathbf{c}(\xi')=\mathbf{c}(\xi',Y)$
via a ``stretching the neck'' argument.}
\end{figure}

The parametrized moduli space (\ref{parametrized}) is not compact;
its compactification will be denoted 
\begin{equation}
\bigcup_{L\in[0,\infty]}\{L\}\times\mathcal{M}^{+}(W_{\xi',Y}^{+}(L),\mathfrak{s}_{\omega},[\mathfrak{c}])\label{parametrized compact}
\end{equation}
where the definition of $\mathcal{M}^{+}(W_{\xi',Y}^{+}(\infty),\mathfrak{s}_{\omega},[\mathfrak{c}])$
is given in (\ref{infinite stretch}).\textbf{ }For now, it suffices
to say that when we count the endpoints of all one dimensional moduli
spaces inside (\ref{parametrized compact}) we will get $0$. 

The count coming from the fiber over $L=0$ will give the term $\sum_{z}m_{z}(W_{\xi',Y}^{+},\mathfrak{s}_{\omega},[\mathfrak{c}])$
corresponding to the hybrid invariant $c(\xi',Y)$ while the count
coming from the fiber over $L=\infty$ will give the coefficients
(\ref{sum naturality 1}) and (\ref{sum naturality 2}) of the image
of $\check{m}c(\xi')$. Finally, the count coming from the other fibers
will contribute a boundary term (see Theorem (\ref{boundary parametrized})
for the precise statement). At the level of homology, this means that
$\widecheck{HM}_{\bullet}(W^{\dagger},\mathfrak{s}_{\omega})\mathbf{c}(\xi')=\mathbf{c}(\xi',Y)$
so at this point the naturality proof has been reduced to showing
that $\mathbf{c}(\xi',Y)=\mathbf{c}(\xi)$. Again, from the chain
level perspective this means that up to boundary terms, for each critical
point $[\mathfrak{c}]$ the numbers $\sum_{z}m_{z}(W_{\xi',Y}^{+},\mathfrak{s}_{\omega},[\mathfrak{c}])$
must equal $\sum_{z'}m_{z'}(Z_{Y,\xi}^{+},\mathfrak{s},[\mathfrak{c}])$. 

If one were to replace the half-cylindrical end $\mathbb{R}^{+}\times(-Y)$
with a compact piece $X$ so that we could work with numbers instead
of homology classes, the previous quantities would be the same due
to Theorem $D$ in \cite{MR2199446} (i.e, equation (\ref{Mrowka Rollin nat})
in our paper). Therefore, it becomes clear at this point that what
we need to do is adapt the Mrowka-Rollin theorem to the case in which
we have a half-infinite cylinder. 

Two things that change in this new setup are that certain inclusions
of Sobolev spaces are no longer compact, and in order to achieve transversality
(i.e, obtain unobstructed moduli spaces in the terminology of \cite{MR2199446})
one must use the ``abstract perturbations'' defined by Kronheimer
and Mrowka in \cite{MR2388043}. In particular, these perturbations
introduce new terms that do not appear in the usual linearizations
of the Seiberg-Witten equations, so for the gluing argument we will
employ one needs to check that the new contributions do not mess up
the desired behavior of the linearized Seiberg Witten equations. Namely,
we will see that the contributions have leading terms which are quadratic
in a appropriate sense. Had the leading term been linear, the gluing
argument would not have worked.

Our gluing argument and the proof of Theorem $D$ \cite{MR2199446}
morally follows the same basic ideas as the other gluing arguments
in gauge theory but as expected differs in the specific details (a
few references include \cite{MR2388043,MR1787219,MR2191904,MR1883043,MR1287851,MR1081321,MR2465077}).
Perhaps the most common gluing argument in gauge theory is the one
involving the ``\textit{stretching the neck}'' operation on a closed
oriented Riemannian 4 manifold $X$ which has a separating hypersurface
$Y$ inside it. 
\begin{center}
\begin{figure}[H]
\begin{centering}
\includegraphics[scale=0.3]{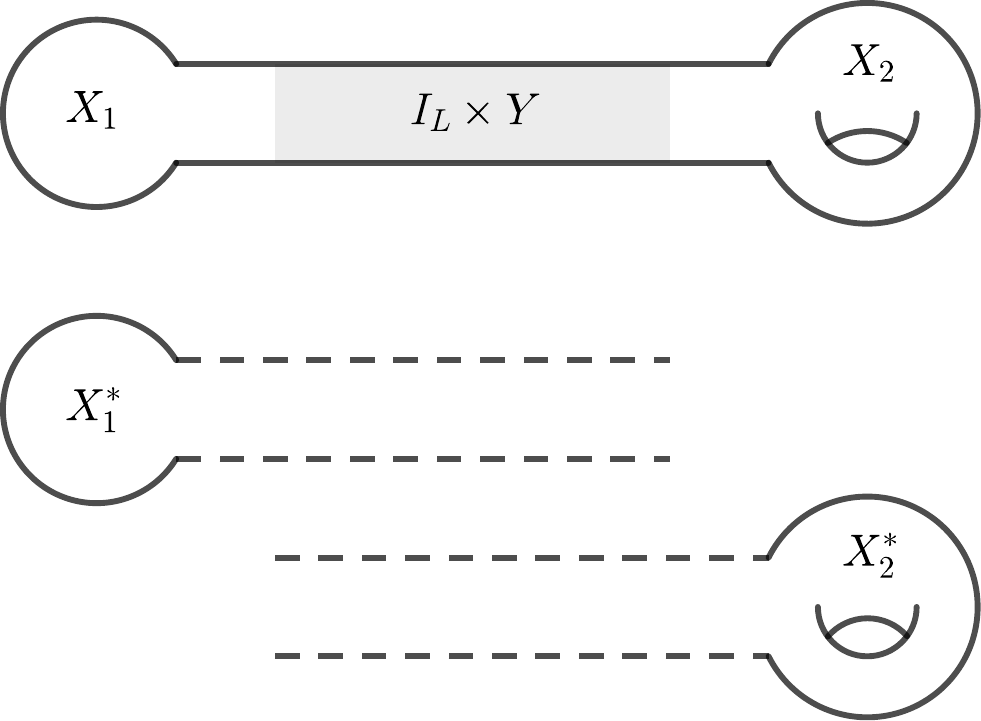}
\par\end{centering}
\caption{Gluing technique for the ``stretching the neck'' argument. }
\end{figure}
\par\end{center}

Namely, one writes $X$ as $X=X_{1}\cup X_{2}$ and after choosing
a metric which is cylindrical near $Y$ one can stretch the metric
along $Y$ in order to have a cylinder $I_{L}\times Y$ of length
$L$ inserted between $X_{1}$ and $X_{2}$ as shown in the picture.
The point is that as $L$ increases, the Seiberg Witten equations
on $X_{L}=X_{1}\cup(I_{L}\times Y)\cup X_{2}$ start behaving more
like the solutions on the manifolds with cylindrical ends $X_{1}^{*}$
and $X_{2}^{*}$. More precisely, one can start from solutions on
$X_{1}^{*}$ and $X_{2}^{*}$ which agree on their respective ends
in order to construct a pre-solution on $X_{L}$ , that is, a configuration
on $X_{L}$ which is a solution to the Seiberg Witten equations on
$X_{L}$, except perhaps for a region supported on $I_{L}\times Y$.
The main point of the gluing argument is that one can find an $L_{0}$
sufficiently large, so that for all $L$ bigger than $L_{0}$ we can
obtain an actual solution to the Seiberg Witten equations on $X_{L}$
thanks to an application of the implicit function theorem for Banach
spaces. In order for this to work it is imperative to have estimates
that become independent of $L$. 

Likewise, in our situation we want to take advantage of the fact that
for a strong symplectic cobordism the symplectic structure is given
near the boundary by the symplectization of the contact structure,
so that in analogy with the cylindrical case we can perform a ``\textit{dilating
the cone}'' operation, where now the key parameter is a dilation
parameter $\tau$ , which determines the size of the cone $C_{\tau}$
determined by the symplectization of the contact structure near the
boundary. As in the cylindrical case, the main idea is that once $\tau$
is sufficiently large, the moduli space of solutions to the Seiberg
Witten equations on the manifold shown below can be described in terms
of the moduli space used to define the contact invariant of $(Y,\xi)$.
Again, this will rely on an application of the implicit function theorem,
which requires guaranteeing that certain estimates become independent
of $\tau$ (once it becomes sufficiently large). As we will explain
near the end of the paper this gluing theorem will establish that
$\mathbf{c}(\xi',Y)=\mathbf{c}(\xi)$. 

\begin{figure}[H]
\begin{centering}
\includegraphics[scale=0.35]{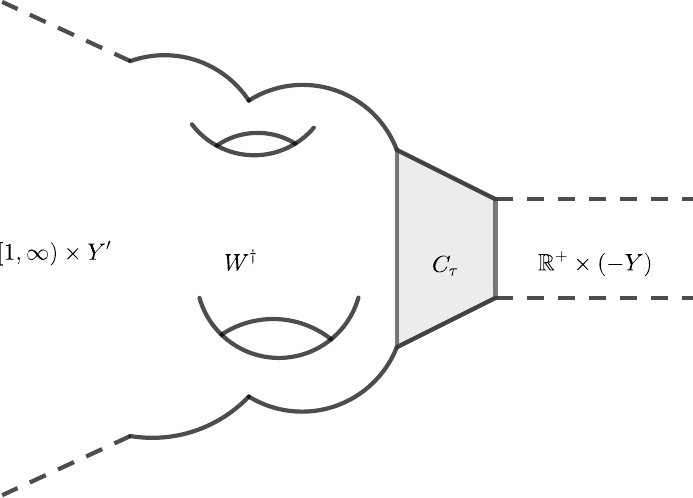}
\par\end{centering}
\caption{``Dilating the cone'' argument used to show that $\mathbf{c}(\xi',Y)=\mathbf{c}(\xi)$.}
\end{figure}

\section{3. Setting Up The Equations }

As explained in the previous section, we will analyze first the equations
on $W_{\xi',Y}^{+}$. In particular, we begin by stating some basic
geometric properties of the manifolds we are going to be working with.
\begin{assumption}
Suppose we have a closed oriented three manifold $Y$ with contact
structure $\xi$. We write $\xi=\ker\theta$ and choose the unique
Riemannian metric $g_{\theta}$ such that \cite[Section 2.3]{MR1474156}:

$\bullet$ The contact form $\theta$ has unit length.

$\bullet$ $d\theta=2*_{Y}\theta$, where $*_{Y}$ is the Hodge star
on $(Y,g_{\theta})$.

$\bullet$ If $J$ is a a choice of an almost complex structure on
$\xi$ then for any $v,w\in\xi$, $g_{\theta}(v,w)=d\theta(v,Jw)$.
\end{assumption}

The contact structure $\xi$ determines a canonical spin-c structure
$\mathfrak{s}_{\xi}$: \textcolor{black}{define the spinor bundle
$S$ as the rank-2 vector bundle $S=\underline{\mathbb{C}}\oplus\xi$
where $\underline{\mathbb{C}}$ is the trivial vector bundle and we
are considering $\xi$ as a complex line bundle. Moreover, there is
a Clifford map $\rho_{Y}:TY\rightarrow\hom(S,S)$ which identifies
$TY$ isometrically with the subbundle $\mathfrak{su}(S)$ of traceless,
skew-adjoint endomorphisms equipped with the inner product $\frac{1}{2}\text{tr}(a^{*}b)$}
\cite[Section 1.1]{MR2388043}\textcolor{black}{. Using $(Y,g_{\theta},\mathfrak{s}_{\xi})$
we can write the configuration space on which the Seiberg-Witten equations
are defined} \cite[Section 9.1]{MR2388043}\textcolor{black}{: }for
any integer or half integer $k\geq0$ define 
\[
\mathcal{C}_{k}(Y,\mathfrak{s}_{\xi})=(B_{ref},0)+L_{k}^{2}(M;iT^{*}Y\oplus S)=\mathcal{A}_{k}(Y,\mathfrak{s}_{\xi})\times L_{k}^{2}(Y;S)
\]
where $B_{ref}$ is a reference smooth connection on the spinor bundle
$S$ compatible with the Levi-Civita connection defined on $TY$ and
$\mathcal{A}_{k}(Y,\mathfrak{s}_{\xi})$ denotes the (affine) space
of spin-c connections of $S$ with Sobolev regularity $L_{k}^{2}$.
We will always assume whenever needed that $k\geq5$, but by elliptic
regularity the constructions end up being independent of $k$ because
one can always find a smooth representative in each gauge equivalence
class of solutions to the Seiberg-Witten equations so will not dwell
a lot on the actual value of $k$ being used. 

The gauge group $\mathcal{G}_{k+1}(Y)$ is 
\[
\mathcal{G}_{k+1}(Y)=\{u\in L_{k+1}^{2}(Y;\mathbb{C})\mid|u|=1\text{ pointwise}\}
\]
It acts on the configuration space via 
\[
u\cdot(B,\varPsi)=(B-u^{-1}du,u\varPsi)
\]
The action is not free at the \textbf{reducible configurations}, that
is, the configurations $(B,0)$ with the spinor component identically
zero. The stabilizer at those configurations consists of the constant
maps $u:Y\rightarrow S^{1}$ which we can identify with $S^{1}$.
To handle reducible configurations Kronheimer and Mrowka introduced
the \textbf{blown-up configuration space} \cite[Section 6.1]{MR2388043}
\[
\mathcal{C}_{k}^{\sigma}(Y,\mathfrak{s}_{\xi})=\{(B,s,\phi)\mid\|\phi\|_{L^{2}(Y)=1},s\geq0\}=\mathcal{A}_{k}(Y,\mathfrak{s}_{\xi})\times\mathbb{R}^{\geq}\times\mathbb{S}(L_{k}^{2}(Y;S))
\]
Here $\mathbb{S}(L_{k}^{2}(Y;S)$ denotes those elements $\phi$ in
$L_{k}^{2}(Y;S)$ whose $L^{2}$ norm (not $L_{k}^{2}$ norm!) is
equal to $1$. In this case the gauge action is 
\[
u\cdot(B,s,\phi)=(B-u^{-1}du,s,u\phi)
\]
and it is easy to check that the gauge group acts freely on this space.
In fact, Lemma 9.1.1 in \cite{MR2388043} shows that the space $\mathcal{C}_{k}^{\sigma}(Y,\mathfrak{s}_{\xi})$
is naturally a Hilbert manifold with boundary and when $k\geq1$,
the space $\mathcal{G}_{k+1}(Y)$ is a Hilbert Lie group which acts
smoothly and freely on $\mathcal{C}_{k}^{\sigma}(Y,\mathfrak{s}_{\xi})$.

We are interested in triples $(B,s,\phi)$ which satisfy a perturbed
version of the Seiberg-Witten equations. At this point the nature
of the perturbations is not that important. For now it suffices to
say that we will take them to be \textbf{strongly tame perturbations
}as in definition 3.6 of \cite{Zhang[2016]}. As a technical point
it is useful to note that the cylindrical functions constructed in
section 11.1 of \cite{MR2388043} are strongly tame perturbations
so the theorems from \cite{MR2388043} which used this class of perturbations
continue to work in this context. We will denote such a perturbation
by $\mathfrak{q}_{Y,g_{\theta},\mathfrak{s}_{\xi}}$. In general a
strongly tame perturbation $\mathfrak{q}$ can be regarded as a map
$\mathfrak{q}:\mathcal{C}_{k}(Y,\mathfrak{s}_{\xi})\rightarrow L_{k}^{2}(Y;iT^{*}Y\oplus S)$,
where one thinks of the codomain as a copy of the tangent space $T_{(B,\varPsi)}\mathcal{C}_{k}(Y,\mathfrak{s}_{\xi})$
for each configuration $(B,\varPsi)\in\mathcal{C}_{k}(Y,\mathfrak{s}_{\xi})$.
Since the codomain naturally splits one can write $\mathfrak{q}=(\mathfrak{q}^{0},\mathfrak{q}^{1})$
and in section 10.2 of \cite{MR2388043} it is explained how $\mathfrak{q}$
gives rise to a perturbation on the blown-up configuration space $\mathfrak{q}^{\sigma}=(\mathfrak{q}^{0},\hat{\mathfrak{q}}^{1,\sigma})$
(notice that only the second component is modified).

The corresponding equations $(B,s,\phi)$ satisfy are \cite[Section 10.3]{MR2388043}
\begin{equation}
\begin{cases}
\frac{1}{2}*F_{B^{t}}+s^{2}\rho_{Y}^{-1}(\phi\phi^{*})_{0}+\mathfrak{q}_{Y,g_{\theta},\mathfrak{s}_{\xi}}^{0}(B,s\phi)=0\\
\varLambda_{\mathfrak{q}_{Y,g_{\theta},\mathfrak{s}_{\xi}}}(B,s,\phi)s=0\\
D_{B}\phi-\varLambda_{\mathfrak{q}_{Y,g_{\theta},\mathfrak{s}_{\xi}}}(B,s,\phi)\phi+\tilde{\mathfrak{q}}_{Y,g_{\theta},\mathfrak{s}_{\xi}}^{1}(B,s,\phi)=0
\end{cases}\label{equations for critical points}
\end{equation}
where:

$\bullet$ $F_{B^{t}}$ denotes the curvature of the connection $B^{t}$
on $\det(S)$.

$\bullet$ $(\phi\phi^{*})_{0}$ denotes the trace-free part of the
hermitian endomorphism $\phi\phi^{*}$: $(\phi\phi^{*})_{0}=\phi\phi^{*}-\frac{1}{2}|\phi|^{2}1_{S}$.

$\bullet$ $D_{B}$ is the Dirac operator corresponding to the connection
$B$.

$\bullet$ $\varLambda_{\mathfrak{q}_{Y,g_{\theta},\mathfrak{s}_{\xi}}}(B,s,\phi)=\text{Re}\left\langle \phi,D_{B}\phi+\tilde{\mathfrak{q}}_{Y,g_{\theta},\mathfrak{s}_{\xi}}^{1}(B,s,\phi)\right\rangle _{L^{2}(Y)}$
and $\tilde{\mathfrak{q}}^{1}(B,r,\psi)=\int_{0}^{1}\mathcal{D}_{(B,sr\psi)}\mathfrak{q}^{1}(0,\psi)ds$
(here $\mathcal{D}$ denotes the linearization of the map $\mathfrak{q}^{1}$).

Using the equations (\ref{equations for critical points}) we can
distinguish three types of solutions (or critical points) $\mathfrak{c}=(B,s,\phi)$
\cite[Definition 4.4]{MR2299739}, the \textbf{irreducible critical
point, }the \textbf{boundary stable reducible critical point }and
the \textbf{boundary unstable reducible critical point. }What is important
about this classification for us is that solutions of the four dimensional
Seiberg Witten equations on $\mathbb{R}\times Y$ for which the spinor
does not vanish identically can only be asymptotic as $t\rightarrow\infty$
to irreducible critical points or boundary stable reducible critical
points. The gauge equivalence class of any of these points will be
denoted as $[\mathfrak{c}]$. 

The triple $(Y,g_{\theta},\mathfrak{s}_{\xi})$ induces a spin-c structure
on $(-Y,g_{\theta})$ given by the same spinor bundle $S_{\xi}$ and
changing the Clifford multiplication from $\rho_{\xi}$ to $-\rho_{\xi}$
\cite[Section 22.5]{MR2388043}. We will continue to denote this spin-c
structure by $\mathfrak{s}_{\xi}$. Given this structure we can use
the cylindrical metric and the spin-c structure induced by $-Y$ on
the cylinder $\mathbb{R}^{+}\times-Y$ \cite[Section 4.3]{MR2388043}.
We use the perturbation $-\mathfrak{q}_{Y,g_{\theta},\mathfrak{s}_{\xi}}$
on $-Y$. 

Consider now the manifold
\[
W_{\xi',Y}^{+}=\left([1,\infty)\times Y'\right)\cup W^{\dagger}\cup\left(\mathbb{R}^{+}\times-Y\right)
\]
We will define the appropriate geometric structures needed on each
piece together with the perturbations we will be using.

$\bullet$ On $\mathbb{R}^{+}\times-Y$, we use the cylindrical metric
and the canonical spin-c structure induced by $\mathfrak{s}_{\xi}$
on the cylinder. As explained on section 10.1 of \cite{MR2388043},
we have a four dimensional perturbation $-\hat{\mathfrak{q}}_{Y,g_{\theta},\mathfrak{s}_{\xi}}:\mathcal{C}_{k}(\mathbb{R}^{+}\times-Y,\mathfrak{s}_{\xi})\rightarrow L_{k}^{2}(\mathbb{R}^{+}\times-Y;iT^{*}(\mathbb{R}^{+}\times-Y)\oplus S)$
on the half-cylinder $\mathbb{R}^{+}\times-Y$, defined by restriction
to each slice.

$\bullet$ On $W^{\dagger}$ we choose a metric $g_{W}$ on $W^{\dagger}$
such that the metric $g_{W}$ is cylindrical in collar neighborhoods
of the boundary components. To define the perturbation on $W^{\dagger}$
we follow section 24.1 in \cite{MR2388043}. Since the Riemannian
metric is cylindrical in the neighborhood of the boundary it contains
on each boundary component an isometric copy of $I_{1}\times-Y$ and
$I_{2}\times Y'$ where $I_{1}=(-C_{1},0]$, $I_{2}=(-C_{2},0]$.
Since the argument is the same for both ends we will use generic notation.
Let $\beta$ be a cut-off function, equal to $1$ near $t=0$ and
equal to $0$ near $t=-C$. Let $\beta_{0}$ be a bump function with
compact support in $(-C,0)$, equal to one on a compact subset inside
$(-C,0)$, for example, the compact subset $\left[-C/2,-C/4\right]$.
Choose another perturbation $\mathfrak{p}_{0}$ of the three dimensional
equations and consider the perturbation 
\[
\hat{\mathfrak{p}}_{W}=\beta\hat{\mathfrak{q}}+\beta_{0}\hat{\mathfrak{p}}_{0}
\]
It is useful to note that the reason why we use two perturbations
is so that one can be varied when we use a transversality argument. 

$\bullet$ On $[1,\infty)\times Y'$ we assume that the metric is
cylindrical in a collar neighborhood $[1,C_{K})\times Y'$ and on
a complement of this neighborhood (like $N_{K}=[C_{K}+1,\infty)\times Y'$
for instance) it is given by the metric 
\[
g_{K,\theta'}=dt\otimes dt+t^{2}g_{\theta'}
\]
with symplectic form 
\[
\omega_{\theta'}=\frac{1}{2}d(t^{2}\theta')
\]
Here $K$ stands for Kahler, although in most cases the cone will
not be a Kahler manifold (in fact occurs only when $(Y,\xi)$ is a
Sasakian manifold \cite{MR2382957}. The form is self-dual with respect
to $g_{K,\theta'}$ and $|\omega_{\theta'}|_{g_{K,\theta'}}=\sqrt{2}$
pointwise. By Lemma 2.1 in \cite{MR1474156}, on the symplectic cone
we have a unit length section $\varPhi_{0}$ associated to the canonical
spinor bundle $S_{\omega_{\theta'}}$. For this section $\varPhi_{0}$
we have a corresponding connection $A_{0}$ such that $D_{A_{0}}\varPhi_{0}=0$.
Choose a smooth extension of $(A_{0},\varPhi_{0})$ to all of $W_{\xi',Y}^{+}$
in such a way that $(A_{0},\varPhi_{0})$ is translation invariant
on the cylindrical end $\mathbb{R}^{+}\times-Y$. Define 
\[
\mathfrak{p}_{K}=\left(-\frac{1}{2}\rho(F_{A_{0}^{t}}^{+})+(\varPhi_{0}\varPhi_{0}^{*})_{0},0\right)
\]
and choose a cutoff function $\beta_{K}$ which is supported on $N_{K}$
and identically equal to $1$ on $[C_{K}+2,\infty)$. Choose also
a cutoff function $\beta_{N_{K}}$ which is supported on $[1,C_{K})\times Y'$
and identically equal to $1$ near the boundary $\partial\left([1,C_{K})\times Y'\right)$. 

Our global perturbation will be 
\begin{equation}
\mathfrak{p}_{W_{\xi',Y}^{+}}=-\hat{\mathfrak{q}}_{Y,g_{\theta},\mathfrak{s}_{\xi}}+\left(\beta\hat{\mathfrak{q}}_{Y,g_{\theta},\mathfrak{s}_{\xi}}+\beta_{0}'\hat{\mathfrak{p}}_{0}\right)+\left(\beta_{0}'\hat{\mathfrak{p}}_{0}'+\beta'\hat{\mathfrak{q}}_{Y',g_{\theta'},\mathfrak{s}_{\xi'}}\right)+\left(\beta_{N_{K}}\hat{\mathfrak{q}}_{Y',g_{\theta'},\mathfrak{s}_{\xi'}}+\beta_{K}\mathfrak{p}_{K}\right)\label{eq:glued perturbation}
\end{equation}
where $\beta_{0}',\beta'$ are cutoff functions defined analogously
for the other cylindrical neighborhood $I_{2}\times Y'$.

In words the previous perturbation behaves as follows: if we start
on the cylindrical end $\mathbb{R}^{+}\times-Y$ we will see the translation
invariant perturbation $-\hat{\mathfrak{q}}_{Y,g_{\theta},\mathfrak{s}_{\xi}}$.
As we enter the cobordism through the boundary $-Y\subset W^{\dagger}$
(recall that $\partial W^{\dagger}=-Y\sqcup Y'$) this perturbation
is modified into a combined perturbation $\beta\hat{\mathfrak{q}}_{Y,g_{\theta},\mathfrak{s}_{\xi}}+\beta_{0}'\hat{\mathfrak{p}}_{0}$,
which is supported on a collar neighborhood of this end. After we
exit this collar neighborhood we will see no perturbations until we
reach again the collar neighborhood of the end $Y'\subset W^{\dagger}$
, where the perturbation is $\beta_{0}'\hat{\mathfrak{p}}_{0}'+\beta'\hat{\mathfrak{q}}_{Y',g_{\theta'},\mathfrak{s}_{\xi'}}$.
Finally, as we exit the cobordism we will see a perturbation identically
equal to $\hat{\mathfrak{q}}_{Y',g_{\theta'},\mathfrak{s}_{\xi'}}$
for a small time until it becomes zero again and then it will eventually
be changed into the perturbation identically equal to $\mathfrak{p}_{K}$.
We will explain the reason why the perturbations were chosen in this
way near the end of this section. 

Now we must define the corresponding configuration space that we want
to use in order to analyze the Seiberg-Witten equations. In general
one needs to define the ordinary configuration space and its blow-up
(see sections 13 and 24.2 of \cite{MR2388043} for some motivation
behind this construction). Due to the asymptotic condition we will
impose, our solutions will always be irreducible so the gauge group
action will be free without having to blow up the configuration space.
Therefore, most of the time we will simply use the ordinary configuration
space. However, if one wants to describe the compactification of the
moduli spaces in terms of the space of broken trajectories then the
blow up model is more convenient so for completeness sake we will
write the equations in the blow up model (but we will switch to the
ordinary configuration space when some computations become more transparent
there).

We are interested in the configurations that solve the following perturbed
version of the Seiberg-Witten equations:
\begin{equation}
\mathfrak{F}_{\mathfrak{p}}=\mathfrak{F}+\mathfrak{p}_{W_{\xi',Y}^{+}}=0\label{solutions perturbed}
\end{equation}
where the \textbf{unperturbed Seiberg Witten map} is \cite[Eq. 4.12]{MR2388043}
\[
\mathfrak{F}(A,\varPhi)=\left(\frac{1}{2}\rho(F_{A^{t}}^{+})-(\varPhi\varPhi^{*})_{0},D_{A}\varPhi\right)
\]

Both the perturbed and unperturbed maps are defined on elements of
the following configuration space (def. 3.5 in \cite{Zhang[2016]}
and def. 13.1 in \cite{MR2388043}):
\begin{defn}
\label{configuration space}Define the \textbf{configuration space}
(without blow-up) $\mathcal{C}_{k,loc}(W_{\xi',Y}^{+},\mathfrak{s}_{\omega})$
as follows. It will consist of pairs $(A,\varPhi)$ such that: 

1) $A$ is a locally $L_{k}^{2}$ spin-c connection for $S$ and $\varPhi$
is a locally $L_{k}^{2}$ section of $S^{+}$.

2) It is $L_{k}^{2}$ close to the canonical solution on the conical
end, that is,
\begin{align*}
A-A_{0}\in L_{k}^{2}([1,\infty)\times Y',iT^{*}([1,\infty)\times Y'))\\
\varPhi-\varPhi_{0}\in L_{k,A_{0}}^{2}([1,\infty)\times Y',S^{+})
\end{align*}
\end{defn}

\begin{rem}
a) Recall that we chose an extension of $A_{0}$ to the cylindrical
end in such a way that it was translation invariant so the condition
that $A$ is a locally $L_{k}^{2}$ spin-c connection means that $A-A_{0}\in L_{k,loc}^{2}(W_{\xi',Y}^{+};iT^{*}W_{\xi',Y}^{+})$.

b) Notice that the second condition implies that $\varPhi$ cannot
be identically $0$, i.e, $\mathcal{C}_{k,loc}(W_{\xi',Y}^{+},\mathfrak{s})$
contains no reducible configurations. In the notation of \cite{MR2388043},
we would write $\mathcal{C}_{k,loc}(W_{\xi',Y}^{+},\mathfrak{s})=\mathcal{C}_{k,loc}^{*}(W_{\xi',Y'}^{+},\mathfrak{s})$. 

c) Due to the lack of a norm the space $\mathcal{C}_{k,loc}(W_{\xi',Y}^{+},\mathfrak{s})$
is not a Banach space unless we impose some asymptotic condition on
the cylindrical end. 
\end{rem}

The \textbf{blown-up configuration} \textbf{space} $\mathcal{C}_{k,loc}^{\sigma}(W_{\xi',Y}^{+},\mathfrak{s}_{\omega})$
is defined as follows:
\begin{defn}
If $S$ denotes the spinor bundle, define the sphere $\mathbb{S}$
as the topological quotient of $L_{k,loc}^{2}(W_{\xi',Y}^{+};S^{+})\backslash0$\textbf{
}by the action of $\mathbb{R}^{+}$ \cite[Section 6.1]{MR2388043}.
The blown-up configuration space associated to $\mathcal{C}_{k,loc}(W_{\xi',Y}^{+},\mathfrak{s}_{\omega})$
is
\[
\mathcal{C}_{k,loc}^{\sigma}(W_{\xi',Y}^{+},\mathfrak{s}_{\omega})=\{(A,\mathbb{R}^{+}\phi,\varPhi)\mid\varPhi\in\mathbb{R}^{\geq0}\phi,\;\;\phi\in\mathbb{S}\text{ and }(A,\varPhi)\in\mathcal{C}_{k,loc}(W_{\xi',Y}^{+},\mathfrak{s}_{\omega})\}
\]
 
\end{defn}

Just as its blown-down version, $\mathcal{C}_{k,loc}^{\sigma}(W_{\xi',Y}^{+},\mathfrak{s}_{\omega})$
is not a Banach manifold, much less a Hilbert manifold, so we will
not try to find useful slices on this space. These slices would have
been ``orthogonal'' in some suitable sense to the gauge group action,
which we will take to be 
\begin{equation}
\mathcal{G}_{k+1}(W_{\xi',Y}^{+})=\{u:W_{\xi',Y}^{+}\rightarrow\mathbb{C}^{*}\mid|u|=1\text{ and }1-u\in L_{k+1}^{2}([1,\infty)\times Y')\}\label{definition gauge-1}
\end{equation}
where the action of $u\in\mathcal{G}_{k+1}$ on a triple $(A,\mathbb{R}^{+}\phi,\varPhi)\in\mathcal{C}_{k,loc}^{\sigma}(W_{\xi',Y}^{+},\mathfrak{s}_{\omega})$
is given by 
\begin{equation}
u\cdot(A,\mathbb{R}^{+}\phi,\varPhi)=(A-u^{-1}du,\mathbb{R}^{+}(u\phi),u\varPhi)\label{action gauge group}
\end{equation}

Using the Sobolev multiplication theorems on manifolds with bounded
geometry \cite[Chapter 1]{MR1175322} it is not difficult to verify
that $\mathcal{G}_{k+1}(W_{\xi',Y}^{+})$ is a Hilbert Lie group and
that the previous formula indeed gives an action on the configuration
space $\mathcal{C}_{k,loc}(W_{\xi',Y}^{+},\mathfrak{s}_{\omega})$,
that is:
\begin{lem}
\label{lem: Hilbert Group} Suppose that $k\geq4$. Then $\mathcal{G}_{k+1}(W_{\xi',Y}^{+})$
is a Hilbert Lie group. Moreover, the action of $\mathcal{G}_{k+1}(W_{\xi',Y}^{+})$
on $\mathcal{C}_{k,loc}(W_{\xi',Y}^{+},\mathfrak{s}_{\omega})$ is
well defined in that:

i) if $(A,\varPhi)\in\mathcal{C}_{k}(W_{\xi',Y}^{+},\mathfrak{s}_{\omega})$
and $u\in\mathcal{G}_{k+1}(W_{\xi',Y}^{+})$ then $u\cdot(A,\varPhi)\in\mathcal{C}_{k}(W_{\xi',Y}^{+},\mathfrak{s}_{\omega})$
and similarly, 

ii) if $u\cdot(A,\varPhi)=(\tilde{A},\tilde{\varPhi})$ for two configurations
$(A,\varPhi),(\tilde{A},\tilde{\varPhi})\in\mathcal{C}_{k}(W_{\xi',Y}^{+},\mathfrak{s}_{\omega})$
and $u$ is a $L_{k+1,loc}^{2}(W_{\xi',Y}^{+})$ gauge transformation,
then $1-u\in L_{k+1}^{2}([1,\infty)\times Y')$.
\end{lem}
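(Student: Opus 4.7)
The plan is to reduce all three claims to Sobolev multiplication and embedding on $W_{\xi',Y}^{+}$ (valid by the bounded-geometry machinery of \cite{MR1175322} cited in the excerpt) together with the pointwise identity $|\varPhi_{0}|\equiv 1$ on the conical end. Because $\dim W_{\xi',Y}^{+}=4$ and $k\geq 4$, the index $k+1$ comfortably exceeds $n/2=2$, so $L_{k+1}^{2}$ is a Banach algebra and $L_{k+1}^{2}\cdot L_{k}^{2}\subset L_{k}^{2}$ continuously. For the Hilbert Lie group structure I would parametrize a neighborhood of each $u_{0}\in\mathcal{G}_{k+1}$ by the exponential chart $\xi\mapsto u_{0}e^{i\xi}$ where $\xi$ is a real-valued function that is locally $L_{k+1}^{2}$ on $W_{\xi',Y}^{+}$ and whose restriction to $[1,\infty)\times Y'$ lies in $L_{k+1}^{2}$; the space of such $\xi$ is a real Hilbert space. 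The map $\xi\mapsto e^{i\xi}-1$ is smooth because its defining power series converges absolutely in the Sobolev norm by the Banach algebra property, and both group multiplication and inversion ($u^{-1}=\bar u$ since $|u|=1$) are then smooth in the chart variables for the same reason.

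For (i) it suffices to work on $[1,\infty)\times Y'$, since off the conical end neither the configuration nor the gauge element carries a decay requirement. Writing
\[
u\cdot A - A_{0} = (A-A_{0}) - \bar u\, du,
\]
the first term is $L_{k}^{2}$ by hypothesis, while $du = d(u-1)\in L_{k}^{2}$ and $\bar u = 1+\overline{(u-1)}$ with $\overline{(u-1)}\in L_{k+1}^{2}$, so the Sobolev product $\bar u\, du$ lies in $L_{k}^{2}$. For the spinor I would split
\[
u\varPhi - \varPhi_{0} = (u-1)\varPhi_{0} + (\varPhi-\varPhi_{0}) + (u-1)(\varPhi-\varPhi_{0}),
\]
where the first term is in $L_{k+1,A_{0}}^{2}$ because $\varPhi_{0}$ and its $A_{0}$-covariant derivatives are smooth with bounded norm on the cone, the second is the defining hypothesis, and the third is a Sobolev product of $L_{k+1}^{2}$ (scalar) with $L_{k,A_{0}}^{2}$ (spinor).

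For (ii) the task is to promote the a priori merely local regularity of $u$ to $L_{k+1}^{2}$ decay of $1-u$ on the conical end. Since $\varPhi-\varPhi_{0}\in L_{k,A_{0}}^{2}$ decays to zero at infinity and $|\varPhi_{0}|\equiv 1$, Sobolev embedding yields $|\varPhi|\geq 1/2$ outside some compact $K\subset[1,\infty)\times Y'$; on the complement the gauge-equivariance $u\varPhi=\tilde\varPhi$ gives the pointwise formula
\[
u-1 = \frac{\langle\tilde\varPhi-\varPhi,\varPhi\rangle}{|\varPhi|^{2}},
\]
which places $u-1$ in $L^{2}$. The higher regularity then comes from the connection side: $du = u(A-\tilde A)$ with $A-\tilde A\in L_{k}^{2}$ on the end, so an inductive application of $L_{s}^{2}\cdot L_{k}^{2}\subset L_{s}^{2}$ for $0\leq s\leq k$ upgrades $u-1\in L_{s}^{2}$ to $du\in L_{s}^{2}$ and hence $u-1\in L_{s+1}^{2}$, terminating at $s=k+1$.

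The step I expect to be the main obstacle is the initial one in (ii), namely promoting the pointwise inversion of $\varPhi$ to a genuine Sobolev estimate, because the cone metric $dt^{2}+t^{2}g_{\theta'}$ is not uniformly of bounded geometry in the naive sense. The standard workaround, exploited in \cite{MR1474156,Zhang[2016]}, is to substitute $t=e^{\tau}$ so the cone becomes a half-cylinder with a conformally rescaled metric of bounded geometry; in this model the Sobolev embeddings and multiplications invoked above apply uniformly and the bootstrap closes as sketched.
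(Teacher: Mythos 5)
The paper defers the proof of this lemma to the author's thesis (cited as Lemma 17 there), so there is no argument in the text to compare against; assessed on its own terms, your proof is sound. The three devices you use — the splitting of $u\cdot A - A_0$ and of $u\varPhi - \varPhi_0$ in (i), the pointwise inversion $u - 1 = \langle\tilde\varPhi - \varPhi, \varPhi\rangle/|\varPhi|^2$ on the set $\{|\varPhi|\geq\tfrac12\}$ together with the bootstrap $du = u(A - \tilde A)$ in (ii) — are exactly the steps that the paper's brief appeal to Sobolev multiplication on manifolds of bounded geometry \cite{MR1175322} is intended to support, and the Sobolev arithmetic ($k\geq 4$ so $L_{k+1}^2$ is a Banach algebra in dimension four and $L_{k+1}^2\cdot L_s^2\subset L_s^2$ for $0\leq s\leq k$) is right.

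Your closing concern, that the cone metric $dt^2 + t^2 g_{\theta'}$ fails to have bounded geometry so that a substitution $t=e^\tau$ is required, is misplaced: the sectional curvatures of the cone are $O(t^{-2})$, their covariant derivatives $O(t^{-2-j})$, and the injectivity radius grows linearly in $t$, so the glued manifold $W_{\xi',Y}^+$ already has bounded geometry in the classical sense and \cite{MR1175322} applies directly (the conformal change is a valid alternative, just unnecessary). The one sentence in your proof that deserves a second look is the claim that the space of infinitesimal gauge transformations you describe — locally $L_{k+1}^2$ everywhere, globally $L_{k+1}^2$ only on the conical end — is a real Hilbert space: there is no norm controlling behaviour on the cylindrical end $\mathbb{R}^+\times(-Y)$, so this space is a Fréchet space, and the Hilbert Lie group assertion has to carry the same qualification the paper already makes for $\mathcal{C}_{k,loc}$ in remark (c). This does not affect parts (i) and (ii), which only use the Banach-algebra structure of $L_{k+1}^2$ on the conical end and on compact pieces, and those parts of your argument are correct.
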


\begin{rem}
A proof of this lemma can be found in the author's thesis, \cite[Lemma 17]{Echeverria[Thesis]}.
\end{rem}

Therefore it makes sense to define 
\[
\mathcal{B}_{k,loc}^{\sigma}(W_{\xi',Y}^{+},\mathfrak{s}_{\omega})=\mathcal{C}_{k,loc}^{\sigma}(W_{\xi',Y}^{+},\mathfrak{s}_{\omega})/\mathcal{G}_{k+1}(W_{\xi',Y}^{+})
\]
Again, since the original space $\mathcal{C}_{k,loc}^{\sigma}(W_{\xi',Y}^{+},\mathfrak{s}_{\omega})$
is not a Banach manifold, we will not be interested in studying directly
$\mathcal{B}_{k,loc}^{\sigma}(W_{\xi',Y}^{+},\mathfrak{s}_{\omega})$,
although this is the space where the solutions to the Seiberg-Witten
equations live. 

To define the moduli space to the Seiberg-Witten equations, we need
to introduce the $\tau$ model first. Let 
\[
[\mathfrak{c}]\in\mathfrak{C}^{o}(-Y,g_{\theta},\mathfrak{s}_{\xi},-\mathfrak{q}_{Y,g_{\theta},\mathfrak{s}_{\xi}})\cup\mathfrak{C}^{s}(-Y,g_{\theta},\mathfrak{s}_{\xi},-\mathfrak{q}_{Y,g_{\theta},\mathfrak{s}_{\xi}})
\]
 be a critical point \cite[Proposition 12.2.5]{MR2388043} to the
blown -up three dimensional Seiberg Witten equations on $-Y$ (\ref{equations for critical points}).
Write $[\mathfrak{c}]=[(B,s,\phi)]$ and let $\mathfrak{c}=(B,s,\phi)$
be a smooth representative in $\mathcal{C}_{k}^{\sigma}(-Y,\mathfrak{s}_{\xi})$.
The critical point $\mathfrak{c}$ gives rise to a translation invariant
configuration $\gamma_{\mathfrak{c}}$ on the half-infinite cylinder
$\mathbb{R}^{+}\times-Y$. 
\begin{defn}
\label{tau model}Define on $\mathbb{R}^{+}\times-Y$ the $\tau$
model $\mathcal{C}_{k,loc}^{\tau}(\mathbb{R}^{+}\times-Y,\mathfrak{s}_{\xi},\mathfrak{c})$
associated to $\mathfrak{c}$ as the space of triples \cite[Section 13.3]{MR2388043}
\[
\gamma=(A,r(t),\phi(t))\in\mathcal{A}_{k,loc}(\mathbb{R}^{+}\times-Y,\mathfrak{s}_{\xi})\times L_{k,loc}^{2}(\mathbb{R}^{+};\mathbb{R})\times L_{k,loc}^{2}(\mathbb{R}^{+}\times-Y;S^{+})
\]
such that 

$i)$ $\gamma-\gamma_{\mathfrak{c}}\in L_{k,loc}^{2}(iT^{*}(\mathbb{R}^{+}\times-Y))\times L_{k,loc}^{2}(\mathbb{R}^{+};\mathbb{R})\times L_{k,loc}^{2}(\mathbb{R}^{+}\times-Y;S^{+})$,
i.e, $\gamma$ is $L_{k,loc}^{2}$ close to $\gamma_{\mathfrak{c}}$.

$ii)$ For all $t\in\mathbb{R}^{+}$, we have that $r(t)\geq0$. 

$iii)$ For all $t\in\mathbb{R}^{+}$ , we have that $\|\phi(t)\|_{L^{2}(-Y)}=1$,
i.e, on each slice the $L^{2}$ norm (not the $L_{k}^{2}$ norm) is
one.
\end{defn}

There is a natural restriction of the gauge group $\mathcal{G}_{k+1}(W_{\xi',Y}^{+})$
to $\mathbb{R}^{+}\times-Y$ which acts on $\mathcal{C}_{k,loc}^{\tau}(\mathbb{R}^{+}\times-Y,\mathfrak{s}_{\xi},\mathfrak{c})$
via 
\[
u\cdot(A,r(t),\phi(t))=(A-u^{-1}du,r(t),u\phi(t))
\]
The gauge equivalence classes of configurations under this gauge group
action will be denoted as 
\[
\mathcal{B}_{k,loc}^{\tau}(\mathbb{R}^{+}\times-Y,\mathfrak{s}_{\xi},[\mathfrak{c}])=\mathcal{C}_{k,loc}^{\tau}(\mathbb{R}^{+}\times-Y,\mathfrak{s}_{\xi},\mathfrak{c})/\mathcal{G}_{k+1,loc}(\mathbb{R}^{+}\times-Y)
\]

We will also use the \textbf{unique continuation principle, }which
will essentially allow us for the most part to avoid working with
the blow-up model. The versions most convenient to us are Proposition
7.1.4 and Proposition 10.8.1 in \cite{MR2388043}, which can still
be used in our context because the perturbation $\mathfrak{p}_{K}$
used on the conical (symplectic) end involves no spinor component.

These imply that if a solution of the perturbed Dirac equation vanishes
on a slice $\{t\}\times-Y$ of the cylindrical end $\mathbb{R}^{+}\times-Y$,
then it would have to vanish on the entire half-cylinder $\mathbb{R}^{+}\times-Y$
and then on the entire four manifold $W_{\xi',Y}^{+}$. However, since
we will be interested in solutions which are asymptotic on the conical
end to the spinor $\varPhi_{0}$ (which is non-vanishing), this cannot
be the case so we can safely conclude that no such solutions will
exist, that is, our spinor $\varPhi$ will never vanish on an open
set or a cylindrical slice. Thanks to this, the following definition
makes sense (compare with definition 24.2.1 of \cite{MR2388043}):
\begin{defn}
The moduli space $\mathcal{M}(W_{\xi',Y}^{+},\mathfrak{s}_{\omega},[\mathfrak{c}])$
for a critical point 
\[
[\mathfrak{c}]\in\mathfrak{C}^{o}(-Y,g_{\theta},\mathfrak{s}_{\xi},-\mathfrak{q}_{Y,g_{\theta},\mathfrak{s}_{\xi}})\cup\mathfrak{C}^{s}(-Y,g_{\theta},\mathfrak{s}_{\xi},-\mathfrak{q}_{Y,g_{\theta},\mathfrak{s}_{\xi}})
\]
 consists gauge equivalence classes of triples 
\[
[A,\mathbb{R}^{+}\phi,\varPhi]\in\mathcal{B}_{k,loc}^{\sigma}(W_{\xi',Y}^{+},\mathfrak{s}_{\omega})
\]
 such that: 

1) $(A,\mathbb{R}^{+}\phi,\varPhi)\in\mathcal{C}_{k,loc}^{\sigma}(W_{\xi',Y}^{+},\mathfrak{s}_{\omega})$
and $(A,\varPhi)$ satisfies the perturbed Seiberg-Witten equations
$\mathfrak{F}_{\mathfrak{p}}(A,\varPhi)=0$ on $W_{\xi',Y}^{+}$.
Here $\mathfrak{p}$ refers to the perturbation explained before equation
(\ref{solutions perturbed}).

2) Because of the unique continuation principle, $\varPhi$ can not
be identically zero on each of the cylindrical slices. Therefore we
can define for each $t$ \cite[Sections 6.2, 13.1]{MR2388043}:
\[
(r(t),\psi(t))=\left(\|\check{\varPhi}(t)\|_{L^{2}(-Y)},\frac{\check{\varPhi}(t)}{\|\check{\varPhi}(t)\|_{L^{2}(-Y)}}\right)
\]
 Also, if we decompose the covariant derivative $\nabla_{A}$ in the
$\frac{d}{dt}$ direction as 
\[
\nabla_{A,\frac{d}{dt}}=\frac{d}{dt}+a_{t}\otimes1_{S}
\]
we require that $\gamma=(A,r(t),\psi(t))$ be an element of $\mathcal{C}_{k,loc}^{\tau}(\mathbb{R}^{+}\times-Y,\mathfrak{s}_{\xi},\mathfrak{c})$
and that it solves the following Seiberg-Witten equations on the cylinder
\cite[Eq. 10.9]{MR2388043} 
\begin{align*}
\frac{1}{2}\frac{d}{dt}\check{A}^{t}=-\frac{1}{2}*_{-Y}F_{\check{A}^{t}}+da_{t}-r^{2}\rho^{-1}(\psi\psi^{*})_{0}-\mathfrak{q}^{0}(\check{A},r\psi)\\
\frac{d}{dt}r=-\varLambda_{\mathfrak{q}}(\check{A},r,\psi)r\\
\frac{d}{dt}\psi=-D_{\check{A}}\psi-a_{t}\psi-\tilde{\mathfrak{q}}^{1}(\check{A},r,\psi)+\varLambda_{\mathfrak{q}}(\check{A},r,\psi)\psi
\end{align*}
where $\check{A}(t)$ denotes the restriction of $A$ to the $t$
slice. Moreover, we require that the gauge equivalence class $[\gamma]$
of $\gamma$ be asymptotic as $t\rightarrow\infty$ to $[\mathfrak{c}]$
in the sense of Definition 13.1.1 in \cite{MR2388043}.
\end{defn}

The moduli space $\mathcal{M}(W_{\xi',Y}^{+},\mathfrak{s}_{\omega},[\mathfrak{c}])$
is naturally a subset of $\mathcal{B}_{k,loc}^{\sigma}(W_{\xi',Y}^{+},\mathfrak{s}_{\omega})$.
However, since the latter space is not in any natural way a Hilbert
manifold we will use a fiber product description of $\mathcal{M}(W_{\xi',Y}^{+},\mathfrak{s}_{\omega},[\mathfrak{c}])$
instead \cite[Lemma 24.2.2, Lemma 19.1.1]{MR2388043}. The idea is
that we can ``break'' the moduli space $\mathcal{M}(W_{\xi',Y}^{+},\mathfrak{s}_{\omega},[\mathfrak{c}])$
into three moduli spaces which we will show are Hilbert manifolds.
These moduli spaces are the moduli space on the cobordism $\mathcal{M}(W^{\dagger},\mathfrak{s}_{\omega})$,
the moduli space on the half cylinder $\mathcal{M}^{\tau}(\mathbb{R}^{+}\times-Y,\mathfrak{s}_{\xi})$
and the moduli space on the conical end $\mathcal{M}([1,\infty)\times Y',\mathfrak{s}')$.
We included a superscript $\tau$ for the second one to emphasize
that its definition uses the $\tau$ model, which described in definition
(\ref{tau model}). The fiber product description will then also allows
us to show that $\mathcal{M}(W_{\xi',Y}^{+},\mathfrak{s}_{\omega},[\mathfrak{c}])$
has a Hilbert manifold structure but in order to explain this we need
we first begin with a lemma.
\begin{lem}
The moduli spaces $\mathcal{M}(W^{\dagger},\mathfrak{s}_{\omega})$
and $\mathcal{M}^{\tau}(\mathbb{R}^{+}\times-Y,\mathfrak{s}_{\xi})$
are Hilbert manifolds.
\end{lem}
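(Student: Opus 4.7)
The plan is to handle the two moduli spaces separately, in each case following the standard recipe developed throughout Chapters 9, 13, 14 and 24 of \cite{MR2388043}: realize the configuration space as a Hilbert (sub)manifold, quotient by the gauge group via the Coulomb slice to obtain a Hilbert manifold, and then present the moduli space as the transverse zero locus of a smooth Fredholm section of a Hilbert bundle.

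For $\mathcal{M}(W^{\dagger},\mathfrak{s}_{\omega})$, the cobordism $W^{\dagger}$ is compact with cylindrical boundary collars, so the configuration space $\mathcal{C}_{k}(W^{\dagger},\mathfrak{s}_{\omega})$ is a Hilbert affine space and its blow-up $\mathcal{C}_{k}^{\sigma}(W^{\dagger},\mathfrak{s}_{\omega})$ is a Hilbert manifold with boundary (Section 9.3 of \cite{MR2388043}). The gauge group $\mathcal{G}_{k+1}(W^{\dagger})$ is a Hilbert Lie group acting smoothly and freely on the blow-up, and Coulomb slices exhibit $\mathcal{B}_{k}^{\sigma}(W^{\dagger},\mathfrak{s}_{\omega})$ locally as a Hilbert manifold. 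The perturbed Seiberg-Witten map is a smooth section of a Hilbert bundle over this quotient, and a Sard-Smale argument using the free parameter $\mathfrak{p}_{0}$ in (\ref{eq:glued perturbation}), which is supported in a compact interior region of the collar, arranges transversality to zero, exactly as in Proposition 24.4.7 of \cite{MR2388043}.

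For $\mathcal{M}^{\tau}(\mathbb{R}^{+}\times -Y,\mathfrak{s}_{\xi})$, for each critical point $[\mathfrak{c}]$ the $\tau$-model configuration space from Definition \ref{tau model} is a Hilbert manifold: it is an affine modification of $L^{2}_{k,loc}$-triples which are $L^{2}_{k,loc}$-close to $\gamma_{\mathfrak{c}}$, subject to the smooth constraints $r(t)\geq 0$ and $\|\phi(t)\|_{L^{2}(-Y)}=1$. The gauge group $\mathcal{G}_{k+1,loc}(\mathbb{R}^{+}\times -Y)$ acts smoothly and freely via the Coulomb slice. The 4-dimensional Seiberg-Witten map on the half-cylinder is smooth, and the Fredholm property of its linearization on appropriate exponentially weighted Sobolev spaces follows from non-degeneracy of $\mathfrak{c}$ as a critical point of the perturbed Chern-Simons-Dirac functional; this non-degeneracy is the role of the strongly tame perturbation $-\mathfrak{q}_{Y,g_{\theta},\mathfrak{s}_{\xi}}$, and the required Fredholm theory is Theorems 14.3.2 and 14.4.2 of \cite{MR2388043}. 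Transversality is then obtained by the standard Sard-Smale argument, and taking the disjoint union over $[\mathfrak{c}]$ yields $\mathcal{M}^{\tau}(\mathbb{R}^{+}\times -Y,\mathfrak{s}_{\xi})$ as a Hilbert manifold.

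The main obstacle lies in the second moduli space: establishing the Fredholm property of the linearized four-dimensional Seiberg-Witten operator on the half-infinite cylinder requires a careful weighted Sobolev analysis, an identification of the asymptotic operator with the extended Hessian of the Chern-Simons-Dirac functional at $\mathfrak{c}$, and additional care at boundary-stable and boundary-unstable critical points where the $\tau$-blow-up introduces a boundary stratum. All of the needed analytic machinery is already in place in \cite{MR2388043}; the work here is to verify that our specific perturbations and asymptotic model fall inside its hypotheses and to patch the half-cylindrical setting onto that framework.
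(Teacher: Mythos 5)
Your outline identifies most of the correct machinery (configuration spaces and their blow-ups, the gauge group as a Hilbert Lie group, Coulomb slices, Fredholm theory for the linearization), but the mechanism you invoke for transversality is not the right one, and the error matters for the content of the lemma.

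You write that a Sard--Smale argument with the free perturbation $\mathfrak{p}_0$ ``arranges transversality to zero, exactly as in Proposition 24.4.7 of \cite{MR2388043},'' and similarly for the half-cylinder you append a ``standard Sard--Smale argument'' after citing Theorem 14.4.2. Neither genericity step is needed, and neither is what makes these moduli spaces Hilbert manifolds. Both $\mathcal{M}(W^\dagger,\mathfrak{s}_\omega)$ and $\mathcal{M}^\tau(\mathbb{R}^+\times -Y,\mathfrak{s}_\xi)$ live on four-manifolds with a free (un-constrained) finite boundary, and for that reason the linearized operator $Q_{(A,\varPhi)}$ is surjective \emph{unconditionally}, not just generically. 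Surjectivity follows from unique continuation: a cokernel element $v$ solves $Q_{(A,\varPhi)}^{*}v=0$ in the interior, integration by parts against arbitrary variations forces its boundary restriction to vanish, and unique continuation then forces $v\equiv 0$. This is exactly the content of Proposition 24.3.1 of \cite{MR2388043} (for the compact cobordism $W^\dagger$) and Theorem 14.4.2 of \cite{MR2388043} (for the half-cylinder, adapted from $\mathbb{R}\times Y$), and those are the two references the paper uses for this lemma. Proposition 24.4.7 is a different statement: it is the Sard--Smale genericity result for the \emph{finite-dimensional} moduli spaces with fixed asymptotics that appear later as fiber products under the restriction maps, and it does not belong in the proof of this lemma. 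If your mechanism were correct, the lemma would only hold for a residual set of perturbations, a strictly weaker statement; in fact it holds for every admissible perturbation, which is what the fiber-product construction in the following sections relies on.
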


\begin{rem}
Section 4.1, 4.2 and 4.3 of the author's thesis \cite{Echeverria[Thesis]}
gives more details on the construction of these manifolds and the
ideas behind the proof of this lemma.
\end{rem}

\begin{proof}
The piece corresponding to the moduli space $\mathcal{M}(W^{\dagger},\mathfrak{s}_{\omega})$
is described in \cite[Proposition 24.3.1]{MR2388043} where it is
shown to be a Hilbert manifold.

Likewise, the second moduli space $\mathcal{M}^{\tau}(\mathbb{R}^{+}\times-Y,\mathfrak{s}_{\xi})$
is described in \cite[Theorem 14.4.2]{MR2388043} where it is shown
that it is a Hilbert manifold (strictly speaking they analyzed an
entire cylinder $\mathbb{R}\times Y$ rather than a half cylinder
$\mathbb{R}^{+}\times-Y$ but the analysis is essentially the same).
\end{proof}
We will start the next section showing that $\mathcal{M}([1,\infty)\times Y',\mathfrak{s}')$
is a Hilbert manifold, following the arguments in section 24 of \cite{MR2388043}.
At the end of the day, we obtain restriction (or trace) maps 
\begin{align*}
R_{\tau}:\mathcal{M}^{\tau}(\mathbb{R}^{+}\times-Y,\mathfrak{s}_{\xi},[\mathfrak{c}])\rightarrow\mathcal{B}_{k-1/2}^{\sigma}(Y,\mathfrak{s}_{\xi})\\
R_{W}^{-}:\mathcal{M}(W^{\dagger},\mathfrak{s}_{\omega})\rightarrow\mathcal{B}_{k-1/2}^{\sigma}(-Y,\mathfrak{s}_{\xi})\\
R_{W}^{+}:\mathcal{M}(W^{\dagger},\mathfrak{s}_{\omega})\rightarrow\mathcal{B}_{k-1/2}^{\sigma}(Y',\mathfrak{s}_{\xi'})\\
R_{K}:\mathcal{M}([1,\infty)\times Y',\mathfrak{s}')\rightarrow\mathcal{B}_{k-1/2}^{\sigma}(-Y',\mathfrak{s}_{\xi})
\end{align*}
given by restricting the (gauge equivalence class of a) solution to
the boundary of each of the corresponding manifolds. We should point
out that there is an identification between $\mathcal{B}_{k}^{\sigma}(-Y,\mathfrak{s})$
and $\mathcal{B}_{k}^{\sigma}(Y,\mathfrak{s})$ \cite[Section 22.5]{MR2388043}
and we can identify $\mathcal{M}(W_{\xi',Y}^{+},\mathfrak{s},[\mathfrak{c}])$
with the fiber product $\text{Fib}(R_{\tau},R_{W}^{-},R_{W}^{+},R_{K})$
given by 
\begin{equation}
\left\{ \left.\left([\gamma_{\mathbb{R}^{+}\times-Y}],[\gamma_{W}],[\gamma_{[1,\infty)\times Y'}]\right)\;\right|\;R_{\tau}[\gamma_{\mathbb{R}^{+}\times-Y}]=R_{W}^{-}[\gamma_{W}]\text{ and }R_{W}^{+}[\gamma_{W}]=R_{K}[\gamma_{[1,\infty)\times Y'}]\right\} \label{eq:Fiber Product}
\end{equation}
Now we can explain how to give $\mathcal{M}(W_{\xi',Y}^{+},\mathfrak{s},[\mathfrak{c}])$
a Hilbert manifold structure, which is stated precisely in Definition
\ref{def:regularity} and Theorem \ref{Theorem perturbations} of
the next section.

For convenience write $R=(R_{\tau},R_{W}^{-},R_{W}^{+},R_{K})$ and
suppose that 
\[
[\gamma]=\left([\gamma_{\mathbb{R}^{+}\times-Y}],[\gamma_{W}],[\gamma_{[1,\infty)\times Y'}]\right)\in\text{Fib}(R_{\tau},R_{W}^{-},R_{W}^{+},R_{K})
\]
 is such that the map $R$ is transverse at $([\mathfrak{b}],[\mathfrak{b}'])$,
where $[\mathfrak{b}]=R_{\tau}([\gamma_{\mathbb{R}^{+}\times-Y}])=R_{W}^{-}([\gamma_{W}])$
and $[\mathfrak{b}']=R_{W}^{+}([\gamma_{W}])=R_{K}([\gamma_{[1,\infty)\times Y'}])$.
In other words, we want the linearized map $\mathcal{D}_{[\gamma]}R$
to be Fredholm and surjective. If this can be achieved, then near
$[\gamma]$ the space $\text{Fib}(R_{\tau},R_{W}^{-},R_{W}^{+},R_{K})$
will have the structure of smooth manifold of dimension $\dim\ker\mathcal{D}_{[\gamma]}R$.
The Fredholm property is proven in Lemma (\ref{Lemma parametrix})
of our paper in the next section. The surjectivity of the map $\mathcal{D}_{[\gamma]}R$
may not be true for an arbitrary perturbation of the form described
in equation (\ref{eq:glued perturbation}) earlier, however, an application
of Sard's theorem shows that one can choose generic perturbations
such that the surjectivity is achieved as well. In fact, achieving
the surjectivity is essentially the same as the proof Kronheimer and
Mrowka gave for the case of a manifold $X^{*}$ with cylindrical ends
\cite[Proposition 24.4.7]{MR2388043}. By choosing a perturbation
from this generic set, one can then guarantee that 
\[
\text{Fib}(R_{\tau},R_{W}^{-},R_{W}^{+},R_{K})=\mathcal{M}(W_{\xi',Y}^{+},\mathfrak{s}_{\omega},[\mathfrak{c}])
\]
 has the structure of a smooth manifold (possibly disconnected with
components of different dimensions).

\section{4. Transversality and Fiber Products}

\subsection{4.1 The moduli space on the conical end $\mathcal{M}([1,\infty)\times Y',\mathfrak{s}')$:}

\ 

We want to regard $\mathcal{M}([1,\infty)\times Y',\mathfrak{s}')$
as a Hilbert submanifold of $\mathcal{B}_{k}([1,\infty)\times Y',\mathfrak{s}')$,
which will be a Hilbert manifold. Denote for simplicity $K_{Y'}=[1,\infty)\times Y'$
and define
\[
\begin{array}{c}
\mathcal{C}_{k}(K_{Y'},\mathfrak{s})=\{(A,\varPhi)\mid A-A_{0}\in L_{k}^{2}(iT^{*}K_{Y'})\;,\;\varPhi-\varPhi_{0}\in L_{k,A_{0}}^{2}(S^{+})\}\end{array}
\]
We take the gauge group to be 
\[
\mathcal{G}_{k+1}(K_{Y'})=\{u:K_{Y'}\rightarrow\mathbb{C}\mid|u|=1,\;\;u\in L_{k+1,loc}^{2}(K_{Y'})\;\;,1-u\in L_{k+1}^{2}(K_{Y'})
\]
Clearly $\mathcal{C}_{k}(K_{Y'},\mathfrak{s}')$ will be a Hilbert
manifold because of the $L_{k}^{2}$ asymptotic conditions. It is
also easy to see that $\mathcal{G}_{k+1}(K_{Y'})$ will be a Hilbert
Lie group. Therefore, to show that 
\[
\mathcal{B}_{k}(K_{Y'},\mathfrak{s}')=\mathcal{C}_{k}(K_{Y'},\mathfrak{s}')/\mathcal{G}_{k+1}(K_{Y'})
\]
is a Hilbert manifold we can use Lemma 9.3.2 in \cite{MR2388043}
which we quote for convenience:
\begin{lem}
\label{lem: slices-1}Suppose we have a Hilbert Lie group $G$ acting
smoothly and freely on a Hilbert manifold $C$ with Hausdorff quotient.
Suppose that at each $c\in C$, the map $d_{0}:T_{e}G\rightarrow T_{c}C$
(obtained from the derivative of the action) has closed range. Then
the quotient $C/G$ is also a Hilbert manifold.
\end{lem}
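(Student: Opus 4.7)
The plan is to construct local slices for the $G$-action and use them as Hilbert charts on $C/G$. Fix $c\in C$ and write $V_c = \mathrm{im}(d_0)\subset T_cC$ for the tangent space to the orbit through $c$. By hypothesis $V_c$ is closed, so the Hilbert space $T_cC$ admits a closed complement $S_c$ with $T_cC = V_c\oplus S_c$. Freeness of the action forces $\ker d_0 = 0$, and the open mapping theorem then upgrades the continuous bijection $d_0 : T_eG \to V_c$ to a topological isomorphism. Next I would build an embedded slice $\Sigma_c\subset C$, a Hilbert submanifold through $c$ with $T_c\Sigma_c = S_c$, produced by exponentiating $S_c$ in a Hilbert chart for $C$ near $c$. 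The action map
\[
\Phi : G \times \Sigma_c \longrightarrow C, \qquad (g,s)\mapsto g\cdot s,
\]
has derivative at $(e,c)$ equal to $(\xi,v)\mapsto d_0(\xi) + v$, a bounded isomorphism $T_eG \oplus S_c \to V_c\oplus S_c = T_cC$. The inverse function theorem for Hilbert manifolds then yields neighborhoods $U\ni e$ in $G$ and $\Sigma_c'\ni c$ in $\Sigma_c$ such that $\Phi$ restricts to a diffeomorphism from $U\times\Sigma_c'$ onto an open neighborhood of $c$ in $C$.

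The critical step is to shrink $\Sigma_c'$ further so that distinct points lie on distinct $G$-orbits, which is precisely where the Hausdorff hypothesis on $C/G$ enters. If no such shrinking existed, one could extract sequences $s_n, s_n'\in\Sigma_c$ with $s_n, s_n'\to c$, $s_n\ne s_n'$, and $g_n\in G$ with $g_n\cdot s_n = s_n'$. The case $g_n\in U$ eventually is excluded by the local diffeomorphism above, so $g_n\notin U$; but then $[s_n]\ne [s_n']$ in $C/G$ while both classes converge to $[c]$, contradicting Hausdorffness. With $\Sigma_c'$ shrunk to a genuine slice, the composition $\Sigma_c'\hookrightarrow C \to C/G$ is a homeomorphism onto an open subset of $C/G$ and supplies the desired chart. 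Smoothness of the transition maps between two such charts $\Sigma_c'$ and $\Sigma_{c''}'$ is immediate because, on overlaps, the change of coordinates factors as the restriction of $\Phi^{-1}$ followed by projection to the second component, both of which are smooth.

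The main obstacle is really this slice-injectivity step: without the Hausdorff hypothesis nothing prevents $G$-orbits from accumulating on a given slice and the chart construction collapses. The closed-range assumption, by contrast, is used only in the ``soft'' step of producing the complement $S_c$ and of ensuring that the linearized action has closed image, which is automatic once the orbit is recognized as a smooth immersed submanifold. In the gauge-theoretic applications that follow, closed range will come from Fredholm estimates on the relevant linearizations and Hausdorffness from standard Coulomb slice arguments, so Lemma \ref{lem: slices-1} will apply directly to the quotient $\mathcal{B}_{k}(K_{Y'},\mathfrak{s}')$.
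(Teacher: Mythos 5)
The paper does not actually supply a proof of this lemma; it quotes Lemma~9.3.2 of Kronheimer--Mrowka verbatim and immediately records the resulting slice description, so there is no ``paper's own proof'' to line up against. That said, your proof attempt is essentially the standard slice construction, and most of the preliminary steps are sound: closed range gives a closed complement $S_c$, freeness gives $\ker d_0 = 0$ (via the observation that a nonzero $\xi\in\ker d_0$ would make the fundamental vector field vanish at $c$, so the integral curve $t\mapsto\exp(t\xi)\cdot c$ would be constant, forcing $\exp(t\xi)\in G_c=\{e\}$), the open mapping theorem then makes $d_0:T_eG\to V_c$ a topological isomorphism, and the inverse function theorem applied to $\Phi(g,s)=g\cdot s$ produces the local diffeomorphism $U\times\Sigma_c'\to C$.

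The gap is in the slice-injectivity step, and it is a genuine logical error rather than a mere omission. You write that if $s_n\neq s_n'$ in $\Sigma_c'$ satisfy $g_n\cdot s_n=s_n'$ with $g_n\notin U$, then ``$[s_n]\neq[s_n']$ in $C/G$ while both classes converge to $[c]$, contradicting Hausdorffness.'' But $g_n\cdot s_n=s_n'$ means precisely that $s_n$ and $s_n'$ lie on the \emph{same} $G$-orbit, so $[s_n]=[s_n']$ for every $n$; there is no pair of distinct converging sequences in $C/G$, and hence no violation of the Hausdorff property to exploit. What Hausdorffness of $C/G$ actually encodes (since $\pi$ is an open map) is that the relation $R=\{(x,g\cdot x):x\in C,\,g\in G\}$ is closed in $C\times C$, and your sequence $(s_n,s_n')\to(c,c)$ with $(s_n,s_n')\in R$ and $(c,c)\in R$ is perfectly compatible with that closedness. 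So the ``contradiction'' never materializes. The real issue one must confront is the possibility that $g_n$ escapes every neighborhood of $e$ while $g_n\cdot s_n$ and $s_n$ both converge to $c$; ruling this out requires a more careful argument than the one you give (in many standard treatments this is exactly where a properness hypothesis or an equivalent estimate enters, and it is worth consulting Kronheimer--Mrowka's actual proof to see how they deploy the Hausdorff assumption at this point). As written, your proof establishes the existence of candidate slices and the local diffeomorphism structure, but it does not establish that the slice map $\Sigma_c'\to C/G$ is injective on a small neighborhood, which is exactly what is needed for the chart.
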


The Hilbert manifold structure is given as follows. If $S\subset C$
is any locally closed submanifold containing $c$, satisfying 
\[
T_{c}C=\text{im}(d_{0})\oplus T_{c}S
\]
 then the restriction of the quotient map $S\rightarrow C/G$ is a
diffeomorphism from a neighborhood of $c$ in $S$ to a neighborhood
of $Gc$ in $C/G$. Therefore, we need to verify first that $\mathcal{B}_{k}(K_{Y'},\mathfrak{s})$
is a Hausdorff space which is the content of the next lemma.
\begin{lem}
The quotient space $\mathcal{B}_{k}(K_{Y'},\mathfrak{s}')$ is Hausdorff. 
\end{lem}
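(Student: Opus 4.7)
The plan is to prove Hausdorffness by the standard closed-graph argument for gauge actions: if $\{(A_n,\Phi_n)\}\subset\mathcal{C}_k(K_{Y'},\mathfrak{s}')$ converges to $(A,\Phi)$ and the transformed configurations $u_n\cdot(A_n,\Phi_n)$ converge to $(A',\Phi')$ for some $u_n\in\mathcal{G}_{k+1}(K_{Y'})$, then (after passing to a subsequence) $u_n$ converges in $\mathcal{G}_{k+1}(K_{Y'})$ to some $u$ satisfying $u\cdot(A,\Phi)=(A',\Phi')$. Once this is established, the usual diagonal argument shows that two distinct orbits cannot share a limit point in $\mathcal{B}_k(K_{Y'},\mathfrak{s}')$.

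The gauge relation on connections gives $u_n^{-1}du_n=A_n-A'_n$, which is uniformly bounded in $L^2_k$ together with $|u_n|=1$. Elliptic bootstrap on geodesic balls of fixed radius yields uniform $L^2_{k+1,\mathrm{loc}}$ bounds, so Rellich plus a diagonal argument produces a subsequence converging to some unit-length $u$ in $L^2_{k+1}$ on every compact set. The critical step where the conical geometry is used is upgrading this local convergence to $1-u\in L^2_{k+1}(K_{Y'})$. Because $\Phi_0$ is pointwise nonvanishing on $K_{Y'}$ and $\Phi-\Phi_0,\Phi'-\Phi_0\in L^2_{k,A_0}$ are continuous and decaying (as $k\geq 5$), one finds a tail $[T,\infty)\times Y'$ on which $\Phi,\Phi',\Phi_n,\Phi'_n$ are all uniformly close to $\Phi_0$ and hence nowhere zero. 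On this tail the spinor relation $u_n\Phi_n=\Phi'_n$ inverts to
\begin{equation*}
u_n=\frac{\langle\Phi'_n,\Phi_n\rangle_{\mathbb{C}}}{|\Phi_n|^2},
\end{equation*}
and the Sobolev multiplication theorems on manifolds of bounded geometry from \cite{MR1175322} imply that $1-u_n$ converges in $L^2_{k+1}([T,\infty)\times Y')$ to $1-u=(\langle\Phi',\Phi\rangle_{\mathbb{C}}-|\Phi|^2)/|\Phi|^2$. The right-hand side lies in $L^2_{k+1}$ because $\Phi-\Phi_0$ and $\Phi'-\Phi_0$ do and $1/|\Phi|^2$ is bounded. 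Splicing the interior and tail estimates yields $u_n\to u$ in $\mathcal{G}_{k+1}(K_{Y'})$.

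The main obstacle is precisely the tail estimate: turning the pointwise formula for $u_n$ into genuine $L^2_{k+1}$ control of $1-u_n$, rather than some weaker decay. This is where the asymptotic condition $\Phi-\Phi_0\in L^2_{k,A_0}$ from Definition \ref{configuration space} and the nonvanishing of the canonical spinor $\Phi_0$ play essential roles, and it is why the argument does not work verbatim on an arbitrary cylindrical end. After this estimate is secured, the equality $u\cdot(A,\Phi)=(A',\Phi')$ follows from the continuity of the gauge action in Lemma \ref{lem: Hilbert Group} together with Sobolev multiplication, completing the proof.
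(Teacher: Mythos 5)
Your overall strategy matches the paper's: reduce Hausdorffness to showing that the gauge transformation $u$ relating two limiting configurations has the required decay along the conical end, and obtain that decay from the fact that both spinors approach the nowhere-vanishing canonical spinor $\varPhi_0$. The paper is terser — after producing $u_\infty\in L^2_{k+1,\mathrm{loc}}$ by exhaustion and Kronheimer–Mrowka's Proposition 9.3.1, it simply cites part (ii) of Lemma \ref{lem: Hilbert Group} for the global $L^2_{k+1}$ decay — whereas you unwind that cited lemma and re-derive the tail estimate explicitly. The pointwise inversion $u_n=\langle\varPhi'_n,\varPhi_n\rangle/|\varPhi_n|^2$ on the region where $|\varPhi_n|$ is bounded below is indeed the right mechanism.

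However, there is a gap in the last estimate. You assert that the right-hand side of $1-u=\langle\varPhi-\varPhi',\varPhi\rangle/|\varPhi|^2$ lies in $L^2_{k+1}([T,\infty)\times Y')$ "because $\varPhi-\varPhi_0$ and $\varPhi'-\varPhi_0$ do." But Definition \ref{configuration space} only places $\varPhi-\varPhi_0$ and $\varPhi'-\varPhi_0$ in $L^2_{k,A_0}$, not $L^2_{k+1}$. The spinor formula therefore gives $1-u\in L^2_k$ on the tail, one derivative short of the regularity required by the definition (\ref{definition gauge-1}) of $\mathcal{G}_{k+1}(K_{Y'})$. To close the gap you must also exploit the connection part of the gauge relation: writing $u=1+v$, the identity $du=u(A-A')$ becomes $dv=(1+v)(A-A')$ with $A-A'\in L^2_k$ and, from the spinor formula, $v\in L^2_k$ on the tail. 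Sobolev multiplication then gives $dv\in L^2_k$, and since $v$ is a scalar function this bootstraps $v$ to $L^2_{k+1}$. This is precisely the content hidden inside the cited Lemma \ref{lem: Hilbert Group}(ii), and without it your argument stops one Sobolev derivative too early.
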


\begin{proof}
We suppose that we have two gauge equivalent sequences $\gamma_{n}=(A_{n},\varPhi_{n})$
and $\tilde{\gamma}_{n}=(\tilde{A}_{n},\tilde{\varPhi}_{n})$ and
want to show that the limits $\gamma=(A_{\infty},\varPhi_{\infty})$
and $\tilde{\gamma}=(\tilde{A}_{\infty},\tilde{\varPhi}_{\infty})$
they converge to are gauge equivalent as well. By an exhaustion argument
and an application of Proposition 9.3.1 in \cite{MR2388043} we can
find a gauge transformation $u_{\infty}$ which establishes the gauge
equivalence, i.e $u_{\infty}\cdot\gamma=\tilde{\gamma}$. A priori
we only have $u_{\infty}\in L_{k+1,loc}^{2}(K_{Y'})$ and to show
that $1-u_{\infty}\in L_{k+1}^{2}(K_{Y'})$ we can now apply condition
$ii)$ in Lemma (\ref{lem: Hilbert Group}) of our paper.
\end{proof}
As is usually the case for Seiberg-Witten or Yang-Mills moduli spaces,
we do not want any random slice to the gauge group action. Rather,
we want to use the so-called Coulomb-Neumann slice \cite[Section 9.3]{MR2388043}.
A tangent vector to $\mathcal{C}_{k}(K_{Y'},\mathfrak{s}')$ at $\gamma=(A,\varPhi)$
can be written as 
\[
(a,\varPsi)\in L_{k}^{2}(K_{Y'},iT^{*}K_{Y'})\oplus L_{k,A_{0}}^{2}(K_{Y'},S^{+})
\]

while the derivative of the gauge group action is 
\begin{equation}
\begin{array}{c}
\mathbf{d}_{\gamma}:L_{k+1}^{2}(K_{Y'};i\mathbb{R})\rightarrow\mathcal{T}_{k}=L_{k}^{2}(K_{Y'},iT^{*}K_{Y'})\oplus L_{k,A_{0}}^{2}(K_{Y'},S^{+})\\
\\
\begin{array}{c}
\mathbf{d}_{(A,\varPhi)}\end{array}(\zeta)=(-d\zeta,\zeta\varPhi)
\end{array}\label{derivative gauge}
\end{equation}
We use the inner product 
\[
\left\langle (a_{1},\varPsi_{1}),(a_{2},\varPsi_{2})\right\rangle _{L^{2}}=\int\left\langle a_{1},a_{2}\right\rangle +\text{Re}\left\langle \varPsi_{1},\varPsi_{2}\right\rangle 
\]
to define the formal adjoint of $\mathbf{d}_{(A,\varPhi)}$, which
is given by \cite[Lemma 9.3.3]{MR2388043}
\begin{equation}
\mathbf{d}_{(A,\varPhi)}^{*}(a,\varPsi)=-d^{*}a+i\text{Re}\left\langle i\varPhi,\varPsi\right\rangle \label{eq:dual derivative gauge}
\end{equation}

To use Lemma (\ref{lem: slices-1}) we just need to show that $\mathbf{d}_{\gamma}$
has closed range. In order to this we will rely on Theorem 3.3 in
\cite{MR1474156} and Proposition 4.1 in \cite{Zhang[2016]}. 

First we need to define another map which will be used soon to show
that $\mathcal{M}([1,\infty)\times Y',\mathfrak{s}')$ is a Hilbert
manifold (for the proof of lemma \ref{Hilbert manifold}). The linearization
of the unperturbed Seiberg-Witten map is \cite[Eq. 8]{MR1474156}
\begin{align*}
\mathcal{D}_{(A,\varPhi)}\mathfrak{F}:L_{k}^{2}(K_{Y'},iT^{*}K_{Y'})\oplus L_{k,A_{0}}^{2}(K_{Y'},S^{+})\mapsto L_{k-1}^{2}(K_{Y'},i\mathfrak{su}(S^{+}))\oplus L_{k-1,A_{0}}^{2}(K_{Y'},S^{-})\\
(a,\varPsi)\mapsto\left(\rho(d^{+}a)-\{\varPhi\varPsi^{*}+\varPsi\varPhi^{*}\}_{0},D_{A}\varPsi+\rho(a)\varPhi\right)
\end{align*}
where
\[
\{\varPhi\varPsi^{*}+\varPsi\varPhi^{*}\}_{0}=\varPhi\varPsi^{*}+\varPsi\varPhi^{*}-\frac{1}{2}\left\langle \varPhi,\varPsi\right\rangle -\frac{1}{2}\left\langle \varPsi,\varPhi\right\rangle =\varPhi\varPsi^{*}+\varPsi\varPhi^{*}-\text{Re}\left\langle \varPhi,\varPsi\right\rangle 
\]
Define the elliptic operator (in \cite{Zhang[2016],MR2199446,MR1474156}
this is the operator $\mathcal{D}$)
\begin{align}
Q_{(A,\varPhi)}=\mathcal{D}_{(A,\varPhi)}\mathfrak{F}\oplus\mathbf{d}_{(A,\varPhi)}^{*}\label{derivative SW gauge-1}\\
(a,\varPsi)\rightarrow(\rho(d^{+}a)-\{\varPhi\varPsi^{*}+\varPsi\varPhi^{*}\}_{0},D_{A}\varPsi+\rho(a)\varPhi,-d^{*}a+i\text{Re}\left\langle i\varPhi,\varPsi\right\rangle )\nonumber 
\end{align}
We also want a formula for the formal adjoint: $Q_{(A,\varPhi)}^{*}$:
this is essentially eq. 24.10 in \cite{MR2388043}. Modulo notational
differences, we obtain
\begin{equation}
Q_{(A,\varPhi)}^{*}(\eta,\psi,\vartheta)=\left((d^{+})^{*}\rho^{*}\eta+\rho^{*}(\psi\varPhi^{*})-d\vartheta,D_{A}^{*}\psi-\eta\varPhi+\vartheta\varPhi\right)\label{dual derivative SW}
\end{equation}
In particular, taking $\eta=0$ and $\psi=0$ one obtains: 
\begin{equation}
Q_{(A,\varPhi)}^{*}(0,0,\vartheta)=(-d\vartheta,\vartheta\varPhi)=\mathbf{d}_{(A,\varPhi)}(\vartheta)\label{derivative gauge and Q}
\end{equation}
Now we are finally ready to prove that $\mathcal{B}_{k}(K_{Y'},\mathfrak{s})$
is Hausdorff. 
\begin{lem}
Define at a configuration $\gamma=(A,\varPhi)$ the subspaces
\[
\begin{array}{c}
\mathcal{K}_{k,\gamma}=\{(a,\varPsi)\mid\mathbf{d}_{(A,\varPhi)}^{*}(a,\varPsi)=0,\;\;\;\left\langle a\mid_{\partial K_{Y'}},n\right\rangle =0\;\;\;\text{at }\partial K_{Y'}\}\\
\\
\mathcal{J}_{k,\gamma}=\text{im}\;\;\mathbf{d}_{\gamma}
\end{array}
\]
As $\gamma$ varies over $\mathcal{C}_{k}(K_{Y'},\mathfrak{s})$,
the subspaces $\mathcal{J}_{k,\gamma}$ and $\mathcal{K}_{k,\gamma}$
define complementary closed sub-bundles of $\mathcal{T}_{k,\gamma}$
and we have a smooth decomposition
\begin{equation}
T\mathcal{C}_{k}(K_{Y'},\mathfrak{s})=\mathcal{J}_{k}\oplus\mathcal{K}_{k}\label{smooth decomposition}
\end{equation}
\end{lem}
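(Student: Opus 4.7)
The plan is to establish the decomposition pointwise at each $\gamma = (A,\varPhi)$ by solving an elliptic Neumann boundary value problem on the manifold with boundary and conical end $K_{Y'}$, and then to promote this to a smooth sub-bundle decomposition by appealing to the smooth dependence of the relevant inverse on $\gamma$. The key analytic input is that
\[
\mathbf{d}_{\gamma}^{*}\mathbf{d}_{\gamma}(\zeta) = d^{*}d\,\zeta + |\varPhi|^{2}\zeta,
\]
as follows from \eqref{derivative gauge} and \eqref{eq:dual derivative gauge} together with the identity $i\,\mathrm{Re}\langle i\varPhi,\zeta\varPhi\rangle = |\varPhi|^{2}\zeta$ for $\zeta\in i\mathbb{R}$. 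This is an elliptic operator which, because $\varPhi$ is $L^{2}_{k,A_{0}}$-asymptotic to the non-vanishing $\varPhi_{0}$, satisfies $|\varPhi|^{2}\geq c>0$ outside a compact set, exactly the setting in which the boundary elliptic theory of Theorem~3.3 of \cite{MR1474156} and Proposition~4.1 of \cite{Zhang[2016]} applies.

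First I would verify the transversality $\mathcal{J}_{k,\gamma}\cap\mathcal{K}_{k,\gamma}=0$. If $(a,\varPsi)=\mathbf{d}_{\gamma}(\zeta)=(-d\zeta,\zeta\varPhi)$ lies in $\mathcal{K}_{k,\gamma}$, then $\mathbf{d}_{\gamma}^{*}\mathbf{d}_{\gamma}\zeta=0$ and the Neumann condition $\langle a|_{\partial K_{Y'}},n\rangle=-\partial_{n}\zeta=0$ holds, so integration by parts yields
\[
0=\int_{K_{Y'}}|d\zeta|^{2}+|\varPhi|^{2}|\zeta|^{2},
\]
with no boundary contribution. Hence $d\zeta=0$ and $\zeta\varPhi=0$; since $\varPhi\neq 0$ on an open set of the conical end and $\zeta\in L^{2}_{k+1}$ decays at infinity, we conclude $\zeta\equiv0$.

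Next I would prove surjectivity $T_{\gamma}\mathcal{C}_{k}=\mathcal{J}_{k,\gamma}+\mathcal{K}_{k,\gamma}$. Given $(a,\varPsi)\in\mathcal{T}_{k}$, I seek $\zeta\in L^{2}_{k+1}(K_{Y'};i\mathbb{R})$ such that $(a,\varPsi)-\mathbf{d}_{\gamma}(\zeta)\in\mathcal{K}_{k,\gamma}$; the two defining conditions of $\mathcal{K}_{k,\gamma}$ translate into the Neumann boundary value problem
\[
(d^{*}d+|\varPhi|^{2})\zeta = \mathbf{d}_{\gamma}^{*}(a,\varPsi), \qquad \partial_{n}\zeta|_{\partial K_{Y'}}=-\langle a|_{\partial K_{Y'}},n\rangle.
\]
This is a formally self-adjoint, uniformly elliptic BVP with strictly positive zeroth-order term on the end, and it falls within the scope of the elliptic theory for manifolds with conical ends cited above; the solvability for arbitrary right-hand sides in the appropriate Sobolev spaces (together with $L^{2}_{k+1}$ regularity) is exactly what those results provide. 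Injectivity of the associated operator was shown in the previous step, so we obtain a unique $\zeta$, and the decomposition $(a,\varPsi)=\mathbf{d}_{\gamma}\zeta+\bigl[(a,\varPsi)-\mathbf{d}_{\gamma}\zeta\bigr]$ with the second summand in $\mathcal{K}_{k,\gamma}$ follows.

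Finally, for closedness and smoothness of the sub-bundle structure \eqref{smooth decomposition}, I would observe that the solution operator $\zeta=G_{\gamma}(a,\varPsi)$ is the inverse of an elliptic operator whose coefficients depend smoothly on $\gamma$ in the $L^{2}_{k}$ topology; by the standard stability of Fredholm inverses under smooth perturbation of coefficients (which is where the Sobolev multiplication theorems for manifolds of bounded geometry in \cite{MR1175322} are needed to control the $|\varPhi|^{2}$ term), $G_{\gamma}$ varies smoothly in $\gamma$, so the projectors $\gamma\mapsto\mathbf{d}_{\gamma}G_{\gamma}$ and $\gamma\mapsto \mathrm{id}-\mathbf{d}_{\gamma}G_{\gamma}$ onto $\mathcal{J}_{k,\gamma}$ and $\mathcal{K}_{k,\gamma}$ are smooth, exhibiting $\mathcal{J}_{k}$ and $\mathcal{K}_{k}$ as complementary smooth closed sub-bundles. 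The main obstacle is the non-compactness of $K_{Y'}$: one cannot invoke the usual compact-boundary elliptic Neumann theory, and one must rely on the asymptotic lower bound $|\varPhi|\geq c>0$ on the conical end to get a genuine bounded inverse; without this, $d^{*}d+|\varPhi|^{2}$ would fail to be invertible (the kernel would contain constants), and the whole decomposition would have to be adjusted by quotienting by that kernel, as is standard in the closed case.
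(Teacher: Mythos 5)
Your reduction is the same as the paper's: both you and the author boil the decomposition down to the invertibility of the Neumann ``Laplacian'' $\zeta\mapsto d^{*}d\zeta+|\varPhi|^{2}\zeta$ on $K_{Y'}$, as in Proposition 9.3.4 of Kronheimer--Mrowka, and both identify the asymptotic lower bound on $|\varPhi|$ from the conical end as the reason this operator has trivial kernel. Your integration-by-parts computation for injectivity is correct and is a nice piece of content the paper leaves implicit.

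The gap is in the surjectivity step. You assert that ``the solvability for arbitrary right-hand sides \dots is exactly what those results provide,'' citing Theorem 3.3 of \cite{MR1474156} and Proposition 4.1 of \cite{Zhang[2016]}, but neither of those is a statement about the scalar Neumann problem for $d^{*}d+|\varPhi|^{2}$: they are Fredholm theorems for the full linearized Seiberg--Witten operator $Q_{(A,\varPhi)}$. Injectivity does not give you surjectivity for free on the non-compact $K_{Y'}$ --- you first need Fredholmness of index zero (or a concrete inverse), and that is precisely what is missing. The paper supplies it by a parametrix splicing: the Neumann problem on a large compact piece of $K_{Y'}$ is handled by Proposition 9.3.4 of \cite{MR2388043}, the operator on the boundaryless conical end is handled by (an adaptation of) Lemma 2.3.2 of \cite{MR2199446}, and the two local inverses are glued with cutoff functions exactly as in Lemma~\ref{Lemma parametrix} and Theorem~\ref{thm invertibility triangle-1}. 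To make your proof complete you should either carry out that splicing argument or replace your citations by ones that actually establish Fredholmness of the scalar Neumann Laplacian on a manifold with boundary and conical end; as written, the surjectivity claim rests on misattributed references.
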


\begin{proof}

In order to show the smooth decomposition $T\mathcal{C}_{k}(K_{Y'},\mathfrak{s})=\mathcal{J}_{k}\oplus\mathcal{K}_{k}$
we can follow again the proof of Proposition 9.3.4 in \cite{MR2388043}
and reduce this to the invertibility of the ``Laplacian'' 
\begin{equation}
L_{k+1}^{2}(K_{Y'},iT^{*}K_{Y'})\ni\vartheta\mapsto\triangle\vartheta+|\varPhi|^{2}\vartheta\in L_{k-1}^{2}(K_{Y'},iT^{*}K_{Y'})\label{laplacian coulomb-1}
\end{equation}
This property can be proved using a parametrix argument (which is
essentially the same as Lemma \ref{Lemma parametrix} and Theorem
\ref{thm invertibility triangle-1} in this paper): choosing a compact
subset large enough for which $|\varPhi|^{2}$ is not identically
zero, one knows from Proposition 9.3.4 in \cite{MR2388043} that the
operator (\ref{laplacian coulomb-1}) is invertible. On the other
hand, Lemma 2.3.2 in \cite{MR2199446} says that on any four manifold
with conical end (like the manifold $X^{+}$ we just used), the operator
\ref{laplacian coulomb-1} is invertible. Notice that their lemma
requires a solution to the Seiberg Witten equations but this is only
because this section was trying to find uniform bounds (independent
of the solution used). At this stage this is not our concern so the
proof they give near the end of that section can be adapted to any
configuration. Therefore, we can splice these two inverses to get
an approximate inverse to \ref{laplacian coulomb-1} on our domain
of interest $K_{Y'}$. By choosing appropriate cutoff functions one
can then guarantee that \ref{laplacian coulomb-1} will be invertible
(again, the proof of Theorem \ref{thm invertibility triangle-1} provides
more details).

Finally, notice that $\mathcal{J}_{k}$ is a closed subspace since
it is the kernel of a continuous projection $T\mathcal{C}_{k}\rightarrow\mathcal{J}_{k}$.
\end{proof}

Continuing with our analysis of our moduli space, we will now show:
\begin{lem}
\label{Hilbert manifold}$\mathcal{M}(K_{Y'},\mathfrak{s}')$ is a
Hilbert submanifold of $\mathcal{B}_{k}(K_{Y'},\mathfrak{s}')$.
\end{lem}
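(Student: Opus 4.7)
The plan is to apply the implicit function theorem at each point $[\gamma] \in \mathcal{M}(K_{Y'},\mathfrak{s}')$, using the Coulomb--Neumann decomposition $T\mathcal{C}_{k}(K_{Y'},\mathfrak{s}') = \mathcal{J}_{k}\oplus\mathcal{K}_{k}$ that we just established. Near a representative $\gamma = (A,\Phi)$ of a solution, the slice $\gamma + \mathcal{K}_{k,\gamma}$ provides local coordinates on $\mathcal{B}_{k}(K_{Y'},\mathfrak{s}')$, and the moduli space is locally cut out of this slice by the zero set of the restricted map $\mathfrak{F}_{\mathfrak{p}}|_{\gamma + \mathcal{K}_{k,\gamma}}$. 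To conclude that this zero set is a smooth Hilbert submanifold it suffices to show that, for generic choices of the tame perturbations appearing in $\mathfrak{p}$, the linearization $\mathcal{D}_{\gamma}\mathfrak{F}_{\mathfrak{p}}\colon \mathcal{K}_{k,\gamma} \to L^{2}_{k-1}(i\mathfrak{su}(S^{+}))\oplus L^{2}_{k-1,A_{0}}(S^{-})$ is Fredholm and surjective.

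The Fredholm property is handled by assembling it, together with the gauge-fixing equation $\mathbf{d}_{\gamma}^{*} = 0$, into the combined elliptic operator $Q_{(A,\Phi)}$ introduced in (\ref{derivative SW gauge-1}); proving that $\mathcal{D}_{\gamma}\mathfrak{F}_{\mathfrak{p}}|_{\mathcal{K}_{k,\gamma}}$ is Fredholm is equivalent to proving that $Q_{(A,\Phi)}$ is. For this I would build a parametrix in exactly the style of Lemma \ref{Lemma parametrix}: on a large compact piece the standard interior elliptic theory of \cite{MR2388043} produces an approximate inverse, while on the conical end $N_{K}$ one invokes the invertibility of the corresponding Dirac--type operator established by Mrowka--Rollin (see Lemma 2.3.2 in \cite{MR2199446}), which crucially uses that the canonical configuration $(A_{0},\Phi_{0})$ has $|\Phi_{0}| \equiv 1$ and controlled curvature on the symplectic cone. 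Splicing the two parametrices with cutoff functions localized in the collar $[1,C_{K})\times Y'$ yields a true parametrix for $Q_{(A,\Phi)}$ modulo a compact error, hence $Q_{(A,\Phi)}$ is Fredholm, and by (\ref{derivative gauge and Q}) its adjoint already contains the image of $\mathbf{d}_{\gamma}$, so the restriction to the Coulomb slice inherits the Fredholm property with the same index.

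Surjectivity is obtained by a standard Sard--Smale argument applied to the space of strongly tame perturbations of the form (\ref{eq:glued perturbation}); one uses the fact that, away from the canonical end, the perturbation $\hat{\mathfrak{p}}_{0}$ may be varied freely inside a Banach space of admissible perturbations, and the vanishing theorems of \cite{MR1474156,Zhang[2016]} prevent the spinor from being identically zero, so unique continuation gives enough room to perturb out any cokernel. This is parallel to the argument in Proposition 24.4.7 of \cite{MR2388043} and yields a comeager set of perturbations for which $\mathcal{D}_{\gamma}\mathfrak{F}_{\mathfrak{p}}|_{\mathcal{K}_{k,\gamma}}$ is surjective at every solution $[\gamma]$. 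Fixing such a perturbation and applying the implicit function theorem in Hilbert spaces endows a neighborhood of $[\gamma]$ in $\mathcal{M}(K_{Y'},\mathfrak{s}')$ with the structure of a smooth manifold modelled on $\ker \mathcal{D}_{\gamma}\mathfrak{F}_{\mathfrak{p}}|_{\mathcal{K}_{k,\gamma}}$.

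The main obstacle is clearly the Fredholm step on the conical end: unlike a cylindrical end, where one has the Atiyah--Patodi--Singer machinery readily available, the warped metric $dt^{2}+t^{2}g_{\theta'}$ distorts Sobolev norms and one must rely on the special structure of $(A_{0},\Phi_{0})$ (uniform exponential decay of any near-solution toward it) to obtain the weighted estimates needed for invertibility at infinity. Everything else---Hausdorffness, slice existence, and the transversality perturbation argument---is formally parallel to the closed or cylindrical case once this analytic input is in place.
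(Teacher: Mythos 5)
Your plan has two structural misunderstandings of what the lemma actually asserts and how one proves it, and both would sink the argument as written.

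First, you claim that the linearization $\mathcal{D}_{\gamma}\mathfrak{F}_{\mathfrak{p}}|_{\mathcal{K}_{k,\gamma}}$ should be shown Fredholm, and that this is equivalent to $Q_{(A,\varPhi)}$ being Fredholm. On $K_{Y'}=[1,\infty)\times Y'$ this cannot be right: $K_{Y'}$ has a boundary component $\{1\}\times Y'$, and an elliptic operator on a manifold with boundary is never Fredholm without imposing a boundary condition. Correspondingly, $\mathcal{M}(K_{Y'},\mathfrak{s}')$ is an \emph{infinite-dimensional} Hilbert submanifold (its tangent space surjects Fredholmly onto the boundary Coulomb slice via $R_{K}$ in the fiber-product picture), so the kernel of the linearization is infinite-dimensional. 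What the paper actually proves is that $Q_{(A,\varPhi)}\oplus(\pi_{0}\circ r)$, the operator \emph{with Atiyah--Patodi--Singer spectral boundary conditions}, is Fredholm, which gives $Q_{(A,\varPhi)}$ \emph{closed range} but nothing more. Your parametrix splices an interior inverse to the conical-end inverse of Mrowka--Rollin, but neither piece sees the boundary $\{1\}\times Y'$; the paper's splicing in Lemma~\ref{Lemma patching} deliberately includes a third model, the half-infinite cylinder with APS conditions, precisely to handle this face.

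Second, the surjectivity of $Q_{(A,\varPhi)}$ is \emph{not} a genericity statement and is not obtained by Sard--Smale. The paper follows the proof of Corollary~17.1.5 of \cite{MR2388043}: once closed range is in hand (part (a)), one shows (part (b)) that any solution $v$ of $Q_{(A,\varPhi)}^{*}v=0$ with vanishing boundary restriction must vanish identically, because the adjoint equations have the evolution form $\frac{d}{dt}v+(L_{0}+h(t))v=0$ on the collar and the unique continuation principle applies. This kills the cokernel for \emph{every} configuration, not just for generic perturbations. A Sard--Smale argument cannot replace this: to apply it one would need the projection from a universal moduli space over perturbation space to be Fredholm, and that fails here because the fibers $\mathcal{M}(K_{Y'},\mathfrak{s}')$ are infinite-dimensional. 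The Sard--Smale machinery you invoke (Proposition~24.4.7 of \cite{MR2388043}, the paper's Theorem~\ref{Theorem perturbations}) belongs to the \emph{next} stage, cutting out the finite-dimensional fiber product of the three Hilbert-manifold pieces, where the relevant operator genuinely is Fredholm. The unconditional regularity on $K_{Y'}$ from unique continuation is exactly what allows that later genericity argument to be stated cleanly without running into a circular dependence.
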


\begin{proof}
We seek for an analogue of proposition 24.3.1 in \cite{MR2388043}.
The main point in that proof was to show that the operator $Q_{(A,\varPhi)}$
introduced before in (\ref{derivative gauge and Q}) is surjective.

To show surjectivity, the idea in the book was to apply Corollary
17.1.5 in \cite{MR2388043}. We will not use directly the corollary
but rather its proof.

a) First one needs to check that $Q_{(A,\varPhi)}:L_{k}^{2}(K_{Y'};iT^{*}K_{Y'}\oplus S^{+})\rightarrow L_{k-1}^{2}(K_{Y'};i\mathfrak{su}(S^{+})\oplus S^{-})$
has closed range. The proof in Corollary 17.1.5 for the closed range
property was based on Theorem 17.1.3 in \cite{MR2388043}, which showed
that on a compact manifold with boundary, the operator 
\begin{equation}
Q_{(A,\varPhi)}\oplus(\pi_{0}\circ r):L_{k}^{2}(X;iT^{*}K_{Y'}\oplus S^{+})\rightarrow L_{k-1}^{2}(X;i\mathfrak{su}(S^{+})\oplus S^{-})\label{Fredholm APS}
\end{equation}
 is Fredholm, where $X$ is denotes the compact manifold with boundary
and $\pi_{0}\circ r$ denotes Atiyah-Patodi-Singer spectral conditions.
If we can show that (\ref{Fredholm APS}) continues to be Fredholm
when $X$ is replaced with $K_{Y'}$ we would thus be done. But this
just follows from a parametrix argument, since we already know that
on a manifold without boundary and with a symplectic end the operator
$Q_{(A,\varPhi)}$ is Fredholm \cite[Theorem 3.3]{MR1474156}, while
on a compact manifold with boundary $Q_{(A,\varPhi)}\oplus(\pi_{0}\circ r)$
is Fredholm. Again, this type of parametrix argument can be found
in our proof of Lemma (\ref{Lemma patching}) for example.

b) The next step is to show that $Q_{(A,\varPhi)}^{*}$ has the property
that every non-zero solution of $Q_{(A,\varPhi)}^{*}v=0$ for $v=(\eta,\psi,\vartheta)$
has non-zero restriction to the boundary $\partial K_{Y'}$.

Using the equation (\ref{derivative gauge and Q}) for the adjoint
$Q_{(A,\varPhi)}^{*}$, we can see that the equation $Q_{(A,\varPhi)}^{*}(\eta,\psi,\vartheta)=0$
becomes in the coordinates $(a,\varPsi)$ of $Q_{(A,\varPhi)}$ (compare
with eq 24.10 in \cite{MR2388043})
\begin{align}
(d^{+})^{*}\rho^{*}\eta+\rho^{*}(\psi\varPhi^{*})-d\vartheta=0\nonumber \\
D_{A}^{*}\psi-\eta\varPhi+\vartheta\varPhi=0\label{calculation Q*}
\end{align}
As in eq. (24.15) of \cite{MR2388043}, the equations (\ref{calculation Q*})
have the shape
\[
\frac{d}{dt}v+(L_{0}+h(t))v=0
\]
where $L_{0}$ is a self-adjoint elliptic operator on $Y'$ and $h$
is a time dependent operator on $Y'$ satisfying the conditions of
the unique continuation lemma. Since $v$ vanishes on the boundary,
it vanishes on the collar too and therefore on the cone $K_{Y'}$.

With $a)$ and $b)$ the surjectivity of $Q_{(A,\varPhi)}$ follows
so the proof that the moduli space $\mathcal{M}([1,\infty)\times Y',\mathfrak{s}')$
is a Hilbert sub-manifold of $\mathcal{B}_{k}(K_{Y'},\mathfrak{s}')$
follows from our initial remarks.
\end{proof}
\begin{rem}
Notice that as in the other cases we have a restriction map
\[
R_{K}:\mathcal{M}([1,\infty)\times Y',\mathfrak{s}')\rightarrow\mathcal{B}_{k-1/2}^{\sigma}(-Y',\mathfrak{s}_{\xi'})
\]
\end{rem}

\subsection{4.2 Gluing the Moduli Spaces}

\ 

Now that we know that each moduli space appearing in the fiber product
description (\ref{eq:Fiber Product}) is a Hilbert manifold, we need
to show that their fiber product is a finite dimensional manifold,
possibly with components of different dimensions. As mentioned before,
we have the restrictions maps
\begin{align*}
R_{\tau}:\mathcal{M}^{\tau}(\mathbb{R}^{+}\times-Y,\mathfrak{s}_{\xi},[\mathfrak{c}])\rightarrow\mathcal{B}_{k-1/2}^{\sigma}(Y,\mathfrak{s}_{\xi})\\
R_{W}^{-}:\mathcal{M}(W^{\dagger},\mathfrak{s}_{\omega})\rightarrow\mathcal{B}_{k-1/2}^{\sigma}(-Y,\mathfrak{s}_{\xi})\\
R_{W}^{+}:\mathcal{M}(W^{\dagger},\mathfrak{s}_{\omega})\rightarrow\mathcal{B}_{k-1/2}^{\sigma}(Y',\mathfrak{s}_{\xi'})\\
R_{K}:\mathcal{M}([1,\infty)\times Y',\mathfrak{s}')\rightarrow\mathcal{B}_{k-1/2}^{\sigma}(-Y',\mathfrak{s}_{\xi'})
\end{align*}
If we write as before an element $[\gamma]\in\mathcal{M}(W_{\xi',Y}^{+},\mathfrak{s}_{\omega},[\mathfrak{c}])$
as 
\[
\text{Fib}(R_{\tau},R_{W}^{-},R_{W}^{+},R_{K})\ni[\gamma]=([\gamma_{\mathbb{R}^{+}\times-Y}],[\gamma_{W}],[\gamma_{[1,\infty)\times Y'}])
\]
and define 
\[
\begin{cases}
\mathcal{B}_{k-1/2}^{\sigma}(-Y,\mathfrak{s}_{\xi})\ni\mathfrak{b}=R_{W}^{-}\left(\gamma_{W}\right)\\
\mathcal{B}_{k-1/2}^{\sigma}(Y',\mathfrak{s}_{\xi'})\ni\mathfrak{b}'=R_{W}^{+}(\gamma_{W})
\end{cases}
\]
then the derivatives of our restriction maps can be written as 
\begin{align*}
\mathcal{D}R_{[\gamma_{\mathbb{R}^{+}\times-Y]}}^{\tau}:T_{[\gamma_{\mathbb{R}^{+}\times-Y]}}\mathcal{M}^{\tau}(\mathbb{R}^{+}\times-Y,\mathfrak{s}_{\xi},[\mathfrak{c}])\rightarrow\mathcal{K}_{k-1/2,\mathfrak{b}}^{\sigma}(Y,\mathfrak{s}_{\xi})\\
\mathcal{D}R_{W,[\gamma_{W}]}^{-}:T_{[\gamma_{W}]}\mathcal{M}(W^{\dagger},\mathfrak{s}_{\omega})\rightarrow\mathcal{K}_{k-1/2,\mathfrak{b}}^{\sigma}(-Y,\mathfrak{s}_{\xi})\\
\mathcal{D}R_{W,[\gamma_{W}]}^{+}:T_{[\gamma_{W}]}\mathcal{M}(W^{\dagger},\mathfrak{s}_{\omega})\rightarrow\mathcal{K}_{k-1/2,\mathfrak{b}'}^{\sigma}(Y',\mathfrak{s}_{\xi'})\\
\mathcal{D}R_{K,[\gamma_{[1,\infty)\times Y'}]}:T\mathcal{M}_{[\gamma_{[1,\infty)\times Y'}]}([1,\infty)\times Y',\mathfrak{s})\rightarrow\mathcal{K}_{k-1/2,\mathfrak{b}'}^{\sigma}(-Y',\mathfrak{s}_{\xi'})
\end{align*}
where the right hand side is the corresponding Couloumb slice at each
configuration $\mathfrak{b},\mathfrak{b}'$. The next definition is
the analogue of definition 24.4.2 in \cite{MR2388043}:
\begin{defn}
\label{def:regularity}Let $[\gamma]\in\mathcal{M}(W_{\xi',Y}^{+},\mathfrak{s}_{\omega},[\mathfrak{c}])$
and 
\[
\rho:\mathcal{M}(W_{\xi',Y}^{+},\mathfrak{s}_{\omega},[\mathfrak{c}])\rightarrow\mathcal{M}^{\tau}(\mathbb{R}^{+}\times-Y,\mathfrak{s}_{\xi},[\mathfrak{c}])\times\mathcal{M}(W^{\dagger},\mathfrak{s}_{\omega})\times\mathcal{M}([1,\infty)\times Y',\mathfrak{s}')
\]
 the restriction map. Write 
\[
\rho([\gamma])=([\gamma_{1}],[\gamma_{2}],[\gamma_{3}])=\left([\gamma_{\mathbb{R}^{+}\times-Y}],[\gamma_{W}],[\gamma_{[1,\infty)\times Y'}]\right)\in\text{Fib}(R_{\tau},R_{W}^{-},R_{W}^{+},R_{K})
\]
 and 
\begin{align*}
[\mathfrak{b}]=R_{\tau}([\gamma_{\mathbb{R}^{+}\times-Y}])=R_{W}^{-}\left([\gamma_{W}]\right)\in\mathcal{B}_{k-1/2}^{\sigma}(-Y,\mathfrak{s}_{\xi})\\{}
[\mathfrak{b}']=R_{W}^{+}([\gamma_{W}])=R_{K}([\gamma_{[1,\infty)\times Y'}])\in\mathcal{B}_{k-1/2}^{\sigma}(Y',\mathfrak{s}_{\xi'})
\end{align*}
We say that the moduli space $\mathcal{M}(W_{\xi',Y}^{+},\mathfrak{s}_{\omega},[\mathfrak{c}])$
is \textbf{regular} at $[\gamma]$ if the map
\[
R=\left(\left(R_{\tau},R_{W}^{-}\right),\left(R_{W}^{+},R_{K}\right)\right):\text{Fib}(R_{\tau},R_{W}^{-},R_{W}^{+},R_{K})\rightarrow\mathcal{B}_{k-1/2}^{\sigma}(-Y,\mathfrak{s}_{\xi})\times\mathcal{B}_{k-1/2}^{\sigma}(Y',\mathfrak{s}_{\xi'})
\]
is transverse at $\rho[\gamma]$. That is, $(R_{\tau},R_{W}^{-})$
is transverse at $[\mathfrak{b}]$ while $\left(R_{W}^{+},R_{K}\right)$
is transverse at $[\mathfrak{b}']$.
\end{defn}

Following the strategy in section 24.4 of \cite{MR2388043}, to show
regularity what we really need is an analogue of Lemma 24.4.1 (which
is our next lemma). The other pieces used by \cite{MR2388043} do
not change so we can conclude the following transversality result
(compare with Proposition 24.4.7 \cite{MR2388043}):
\begin{thm}
\label{Theorem perturbations}Let $\mathfrak{q}_{-Y},\mathfrak{q}_{Y'}$
be fixed perturbations for $-Y,Y'$ respectively such that for all
critical points $[\mathfrak{a}],[\mathfrak{b}]\in\mathcal{B}_{k-1/2}^{\sigma}(-Y,\mathfrak{s}_{\xi})$
and $[\mathfrak{a}'],[\mathfrak{b}']\in\mathcal{B}_{k-1/2}^{\sigma}(Y',\mathfrak{s}_{\xi})$,
the moduli spaces of trajectories $\mathcal{M}([\mathfrak{a}],\mathbb{R}\times-Y,\mathfrak{s}_{\xi},[\mathfrak{b}])$
and $\mathcal{M}([\mathfrak{a}'],\mathbb{R}\times Y',\mathfrak{s}_{\xi'},[\mathfrak{b}'])$
are cut out transversely. Then there is a residual subset $\mathcal{P}_{0}$
of the large space of perturbations $\mathcal{P}(-Y,\mathfrak{s}_{\xi})\times\mathcal{P}(Y',\mathfrak{s}_{\xi'})$
defined in section 11.6 of \cite{MR2388043} for which the following
holds: if for any $(\mathfrak{p}_{0},\mathfrak{p}_{0}')\in\mathcal{P}_{0}\subset\mathcal{P}(-Y,\mathfrak{s}_{\xi})\times\mathcal{P}(Y',\mathfrak{s}_{\xi'})$
one forms perturbation 
\[
\mathfrak{p}_{W_{\xi',Y}^{+}}=-\hat{\mathfrak{q}}_{Y,g_{\theta},\mathfrak{s}_{\xi}}+\left(\beta\hat{\mathfrak{q}}_{Y,g_{\theta},\mathfrak{s}_{\xi}}+\beta_{0}'\hat{\mathfrak{p}}_{0}\right)+\left(\beta_{0}'\hat{\mathfrak{p}}_{0}'+\beta'\hat{\mathfrak{q}}_{Y',g_{\theta'},\mathfrak{s}_{\xi'}}\right)+\left(\beta_{N_{K}}\hat{\mathfrak{q}}_{Y',g_{\theta'},\mathfrak{s}_{\xi'}}+\beta_{K}\mathfrak{p}_{K}\right)
\]
described in equation (\ref{eq:glued perturbation}) , then the moduli
space $\mathcal{M}(W_{\xi',Y}^{+},\mathfrak{s}_{\omega},[\mathfrak{c}],\mathfrak{p}_{W_{\xi',Y}^{+}})$
defined using the perturbation $\mathfrak{p}_{W_{\xi',Y}^{+}}$ is
regular, in other words, we have transversality at $\rho[\gamma]$
for all $[\gamma]\in\mathcal{M}(W_{\xi',Y}^{+},\mathfrak{s}_{\omega},[\mathfrak{c}],\mathfrak{p}_{W_{\xi',Y}^{+}})$.

In particular, for any perturbation belonging to this residual set,
the moduli space 
\[
\mathcal{M}(W_{\xi',Y}^{+},\mathfrak{s}_{\omega},[\mathfrak{c}],\mathfrak{p}_{W_{\xi',Y}^{+}})
\]
 will be a manifold whose components have dimensions equal to $\text{ind}\mathcal{D}_{\rho[\gamma]}R=\dim\ker\mathcal{D}_{\rho[\gamma]}R$. 
\end{thm}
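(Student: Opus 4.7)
The plan is to follow the Kronheimer--Mrowka strategy in Section 24.4 of \cite{MR2388043} and reduce the statement to a Sard--Smale argument applied to a parametrized (universal) moduli space. Concretely, I would enlarge the setup by allowing the perturbations $(\mathfrak{p}_{0},\mathfrak{p}_{0}')$, which are supported in the collar bump regions $[-C/2,-C/4]\times(-Y)$ and the analogous collar on the $Y'$ side, to vary over a separable Banach neighborhood $\mathcal{U}\subset \mathcal{P}(-Y,\mathfrak{s}_{\xi})\times\mathcal{P}(Y',\mathfrak{s}_{\xi'})$ of definable perturbations (section 11.6 of \cite{MR2388043}). The background pieces $-\hat{\mathfrak{q}}_{Y,g_{\theta},\mathfrak{s}_{\xi}}$, $\hat{\mathfrak{q}}_{Y',g_{\theta'},\mathfrak{s}_{\xi'}}$ and $\mathfrak{p}_K$ stay fixed, since we are given that the three-dimensional trajectory moduli spaces on $-Y$ and $Y'$ are already cut out transversely.

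Next I would form the universal fiber product
\[
\widetilde{\mathcal{M}}=\bigcup_{(\mathfrak{p}_0,\mathfrak{p}_0')\in\mathcal{U}}\{(\mathfrak{p}_0,\mathfrak{p}_0')\}\times\mathrm{Fib}(R_{\tau},R_{W}^{-},R_{W}^{+},R_{K})_{(\mathfrak{p}_0,\mathfrak{p}_0')},
\]
and prove it is a Banach manifold by checking that the universal restriction map
\[
\widetilde{R}=((R_{\tau},R_{W}^{-}),(R_{W}^{+},R_{K}))\colon \widetilde{\mathcal{M}}\longrightarrow \mathcal{B}_{k-1/2}^{\sigma}(-Y,\mathfrak{s}_{\xi})\times\mathcal{B}_{k-1/2}^{\sigma}(Y',\mathfrak{s}_{\xi'})
\]
is a submersion onto the diagonal; this is the analogue of Lemma 24.4.1 in \cite{MR2388043}. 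The Fredholm property of the linearization at a fixed perturbation is already in hand from Lemma \ref{Lemma parametrix}, and surjectivity onto the Coulomb slices at $\mathfrak{b}$ and $\mathfrak{b}'$ follows by the standard cokernel-vanishing argument: if a non-zero element $v$ of the $L^2$ cokernel of the fixed-perturbation linearization is $L^2$-orthogonal to the image of the derivative of the universal map with respect to $\mathfrak{p}_0$ (resp.\ $\mathfrak{p}_0'$), then $v$ must vanish on the collar bump region where $\beta_0',\beta_0''$ are supported. The unique continuation principle (Proposition 7.1.4 and Proposition 10.8.1 in \cite{MR2388043}, already invoked in the excerpt to handle the $\mathfrak{p}_K$ end) then propagates $v\equiv 0$ across the entire configuration on $W^{\dagger}$ and both ends, since $\varPhi$ is nowhere vanishing on cylindrical slices; this forces $v=0$, a contradiction.

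Once $\widetilde{\mathcal{M}}$ is known to be a Banach manifold, I apply Sard--Smale to the smooth projection $\pi\colon \widetilde{\mathcal{M}}\to\mathcal{U}$. The projection is Fredholm because $\widetilde{R}$ minus the diagonal is Fredholm fiberwise; the residual set $\mathcal{P}_0$ is then the set of regular values of $\pi$. For any $(\mathfrak{p}_0,\mathfrak{p}_0')\in\mathcal{P}_0$ the fiber $\mathcal{M}(W_{\xi',Y}^{+},\mathfrak{s}_{\omega},[\mathfrak{c}],\mathfrak{p}_{W_{\xi',Y}^{+}})$ is a smooth manifold, and transversality of $R$ at every $\rho[\gamma]$ is exactly the regularity in Definition \ref{def:regularity}. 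The dimension of each component equals $\mathrm{ind}\,\mathcal{D}_{\rho[\gamma]}R$ by the implicit function theorem in the Fredholm setting. Finally, taking a countable intersection over the (at most countable) set of asymptotic critical points $[\mathfrak{c}]$ and components $z$ preserves residuality, giving the full statement.

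The main obstacle is the cokernel-vanishing step: one has to arrange that the perturbations $(\mathfrak{p}_0,\mathfrak{p}_0')$, which are only localized on the collars, nevertheless suffice to kill any obstruction over the whole of $W_{\xi',Y}^{+}$, including the non-compact conical and cylindrical ends where one cannot freely vary perturbations. This is where one must combine (i)~the density statement from section 11.6 of \cite{MR2388043} that the collar-supported perturbations are rich enough to separate pairs of configurations pointwise, with (ii)~unique continuation on the (unperturbed on those regions) cylindrical and conical ends to propagate any putative cokernel element out of the collars and thence to zero. Every other step is standard once this is established.
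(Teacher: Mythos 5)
Your proposal is correct and follows essentially the same route as the paper: the paper's own argument consists of establishing the Fredholm property via Lemma~\ref{Lemma parametrix} (the analogue of Lemma~24.4.1 in~\cite{MR2388043}) and then invoking the unchanged Sard--Smale / universal-moduli-space / cokernel-vanishing machinery of Proposition~24.4.7 in~\cite{MR2388043}, which is precisely the strategy you spell out. Your extra detail on the cokernel-vanishing step (collar-supported perturbations plus unique continuation across the unperturbed cylindrical and conical ends) is exactly the content the paper delegates to Kronheimer--Mrowka with the remark that ``the other pieces \ldots\ do not change.''
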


Again, the proof of this theorem is a consequence of the following
lemma:
\begin{lem}
\label{Lemma parametrix}Let $[\gamma]\in\mathcal{M}(W_{\xi',Y}^{+},\mathfrak{s}_{\omega},[\mathfrak{c}])$
. Then the sum of the derivatives 
\begin{align*}
\mathcal{D}_{\rho[\gamma]}R=\left(\mathcal{D}_{[\gamma_{1}]}R_{\tau}+\mathcal{D}_{[\gamma_{2}]}R_{W}^{-}\right)\oplus\left(\mathcal{D}_{[\gamma_{2}]}R_{W}^{+}+\mathcal{D}_{[\gamma_{3}]}R_{K}\right):\\
T_{[\gamma_{1}]}\mathcal{M}^{\tau}(\mathbb{R}^{+}\times-Y,\mathfrak{s}_{\xi},[\mathfrak{c}])\oplus T_{[\gamma_{2}]}\mathcal{M}(W^{\dagger},\mathfrak{s}_{\omega})\oplus T_{[\gamma_{3}]}\mathcal{M}([1,\infty)\times Y',\mathfrak{s})\rightarrow\mathcal{K}_{k-1/2,\mathfrak{b}}^{\sigma}(-Y)\oplus\mathcal{K}_{k-1/2,\mathfrak{b}'}(Y')
\end{align*}
is a Fredholm map.
\end{lem}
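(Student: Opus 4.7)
The plan is to prove Fredholmness via a parametrix argument in the spirit of Lemma 24.4.1 of \cite{MR2388043} and Lemma 2.3.2 of \cite{MR2199446}, splicing together three locally Fredholm inverses, one for each geometric piece of $W_{\xi',Y}^{+}$.

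First I would reformulate the statement. The tangent spaces $T_{[\gamma_i]}$ are (modulo gauge) kernels of the linearized, gauge-fixed operator $Q_{(A,\varPhi)}$ introduced in equation (\ref{derivative SW gauge-1}) on each piece, with appropriate boundary/asymptotic behavior. The fiber product condition encoded by the restriction maps forces the representatives to match, up to gauge, on $\{0\}\times(-Y)$ and $\{0\}\times Y'$. A standard manipulation (see the analogue of Proposition 24.4.6 of \cite{MR2388043}) identifies the kernel and cokernel of $\mathcal{D}_{\rho[\gamma]}R$ with the kernel and cokernel of a single operator $\mathbf{Q}_\gamma$ on the whole manifold $W_{\xi',Y}^{+}$, namely the linearized Seiberg--Witten operator together with the Coulomb gauge-fixing condition, acting on sections in the appropriate global Sobolev space ($L_k^2$ on the cobordism and cylindrical end, $L_k^2$-asymptotic to $(A_0,\varPhi_0)$ on the conical end). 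Thus it suffices to prove that $\mathbf{Q}_\gamma$ is Fredholm.

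To prove that $\mathbf{Q}_\gamma$ is Fredholm I would build a parametrix by splicing. Choose smooth cutoff functions $\chi_1,\chi_2,\chi_3$ subordinate to a partition $W_{\xi',Y}^{+}=U_1\cup U_2\cup U_3$, where $U_1$ is a neighborhood of $\mathbb{R}^+\times(-Y)$, $U_2$ is a neighborhood of $W^\dagger$, and $U_3$ is a neighborhood of $[1,\infty)\times Y'$, each slightly enlarged so as to overlap its neighbors. On each $U_i$ there is an available right/left inverse modulo compact operators of $\mathbf{Q}_\gamma$:
\begin{itemize}
\item On $U_2$ (compact with boundary), the operator $Q_{(A,\varPhi)}\oplus(\pi_0\circ r)$ is Fredholm by Theorem 17.1.3 of \cite{MR2388043}.
\item On $U_1$ (half-infinite cylinder asymptotic to the critical point $[\mathfrak{c}]$), Fredholmness with APS-type boundary condition at $t=0$ follows from Chapters 14 and 17 of \cite{MR2388043}, using that $\mathfrak{c}$ is nondegenerate and hyperbolic (after the strongly tame perturbation).
\item On $U_3$ (conical symplectic end), Theorem 3.3 of \cite{MR1474156} (reproved in Proposition 4.1 of \cite{Zhang[2016]}) gives Fredholmness of $Q_{(A,\varPhi)}$ with the $(A_0,\varPhi_0)$-asymptotic boundary condition.
\end{itemize}
Denote the corresponding parametrices by $P_1,P_2,P_3$. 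Defining $P v = \sum_i \psi_i P_i(\chi_i v)$ for appropriately chosen bump functions $\psi_i$ with $\psi_i\equiv 1$ on $\operatorname{supp}\chi_i$, one computes
\[
\mathbf{Q}_\gamma P v - v = \sum_i [\mathbf{Q}_\gamma,\psi_i] P_i(\chi_i v) + \sum_i \psi_i K_i(\chi_i v),
\]
where $K_i$ is the compact error from the local parametrix. The commutator terms $[\mathbf{Q}_\gamma,\psi_i]$ are zeroth-order operators supported in compact annular regions of the transitions, and the product of these supports with the parametrix $P_i$ yields operators that factor through a compact Sobolev embedding. Hence the total error is compact, and the same argument on the other side shows $P$ is also an approximate left inverse. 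This gives the Fredholm property.

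The main obstacle, I expect, lies in the splicing across the transition to the conical end, where the natural function spaces are not the same as on the cobordism (in spirit one uses weighted spaces tailored to the cone geometry, while on $W^\dagger$ one has unweighted $L^2_k$). Care is needed to arrange that the cutoffs are chosen in a region where the canonical configuration $(A_0,\varPhi_0)$ has already taken over and the two Sobolev structures agree; this is exactly the issue addressed in \cite{MR2199446} and \cite{Zhang[2016]}. Once this technical point is handled, the remaining compactness assertions reduce to Rellich-type embeddings on compact pieces and to standard decay estimates on the cylindrical and conical ends, both of which are already in the literature.
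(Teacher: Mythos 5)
Your proposal takes a genuinely different route from the paper. The paper follows the fiber-product framework of Kronheimer--Mrowka's Chapter 24: it decomposes the boundary Coulomb slices $\mathcal{K}^{\sigma}_{k-1/2,\mathfrak{b}}$ and $\mathcal{K}^{\sigma}_{k-1/2,\mathfrak{b}'}$ via the spectral projections $\pi_{\mathfrak{b}},\pi_{\mathfrak{b}'}$ associated to the Hessians at the boundary configurations, then proves eight compactness/Fredholm assertions about the composites $\pi_{\mathfrak{b}}\circ\mathcal{D}R_{\bullet}$ and $(1-\pi_{\mathfrak{b}})\circ\mathcal{D}R_{\bullet}$, six of which are quoted directly from Proposition 24.3.2 and Lemma 24.4.1 of \cite{MR2388043}, and the remaining two (for the conical piece) established by a parametrix splicing that is restricted to the cone alone (the paper's Lemma patching). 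Fredholmness of $\mathcal{D}_{\rho[\gamma]}R$ then follows because the sum differs by a compact operator from a direct sum of Fredholm operators. Your approach instead proposes to identify $\mathcal{D}_{\rho[\gamma]}R$ with a single global linearized Seiberg--Witten operator $\mathbf{Q}_{\gamma}$ on $W_{\xi',Y}^{+}$ and build a global three-piece parametrix. That is a classical gluing strategy and would also yield the result, but it is not what the paper does.

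There are two places where your proposal is thinner than it needs to be. First, the reformulation step --- that the kernel and cokernel of $\mathcal{D}_{\rho[\gamma]}R$ can be identified with those of a global $\mathbf{Q}_{\gamma}$ --- is not a one-line invocation; it is precisely the comparison of indices between the boundary-matching operator and the glued operator that is usually proved by the kind of spectral-decomposition bookkeeping the paper carries out. In effect, you are assuming as input a statement logically on a par with the one being proved; you would need to spell out the snake-lemma-type argument exploiting surjectivity of the local $Q_i$ and the APS spectral conditions to make this step honest. Second, you correctly flag the transition to the conical end as the delicate point, but you leave it unresolved; the paper addresses exactly this by constructing a local parametrix for $Q_{(A,\varPhi)}\oplus(\Pi_0\circ r_{-Y'})$ over $K_{Y'}$ (splicing a parametrix from the closed-up manifold $X^{+}$ with one from the half-cylinder $Z$, each available from \cite{MR1474156} and Theorem 17.1.4 of \cite{MR2388043}). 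Your outline cites the same two sources for the local Fredholm ingredients, which is right; but as written, the proof sketches the gluing at the cone without actually specifying the function spaces and cutoffs that make the spliced error compact --- which is the whole content of the step and cannot be deferred. The paper's projection-based organization sidesteps the global reformulation by never needing to compare to a global operator: each composite $\pi\circ\mathcal{D}R$ is analyzed in place.
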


\begin{proof}
We will begin showing that the following maps are Fredholm and compact:
\begin{equation}
\begin{array}{ccccccccc}
\pi_{\mathfrak{b}}\circ\mathcal{D}_{[\gamma_{1}]}R_{\tau} &  & \text{is compact} & (1) &  & (1-\pi_{\mathfrak{b}})\circ\mathcal{D}_{[\gamma_{2}]}R_{W}^{-} & \text{is compact} & (5)\\
(1-\pi_{\mathfrak{b}})\circ\mathcal{D}_{[\gamma_{1}]}R_{\tau} &  & \text{is Fredholm} & (2) &  & \pi_{\mathfrak{b}}\circ\mathcal{D}_{[\gamma_{2}]}R_{W}^{-} & \text{is Fredholm} & (6)\\
\\
(1-\pi_{\mathfrak{b}'})\circ\mathcal{D}_{[\gamma_{2}]}R_{W}^{+} &  & \text{is compact} & (3) &  & \pi_{\mathfrak{b}'}\circ\mathcal{D}_{[\gamma_{3}]}R_{K} & \text{is compact} & (7)\\
\pi_{\mathfrak{b}'}\circ\mathcal{D}_{[\gamma_{2}]}R_{W}^{+} &  & \text{is Fredholm} & (4) &  & (1-\pi_{\mathfrak{b}'})\circ\mathcal{D}_{[\gamma_{3}]}R_{K} & \text{is Fredholm} & (8)
\end{array}\label{eight}
\end{equation}
Here $\pi_{\mathfrak{b}},\pi_{\mathfrak{b}'}$ are defined as follows
\cite[Sections 12.4, 17.3]{MR2388043}. We have a Hessian operator
$\text{Hess}_{\mathfrak{q}}^{\sigma}:\mathcal{K}_{k}^{\sigma}\rightarrow\mathcal{K}_{k-1}^{\sigma}$
obtained by projecting $\mathcal{D}(\text{grad}\cancel{\mathcal{L}})^{\sigma}$
onto the subspace $\mathcal{K}_{k-1}^{\sigma}$. The spectrum of $\text{Hess}_{\mathfrak{q}}^{\sigma}$
is real, discrete and with finite dimensional generalized eigenspaces.
If the operator is hyperbolic (that is, zero is not an eigenvalue)
we have a spectral decomposition 
\[
\mathcal{K}_{k-1/2,\mathfrak{b}}^{\sigma}=\mathcal{K}_{\mathfrak{b}}^{+}\oplus\mathcal{K}_{\mathfrak{b}}^{-}
\]
where $\mathcal{K}_{\mathfrak{b}}^{+}$ is the closure of the span
of the positive eigenspaces and $\mathcal{K}_{\mathfrak{b}}^{-}$
of the negative eigenspaces. In the non-hyperbolic case, we choose
$\epsilon$ sufficiently small that there are no eigenvalues in $(0,\epsilon)$
and then define $\mathcal{K}_{k-1/2,\mathfrak{b}}^{\pm}$ using the
spectral decomposition of the operator $\text{Hess}_{\mathfrak{q},\mathfrak{b}}^{\sigma}-\epsilon$.
The effect is that the generalized $0$ eigenspace belongs to $\mathcal{K}_{\mathfrak{b}}^{-}$. 

Also, notice that the roles of the different operators are sometimes
opposite because of the different orientations on the manifolds, namely
\begin{align*}
\mathcal{K}_{\mathfrak{b}}^{-}(-Y)=\mathcal{K}_{\mathfrak{b}}^{+}(Y)\\
\mathcal{K}_{\mathfrak{b}'}^{-}(-Y')=\mathcal{K}_{\mathfrak{b}'}^{+}(Y')
\end{align*}
We will now break the proof this lemma and the verification of the
previous eight identities into three mini lemmas.
\end{proof}
\begin{lem}
The identities $(1),(2),(3),(4),(5),(6)$ stated in Lemma (\ref{Lemma parametrix})
hold.
\end{lem}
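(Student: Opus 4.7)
The plan is to reduce each of the six identities to a previously established result in Kronheimer--Mrowka \cite{MR2388043}, since the pieces $R_{\tau}$, $R_{W}^{\pm}$ involve restriction maps from manifolds (a half cylinder and a compact cobordism) of types already analyzed in their book. The common principle is that when one linearizes the Seiberg--Witten equations on a four--manifold with cylindrical end or cylindrical collar, the trace at the boundary Coulomb slice, when composed with the spectral projections onto the positive/negative eigenspaces of the three--dimensional Hessian, factors into a Fredholm piece and a compact piece. The choice of which projection yields which depends on the direction of the cylindrical coordinate relative to the boundary orientation.

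For identities $(1)$ and $(2)$, I would work directly on the half--cylinder $\mathbb{R}^{+}\times -Y$ asymptotic to $[\mathfrak{c}]$. The linearized Seiberg--Witten--Coulomb operator on this half--cylinder takes the form $\tfrac{d}{dt}+L(t)$, where $L(t)$ is a time--dependent self--adjoint operator on $-Y$ converging as $t\to\infty$ to the Hessian $\mathrm{Hess}_{-\mathfrak{q}}^{\sigma}$ at $[\mathfrak{c}]$. The solutions of the linearized equation that lie in $L_{k,loc}^{2}$ and have the prescribed asymptotic decay are, by spectral theory for such operators (Lemma~17.3.1 and Proposition~14.5.5 in \cite{MR2388043}), parameterized up to finite--dimensional correction by their boundary values in $\mathcal{K}_{k-1/2,\mathfrak{b}}^{-}(-Y)=\mathcal{K}_{k-1/2,\mathfrak{b}}^{+}(Y)$, the modes that extend to growing modes into the half--cylinder. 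Hence projection onto the complementary subspace $\mathcal{K}_{k-1/2,\mathfrak{b}}^{+}(-Y)=\mathcal{K}_{k-1/2,\mathfrak{b}}^{-}(Y)$, along which all modes decay exponentially into $\mathbb{R}^{+}\times -Y$, is compact, giving $(1)$, while projection onto the remaining subspace is Fredholm, giving $(2)$.

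For identities $(3)$--$(6)$ I would invoke the Atiyah--Patodi--Singer theory for the compact cobordism $W^{\dagger}$ with its two cylindrical collar neighborhoods, as developed in Chapter~17 of \cite{MR2388043} (specifically Theorem~17.1.3 and Proposition~17.3.3) and applied to cobordism maps in the proof of Proposition~24.4.3. On each end separately the same spectral dichotomy holds, and the direction of the outward normal decides which sign is Fredholm and which is compact. For the end at $-Y\subset\partial W^{\dagger}$, the outgoing $t$--direction agrees with the cylindrical coordinate for $\mathbb{R}^{+}\times -Y$, so the Fredholm/compact assignment of $R_{W}^{-}$ is opposite to that of $R_{\tau}$, matching $(5)$ and $(6)$; for the end at $Y'\subset\partial W^{\dagger}$, the orientation reversal encoded in $\mathcal{K}_{\mathfrak{b}'}^{-}(-Y')=\mathcal{K}_{\mathfrak{b}'}^{+}(Y')$ gives $(3)$ and $(4)$. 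To pass from statements at a single configuration to statements along the full moduli spaces I would use that $\mathcal{M}^{\tau}(\mathbb{R}^{+}\times -Y,\mathfrak{s}_{\xi},[\mathfrak{c}])$ and $\mathcal{M}(W^{\dagger},\mathfrak{s}_{\omega})$ have already been shown to be Hilbert manifolds on which the restriction maps are smooth.

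The main obstacle is not analytic novelty but careful bookkeeping: one must track the orientation reversals $-Y$ vs.\ $Y$ and $-Y'$ vs.\ $Y'$, the various sign conventions entering the Hessian, and the perturbation data built into the definition of $\mathrm{Hess}_{\mathfrak{q}}^{\sigma}$ from the strongly tame perturbations $\mathfrak{q}$ on each end. In particular one must verify that the three--dimensional perturbations $\mathfrak{q}_{Y,g_{\theta},\mathfrak{s}_{\xi}}$ and $\mathfrak{q}_{Y',g_{\theta'},\mathfrak{s}_{\xi'}}$ used in our global perturbation $\mathfrak{p}_{W_{\xi',Y}^{+}}$ are exactly the ones for which the Hessian computations of Chapter~17 apply on the collar region, so that the Fredholm/compact decomposition obtained on the collar really does match the spectral projection at $[\mathfrak{b}]$ (resp.\ $[\mathfrak{b}']$). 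Once this is arranged, identities $(1)$--$(6)$ follow line by line from the corresponding statements in \cite{MR2388043}.
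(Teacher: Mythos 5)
Your proposal is correct and follows essentially the same approach as the paper, which simply cites Proposition~24.3.2 of \cite{MR2388043} for identities $(3)$--$(6)$ (the compact cobordism $W^{\dagger}$, allowed there to have disconnected boundary $-Y\sqcup Y'$) and Lemma~24.4.1 of \cite{MR2388043} for $(1)$--$(2)$ (the half-cylinder). Your more detailed tracing through the Chapter~17 spectral theory is a faithful account of what those two packaged results establish, and your identification of the orientation bookkeeping as the main point to verify is exactly right.
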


\begin{proof}
By Proposition 24.3.2 in \cite{MR2388043}, $(3),(4),(5),(6)$ are
true (remember that in this section of the book the boundary is the
compact four manifold is allowed to be disconnected. In our case the
boundary is simply $-Y\cup Y'$).

By the discussion in Lemma 24.4.1 in \cite{MR2388043}, $(1)$ and
$(2)$ are true.
\end{proof}
Now we turn to verifying \textbf{$(7)$} and $(8)$. To explain what
we need to do we will chase through some theorems of \cite{MR2388043}
\cite[Proposition 2.18, Lemma 3.17]{LinJ[2016]}. It is also useful
to observe that $(7)$ will be true because the proof in \cite{MR2388043}
is essentially a local argument near the boundary.
\begin{lem}
\label{Lemma patching}The identities $(7),(8)$ stated in Lemma (\ref{Lemma parametrix})
hold.
\end{lem}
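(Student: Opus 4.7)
The plan is to establish $(7)$ and $(8)$ by adapting the techniques of Kronheimer--Mrowka for the cylindrical case to the present setting in which the far end of $K_{Y'}$ is a symplectic cone. As flagged by the paper, $(7)$ is a purely local statement near the compact boundary $\{1\}\times Y'$, while $(8)$ requires new global input about the Fredholm theory on manifolds with symplectic-cone ends, to be combined with KM's APS-type Fredholm theory on compact manifolds with boundary via a parametrix patching argument.

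For $(7)$, I would follow the proof of Proposition 24.3.2 in \cite{MR2388043} essentially verbatim. Inside the cylindrical collar $[1,C_K)\times Y'$ the linearized Seiberg--Witten--plus--gauge operator $Q_{(A,\varPhi)}$ takes the form $\tfrac{d}{dt}+\mathrm{Hess}^\sigma_{\mathfrak{q},\mathfrak{b}'}+(\text{lower order})$, and any tangent vector $(a,\varPsi)\in T_{[\gamma_3]}\mathcal{M}(K_{Y'},\mathfrak{s}')$ in Coulomb gauge satisfies $Q_{(A,\varPhi)}(a,\varPsi)=0$. Expanding in eigenmodes of $\mathrm{Hess}^\sigma_{\mathfrak{q},\mathfrak{b}'}$, the $\pi_{\mathfrak{b}'}$-component consists of those modes with positive eigenvalue, which are exponentially damped as one moves inward. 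Consequently the $L^2_{k+1/2}$-norm of $\pi_{\mathfrak{b}'} R_K(a,\varPsi)$ on $\{1\}\times Y'$ is controlled by the $L^2_k$-norm of $(a,\varPsi)$ on any fixed compact collar, and compactness follows from Rellich. The conical structure of the far end never intervenes.

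For $(8)$, I would construct a parametrix for $Q_{(A,\varPhi)}$ on $K_{Y'}$ equipped with the APS spectral boundary condition $(1-\pi_{\mathfrak{b}'})$ at $\{1\}\times Y'$, whose Fredholmness is equivalent to the claimed statement. Fix $C_0$ large enough that $[C_0,\infty)\times Y'$ lies entirely in the symplectic-cone region, and split $K_{Y'}=X_1\cup X_2$ with $X_1=[1,C_0]\times Y'$ and $X_2=[C_0,\infty)\times Y'$. On $X_1$ the operator $Q_{(A,\varPhi)}\oplus(\pi_{\mathfrak{b}'}\circ r)$ (with APS data at $\{1\}\times Y'$ and full Dirichlet-type data at $\{C_0\}\times Y'$) is Fredholm by Theorem 17.1.3 of \cite{MR2388043}; call its parametrix $P_1$. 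On $X_2$ the operator $Q_{(A,\varPhi)}$ is Fredholm by Theorem 3.3 of \cite{MR1474156} together with Proposition 4.1 of \cite{Zhang[2016]}, and the asymptotic control furnished by Proposition 2.18 and Lemma 3.17 of \cite{LinJ[2016]} provides a parametrix $P_2$ with uniform bounds along the cone. Splicing with smooth cutoffs $\chi_1,\chi_2$ supported in $X_1,X_2$ and equal to $1$ outside a compact collar of $\{C_0\}\times Y'$, one forms the global parametrix $P=\chi_1 P_1\chi_1'+\chi_2 P_2\chi_2'$ and computes $QP=I+K$, $PQ=I+K'$, where $K,K'$ are sums of commutators $[Q,\chi_i]P_i(\,\cdot\,)$ supported in the compact overlap. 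This is exactly the pattern of the parametrix argument already sketched in the proof of Lemma~\ref{Hilbert manifold}.

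The main obstacle will be ensuring that the splicing error terms $K,K'$ are genuinely compact. Since $X_2$ is non-compact, Rellich does not apply globally, so compactness must be localized to the compact overlap region where the cutoff derivatives live; this reduces to showing that $P_2$ maps $L^2_{k-1}$ into $L^2_k$ with estimates that remain uniform along the cone end, which is precisely the content of the asymptotic decay results in \cite{MR1474156,Zhang[2016],LinJ[2016]}. Careful bookkeeping of these norms, and of the pointwise behaviour of $\varPhi$ away from $\varPhi_0$ (which enters the zeroth-order term of $Q$), will constitute the technical heart of the argument.
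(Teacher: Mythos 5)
Your treatment of $(7)$ is consistent with the paper, which likewise dismisses it as a local argument near the boundary $\{1\}\times Y'$ along the lines of Proposition 24.3.2 of \cite{MR2388043}.

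For $(8)$ there are two genuine gaps in the parametrix construction as written. First, ``full Dirichlet-type data at $\{C_0\}\times Y'$'' is not an elliptic boundary condition for the first-order operator $Q_{(A,\varPhi)}$, which near a slice has the shape $\frac{d}{dt}+L_0$; imposing the full trace at a boundary slice is overdetermined, and the correct condition there would be an APS-type spectral projection, exactly as at $\{1\}\times Y'$. Second, and more seriously, the cone $X_2=[C_0,\infty)\times Y'$ has a boundary component $\{C_0\}\times Y'$, so Theorem 3.3 of \cite{MR1474156} --- which establishes Fredholmness for manifolds \emph{without} boundary equipped with a symplectic cone end --- does not supply the parametrix $P_2$ you want. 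Without a boundary condition at $\{C_0\}\times Y'$ (or without first capping off that end), the operator on $X_2$ is not Fredholm, and finding the right Fredholm setup for a manifold with a symplectic end \emph{and} a boundary is precisely the problem being solved. The paper avoids both difficulties by choosing a different pair of auxiliary manifolds that overlap over $K_{Y'}$: a closed-up manifold $X^+=X\cup\text{cylinder}\cup K_{Y'}$ with no boundary and a symplectic cone end (so Theorem 3.3 of \cite{MR1474156} applies directly, giving $P_{X^+}$), and the half-infinite cylinder $Z=(-\infty,0]\times -Y'$ with a single boundary component carrying an APS condition (so Theorem 17.1.4 of \cite{MR2388043} applies directly, giving $P_Z$). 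These two parametrices are then spliced with cutoffs supported in the compact product collar $(-5,0]\times -Y'$. Your decomposition along a single dividing slice $\{C_0\}\times Y'$ can probably be repaired --- replace Dirichlet by APS on $X_1$ and cap off $X_2$ so that the conical-end Fredholm theorem applies --- but as written the ingredients you cite do not produce the two Fredholm operators you need.
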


\begin{proof}
Assertions $(7)$ and $(8)$ are the ``conical'' versions of Proposition
24.3.2 in the book. The proof of this theorem in turn refers to Theorem
17.3.2, which at the same time requires Proposition 17.2.6, which
depends at the same time on Proposition 17.2.5. The latter uses essentially
Theorem 17.1.3 and the only part that is not proven explicitly is
part $a)$, which depends on a parametrix argument (modeled on Proposition
14.2.1) of Theorem 17.1.4. 

In a nutshell, we must do the following. Decompose $Q_{(A,\varPhi)}$
as 
\begin{align*}
Q_{(A,\varPhi)} & =D_{0}+K\\
D_{0}(a,\varPsi) & =\left(\rho(d^{+}a),D_{A_{0}}\varPsi,-d^{*}a\right)\\
K(a,\varPsi) & =\left(-\{\varPhi\varPsi^{*}+\varPsi\varPhi^{*}\}_{0},\rho(A-A_{0})\varPsi+\rho(a)\varPhi+i\text{Re}\left\langle i\varPhi,\varPsi\right\rangle \right)
\end{align*}
On the collar of $\partial K_{Y'}$, $D_{0}$ can be written in the
form 
\[
\frac{d}{dt}+L_{0}
\]
where $L_{0}:C^{\infty}(-Y';E_{0})\rightarrow C^{\infty}(-Y';E_{0})$
is a first order, self-adjoint elliptic differential operator. We
will not write the exact formula for the domain and codomain since
they would rather cumbersome. Rather we will denote the bundles involved
by the letter $E_{0}$ when referring to the three manifolds and by
$E$ for the four manifolds just as the book does.

If $H_{0}^{+}$ and $H_{0}^{-}$ are the closures in $L_{1/2}^{2}(Y;E_{0})$
of the spans of the eigenvectors belonging to positive and non-positive
eigenvalues of $L_{0}$ and 
\[
\varPi_{0}:L_{1/2}^{2}(Y;E_{0})\rightarrow L_{1/2}^{2}(Y;E_{0})
\]
is the projection with image $H_{0}^{-}$ and kernel $H_{0}^{+}$,
we need to show that the operator 
\[
Q_{(A,\varPhi)}\oplus(\varPi_{0}\circ r_{-Y'}):L_{k}^{2}(K_{Y'};E)\rightarrow L_{k-1}^{2}(K_{Y'};E)\oplus(H_{0}^{-}\cap L_{k-1/2}^{2})
\]
is Fredholm. First, for notational purposes take the collar neighborhood
of $\partial K_{Y'}$ to be $(-5,0]\times-Y'$, where $\partial K_{Y'}$
has now been identified with $\{0\}\times-Y'$. Also denote for simplicity
\[
Q_{K_{Y'}}=Q_{(A,\varPhi)}:L_{k}^{2}(K_{Y'};E)\rightarrow L_{k-1}^{2}(K_{Y'};E)
\]
To show the Fredholm property mentioned above we will give a parametrix
argument, which is essentially the same as the one used in Proposition
14.2.1 of \cite{MR2388043}. Namely, we modify the manifold $K_{Y'}$
in two different ways.

For the first modification we close up $K_{Y'}$ first by extending
the collar neighborhood a little bit (to the left in our figure) and
then finding a four manifold $X$ (dots on the left side of the figure)
bounding $Y'$. For the second modification, we forget about the part
of the cone $K_{Y'}$ which does not have a product structure, in
other words, we take the collar neighborhood of $K_{Y'}$ and extend
it into a half-infinite cylinder which extends indefinitely to the
right in our figure. In particular, notice that we superimposed both
modifications in our image to save some space but they do not interact
with each other. Each modification provides a parametrix as follows.

\begin{figure}[H]
\begin{centering}
\includegraphics[scale=0.65]{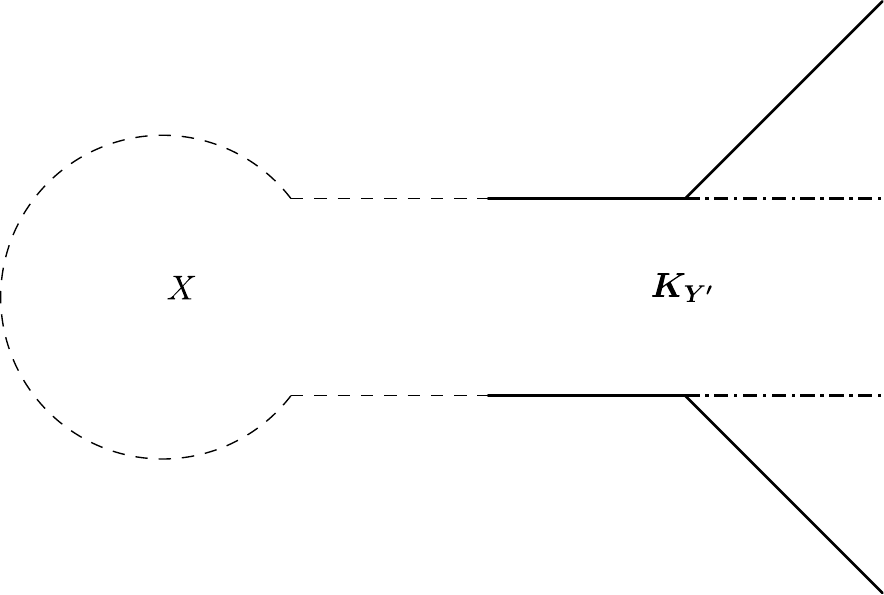}
\par\end{centering}
\caption{Closing up the cone $K_{Y'}$ into the manifold $X\cup K_{Y'}$. Simultaneously,
we extend the product neighborhood $(-5,0]\times-Y'$ of $K_{Y'}$
into a half-infinite cylinder $Z=(-\infty,0]\times-Y'$. }
\end{figure}

Regarding the first modification, we can define the manifold $X^{+}=X\cup\text{cylinder}\cup K_{Y'}$
and extend $Q_{K_{Y}'}$ to an operator 
\[
Q_{X^{+}}:L_{k}^{2}(X^{+};E)\rightarrow L_{k-1}^{2}(X^{+};E)
\]
and by Theorem 3.3 in \cite{MR1474156} there is a parametrix (that
is, $Q_{X^{+}}P_{X^{+}}-I$ and $P_{X^{+}}Q_{X^{+}}-I$ are compact
operators) which we denote 
\[
P_{X^{+}}:L_{k-1}^{2}(X^{+};E)\rightarrow L_{k}^{2}(X^{+};E)
\]
Similarly, for the second modification we define the half-cylinder
$Z=(-\infty,0]\times-Y'$. By Theorem 17.1.4 in \cite{MR2388043},
the operator 
\[
Q_{Z}\oplus(\varPi_{0}\circ r_{-Y'}):L_{k}^{2}(Z;E)\rightarrow L_{k-1}^{2}(Z;E)\oplus(H_{0}^{-}\cap L_{k-1/2}^{2}(-Y';E_{0}))
\]
has a parametrix 
\[
P_{Z}:L_{k-1}^{2}(Z;E)\oplus(H_{0}^{-}\cap L_{k-1/2}^{2}(-Y';E_{0}))\rightarrow L_{k}^{2}(Z;E)
\]
Finally, to define the parametrix corresponding to $Q_{K_{Y'}}\oplus(\varPi_{0}\circ r_{-Y'})$
, let $1=\eta_{1}+\eta_{2}$ be a partition of unity subordinate to
a covering of $K_{Y'}$ by the open sets $U_{1}=K_{Y'}\backslash([-2,0]\times-Y')$
and $U_{2}=(-3,0]\times-Y'$. Let $\gamma_{1}$ be a function which
is $1$ on the support of $\eta_{1}$ and vanishes on $(-1,0]\times-Y'$.
Similarly, let $\gamma_{2}$ be $1$ on the support of $\eta_{2}$
and vanishing outside $[-4,0]\times Y'$. Define 
\begin{align*}
P_{K_{Y'}}:L_{k-1}^{2}(K_{Y'};E)\oplus(H_{0}^{-}\cap L_{k-1/2}^{2}) & \mapsto & L_{k}^{2}(K_{Y'};E)\\
e & \mapsto & \gamma_{1}P_{X^{+}}(\eta_{1}e)+\gamma_{2}P_{Z}(\eta_{2}e)
\end{align*}
Notice that thanks to how the supports of the functions where chosen,
the function is actually well defined. A similar computation to Proposition
14.2.1 in \cite{MR2388043} shows that $P_{K_{Y'}}$ is a parametrix
for $Q_{K_{Y'}}\oplus(\varPi_{0}\circ r_{-Y'})$. 
\end{proof}
Now we can finish the proof of Lemma (\ref{Lemma parametrix}).
\begin{lem}
$\mathcal{D}_{\rho[\gamma]}R$ is a Fredholm map.
\end{lem}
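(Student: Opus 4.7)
The plan is to follow the strategy used in the proof of Lemma 24.4.1 and Proposition 24.4.7 of \cite{MR2388043}, leveraging the eight identities (1)--(8) established in the preceding sub-lemmas via the spectral splittings $\mathcal{K}^\sigma_{k-1/2,\mathfrak{b}} = \mathcal{K}^+_\mathfrak{b} \oplus \mathcal{K}^-_\mathfrak{b}$ and $\mathcal{K}^\sigma_{k-1/2,\mathfrak{b}'} = \mathcal{K}^+_{\mathfrak{b}'} \oplus \mathcal{K}^-_{\mathfrak{b}'}$ on the two boundary Coulomb slices.

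First, I would write $\mathcal{D}_{\rho[\gamma]} R$ as the $2 \times 3$ block operator
\[
\mathcal{D}_{\rho[\gamma]}R = \begin{pmatrix} \mathcal{D}R_\tau & \mathcal{D}R_W^- & 0 \\ 0 & \mathcal{D}R_W^+ & \mathcal{D}R_K \end{pmatrix}
\]
from the domain decomposition $V_1 \oplus V_2 \oplus V_3$ (where the $V_i$ are the three tangent summands $T_{[\gamma_1]}\mathcal{M}^\tau$, $T_{[\gamma_2]}\mathcal{M}(W^\dagger)$, $T_{[\gamma_3]}\mathcal{M}([1,\infty)\times Y',\mathfrak{s})$) to $\mathcal{K}^\sigma_\mathfrak{b} \oplus \mathcal{K}^\sigma_{\mathfrak{b}'}$. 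Composing each row with its pair of spectral projectors $(\pi, 1-\pi)$ then refines this into a $4 \times 3$ array whose entries are classified by the eight identities: the four entries coming from (1), (3), (5), (7) are compact, and the four coming from (2), (4), (6), (8) are Fredholm.

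Next, I would set aside the compact entries as a single compact perturbation and assemble the surviving Fredholm skeleton into one operator by patching the individual parametrices: the APS-type parametrix on the half-cylindrical trajectory end $\mathbb{R}^+ \times -Y$, the Fredholm parametrix on the compact cobordism $W^\dagger$ with APS spectral boundary conditions on both components of $\partial W^\dagger$, and the conical-end parametrix constructed in the proof of Lemma \ref{Lemma patching}. With the cutoff functions chosen exactly as in Proposition 14.2.1 of \cite{MR2388043}, the patching errors are themselves compact and can be absorbed into the compact remainder, producing a parametrix for $\mathcal{D}_{\rho[\gamma]} R$ modulo compact operators.

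The standard Fredholm criterion then yields that $\mathcal{D}_{\rho[\gamma]} R$ has finite-dimensional kernel and cokernel and closed range, which is the desired conclusion. The main potential obstacle is the presence of the conical end, which has no counterpart in \cite{MR2388043}; however, this is precisely what Lemma \ref{Lemma patching} has already been arranged to handle, and from the perspective of the patching argument the symplectic cone behaves exactly like a cylindrical end with respect to the spectral projector and its APS-type parametrix. Hence the Fredholmness of $\mathcal{D}_{\rho[\gamma]} R$ reduces to a direct adaptation of the cylindrical-end patching argument of the book.
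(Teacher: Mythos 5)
Your first paragraph and final sentence are correct and essentially match the paper: organize $\mathcal{D}_{\rho[\gamma]}R$ by the spectral projectors and use the eight identities to split it into a Fredholm part plus a compact perturbation. But your middle paragraph misidentifies the mechanism for the final assembly, and as described it would not apply. The operator $\mathcal{D}_{\rho[\gamma]}R$ is a sum of derivatives of restriction (trace) maps, taking values in boundary Coulomb slices; it is not an elliptic differential operator on a glued four-manifold, so there is nothing to patch with cutoff functions \`a la Proposition 14.2.1 of \cite{MR2388043}. That geometric splicing technique belongs one level down: it is exactly what is used in Lemma \ref{Lemma patching} to establish the Fredholm identities (7) and (8) for the conical restriction map, by patching a parametrix for the elliptic operator $Q_{(A,\varPhi)}$ on $K_{Y'}$ out of a parametrix on $X^{+}$ and an APS parametrix on the half-cylinder $Z$. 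Once (1)--(8) are in hand, the final step is purely algebraic: after permuting the codomain factors to $\mathcal{K}^{-}_{\mathfrak{b}}\oplus\left(\mathcal{K}^{+}_{\mathfrak{b}}\oplus\mathcal{K}^{+}_{\mathfrak{b}'}\right)\oplus\mathcal{K}^{-}_{\mathfrak{b}'}$, the surviving Fredholm skeleton is literally a direct sum of $(1-\pi_{\mathfrak{b}})\circ\mathcal{D}R_{\tau}$, the joint restriction $\left(\pi_{\mathfrak{b}}\circ\mathcal{D}R_{W}^{-},\pi_{\mathfrak{b}'}\circ\mathcal{D}R_{W}^{+}\right)$, and $(1-\pi_{\mathfrak{b}'})\circ\mathcal{D}R_{K}$; a direct sum of Fredholm operators is Fredholm, and a compact perturbation of a Fredholm operator is Fredholm. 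No new parametrix is built at this stage. One small related point worth keeping straight: for the middle column you need the joint Fredholmness of $\left(\pi_{\mathfrak{b}}\circ\mathcal{D}R_{W}^{-},\pi_{\mathfrak{b}'}\circ\mathcal{D}R_{W}^{+}\right)$ into $\mathcal{K}^{+}_{\mathfrak{b}}\oplus\mathcal{K}^{+}_{\mathfrak{b}'}$, which is what Proposition 24.3.2 of \cite{MR2388043} provides for a cobordism with disconnected boundary; reading (4) and (6) as two separate scalar Fredholm statements is not by itself enough.
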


\begin{proof}
Thanks to the eight identities (\ref{eight}) we can see that 
\[
\begin{array}{cc}
 & \mathcal{D}_{[\gamma_{1}]}R_{\tau}+\mathcal{D}_{[\gamma_{2}]}R_{W}^{-}\\
\\
= & \underbrace{(1-\pi_{\mathfrak{b}})\circ\mathcal{D}_{[\gamma_{1}]}R_{\tau}}_{\text{Fredholm}}+\underbrace{\pi_{\mathfrak{b}}\circ\mathcal{D}_{[\gamma_{2}]}R_{W}^{-}}_{\text{Fredholm}}+\underbrace{\pi_{\mathfrak{b}}\circ\mathcal{D}_{[\gamma_{1}]}R_{\tau}}_{\text{compact}}+\underbrace{(1-\pi_{\mathfrak{b}})\circ\mathcal{D}_{[\gamma_{2}]}R_{W}^{-}}_{\text{compact}}
\end{array}
\]
Likewise,
\[
\begin{array}{cc}
 & \mathcal{D}_{[\gamma_{3}]}R_{K}+\mathcal{D}_{[\gamma_{2}]}R_{W}^{+}\\
\\
= & \underbrace{(1-\pi_{\mathfrak{b}'})\circ\mathcal{D}_{[\gamma_{3}]}R_{K}}_{\text{Fredholm}}+\underbrace{\pi_{\mathfrak{b}'}\circ\mathcal{D}_{[\gamma_{2}]}R_{W}^{+}}_{\text{Fredholm}}+\underbrace{\pi_{\mathfrak{b}'}\circ\mathcal{D}_{[\gamma_{3}]}R_{K}}_{\text{compact}}+\underbrace{(1-\pi_{\mathfrak{b}'})\circ\mathcal{D}_{[\gamma_{2}]}R_{W}^{+}}_{\text{compact}}
\end{array}
\]
Therefore, 
\[
\left(\mathcal{D}_{[\gamma_{1}]}R_{\tau}+\mathcal{D}_{[\gamma_{2}]}R_{W}^{-}\right)\oplus\left(\mathcal{D}_{[\gamma_{2}]}R_{W}^{+}+\mathcal{D}_{[\gamma_{3}]}R_{K}\right)
\]
 differs by the compact operator 
\[
\left(\pi_{\mathfrak{b}}\circ\mathcal{D}_{[\gamma_{1}]}R_{\tau}+(1-\pi_{\mathfrak{b}})\circ\mathcal{D}_{[\gamma_{2}]}R_{W}^{-}\right)\oplus\left(\pi_{\mathfrak{b}'}\circ\mathcal{D}_{[\gamma_{3}]}R_{K}+(1-\pi_{\mathfrak{b}'})\circ\mathcal{D}_{[\gamma_{2}]}R_{W}^{+}\right)
\]
 from the direct sum of the Fredholm operators 
\[
\left((1-\pi_{\mathfrak{b}})\circ\mathcal{D}_{[\gamma_{1}]}R_{\tau}\oplus\pi_{\mathfrak{b}}\circ\mathcal{D}_{[\gamma_{2}]}R_{W}^{-}\right)\oplus\left((1-\pi_{\mathfrak{b}'})\circ\mathcal{D}_{[\gamma_{3}]}R_{K}\oplus\pi_{\mathfrak{b}'}\circ\mathcal{D}_{[\gamma_{2}]}R_{W}^{+}\right)
\]
and so the result follows.
\end{proof}

\section{5. Stretching the Neck}

As promised when we explained our strategy for proving naturality,
we will consider a parametrized moduli space following the ideas used
in sections 4.9, 4.10, 6.3 of \cite{MR2299739} and sections 24.6,
26.1 and 27.4 of \cite{MR2388043}. Thanks to the computations done
in sections 5.5 and 6 of \cite{Zhang[2016]}, formally our situation
$\text{cylinder+compact+cone}$ behaves in the same way as if we were
working in the context of $\text{cylinder+compact}$, which is where
the theorems just mentioned strictly speaking apply. 

Recall that we want to show that $\widecheck{HM}_{\bullet}(W^{\dagger},\mathfrak{s}_{\omega})\mathbf{c}(\xi')=\mathbf{c}(\xi',Y)$,
in other words, at the chain-level we must have
\[
\check{m}c(\xi')-c(\xi',Y)\in\text{im}\check{\partial}_{-Y}
\]
The strategy we spelled out consisted in attaching a cylinder of length
$L$ to $W_{\xi',Y}^{+}$ and studying the Seiberg-Witten equations
on 
\[
W_{\xi',Y}^{+}(L)=([1,\infty)\times Y')\cup([0,L]\times-Y')\cup W^{\dagger}\cup(\mathbb{R}^{+}\times-Y)
\]
Equivalently, as explained in section 24.6 of \cite{MR2388043}, we
can consider a family of metrics $g_{L}$ and perturbations on $W^{\dagger}$,
all of which are equal near $Y'$. For example, we can choose a fraction
of the collar neighborhood near $Y'$ and instead of using the product
metric $dt\otimes dt+g_{Y'}$, we use a smoothed out version of the
metric $g_{L}=L^{2}dt\otimes dt+g_{Y'}$, which agrees with the old
metric outside this region. In any case, we obtain a parametrized
configuration space 
\[
\mathcal{\mathfrak{M}}_{z}(W_{\xi',Y}^{+},\mathfrak{s}_{\omega},[\mathfrak{c}])=\bigcup_{L\in[0,\infty)}\{L\}\times\mathcal{\mathcal{M}}_{z}(W_{\xi',Y}^{+}(L),\mathfrak{s}_{\omega},[\mathfrak{c}])
\]
 which we can identify with a subset of $[0,\infty)\times\mathcal{B}_{k,loc}^{\sigma}(W_{\xi',Y}^{+},\mathfrak{s}_{\omega})$
as follows (see the remark before definition 24.4.9 in \cite{MR2388043}
and section 2.3 in \cite{MR2838269}):

For any $t\in[0,\infty)$ there is a unique automorphism $b_{t}:TW_{\xi',Y}^{+}\rightarrow TW_{\xi',Y}^{+}$
that is positive, symmetric with respect to $g_{0}$ and has the property
that $g_{0}(u,v)=g_{t}(b_{t}(u),b_{t}(v))$. The map induced by $b_{t}$
on orthonormal frames gives rise to a map of spinor bundles $\bar{b}_{t}:S_{0}^{\pm}\rightarrow S_{t}^{\pm}$
associated to the metrics $g_{0}$ and $g_{t}$ . This map is an isomorphism
preserving the fiberwise length of spinors. The identification 
\[
[0,\infty)\times\mathcal{B}_{k,loc}^{\sigma}(W_{\xi',Y}^{+},\mathfrak{s}_{\omega})\rightarrow\bigcup_{L\in[0,\infty)}\{L\}\times\mathcal{B}_{k,loc}^{\sigma}(W_{\xi',Y}^{+}(L),\mathfrak{s}_{\omega})
\]
is then given by 
\begin{equation}
(L,A,\mathbb{R}^{+}\phi,\varPhi)\rightarrow(L,A,\mathbb{R}^{+}\bar{b}_{L}(\phi),\bar{b}_{L}(\varPhi))\label{identification metric}
\end{equation}
Just as in proposition 26.1.3 in \cite{MR2388043}, the moduli space
$\mathcal{\mathfrak{M}}{}_{z}(W_{\xi',Y}^{+},\mathfrak{s}_{\omega},[\mathfrak{c}])$
is a smooth manifold with boundary. The boundary is the fiber over
$L=0$, that is, the original moduli space $\mathcal{\mathcal{M}}_{z}(W_{\xi',Y}^{+},\mathfrak{s},[\mathfrak{c}])$.
Each individual moduli space $\mathcal{\mathcal{M}}{}_{z}(W_{\xi',Y}^{+}(L),\mathfrak{s}_{\omega},[\mathfrak{c}])$
can be compactified into $\mathcal{\mathcal{M}}_{z}^{+}(W_{\xi',Y}^{+}(L),\mathfrak{s}_{\omega},[\mathfrak{c}])$
by adding broken trajectories as in definition 24.6.1 of \cite{MR2388043}\footnote{More precisely, for us a broken trajectory asymptotic to $[\mathfrak{c}]$
consists of an element $[\gamma_{0}]$ in a moduli space $\mathcal{M}_{z_{0}}(W_{\xi',Y}^{+}(L),\mathfrak{s}_{\omega},[\mathfrak{c}])$
and an unparametrized broken trajectory $[\check{\boldsymbol{\gamma}}]$
in a moduli space $\check{\mathcal{M}}_{z}([\mathfrak{c}_{0}],\mathfrak{s}_{\xi},[\mathfrak{c}])$.} and to compactify
\[
\bigcup_{L\in[0,\infty)}\{L\}\times\mathcal{\mathcal{M}}_{z}^{+}(W_{\xi',Y}^{+}(L),\mathfrak{s},[\mathfrak{c}])
\]
 we add a fiber over $L=\infty$, which is denoted $\mathcal{\mathcal{M}}_{z}^{+}(W_{\xi',Y}^{+}(\infty),\mathfrak{s},[\mathfrak{c}])$
, where 
\begin{equation}
W_{\xi',Y}^{+}(\infty)=\left(K_{Y'}\cup\left[\mathbb{R}^{+}\times-Y'\right]\right)\cup\left(\left[\mathbb{R}^{-}\times-Y'\right]\cup W^{\dagger}\cup\left[\mathbb{R}^{+}\times-Y\right]\right)\label{infinite stretch}
\end{equation}
 An element in this space consists (at most) of a quadruple $([\gamma_{K'}],[\check{\boldsymbol{\gamma}}_{Y'}],[\gamma_{W^{\dagger}}],[\check{\boldsymbol{\gamma}}_{Y}])$
where :

$\bullet$ $[\gamma_{K'}]\in\mathcal{\mathcal{M}}(Z_{Y',\xi'}^{+},\mathfrak{s}',[\mathfrak{a}_{Y'}])$
is a solution on $\left[\mathbb{R}^{+}\times-Y\right]\cup K_{Y'}$.

$\bullet$ $[\check{\boldsymbol{\gamma}}_{Y'}]\in\check{\mathcal{\mathcal{M}}}^{+}([\mathfrak{a}_{Y'}],\mathfrak{s}_{\xi'},[\mathfrak{b}_{Y'}])$
is an unparametrized trajectory on the cylinder $\mathbb{R}\times-Y'$.

$\bullet$ $[\gamma_{W^{\dagger}}]\in\mathcal{\mathcal{M}}([\mathfrak{b}_{Y'}],W_{*}^{\dagger},\mathfrak{s}_{\omega},[\mathfrak{b}_{Y}])$
is a solution on $W_{*}^{\dagger}$, that is, $W^{\dagger}$ with
two cylindrical ends attached to it. 

$\bullet$ $[\check{\boldsymbol{\gamma}}_{Y}]\in\check{M}^{+}([\mathfrak{b}_{Y}],\mathfrak{s}_{\xi},[\mathfrak{c}])$
is an unparametrized trajectory on the cylinder $\mathbb{R}\times-Y$.

Just as in proposition 26.1.4 in \cite{MR2388043}, the space 

\[
\mathfrak{M}_{z}^{+}(W_{\xi',Y}^{+},\mathfrak{s},[\mathfrak{c}])=\bigcup_{L\in[0,\infty]}\{L\}\times\mathcal{\mathcal{M}}_{z}^{+}(W_{\xi',Y}^{+}(L),\mathfrak{s},[\mathfrak{c}])
\]
 is compact and when it is of dimension 1 the $0$ dimensional strata
over $L=\infty$ are of the following types (compare with proposition
26.1.6 \cite{MR2388043}):

i) $\mathcal{\mathcal{M}}_{Z_{Y',\xi'}^{+}}\times\mathcal{\mathcal{M}}_{W_{*}^{\dagger}}$

ii) $\mathcal{\mathcal{M}}_{Z_{Y',\xi'}^{+}}\times\mathcal{\mathcal{M}}_{W_{*}^{\dagger}}\times\check{\mathcal{\mathcal{M}}}_{-Y}$ 

iii) $\mathcal{\mathcal{M}}_{Z_{Y',\xi'}^{+}}\times\check{\mathcal{\mathcal{M}}}_{-Y'}\times\mathcal{\mathcal{M}}_{W_{*}^{\dagger}}$

Here $\check{\mathcal{\mathcal{M}}}$ denotes an unparametrized moduli
space. Also, in the last two cases the middle space denotes a boundary-obstructed
moduli space, i.e, it denotes trajectories which connect a boundary
stable point (as $t\rightarrow-\infty$) with a boundary unstable
point (as $t\rightarrow\infty$).

The following theorem shows that up to a boundary term, $\sum_{z}m_{z}(W_{\xi',Y}^{+},\mathfrak{s}_{\omega},[\mathfrak{c}])$
equals either of the sums (\ref{sum naturality 1}), (\ref{sum naturality 2}).
It can be seen as the analogue of Lemma 4.15 in \cite{MR2299739}
and Proposition 24.6.10 in \cite{MR2388043} (in fact, it was used
implicitly in the proof of the pairing formula in Proposition 6.8
of \cite{MR2299739} and Theorem 6.2 in \cite{Zhang[2016]}):
\begin{prop}
\label{boundary parametrized}If $\mathfrak{M}_{z}(W_{\xi',Y}^{+},\mathfrak{s},[\mathfrak{c}])$
is zero-dimensional, it is compact. If $\mathcal{\mathfrak{M}}_{z}(W_{\xi',Y}^{+},\mathfrak{s}_{\omega},[\mathfrak{c}])$
is one-dimensional and contains irreducible trajectories, then the
compactification $\mathcal{\mathfrak{M}}_{z}^{+}(W_{\xi',Y}^{+},\mathfrak{s}_{\omega},[\mathfrak{c}])$
is a 1-dimensional manifold whose boundary points are of the following
types:

1) The fiber over $L=0$, namely the space $\mathcal{M}_{z}(W_{\xi',Y}^{+},\mathfrak{s}_{\omega},[\mathfrak{c}])$.

2) The fiber over $L=\infty$, namely the three products described
previously.

3) Products of the form 
\[
\mathcal{\mathfrak{M}}(W_{\xi',Y}^{+},\mathfrak{s}_{\omega},[\mathfrak{b}])\times\check{\mathcal{M}}([\mathfrak{b}],\mathfrak{s}_{\xi},[\mathfrak{c}])
\]
 or 
\[
\mathcal{\mathfrak{M}}(W_{\xi',Y}^{+},\mathfrak{s}_{\omega},[\mathfrak{a}])\times\check{\mathcal{M}}([\mathfrak{a}],\mathfrak{s}_{\xi},[\mathfrak{b}])\times\check{\mathcal{M}}([\mathfrak{b}],\mathfrak{s}_{\xi},[\mathfrak{c}])
\]
 where the middle one is boundary obstructed. 
\end{prop}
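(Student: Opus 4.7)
The plan is to follow the blueprint of Propositions 24.6.10 and 26.1.6 in \cite{MR2388043}, adapted to our setting with a conical end at $[1,\infty) \times Y'$. The key additional input is the uniform exponential decay along the conical end established in Sections 5 and 6 of \cite{Zhang[2016]}, which ensures that solutions in $\mathfrak{M}_z$ remain $L^{2}$-close to the canonical configuration $(A_{0}, \varPhi_{0})$ with a decay rate independent of the point in the moduli space. This rules out any compactness failure (bubbling or energy escaping to infinity) on the conical end, so all such failures must occur either at the stretching neck $[0,L]\times -Y'$ or at the cylindrical end $\mathbb{R}^{+}\times -Y$.

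The zero-dimensional statement is handled by a direct compactness argument. Given a sequence $(L_{n},[\gamma_{n}])$ in $\mathfrak{M}_{z}$, pass to a subsequence with either $L_{n}\to L_{\infty}<\infty$ or $L_{n}\to\infty$. Standard Seiberg--Witten compactness (Chapter 16 of \cite{MR2388043}) combined with the exponential decay gives a subsequential limit, possibly broken. Dimension counting rules out all such broken limits, since each would live in a stratum of strictly positive expected dimension, contradicting the assumption that $\mathfrak{M}_{z}$ is zero-dimensional. The limits therefore lie either in the interior of $\mathfrak{M}_z$ or in the $L=0$ fiber, both of which are already inside $\mathfrak{M}_{z}$.

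For the one-dimensional case, I would proceed in two parallel parts. First, enumerate all possible limit configurations for a sequence in $\mathfrak{M}_{z}^{+}$: interior limits; limits in the $L=0$ fiber; limits in the $L=\infty$ fiber with the quadruple structure described in the text preceding the proposition; and limits in which energy concentrates on a half-cylinder inside $\mathbb{R}^{+}\times -Y$, producing a broken trajectory on $\mathbb{R}\times -Y$. Second, a gluing argument shows that each zero-dimensional stratum appearing in this enumeration is a genuine boundary point of a one-manifold. The gluings required are standard neck-stretching gluing at $-Y'$ for the $L=\infty$ strata (following Sections 18 and 19 of \cite{MR2388043}) and standard cylinder gluing at $-Y$ for broken trajectories. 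Both happen away from the conical end, so they are unaffected by its presence; the Fredholm framework established in Lemma \ref{Lemma parametrix} supplies the uniform estimates needed to run the implicit function theorem in each gluing step.

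The main obstacle will be the gluing in cases (ii) and (iii), where an unparametrized trajectory sits as a boundary-obstructed middle piece between a boundary-stable and a boundary-unstable critical point. Here the gluing requires a careful analysis of the Kuranishi model near the corner stratum, parallel to Propositions 26.1.4--26.1.6 of \cite{MR2388043}; the output is that each such configuration contributes a true boundary point. A final dimension count, identical to the one in that reference, selects among all $L=\infty$ degenerations exactly the three product types (i)--(iii) as the only ones producing zero-dimensional strata, and similarly selects the single-break and boundary-obstructed double-break configurations described in item (3) of the statement.
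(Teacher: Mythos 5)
Your proposal follows essentially the same approach as the paper, which is to cite the analogous compactness and boundary-structure results from Kronheimer--Mrowka (Propositions 24.6.10, 26.1.3, 26.1.4, 26.1.6) and to use Zhang's uniform exponential decay on the conical end to reduce the situation to the ``cylinder $+$ compact'' context where those results apply directly. Your write-up is somewhat more explicit than the paper's, which presents the proposition as a direct analogue of the cited references without spelling out the enumeration of degenerations, the dimension count, or the gluing near boundary-obstructed corners; that added detail is accurate, but the underlying strategy is the same.
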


In order to apply the proposition define $P=[0,\infty)$ and the numbers
\[
m_{z}(W_{\xi',Y}^{+},\mathfrak{s}_{\omega},[\mathfrak{a}])_{P}=\begin{cases}
|\mathcal{\mathfrak{M}}_{z}(W_{\xi',Y}^{+},\mathfrak{s}_{\omega},[\mathfrak{a}])|\;\;\;\mod2 & \text{if }\dim\mathcal{\mathfrak{M}}_{z}(W_{\xi',Y}^{+},\mathfrak{s},[\mathfrak{a}])=0\\
0 & \text{otherwise}
\end{cases}
\]

Recall also that the differential on $\check{C}_{\bullet}(-Y,\mathfrak{s}_{\xi})=\mathfrak{C}^{o}(-Y,\mathfrak{s}_{\xi})\oplus\mathfrak{C}^{s}(-Y,\mathfrak{s}_{\xi})$
is \cite[Definition 22.1.3]{MR2388043}
\[
\check{\partial}=\left(\begin{array}{cc}
\partial_{o}^{o} & -\partial_{o}^{u}\bar{\partial}_{u}^{s}\\
\partial_{s}^{o} & \bar{\partial}_{s}^{s}-\partial_{u}^{u}\bar{\partial}_{u}^{s}
\end{array}\right)
\]
Suppose now that $\mathcal{\mathfrak{M}}_{z}(W_{\xi',Y}^{+},\mathfrak{s}_{\omega},[\mathfrak{c}])$
is one dimensional. We use the previous proposition to count the endpoints
of $\mathcal{\mathfrak{M}}_{z}^{+}(W_{\xi',Y}^{+},\mathfrak{s}_{\omega},[\mathfrak{c}])$
by making cases on $[\mathfrak{c}]$.

\subsection{Case $[\mathfrak{c}]\in\mathfrak{C}^{o}(-Y,\mathfrak{s}_{\xi})$
{[}irreducible critical point{]}}
\begin{enumerate}
\item The fiber over $L=0$, gives the contributions 
\begin{equation}
\sum_{z}m_{z}(W_{\xi',Y}^{+},\mathfrak{s}_{\omega},[\mathfrak{c}])\label{cont1}
\end{equation}
These numbers were used in the chain-level definition of $c(\xi',Y)$.
\item The fiber over $L=\infty$ gives the contributions (\ref{sum naturality 1})
\begin{align}
 & \sum_{[\mathfrak{a}]\in\mathfrak{C}^{o}(-Y')}\;\;\sum_{z_{1},z_{2}}m_{z_{1}}(Z_{Y',\xi'}^{+},\mathfrak{s}',[\mathfrak{a}])n_{z_{2}}([\mathfrak{a}],W_{*}^{\dagger},\mathfrak{s}_{\omega},[\mathfrak{c}])\nonumber \\
+ & \sum_{[\mathfrak{a}]\in\mathfrak{C}^{s}(-Y'),[\mathfrak{b}]\in\mathfrak{C}^{u}(-Y')}\;\;\sum_{z_{1},z_{2},z_{3}}m_{z_{1}}(Z_{Y',\xi'}^{+},\mathfrak{s}',[\mathfrak{a}])\bar{n}_{z_{2}}([\mathfrak{a}],\mathfrak{s}_{\xi'},[\mathfrak{b}])n_{z_{3}}([\mathfrak{b}],W_{*}^{\dagger},\mathfrak{s}_{\omega},[\mathfrak{c}])\label{cont2}\\
+ & \sum_{[\mathfrak{a}]\in\mathfrak{C}^{s}(-Y'),[\mathfrak{b}]\in\mathfrak{C}^{u}(-Y)}\;\;\sum_{z_{1},z_{2},z_{3}}m_{z_{1}}(Z_{Y',\xi'}^{+},\mathfrak{s}',[\mathfrak{a}])\bar{n}_{z_{2}}([\mathfrak{a}],W_{*}^{\dagger},\mathfrak{s}_{\omega},[\mathfrak{b}])n_{z_{3}}([\mathfrak{b}],\mathfrak{s}_{\xi},[\mathfrak{c}])\nonumber 
\end{align}
These numbers were used in the chain-level definition of $\check{m}c(\xi')$. 
\item We obtain contributions of the form 
\begin{align}
 & \sum_{[\mathfrak{a}]\in\mathfrak{C}^{o}(-Y)}\;\;\sum_{w_{1},w_{2}}m_{w_{1}}(W_{\xi',Y}^{+},\mathfrak{s}_{\omega},[\mathfrak{a}])_{P}n_{w_{2}}([\mathfrak{a}],\mathfrak{s}_{\xi},[\mathfrak{c}])\label{cont3}\\
+ & \sum_{[\mathfrak{a}]\in\mathfrak{C}^{s}(-Y),[\mathfrak{b}]\in\mathfrak{C}^{u}(-Y')}\;\;\sum_{w_{1},w_{2},w_{3}}m_{w_{1}}(W_{\xi',Y}^{+},\mathfrak{s}_{\omega},[\mathfrak{a}])_{P}\bar{n}_{w_{2}}([\mathfrak{a}],\mathfrak{s}_{\xi},[\mathfrak{b}])n_{w_{3}}([\mathfrak{b}],\mathfrak{s}_{\xi},[\mathfrak{c}])\nonumber 
\end{align}
These numbers will be used momentarily to define the boundary term. 
\end{enumerate}
By Proposition (\ref{boundary parametrized}) the sum of (\ref{cont1}),
(\ref{cont2}) and (\ref{cont3}) correspond to the number of points
in the boundary of a one dimensional compact manifold, hence it must
equal $0$.

\subsection{Case $[\mathfrak{c}]\in\mathfrak{C}^{s}(-Y,\mathfrak{s}_{\xi})$
{[}boundary stable critical point{]}}
\begin{enumerate}
\item The fiber over $L=0$, gives the contributions 
\begin{equation}
\sum_{z}m_{z}(W_{\xi',Y}^{+},\mathfrak{s}_{\omega},[\mathfrak{c}])\label{cont1-1}
\end{equation}
These numbers were used in the chain-level definition of $c(\xi',Y)$.
\item The fiber over $L=\infty$ gives the contributions (\ref{sum naturality 1})
\begin{align}
 & \sum_{[\mathfrak{a}]\in\mathfrak{C}^{o}(-Y')}\;\;\sum_{z_{1},z_{2}}m_{z_{1}}(Z_{Y',\xi'}^{+},\mathfrak{s}',[\mathfrak{a}])n_{z_{2}}([\mathfrak{a}],W_{*}^{\dagger},\mathfrak{s}_{\omega},[\mathfrak{c}])\label{cont2-1}\\
+ & \sum_{[\mathfrak{a}]\in\mathfrak{C}^{s}(-Y')}\;\;\sum_{z_{1},z_{2}}m_{z}(Z_{Y',\xi'}^{+},\mathfrak{s}',[\mathfrak{a}])\bar{n}_{z_{2}}([\mathfrak{a}],W_{*}^{\dagger},\mathfrak{s}_{\omega},[\mathfrak{c}])\nonumber \\
+ & \sum_{[\mathfrak{a}]\in\mathfrak{C}^{s}(-Y'),[\mathfrak{b}]\in\mathfrak{C}^{u}(-Y')}\;\;\sum_{z_{1},z_{2},z_{3}}m_{z_{1}}(Z_{Y',\xi'}^{+},\mathfrak{s}',[\mathfrak{a}])\bar{n}_{z_{2}}([\mathfrak{a}],\mathfrak{s}_{\xi},[\mathfrak{b}])n_{z_{3}}([\mathfrak{b}],W_{*}^{\dagger},\mathfrak{s}_{\omega},[\mathfrak{c}])\nonumber \\
+ & \sum_{[\mathfrak{a}]\in\mathfrak{C}^{s}(-Y'),[\mathfrak{b}]\in\mathfrak{C}^{u}(-Y)}\;\;\sum_{z_{1},z_{2},z_{3}}m_{z_{1}}(Z_{Y',\xi'}^{+},\mathfrak{s}',[\mathfrak{a}])\bar{n}_{z_{2}}([\mathfrak{a}],W_{*}^{\dagger},\mathfrak{s}_{\omega},[\mathfrak{b}])n_{z_{3}}([\mathfrak{b}],\mathfrak{s}_{\xi},[\mathfrak{c}])\nonumber 
\end{align}
These numbers were used in the chain-level definition of $\check{m}c(\xi')$. 
\item We obtain contributions of the form 
\begin{align}
 & \sum_{[\mathfrak{a}]\in\mathfrak{C}^{o}(-Y)}\sum_{w_{1},w_{2}}m_{w_{1}}(W_{\xi',Y}^{+},\mathfrak{s}_{\omega},[\mathfrak{a}])_{P}n_{w_{2}}([\mathfrak{a}],\mathfrak{s}_{\xi},[\mathfrak{c}])\label{cont3-1}\\
+ & \sum_{[\mathfrak{a}]\in\mathfrak{C}^{s}(-Y)}\sum_{w_{1},w_{2}}m_{w_{1}}(W_{\xi',Y}^{+},\mathfrak{s}_{\omega},[\mathfrak{a}])_{P}\bar{n}_{w_{2}}([\mathfrak{a}],\mathfrak{s}_{\xi},[\mathfrak{c}])\nonumber \\
+ & \sum_{[\mathfrak{a}]\in\mathfrak{C}^{s}(-Y),[\mathfrak{b}]\in\mathfrak{C}^{u}(-Y)}\sum_{w_{1},w_{2},w_{3}}m_{w_{1}}(W_{\xi',Y}^{+},\mathfrak{s}_{\omega},[\mathfrak{a}])_{P}\bar{n}_{w_{2}}([\mathfrak{a}],\mathfrak{s}_{\xi},[\mathfrak{b}])n_{w_{3}}([\mathfrak{b}],\mathfrak{s}_{\xi},[\mathfrak{c}])\nonumber 
\end{align}
These numbers will be used momentarily to define the boundary term. 
\end{enumerate}
As before the sum of (\ref{cont1-1}), (\ref{cont2-1}) and (\ref{cont3-1})
equals $0$.

Define the chain element $\psi\in\mathfrak{C}^{o}(-Y,\mathfrak{s}_{\xi})\oplus\mathfrak{C}^{s}(-Y,\mathfrak{s}_{\xi})$
via the formula 
\[
\psi=\left(\sum_{[\mathfrak{a}]\in\mathfrak{C}^{o}(-Y)}\sum_{w_{1}}m_{w_{1}}(W_{\xi',Y}^{+},\mathfrak{s}_{\omega},[\mathfrak{a}])_{P}e_{[\mathfrak{a}]},\sum_{[\mathfrak{a}]\in\mathfrak{C}^{s}(-Y)}\sum_{w_{1}}m_{w_{1}}(W_{\xi',Y}^{+},\mathfrak{s}_{\omega},[\mathfrak{a}])_{P}e_{[\mathfrak{a}]}\right)
\]
It is not hard to see that 
\[
\check{\partial}\psi=\left(\sum_{[\mathfrak{c}]\in\mathfrak{C}^{o}(-Y)}C_{o}e_{[\mathfrak{c}]},\sum_{[\mathfrak{c}]\in\mathfrak{C}^{s}(-Y)}C_{s}e_{[\mathfrak{c}]}\right)
\]
where $C_{o}$ equals (\ref{cont3}) and $C_{s}$ equals (\ref{cont3-1}).
In other words, we have the chain-level identity 
\[
\check{m}c(\xi')-c(\xi',Y)=\check{\partial}\psi
\]
which gives us the desired identity 
\[
\widecheck{HM}_{\bullet}(W^{\dagger},\mathfrak{s}_{\omega})\mathbf{c}(\xi')=\mathbf{c}(\xi',Y)
\]
concluding the first phase in the proof for the naturality of the
contact invariant under strong symplectic cobordisms. Now we proceed
to address the second part of the proof (as explained at the beginning
of the paper). Namely, we will show that $\mathbf{c}(\xi',Y)$ equals
$\mathbf{c}(\xi)$ by adapting Mrowka and Rollin's ``dilating the
cone'' technique to the case of a manifold with cylindrical end.

\section{6. Generalized Gluing-Excision Theorem}

\subsection{\label{subsec:Gluing-and-Identifying}6.1 Gluing and Identifying
Spin-c Structures}

\ 

Before describing the modified gluing argument why will say very quickly
why the ``special'' condition can be dropped for the symplectic
cobordisms we are working with. More details can be found in section
6.1 of \cite{Echeverria[Thesis]}. The definition Mrowka and Rollin
used for a special symplectic corbordism appears near formula (1.1)
of \cite{MR2199446}, which we repeat for convenience: 
\begin{defn}
\label{def:special cobordism-1-1-1}A cobordism $(W,\omega):(Y,\xi)\rightarrow(Y',\xi')$
is said to be a \textbf{special symplectic cobordism} if:

1) With the symplectic orientation, $\partial W=-Y\sqcup Y'$ and
$\omega$ is strictly positive on $\xi$ and $\xi'$ with their induced
orientations.

2) The symplectic form is given in a collar neighborhood of the concave
boundary by a symplectization of $(Y,\xi)$.

3) The map induced by the inclusion $i^{*}:H^{1}(W,Y';\mathbb{Z})\rightarrow H^{1}(Y;\mathbb{Z})$
is the zero map. 
\end{defn}

Notice that it is the last condition the one that makes the symplectic
cobordism ``special''. We want to work with strong cobordisms, which
in particular means that the convex end is also given by a symplectization
of $(Y,\xi)$ and that the special condition does not appear. The
reason why Mrowka and Rollin introduced this condition is that they
were interested in guaranteeing the injectivity of a certain map 
\[
\jmath:\text{Spin}^{c}(X,\xi)\rightarrow\text{Spin}^{c}(X\cup W,\xi')
\]
where $X$ was a compact manifold with boundary a contact manifold
$(Y,\xi)$ and $\text{Spin}^{c}(X,\xi)$ denotes the isomorphism classes
of (relative) spin-c structures on $X$ whose restriction to $Y$
induce the spin-c structure determined by $\xi$. A similar definition
applies to $\text{Spin}^{c}(X\cup W,\xi')$, where now $X\cup W$
bounds $(Y',\xi')$. In the same way in which for a manifold without
boundary $Z$ the set of spin-c structures $\text{Spin}^{c}(Z)$ is
an affine space over $H^{2}(Z;\mathbb{Z})$, the set $\text{Spin}^{c}(X,\xi)$
is an affine space over $H^{2}(X;Y;\mathbb{Z})$ (this is discussed
in the first three pages of \cite{MR1474156}).

We are interested in the situation when $X=\mathbb{R}^{+}\times-Y$
and so the affine space $H^{2}(\mathbb{R}^{+}\times-Y;Y;\mathbb{Z})$
reduces automatically to a singleton, so regardless of how the map
\[
\jmath:\text{Spin}^{c}(\mathbb{R}^{+}\times-Y,\xi)\rightarrow\text{Spin}^{c}(\mathbb{R}^{+}\times-Y\cup W,\xi')
\]
 is defined, it will automatically be injective. In fact, the definition
of such a map is not difficult to give: as we already mentioned the
contact structure $\xi$ gives rise to a canonical spinor model $S_{\xi}$
on $Y$ (also on $-Y$, \cite[Section 22.5]{MR2388043}) and on $\mathbb{R}^{+}\times-Y$
\cite[Section 4.3]{MR2388043}.

This canonical spinor bundle model over $\mathbb{R}^{+}\times-Y$
represents the (unique) isomorphism class $\mathfrak{s}(\mathbb{R}^{+}\times-Y,\xi)$
of relative spin-c structure inside $\text{Spin}^{c}(\mathbb{R}^{+}\times-Y,\xi)$.
We define $\jmath\left[\mathfrak{s}(\mathbb{R}^{+}\times-Y,\xi)\right]$
by specifying a relative spin-c structure over $\mathbb{R}^{+}\times-Y\cup W$
as follows.

Using the symplectic structure on $W$ we have a canonical spinor
bundle $S_{\omega}$ as well. This induces spinor bundles on $\partial W$
as explained in Section 4.5 of \cite{MR2388043}. Since the symplectic
structure is specified near the boundary by the corresponding contact
structure because of the strong condition in our cobordism it is not
difficult to identify in this way $S_{\xi}$ with $S_{\omega}\mid_{-Y}$
and hence we produce a total spinor bundle over $(\mathbb{R}^{+}\times-Y)\cup W$,
which is representing $\jmath\left[\mathfrak{s}(\mathbb{R}^{+}\times-Y,\xi)\right]$
(more details can be found in the author's thesis cited before). 

Another way to explain why the special condition was needed in the
paper \cite{MR2199446} is to say that $X$ could have interesting
topology, so there was an obstruction problem when trying to extend
certain data defined on the complement of $X$ (for example gauge
transformations) to the entire manifold. However, in our case these
obstructions disappear since we have replaced $X$ with a half-cylinder.

\subsection{6.2 Connected Sum Along $Y$}

\ 

We will now adapt the gluing/ excision theorem in \cite{MR2199446}
to our situation. More precisely we want an analogue of their corollary
3.2.2. The following construction is based on sections 4.1 and 2.1.5
from that paper. There they proved a gluing result for a class of
manifolds with a so called \textit{AFAK end $Z$, }that is, an asymptotically
flat almost Kahler end, the idea being that this class of manifolds
behave sufficiently nice near the symplectic end so all the necessary
analysis goes through. We recall the definition of an AFAK end, which
is Definition 2.1.2 of \cite{MR2199446}.
\begin{defn}
\textbf{\label{def:AKAK end }($AFAK$ end} \textbf{manifold) }An
asymptotically flat almost Kahler end is a manifold $Z$ which admits
a decomposition $C_{Z}\cup_{Y}N$, where $N$ is a not necessarily
compact 4-dimensional manifold, with contact boundary $Y$, endowed
with a fixed contact form $\theta$, and $C_{Z}=(0,T]\times Y$ for
some $T>0$.

In addition, $Z$ is endowed with an almost Kahler structure $(\omega_{Z},J_{Z})$
and a proper function $\sigma_{Z}:Z\rightarrow(0,\infty)$ satisfying:

$a)$ On $(0,T]\times Y\subset Z$ we have $\sigma_{Z}(t,y)=t$.

$b)$ The almost Kahler structure on $C_{Z}$ is the one of an almost
Kahler cone on $(Y,\theta)$.

$c)$ There is a constant $\kappa>0$, such that the injectivity radius
satisfies $\kappa\text{inj}(x)>\sigma(x)$ for all $x\in Z$.

$d)$ For each $x\in Z$, let $e_{x}$ be the map $e_{x}:v\rightarrow\exp_{x}(\sigma_{Z}(x)v/\kappa)$
, and $\gamma_{x}$ be the metric on the unit ball in $T_{x}Z$ defined
as $e_{x}^{*}\gamma_{x}/\sigma_{Z}(x)^{2}$. Then these metrics have
bounded geometry in the sense that all covariant derivatives of the
curvature are bounded by some constants independent of $x$.

$e)$ For each $x\in Z$ , let $o_{x}$ denote the symplectic form
$e_{x}^{*}\omega_{Z}/\sigma_{Z}(x)^{2}$ on the unit ball, then $o_{x}$
similarly approximates the translation invariant symplectic form,
with all its derivatives.

$f)$ For all $\epsilon>0$, the function $e^{-\epsilon\sigma_{Z}}$
is integrable on $Z$.

$g)$ The map induced by the inclusion $Y=\partial N\subset N$ ,
$H_{c}^{1}(N)\rightarrow H^{1}(Y)$ where $H_{c}^{*}(N)$ is the compactly
supported de Rham cohomology, is identically $0$.
\end{defn}

The important things that we need to point out regarding this definition
is that:

$\bullet$ The last condition $g)$ regarding the vanishing of map
between de Rham cohomologies mimics the special condition for a symplectic
cobordism that we already discussed before. Therefore, in our context
this condition is not needed.

$\bullet$ To our cobordism $(W,\omega)$ one can associate an AFAK
end $(Z,\omega_{Z})$ as explained in section 4.1 of \cite{MR2199446}
. We can simplify their construction in our case because our cobordism
is strong so in fact we can exploit the fact that near the convex
end $\omega$ is also determined by a symplectization of the contact
structure. We start be using a collar neighborhood $[T_{0},T_{1})\times Y$
of $Y\subset\partial W$ (with $T_{0}>1$) and a contact form $\theta$
such that the symplectic form $\omega$ near the concave end of that
neighborhood is given by $\frac{1}{2}d(t^{2}\theta)$. We then glue
a sharp cone on the boundary $Y$ by extending the collar neighborhood
into $(0,T_{1})\times Y$ with its symplectic form. Likewise, we have
a similar collar neighborhood near the convex end and we can therefore
glue (after some reparameterizations) the half-infinite cone $[1,\infty)\times Y'$
with the symplectic form $\frac{1}{2}d(t'^{2}\theta')$ where $t'$
denotes the time coordinate on $[1,\infty)\times Y'$. Therefore we
take $Z$ to be
\[
Z=((0,T_{0})\times Y)\cup W\cup\left([1,\infty)\times Y'\right)
\]
Moreover, we can find a ``time coordinate'' $\sigma_{Z}$ on $Z$
as they described in our definition (\ref{def:AKAK end }), i.e, properties
$a),b),c),d),e)$ are satisfied (in fact, after reparametrization
in can be taken to agree with the natural time coordinate on the third
factor $[1,\infty)\times Y'$ of $Z$).

$\bullet$ Notice in particular that our symplectic form $\omega_{Z}$
has the property that it is exact except for a compact set (which
is contained in $W$). Hence the class of manifolds we are using could
be called \textit{AFAKAE ends} (where \textit{AE} stands for almost
exact) but for convenience we will keep calling this manifold an AFAK
end. After choosing a metric $g_{Z}$ and almost complex structure
$J_{Z}$ on $Z$ so that $\omega_{Z}$ is self-dual and of pointwise
norm $\sqrt{2}$ the data $(Z,\omega_{Z},J_{Z},g_{Z},\sigma_{Z})$
will represent an AFAK end with the caveats mentioned above.

This is the class of manifolds to which the generalized excision/gluing
theorem will apply, though the theorem will only be used for this
particular $Z$. The idea will be to glue $Z$ to the cylindrical
end $\mathbb{R}^{+}\times-Y$ using an operation that Mrowka and Rollin
named \textbf{connected sum along $Y$}.

To be more precise, consider as before the symplectic cone $[1,\infty)\times Y$
for the contact form $\theta$ with metric 
\[
g_{K,\theta}=dt\otimes dt+t^{2}g_{\theta}
\]
 and symplectic form 
\[
\omega_{\theta}=\frac{1}{2}d(t^{2}\theta)
\]

Choose a number $\tau>1$ \footnote{It goes without saying that the $\tau$ is completely unrelated to
the $\tau$ used in the $\tau$-model of the configuration space. } and identify an annulus $(1,\tau)\times Y$ in $[1,\infty)\times Y$
with an annulus $(1/\tau,1)\times Y\subset Z$ using the dilation
map 
\begin{align*}
(1,\tau)\times Y & \overset{\nu_{\tau}}{\longrightarrow}(1/\tau,1)\times Y\\
(t,y) & \rightarrow(t/\tau,y)
\end{align*}

Define $M_{\tau}$ as the union of $\left(\mathbb{R}^{+}\times-Y\right)\cup[1,\tau)\times Y$
and $Z\cap\{\sigma_{Z}>1/\tau\}$\textbf{ }
\begin{equation}
M_{\tau}=(\left(\mathbb{R}^{+}\times-Y\right)\cup[1,\tau)\times Y))\cup\left(Z\cap\{\sigma_{Z}>1/\tau\}\right)\label{manifolds Mtau}
\end{equation}
glued along the previous annuli via the dilation map $\nu_{\tau}$.

\begin{figure}[H]
\begin{centering}
\includegraphics[scale=0.5]{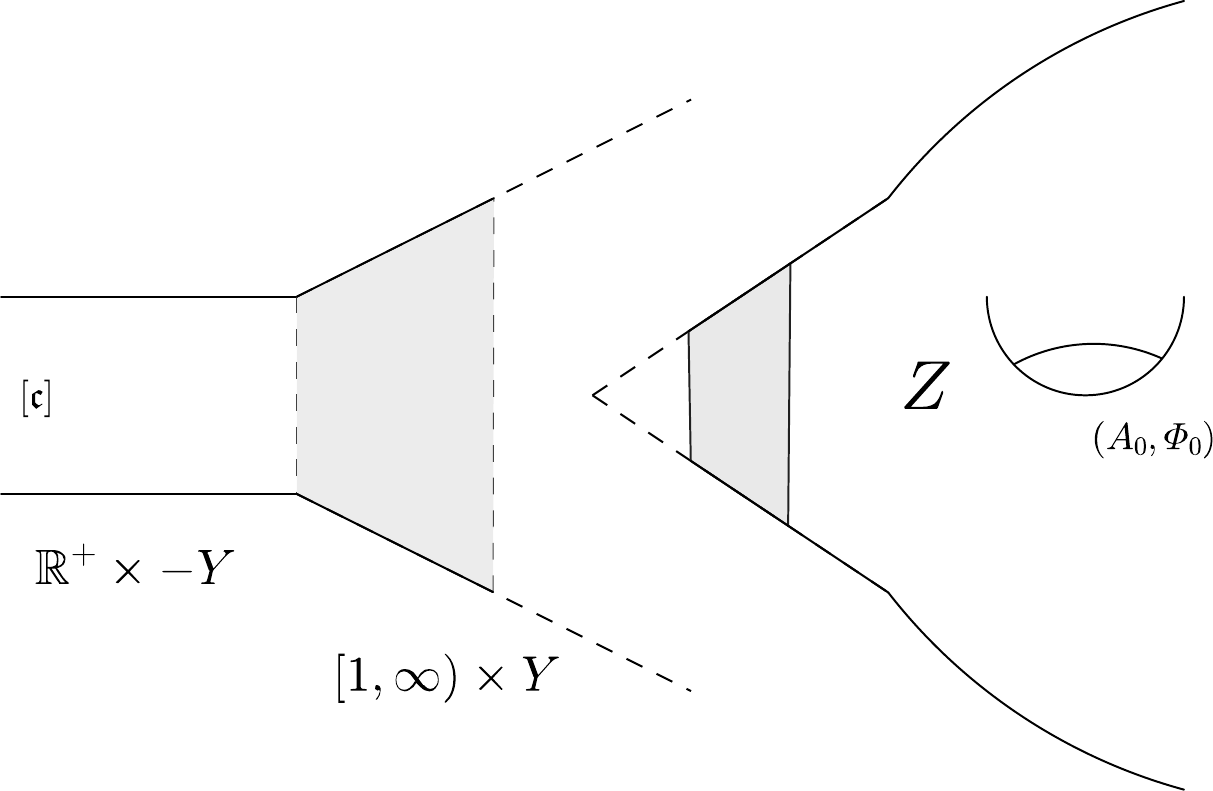}
\par\end{centering}
\caption{\label{fig: Mtau}Using the ``connected sum along $Y$'' operation
to obtain the family of manifolds $M_{\tau}$}
\end{figure}

In the figure, the gray regions represent the annuli that are identified
and the dashed regions are the parts of the cone and $Z$ that are
taken off in the construction. We need to say how to redefine the
geometric structures we had in place (metric, symplectic form, etc)
so that they agree under the identification operation. The symplectic
form can be taken as 
\[
\omega_{Z,\tau}=\tau^{2}\omega_{Z}
\]
and the new ``time coordinate'' becomes 
\[
\sigma_{\tau,Z}=\tau\sigma_{Z}
\]
The metric is a dilation of the original metric, that is,
\[
g_{\tau,Z}=\tau^{2}g_{Z}
\]
 In this way, with respect to $g_{\tau,Z}$, $\omega_{Z,\tau}$ is
self-dual with norm $\sqrt{2}$. As usual, $g_{Z}$ and $\omega_{Z}$
determine a compatible almost complex structure $J_{Z,\tau}$ which
in fact is independent of $\tau$, i.e,
\[
J_{Z,\tau}=J_{Z}
\]
The natural Clifford multiplication is 
\[
\rho_{\tau,Z}(\eta)=\frac{\rho_{Z}(\eta)}{\tau},\;\;\;\;\eta\text{ a one form }
\]
while the spinor bundle remains the same, i.e
\[
S_{\tau,Z}=S_{Z}
\]

We will specify the spin-c structure $\mathfrak{s}_{\tau}$ on $M_{\tau}$
as the isomorphism class of the following spinor bundle $(S_{\tau},\rho_{\tau})$: 

$\bullet$ On $\mathbb{R}^{+}\times-Y$ we use the spinor bundle $(S_{\mathbb{R}^{+}\times-Y},\rho_{\mathbb{R}^{+}\times-Y})$
that the canonical spinor bundle $(S_{\theta},\rho_{\theta})$ on
$Y$ induces on $\mathbb{R}^{+}\times-Y$. 

$\bullet$ Along the boundary, we identify\textbf{ }$(S_{\mathbb{R}^{+}\times-Y,\theta},\rho_{\mathbb{R}^{+}\times-Y,\theta})\mid_{Y}$
with $(S_{K_{Y}},\rho_{K_{Y}})\mid_{\{1\}\times Y}$ where $(S_{K_{Y}},\rho_{K_{Y}})$
denotes the canonical spinor bundle associated to the symplectic cone
$K_{Y}=[1,\infty)\times Y$.

$\bullet$ Over $M_{\tau}\cap\{\sigma_{\tau,Z}<\tau\}=M_{\tau}\cap\{\sigma_{Z}<1\}=M_{\tau}\cap\{(1,\tau)\times Y\}$
we use the spinor bundle $(S_{K_{Y}},\rho_{K_{Y}})$ . 

$\bullet$ Over $M_{\tau}\cap\{\sigma_{\tau,Z}>1\}=M_{\tau}\cap\{\sigma_{Z}>1/\tau\}$
we use the spinor bundle $(S_{\tau,Z},\rho_{\tau,Z})=\left(S_{\tau,Z},\frac{\rho_{Z}}{\tau}\right)$. 

To write the transition map from $(S_{K_{Y}},\rho_{K_{Y}})$ to $(S_{\tau,Z},\rho_{\tau,Z})$
over $M_{\tau}\cap\{1/\tau<\sigma_{Z}<1\}$ observe that if $e_{Y}^{1},e_{Y}^{2},e_{Y}^{3}$
is a coframe at the slice $\{1\}\times Y\simeq Y$ then $dt,te_{Y}^{1},te_{Y}^{2},te_{Y}^{3}$
is a coframe on $(1,\tau)\times Y\subset K_{Y}$ while $\tau dt,\tau te_{Y}^{1},\tau te_{Y}^{2},\tau te_{Y}^{3}$
is a coframe on $\{1/\tau<\sigma_{Z}<1\}\subset Z$. Therefore we
can define as $\bar{\epsilon}_{t}^{01}=\frac{1}{\sqrt{2}}(dt-ite_{Y}^{1})$,
$\bar{\epsilon}_{t}^{23}=\frac{t}{\sqrt{2}}(e_{Y}^{2}-ie_{Y}^{3})$
and the identification map 
\[
\begin{array}{c}
\mathfrak{G}_{\tau}:S_{K_{Y}}\rightarrow S_{\tau,Z}\\
\alpha_{K_{Y}}+\beta_{K_{Y}}\bar{\epsilon}_{t}^{01}\wedge\bar{\epsilon}_{t}^{23}\rightarrow\alpha_{K_{Y}}+\tau^{2}\beta_{K_{Y}}\bar{\epsilon}_{t}^{01}\wedge\bar{\epsilon}_{t}^{23}
\end{array}
\]
 
\begin{rem}
In the case of \cite{MR2199446} , their construction required (in
their notation) the choice of an element $(\mathfrak{s},h)\in\text{Spin}^{c}(M,\omega)$
\cite[Section 2.1.7]{MR2199446}. As we explained before, by using
a half infinite cylinder instead of a compact piece, all of our constructions
can be done in a canonical way, which is why our description is more
simple sense and we can drop the explicit reference to $h$.
\end{rem}

Our (unperturbed) Seiberg Witten map continues to be 
\[
\mathfrak{F}(A,\varPhi)=\left(\frac{1}{2}\rho(F_{A^{t}}^{+})-(\varPhi\varPhi^{*})_{0},D_{A}\varPhi\right)
\]
To define the perturbations, write the half-infinite cylinder as 
\[
\mathbb{R}^{+}\times-Y=\left([0,1]\times-Y\right)\cup\left([1,\infty)\times-Y\right)
\]
where $[0,1]\times-Y$ is going to play the role of a trivial cobordism.
By that we simply mean that the perturbations we use on $[0,1]\times-Y$
are of the form $\hat{\mathfrak{p}}=\beta\hat{\mathfrak{q}}+\beta_{0}\hat{\mathfrak{p}}_{0}$
where $\hat{\mathfrak{p}}$ coincides near $\{1\}\times-Y$ with a
strongly tame perturbation $-\mathfrak{\hat{q}}_{Y,g_{\theta},\mathfrak{s}_{\xi}}$
on $[1,\infty)\times-Y$ and near $\{0\}\times-Y$ it vanishes. On
\[
Z_{\tau}=[1,\tau)\times Y\cup\left(Z\cap\{\sigma_{Z}>1/\tau\}\right)
\]
 consider the perturbation
\[
\mathfrak{p}_{Z_{\tau}}=-\frac{1}{2}\rho_{\tau}(F_{A_{0,\tau}^{t}}^{+})+(\varPhi_{\tau,0}\varPhi_{\tau,0}^{*})_{0}
\]
 where $(A_{0,\tau}^{t},\varPhi_{\tau,0})$ denotes the canonical
solution. Again, similar to the perturbation $\mathfrak{p}_{W_{\xi',Y}^{+}}$
defined in equation  (\ref{eq:glued perturbation}) we can produce
a perturbation 
\begin{equation}
\mathfrak{p}_{M_{\tau}}=-\mathfrak{\hat{q}}_{Y,g_{\theta},\mathfrak{s}_{\xi}}+(\beta\mathfrak{\hat{q}}_{Y,g_{\theta},\mathfrak{s}_{\xi}}+\beta_{0}'\hat{\mathfrak{p}}_{0})+\beta_{K}\mathfrak{p}_{Z_{\tau}}\label{perturbed tau}
\end{equation}
 It is also useful to think of the manifold $Z_{Y,\xi}^{+}$ {[}where
the contact invariant $\mathbf{c}(\xi)$ of $(Y,\xi)$ is defined{]}
as the manifold $M_{\tau}$ obtained by taking ``$\tau=\infty$''.
In other words, we will write 
\[
M_{\infty}\equiv Z_{Y,\xi}^{+}
\]
Notice that on this manifold we can also define a perturbation $\mathfrak{p}_{M_{\infty}}$
in exactly the same way as for $\mathfrak{p}_{M_{\tau}}$ (so it agrees
with $-\mathfrak{\hat{q}}_{Y,g_{\theta},\mathfrak{s}_{\xi}}$ on half-cylinder
$[1,\infty)\times-Y$, it agrees with $\mathfrak{p}_{K}$ on the cone
$[1,\infty)\times Y$ and it is interpolated between these two perturbations
on the finite cylinder $[0,1]\times-Y$ through a perturbation $\beta\mathfrak{\hat{q}}_{Y,g_{\theta},\mathfrak{s}_{\xi}}+\beta_{0}'\hat{\mathfrak{p}}_{0}$
). 

Our previous transversality Theorem (\ref{Theorem perturbations})
now reads as follows:
\begin{lem}
For all critical points $[\mathfrak{c}]\in\mathfrak{C}^{o}(-Y,\mathfrak{s}_{\xi})\oplus\mathfrak{C}^{s}(-Y,\mathfrak{s}_{\xi})$
and for each $0<\tau\leq\infty$ there is a residual subset $\mathcal{P}_{\tau}$
of the large space of perturbations $\mathcal{P}(Y,\mathfrak{s}_{\xi})$
such that for any $\mathfrak{p}_{\tau}\in\mathcal{P}_{\tau}$ the
corresponding perturbation $\mathfrak{p}_{M_{\tau}}$ satisfies the
property that all the moduli spaces $\mathcal{M}(M_{\tau},\mathfrak{s}_{\tau},[\mathfrak{c}],\mathfrak{p}_{M_{\tau}})$
are cut out transversely. 
\end{lem}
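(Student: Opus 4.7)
The plan is to reduce this transversality statement to essentially the same argument used in Theorem \ref{Theorem perturbations} for $W_{\xi',Y}^{+}$, since the manifolds $M_\tau$ (including $M_\infty = Z_{Y,\xi}^{+}$) admit exactly the same sort of fiber product decomposition. First I would set up, for each fixed $0<\tau\leq\infty$, the fiber product description of $\mathcal{M}(M_\tau,\mathfrak{s}_\tau,[\mathfrak{c}],\mathfrak{p}_{M_\tau})$ as intersection of restriction images of three Hilbert manifold moduli spaces: the $\tau$-model moduli space on $\mathbb{R}^{+}\times -Y$ asymptotic to $[\mathfrak{c}]$ (already established as a Hilbert manifold via Theorem 14.4.2 of \cite{MR2388043}), the moduli space on the finite cylinder $[0,1]\times -Y$ together with the appropriate interpolating perturbation piece, and the moduli space on $Z_\tau$ (which for $\tau<\infty$ is a compact-plus-truncated-cone piece and for $\tau=\infty$ is exactly the conical end $[1,\infty)\times Y$, whose Hilbert manifold structure was proven in Lemma \ref{Hilbert manifold}).

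Next I would verify the Fredholm property of the sum of linearized restriction maps $\mathcal{D}_{\rho[\gamma]}R$ at any $[\gamma]\in\mathcal{M}(M_\tau,\mathfrak{s}_\tau,[\mathfrak{c}],\mathfrak{p}_{M_\tau})$ by exactly the same eight-identity argument given in Lemma \ref{Lemma parametrix}: splitting the spectral projections $\pi_\mathfrak{b}$ and $(1-\pi_\mathfrak{b})$ associated to the three-manifold ends, noting that the compact/Fredholm assertions on the cone end were proved in Lemma \ref{Lemma patching} via the parametrix argument that glues the Mrowka--Kronheimer APS parametrix on the cylindrical half with the Kronheimer--Mrowka parametrix (from \cite[Theorem 3.3]{MR1474156}) on the conical closure. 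For $\tau=\infty$ the only modification is that the would-be gluing annulus degenerates, but the two parametrix pieces simply correspond to the two genuine ends of $M_\infty=Z_{Y,\xi}^{+}$, so the argument applies verbatim.

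Once Fredholmness is in hand, I would construct the universal (parametrized) moduli space $\mathcal{M}^{\mathrm{univ}}=\{(\mathfrak{p}_\tau,[\gamma]) : \mathfrak{F}_{\mathfrak{p}_{M_\tau}}[\gamma]=0\}$ sitting over $\mathcal{P}(Y,\mathfrak{s}_\xi)$, show it is a Banach manifold, and apply the Sard--Smale theorem to the projection $\mathcal{M}^{\mathrm{univ}}\to\mathcal{P}(Y,\mathfrak{s}_\xi)$; the residual set $\mathcal{P}_\tau$ is the set of regular values. The surjectivity of the universal linearization is the crux, and it is proven exactly as in \cite[Proposition 24.4.7]{MR2388043} via the unique continuation principle for the adjoint operator $Q_{(A,\Phi)}^{*}$, which is valid here because $\mathfrak{p}_K$ (and $\mathfrak{p}_{Z_\tau}$) involves no spinor component so the relevant equations still satisfy the hypotheses of Propositions 7.1.4 and 10.8.1 in \cite{MR2388043}.

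The main obstacle is the surjectivity statement for the universal linearization when $\tau=\infty$ (the conical case $M_\infty=Z_{Y,\xi}^{+}$): here the perturbation $\mathfrak{p}_0$ is supported on a bounded collar near the finite cylinder $[0,1]\times -Y$, so one must check that variations of $\mathfrak{p}_0$ supported there alone are enough to fill out the cokernel of $\mathcal{D}_{\rho[\gamma]}R$. This is exactly where unique continuation is used: any element of the cokernel satisfies an adjoint equation of the form $\tfrac{d}{dt}v+(L_0+h(t))v=0$ on $\mathbb{R}^{+}\times -Y$ that vanishes on an open set iff it vanishes on all of $M_\tau$, and this forces it to vanish identically. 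With surjectivity thus established, Sard--Smale yields the residual set of regular perturbations and the lemma follows.
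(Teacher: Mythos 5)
Your proposal follows essentially the same route as the paper, which states this lemma as a direct restatement of the earlier Theorem \ref{Theorem perturbations} and does not give an independent proof: fiber-product decomposition into Hilbert manifold pieces, Fredholmness of the summed restriction maps via the spectral-projection parametrix argument, and Sard--Smale over the space of collar-supported perturbations with surjectivity of the universal linearization secured by unique continuation. Your reconstruction is correct and fills in the details the paper elides; the only minor imprecision is in the bookkeeping of the fiber-product pieces (your first factor, the $\tau$-model on $\mathbb{R}^{+}\times -Y$, overlaps your second factor $[0,1]\times -Y$; the cleaner split is $[1,\infty)\times -Y$, the trivial cobordism $[0,1]\times -Y$ carrying the varying perturbation $\mathfrak{p}_{0}$, and $Z_{\tau}$), but this does not affect the argument.
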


When we study the properties of the gluing map it will become clear
that we want to be able to choose a single perturbation $\mathfrak{p}_{all}$
such that when we plug it in the formula for $\mathfrak{p}_{M_{\tau}}$
it guarantees transversality \textit{simultaneously} for all moduli
spaces $\mathcal{M}(M_{\tau},\mathfrak{s}_{\tau},[\mathfrak{c}],\mathfrak{p}_{M_{\tau}})$.
In other words, we would like to be able to choose a perturbation
$\mathfrak{p}_{all}\in\bigcap_{0<\tau\leq\infty}\mathcal{P}_{\tau}$.
However, notice that without further restrictions $\bigcap_{0<\tau\leq\infty}\mathcal{P}_{\tau}$
might be empty.

Fortunately, since we are ultimately interested in the case when $\tau$
is sufficiently large we can choose an increasing sequence $\tau_{n}$
with $\tau_{n}\rightarrow\infty$ and then use the fact that the countable
intersection of residual sets is residual \cite[Theorem 1.4]{MR584443}
so that $\left(\cap_{n}\mathcal{P}_{\tau_{n}}\right)\cap\mathcal{P}_{\infty}$
is residual as well. In particular this means that we can take $\mathfrak{p}_{all}\in\left(\cap_{n}\mathcal{P}_{\tau_{n}}\right)\cap\mathcal{P}_{\infty}$
, which we will assume from now on.

Strictly speaking, since we will work with an additional family $M_{\tau}'$
obtained by using another connected sum operation with another AFAK
end $Z'$ we should really take $\mathfrak{p}_{all}\in\left(\cap_{n}\mathcal{P}_{\tau_{n}}\right)\cap\mathcal{P}_{\infty}\cap\left(\cap_{n}\mathcal{P}_{\tau_{n}}^{\prime}\right)$,
where $\mathcal{P}_{\tau_{n}}^{\prime}$ denotes the residual space
of perturbations for the manifold $M_{\tau}'$. However, for the proof
of the gluing theorem we will end up taking $Z'=(0,\infty)\times Y$
(as in section 4.1 of \cite{MR2199446}), in which case one can check
that \textit{all} the $M_{\tau}'$ end up coinciding with $Z_{Y,\xi}^{+}=M_{\infty}$.
Hence, this point does not make much of a difference. Also, for notational
convenience, we will keep writing the moduli spaces typically as $\mathcal{M}(M_{\tau},\mathfrak{s}_{\tau},[\mathfrak{c}])$
instead of $\mathcal{M}(M_{\tau_{n}},\mathfrak{s}_{\tau_{n}},[\mathfrak{c}])$.

\subsection{6.3 Gluing Map}

\ 

Our main objective in this section is to adapt Theorem 3.1.9 in \cite{MR2199446}
to our situation. First we need to define a pre-gluing map that allows
us to compare solutions in the moduli spaces corresponding to the
manifolds $M_{\tau}$ and $M_{\tau}^{\prime}$. This will then be
promoted to an actual gluing map which basically says that once $\tau$
becomes sufficiently large the Seiberg-Witten solutions on $M_{\tau}$
are in bijective correspondence with the Seiberg-Witten solutions
on $M_{\tau}'$ (the precise statement is Theorem (\ref{fig:Hybrid invariant})). 

As can be seen from Figure (\ref{fig: Mtau}), one should think of
the manifolds $M_{\tau}$ as being diffeomorphic versions of the manifold
$W_{\xi',Y}^{+}$ described in Figure (\ref{fig:Hybrid invariant}).
The moduli space of Seiberg-Witten equations over each of the $M_{\tau}$
gives rise to a ``$\tau$-hybrid'' invariant $c(\xi',Y,\tau)\in\check{C}_{*}(-Y,\mathfrak{s}_{\xi})$,
but a standard deformation of metrics and perturbations argument which
is explained at the end of the paper tells us that in fact they all
define the same homology class $\mathbf{c}(\xi',Y,\tau)=\mathbf{c}(\xi',Y)$,
where the right hand side denotes our original ``hybrid'' invariant.
On the other hand, when we take $Z'=(0,\infty)\times Y$, the resulting
manifolds $M_{\tau}'$ agree with $Z_{Y,\xi}^{+}$ as mentioned at
the end of the previous section. Therefore, from the moduli space
of Seiberg-Witten equations over $M_{\tau}'$ we obtain the ordinary
contact invariant $\mathbf{c}(\xi)$ and then the our gluing argument
will imply that these two invariants agree.

We write $(M_{\tau},g_{\tau},\omega_{\tau},J_{\tau},\sigma_{\tau})$
and $(M_{\tau}',g_{\tau}',\omega_{\tau}',J_{\tau}',\sigma_{\tau}')$
to make explicit the data required in our construction. Notice that
on the domains $\{\sigma_{\tau}\leq\tau\}\subset M_{\tau}$ and $\{\sigma_{\tau}'\leq\tau\}\subset M_{\tau}'$
all the previous structures agree (including the spinor bundles and
the canonical solutions). In fact, we can regard these regions as
subsets of $Z_{Y,\xi}^{+}$. 

Let $(A,\varPhi)$ be a solution of the Seiberg-Witten equations on
$M_{\tau}$. We want to transport $(A,\varPhi)$ into an approximate
solution on $M_{\tau}'$. 

First we need to construct a spinor bundle $S_{(A,\varPhi)}'$ associated
to $(A,\varPhi)$ on $M_{\tau}'$. To be more precise, the isomorphism
class of the spin-c structure is independent of the solution $(A,\varPhi)$
that we use, but the particular model will depend on the solution
since it will be used to define a transition function.

Lemma (\ref{C0 control}) in the Appendix states that we can find
a compact set $C$ with the following significance: for every $\tau$
large enough and for every solution to the Seiberg-Witten equations
on $M_{\tau}$ we have $|\alpha|\geq\frac{1}{2}$ on $M_{\tau}\backslash[(\mathbb{R}^{+}\times-Y)\cup C]$
(recall that $\varPhi=(\alpha,\beta)$ and that the paper \cite{MR2199446}
writes it instead as $(\beta,\gamma)$). We may write $C$ as $C=[1,T]\times Y\subset Z_{Y,\xi}^{+}$
where $T$ is large enough and independent of $\tau$ and the solution
$(A,\varPhi)$. \textbf{From now on we will assume that $\tau$ is
chosen so that it is larger than $T$}.

For $\tau>T$, we construct the spinor bundle $S_{(A,\varPhi)}'$
on $M_{\tau}'$ as follows (\cite{MR2199446} named this spinor bundle
$S_{(A,\varPhi)}$): 
\begin{enumerate}
\item Over the region $M_{\tau}'\cap\{\sigma_{\tau}'\leq\tau\}\subset Z_{Y,\xi}^{+}$,
we use the spinor bundle $S_{\xi}$ determined by $\xi$. Over the
region $M_{\tau}'\cap\{\sigma_{\tau}'\geq T\}$, we use the spinor
bundle determined by the almost complex structure $J_{\tau}'$, i.e,
$S_{J_{\tau}^{\prime}}'$. In other words 
\[
S_{(A,\varPhi)}^{\prime}=\begin{cases}
S_{\xi} & \text{over }M_{\tau}'\cap\{\sigma_{\tau}'\leq\tau\}\\
S_{J_{\tau}'}^{\prime} & \text{over }M_{\tau}'\cap\{\sigma_{\tau}'\geq T\}
\end{cases}
\]
\item To specify what happens over the annulus $\{T\leq\sigma_{\tau}'\leq\tau\}$
define the map (gauge transformation) \footnote{Here we do not use the notation $h'_{(A,\varPhi)}$ that can be found
in \cite{MR2199446} , since our isomorphism $h$ is already canonical
and so there is no need to distinguish $h'_{(A,\varPhi)}$ from $h_{(A,\varPhi)}$.}
\[
\begin{array}{c}
h_{(A,\varPhi)}:M_{\tau}\backslash[(\mathbb{R}^{+}\times-Y)\cup C]\rightarrow S^{1}\\
h_{(A,\varPhi)}=\frac{|\alpha|}{\alpha}
\end{array}
\]
Identifying both of $S_{\xi}$ and $S'_{J_{\tau}'}$ canonically with
$\underline{\mathbb{C}}\oplus\varLambda_{2}^{+}$, then the transition
map $S_{\xi}\rightarrow S'_{J_{\tau}'}$ becomes multiplication by
$h_{(A,\varPhi)}$.
\end{enumerate}

\begin{figure}[H]
\begin{centering}
\includegraphics[scale=0.5]{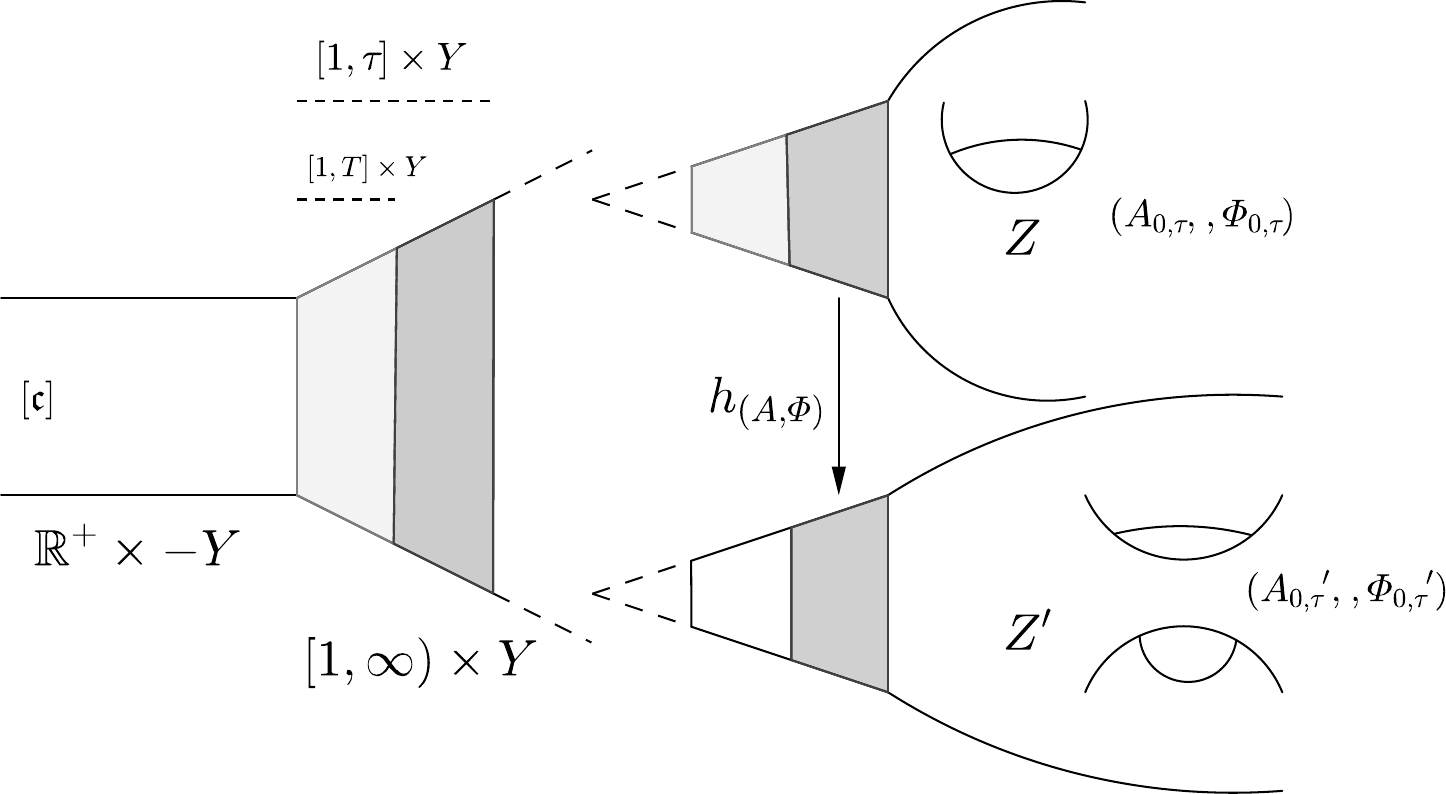}
\par\end{centering}
\caption{\label{fig:Defining-the-spinor}Defining the spinor bundle $S_{(A,\varPhi)}^{\prime}$
over $M_{\tau}'$.}
\end{figure}

Notice that if $u\in\mathcal{G}(M_{\tau})$ then $h_{u\cdot(A,\varPhi)}=u^{-1}h_{(A,\varPhi)}$
and so we can easily build an isomorphism 
\[
u^{\#}:S_{(A,\varPhi)}^{\prime}\rightarrow S_{u\cdot(A,\varPhi)}^{\prime}
\]
 Our next job is to construct a configuration on the spinor bundle
$S_{(A,\varPhi)}'$ over $M_{\tau}'$. For this we recall a family
of cut-off functions described in section 2.2.1 of \cite{MR2199446}
. Let $\chi(t)$ be a smooth decreasing function such that 
\[
\chi(t)=\begin{cases}
0 & t\geq1\\
1 & t\leq0
\end{cases}
\]
and define 
\[
\chi_{\tau}(t)=\chi\left(\frac{t-\tau}{N_{0}}+1\right)=\begin{cases}
0 & t\geq\tau\\
1 & t\leq\tau-N_{0}
\end{cases}
\]
 where $N_{0}$ is a number that is fixed later to control the derivatives
of $\chi_{\tau}$. With the help of this function define $S_{(A,\varPhi)}'$
as follows:

$\bullet$ On the region $M_{\tau}'\cap\{\sigma_{\tau}'<\tau\}$ we
can identify the structures on $M_{\tau}$ with $M_{\tau}'$ and so
$(A,\varPhi)\mid_{M_{\tau}\cap\{\sigma_{\tau}<\tau\}}$ defines a
configuration on $S_{(A,\varPhi)}^{\prime}\mid_{M_{\tau}'\cap\{\sigma_{\tau}'\leq\tau\}}$.

$\bullet$ On the region $M_{\tau}'\cap\{\sigma_{\tau}'\geq T\}$
we can write $\varPhi$ as a pair $(\alpha,\beta)$ and $A$ as $A=A_{0,\tau}+a$
so if we regard $h_{(A,\varPhi)}$ as a gauge transformation (the
one that is used to transition on the annulus $M_{\tau}'\cap\{T\leq\sigma_{\tau}'\leq\tau\}$)
then
\begin{align*}
 & h_{(A,\varPhi)}\cdot(A,\varPhi)\\
= & h_{(A,\varPhi)}\cdot(A_{0,\tau}+a,(\alpha,\beta))\\
= & \left(A_{0,\tau}+a-\frac{\alpha}{|\alpha|}d\left(\frac{|\alpha|}{\alpha}\right),\left(|\alpha|,\frac{|\alpha|}{\alpha}\beta\right)\right)\\
\equiv & \left(A_{0,\tau}+\hat{a},(\hat{\alpha},\hat{\beta})\right)
\end{align*}
Notice that $\hat{\alpha}$ is a real function with $\hat{\alpha}\geq\frac{1}{2}$.
Therefore we define on $M_{\tau}'\cap\{\sigma_{\tau}'\geq T\}$ 
\[
(A,\varPhi)^{\#}\equiv(A_{0,\tau}'+(\chi_{\tau}\circ\sigma_{\tau}')\hat{a},(\hat{\alpha}^{\chi_{\tau}\circ\sigma_{\tau}'},(\chi_{\tau}\circ\sigma_{\tau}')\hat{\beta}))
\]

$\bullet$ On the end $\{\sigma_{\tau}'\geq\tau\}$ we set 
\[
(A,\varPhi)^{\#}=(A_{0,\tau}',\varPhi_{0,\tau}')
\]

Since the construction is compatible with the gauge group action in
the sense that 
\[
u^{\#}\cdot(A,\varPhi)^{\#}=(u\cdot(A,\varPhi))^{\#}
\]
 we have constructed our \textbf{pre-gluing map}
\begin{equation}
\#:\mathcal{M}(M_{\tau},\mathfrak{s}_{\tau},[\mathfrak{c}])\rightarrow(\mathcal{C}/\mathcal{G})(M_{\tau}')\label{PREgluing map}
\end{equation}

Lemma (\ref{exponential decay SW map}) of the Appendix guarantees
the following exponential decay estimate: there is a $\delta>0$ and
$T$ large enough such that for every $N_{0}\geq1,k\in\mathbb{N}$
, $\tau$ satisfying $\tau\geq T+N_{0}$ and every solution $(A,\varPhi)$
of the Seiberg-Witten equations on $M_{\tau}$, we have that $(A,\varPhi)^{\#}$
satisfies the Seiberg-Witten equations on $\{\sigma_{\tau}'\leq T\}\subset M_{\tau}'$
and 
\begin{equation}
|\mathfrak{F}_{\mathfrak{p}_{M_{\tau}'}}(A,\varPhi)^{\#}|_{C^{k}(g_{\tau}',A^{\#})}\leq c_{k}e^{-\delta\sigma_{\tau}}\label{exponential decay approximation}
\end{equation}
 on $\{\sigma_{\tau}'\geq T\}\subset M_{\tau}'$.

Our objective now is to modify the pre-gluing map $\#:M(M_{\tau},\mathfrak{s}_{\tau},[\mathfrak{c}])\rightarrow(\mathcal{C}/\mathcal{G})(M_{\tau}')$
to obtain a \textbf{gluing map} (Theorem 3.1.9 \cite{MR2199446})
\[
\mathfrak{G}_{\tau}:\mathcal{M}(M_{\tau},\mathfrak{s}_{\tau},[\mathfrak{c}])\rightarrow\mathcal{M}(M_{\tau}',\mathfrak{s}_{\tau}',[\mathfrak{c}])
\]
We want to define $\mathfrak{G}_{\tau}$ at the level of configuration
spaces in such a way that is gauge equivariant. Our proposal is that
this map should decompose as 
\begin{equation}
\mathfrak{G}_{\tau}(A,\varPhi)=(A,\varPhi)^{\#}+\left(\mathcal{D}_{(A,\varPhi)^{\#}}\mathfrak{F}_{\mathfrak{p}_{M_{\tau}^{\prime}}}\right)^{*}(b',\psi')\label{decomposition glued map}
\end{equation}
 where $(b',\psi')\in L_{k,A}^{2}(i\mathfrak{su}(S_{\tau}^{\prime+})\oplus S_{\tau}^{\prime-})$
is the quantity that needs to be determined.

Here $\mathcal{D}_{(A,\varPhi)^{\#}}\mathfrak{F}_{\mathfrak{p}_{M_{\tau}^{\prime}}}$
denotes the linearization of the perturbed Seiberg-Witten map $\mathfrak{F}_{\mathfrak{p}_{M_{\tau}^{\prime}}}$.
In the old days of Seiberg-Witten theory, where only the curvature
equation was perturbed by some imaginary-valued self-dual two-form,
this linearized map $\mathcal{D}_{(A,\varPhi)^{\#}}\mathfrak{F}_{\mathfrak{p}_{M_{\tau}^{\prime}}}$
would coincide with the linearization of the \textit{unperturbed}
Seiberg-Witten map $\mathcal{D}_{(A,\varPhi)^{\#}}\mathfrak{F}$,
since the perturbations where independent of the configuration $(A,\varPhi)^{\#}$
being used. In fact, analyzing the formula (\ref{perturbed tau}),
we can see that the discrepancy between these two maps is due to the
(abstract) perturbations used on the cylindrical end, so to emphasize
this point we may sometimes write $\mathcal{D}_{\mathfrak{q},(A,\varPhi)^{\#}}\mathfrak{F}$
instead of the more precise notation $\mathcal{D}_{(A,\varPhi)^{\#}}\mathfrak{F}_{\mathfrak{p}_{M_{\tau}^{\prime}}}$. 

Recall that the perturbed Seiberg-Witten equation is 
\[
\mathfrak{F}_{\mathfrak{p}_{M_{\tau}'}}(A,\varPhi)=\mathfrak{F}(A,\varPhi)+\mathfrak{p}_{M_{\tau}'}(A,\varPhi)=0
\]

By definition, we want $\mathfrak{G}_{\mathfrak{\tau}}(A,\varPhi)$
to solve the previous equation which means that 
\[
\mathfrak{F}\mathfrak{G}_{\tau}(A,\varPhi)+\mathfrak{p}_{M_{\tau}'}\mathfrak{G}_{\mathfrak{\tau}}(A,\varPhi)=0
\]
and implicitly we want to think of the previous equation as depending
on $(b',\psi')$ when we write $\mathfrak{G}_{\tau}(A,\varPhi)$ in
an explicit way as in (\ref{decomposition glued map}) . In order
to write this equation in terms of $(b',\psi')$, one needs to perform
many tedious calculations which will be included in the author's thesis
but not here in order to simplify the exposition. These can be found
on pages $88$ through $96$ of \cite{Echeverria[Thesis]}.

To explain the end result, it is good to compare our calculation with
equation 3.2 in \cite{MR2199446}. Following the notation in section
3 of \cite{MR2199446}, we define first the ``\textbf{perturbed Seiberg-Witten
Laplacian}'' 
\begin{equation}
\triangle_{2,\mathfrak{q},(A,\varPhi)^{\#}}=\left(\mathcal{D}_{(A,\varPhi)^{\#}}\mathfrak{F}_{\mathfrak{q}}\right)\circ\left(\mathcal{D}_{(A,\varPhi)^{\#}}\mathfrak{F}_{\mathfrak{q}}\right)^{*}\label{Laplacian}
\end{equation}
and the quadratic map
\[
\mathcal{Q}(a,\phi)=(-(\phi\phi^{*})_{0},\rho(a)\phi)
\]
Finally, define the perturbation term 
\[
P(b',\psi')=\mathfrak{p}_{M_{\tau}'}(\mathfrak{G}(A,\varPhi))-\mathfrak{p}_{M_{\tau}'}(A,\varPhi)^{\#}-\left(\mathcal{D}_{(A,\varPhi)}\hat{\mathfrak{q}}\right)\circ\left(\mathcal{D}_{(A,\varPhi)^{\#}}\mathfrak{F}_{\mathfrak{q}}\right)^{*}
\]
One can then show the following: 
\begin{thm}
\label{GLUING EQUATION}The configuration $\mathfrak{G}_{\mathfrak{q}}(A,\varPhi)=(A,\varPhi)^{\#}+\left(\mathcal{D}_{\mathfrak{q},(A,\varPhi)^{\#}}\mathfrak{F}_{\mathfrak{q}}\right)^{*}(b',\psi')$
is a solution to the perturbed Seiberg Witten equations $\mathfrak{F}_{\mathfrak{p}_{M_{\tau}'}}\mathfrak{G}_{\mathfrak{q}}(A,\varPhi)=0$
if and only if
\begin{equation}
\triangle_{2,\mathfrak{q},(A,\varPhi)^{\#}}(b',\psi')+\mathcal{Q}\circ\left(\mathcal{D}_{(A,\varPhi)^{\#}}\mathfrak{F}_{\mathfrak{q}}\right)^{*}(b',\psi')+P(b',\psi')=-\mathfrak{F}_{\mathfrak{p}_{M_{\tau}'}}(A,\varPhi)^{\#}\label{NEW GLUING-1}
\end{equation}
\end{thm}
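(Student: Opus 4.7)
The plan is to carry out a direct algebraic calculation. I would set $v:=\left(\mathcal{D}_{(A,\varPhi)^{\#}}\mathfrak{F}_{\mathfrak{q}}\right)^{*}(b',\psi')$ so that $\mathfrak{G}_{\mathfrak{q}}(A,\varPhi)=(A,\varPhi)^{\#}+v$, split the perturbed map as $\mathfrak{F}_{\mathfrak{p}_{M_{\tau}'}}=\mathfrak{F}+\mathfrak{p}_{M_{\tau}'}$, and Taylor expand each piece around the approximate solution $(A,\varPhi)^{\#}$. The central observation is that for the unperturbed Seiberg--Witten map $\mathfrak{F}$ the Taylor expansion terminates at second order, because the only nonlinearities are the bilinear spinor pairing $-(\varPhi\varPhi^{*})_{0}$ in the curvature component and the Clifford action $\rho(a)\varPhi$ in the Dirac component. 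Expanding both formulas directly yields the exact identity
\[
\mathfrak{F}((A,\varPhi)^{\#}+v)=\mathfrak{F}((A,\varPhi)^{\#})+\mathcal{D}_{(A,\varPhi)^{\#}}\mathfrak{F}(v)+\mathcal{Q}(v),
\]
which is precisely why $\mathcal{Q}(a,\phi)=(-(\phi\phi^{*})_{0},\rho(a)\phi)$ was introduced: it records the entire quadratic remainder in a single explicit form.

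For the perturbation term I would reinterpret the defining formula for $P$ as the first-order Taylor identity
\[
\mathfrak{p}_{M_{\tau}'}(\mathfrak{G}_{\mathfrak{q}}(A,\varPhi))=\mathfrak{p}_{M_{\tau}'}((A,\varPhi)^{\#})+\left(\mathcal{D}_{(A,\varPhi)}\hat{\mathfrak{q}}\right)(v)+P(b',\psi').
\]
Adding the two expansions, the linear contributions from $\mathfrak{F}$ and from $\hat{\mathfrak{q}}$ combine into $\mathcal{D}_{(A,\varPhi)^{\#}}\mathfrak{F}_{\mathfrak{q}}(v)$, which by the choice of $v$ is exactly $\triangle_{2,\mathfrak{q},(A,\varPhi)^{\#}}(b',\psi')$. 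The remaining nonlinear pieces regroup as $\mathcal{Q}\circ(\mathcal{D}_{(A,\varPhi)^{\#}}\mathfrak{F}_{\mathfrak{q}})^{*}(b',\psi')+P(b',\psi')$, and setting the full expression equal to zero while moving the constant term $\mathfrak{F}_{\mathfrak{p}_{M_{\tau}'}}((A,\varPhi)^{\#})$ to the right-hand side yields the stated equation; conversely, any $(b',\psi')$ solving that equation produces a zero of $\mathfrak{F}_{\mathfrak{p}_{M_{\tau}'}}\mathfrak{G}_{\mathfrak{q}}(A,\varPhi)$ by reading the same steps backwards.

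No conceptual obstacle arises here. The difficulty is purely bookkeeping, namely keeping straight which linearization (with or without $\hat{\mathfrak{q}}$, and at which base point) is being invoked, so that the cross-terms absorbed into $P$ are precisely the ones left over after extracting $\triangle_{2,\mathfrak{q},(A,\varPhi)^{\#}}$. The author acknowledges that the honest expansion is tedious and refers to pages 88--96 of the thesis for the full computation; the \emph{content} of the theorem is simply that this formal manipulation can be carried out, recasting the nonlinear gluing problem as a fixed-point equation of Banach-space type, which is what the subsequent contraction-mapping argument requires.
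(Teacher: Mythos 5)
Your proposal is correct and matches the approach the paper defers to the thesis (pages 88--96): a direct Taylor expansion of $\mathfrak{F}_{\mathfrak{p}_{M_{\tau}'}}$ about $(A,\varPhi)^{\#}$, observing that the unperturbed map $\mathfrak{F}$ is exactly quadratic with remainder $\mathcal{Q}$, that $P$ is by definition the remainder of the perturbation $\mathfrak{p}_{M_{\tau}'}$ after subtracting its value and its linearization at $(A,\varPhi)^{\#}$, and that the two linear contributions recombine into $\mathcal{D}_{(A,\varPhi)^{\#}}\mathfrak{F}_{\mathfrak{q}}\circ(\mathcal{D}_{(A,\varPhi)^{\#}}\mathfrak{F}_{\mathfrak{q}})^{*}=\triangle_{2,\mathfrak{q},(A,\varPhi)^{\#}}$. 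The equivalence then follows by moving $\mathfrak{F}_{\mathfrak{p}_{M_{\tau}'}}(A,\varPhi)^{\#}$ to the right-hand side, exactly as you describe.
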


Notice that the term $P(b',\psi')$ is a new term that does not appear
in the usual linearization of the Seiberg Witten equations. This appears
solely due to the presence of the abstract perturbations used in \cite{MR2388043}.
To solve this equation we will need a sharp version of the contraction
mapping theorem. 

Namely, the basic idea is to define 
\[
V_{\mathfrak{q}}=\triangle_{2,\mathfrak{q},(A,\varPhi)^{\#}}(b',\psi')
\]
\textbf{Our intention is to show that $\triangle_{2,\mathfrak{q},(A,\varPhi)^{\#}}$
is invertible} so that if we define $S_{\mathfrak{q},(A,\varPhi)^{\#}}(V_{\mathfrak{q}})$
as 
\[
S_{\mathfrak{q},(A,\varPhi)^{\#}}(V_{\mathfrak{q}})\equiv-\mathcal{Q}\circ\left[\left(\mathcal{D}_{(A,\varPhi)^{\#}}\mathfrak{F}_{\mathfrak{q}}\right)^{*}\right]\left(\triangle_{2,\mathfrak{q},(A,\varPhi)^{\#}}^{-1}V_{\mathfrak{q}}\right)-P\left(\triangle_{2,\mathfrak{q},(A,\varPhi)^{\#}}^{-1}V_{\mathfrak{q}}\right)
\]
the gluing equation (\ref{NEW GLUING-1}) that we need to solve can
be written as 
\[
V_{\mathfrak{q}}=S_{\mathfrak{q},(A,\varPhi)^{\#}}(V_{\mathfrak{q}})-\mathfrak{F}_{\mathfrak{p}_{M_{\tau}^{\prime}}}(A,\varPhi)^{\#}
\]
The solution of this equation will be guaranteed once we shows the
hypothesis of Proposition 2.3.5 in \cite{MR2199446} are satisfied.
Therefore, we will show first that $\triangle_{2,\mathfrak{q},(A,\varPhi)^{\#}}$
is indeed invertible.

\subsection{6.4 Invertibility of $\triangle_{2,\mathfrak{q},(A,\varPhi)^{\#}}$ }

\ 

In this section we seek a version of Proposition 3.1.2 and Corollary
3.1.6 in \cite{MR2199446} , namely, we want to show that:
\begin{thm}
\label{thm invertibility triangle-1} For each $k\geq0$ there exists
a constant $c_{k}>0$ such that for every $\tau$ large enough, every
$N_{0}\geq1$ and every solution $(A,\varPhi)$ of the Seiberg-Witten
equations on $M_{\tau}$ belonging to the zero dimensional strata
of $\mathcal{M}(M_{\tau},\mathfrak{s}_{\tau},[\mathfrak{c}])$, the
operator 
\begin{align*}
\triangle_{2,\mathfrak{q},(A,\varPhi)^{\#}}:L_{k+2,A^{\#}}^{2}(M_{\tau}',g_{\tau}')\rightarrow L_{k,A^{\#}}^{2}(M_{\tau}',g_{\tau}')\\
(b',\psi')\rightarrow\left(\mathcal{D}_{\mathfrak{q},(A,\varPhi)^{\#}}\mathfrak{F}\right)\circ(\mathcal{D}_{\mathfrak{q},(A,\varPhi)^{\#}}\mathfrak{F})^{*}(b',\psi')
\end{align*}
is an isomorphism, and moreover, its inverse $\triangle_{2,\mathfrak{q},(A,\varPhi)^{\#}}^{-1}$
satisfies for all $(b',\psi')$
\[
c_{k}\|(b',\psi')\|_{L_{k+1}^{2}(g_{\tau}',A^{\#})}\geq\|\triangle_{2,\mathfrak{q},(A,\varPhi)^{\#}}^{-1}(b',\psi')\|_{L_{k+3}^{2}(g_{\tau}',A^{\#})}
\]
\end{thm}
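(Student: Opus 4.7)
The plan is to mimic the parametrix construction of Corollary 3.1.6 in \cite{MR2199446}, splicing together three model inverses corresponding to the three geometric pieces of $M_{\tau}'$: the cylindrical end $\mathbb{R}^{+}\times -Y$, the compact ``middle'' region, and the conical/AFAK end $\{\sigma_{\tau}'\geq T\}$. The key point is that the configuration $(A,\varPhi)^{\#}$ has been designed so that on the conical piece it agrees with (a gauge transform of) the canonical configuration $(A_{0,\tau}',\varPhi_{0,\tau}')$ up to a term with exponential decay (estimate \eqref{exponential decay approximation}), so that on that piece the Laplacian is a small perturbation of the canonical Laplacian for which Mrowka--Rollin already established uniform invertibility with constants independent of $\tau$. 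On the cylindrical end, $(A,\varPhi)^{\#}$ is asymptotic to the irreducible or boundary stable critical point $[\mathfrak{c}]$, so the operator $\triangle_{2,\mathfrak{q},(A,\varPhi)^{\#}}$ is modeled on $\partial_t^2 + (\mathrm{Hess}_{\mathfrak{q}}^{\sigma})^2$ (plus lower order), which is a positive, invertible elliptic operator with uniform bounds once the perturbation $\mathfrak{q}$ is chosen so that $\mathrm{Hess}_{\mathfrak{q}}^{\sigma}$ is hyperbolic at $[\mathfrak{c}]$; here the parametrix argument of Lemma~\ref{Lemma patching} applies with minor modifications.

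Concretely, I would proceed as follows. First, on the conical end $\{\sigma_{\tau}'\geq T\}$, invoke Proposition 3.1.2 of \cite{MR2199446}: for the canonical configuration on $Z_{\tau}'$ the Laplacian is invertible with norm controlled independently of $\tau$, and the exponential decay bound \eqref{exponential decay approximation} implies that $\triangle_{2,\mathfrak{q},(A,\varPhi)^{\#}}$ differs from the canonical Laplacian by an operator whose coefficients decay like $e^{-\delta\sigma_{\tau}'}$. Second, on the cylindrical end, construct the parametrix exactly as in Lemma~\ref{Lemma patching}, using the fact that the critical point $[\mathfrak{c}]$ is nondegenerate so the Hessian is invertible on the appropriate Coulomb slice. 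Third, on the compact middle region one uses a local parametrix from interior elliptic regularity. Patch these three using a partition of unity $(\eta_1,\eta_2,\eta_3)$ and auxiliary cutoffs $(\gamma_1,\gamma_2,\gamma_3)$ as in the proof of Lemma~\ref{Lemma patching}, producing an approximate inverse $P_{\tau}$ such that $\triangle_{2,\mathfrak{q},(A,\varPhi)^{\#}}\circ P_{\tau} = I + E_{\tau}$, where $E_{\tau}$ is supported on the overlap annuli where the cutoffs are differentiated.

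The main obstacle, and where care is needed, is controlling the error term $E_{\tau}$ uniformly as $\tau\to\infty$, together with handling the contribution from the abstract perturbation $\hat{\mathfrak{q}}$. The error $E_{\tau}$ has two sources: derivatives of the patching cutoffs (which are harmless because the overlap annuli are of fixed size in the rescaled metric, and the canonical Laplacian has uniformly bounded parametrix on each piece), and the discrepancy between $\triangle_{2,\mathfrak{q},(A,\varPhi)^{\#}}$ and its model on the conical end (controlled by \eqref{exponential decay approximation} combined with the $C^0$ bound in Lemma~\ref{C0 control}). For $\tau\geq T+N_0$ with $\tau$ sufficiently large and $N_0$ sufficiently large, both contributions make $\|E_{\tau}\|<1/2$, so $I+E_{\tau}$ is invertible by Neumann series and the true inverse $\triangle_{2,\mathfrak{q},(A,\varPhi)^{\#}}^{-1}=P_{\tau}(I+E_{\tau})^{-1}$ exists with operator norm bounded uniformly in $\tau$. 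The abstract perturbation contribution deserves special attention: the linearization $\mathcal{D}_{(A,\varPhi)^{\#}}\hat{\mathfrak{q}}$ is nonlocal, but since $\hat{\mathfrak{q}}$ is supported on the cylindrical piece and $\hat{\mathfrak{q}}$ is strongly tame, its linearization is a bounded map from $L^2_{k+1}$ to $L^2_k$ with norm independent of $\tau$; thus it is absorbed into $E_{\tau}$ on the cylindrical piece without spoiling the estimate.

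Finally, the $L^2_{k+3}\leftarrow L^2_{k+1}$ estimate for the inverse is obtained by elliptic bootstrapping: since $\triangle_{2,\mathfrak{q},(A,\varPhi)^{\#}}$ is a second order elliptic operator with coefficients of bounded geometry in the dilated metric $g_{\tau}'$, and since the abstract perturbation term is a bounded operator between all the relevant Sobolev spaces, the standard elliptic regularity inequality $\|u\|_{L^2_{k+3}}\leq C(\|\triangle u\|_{L^2_{k+1}}+\|u\|_{L^2})$ combined with the already established $L^2$-invertibility gives the desired bound $\|\triangle_{2,\mathfrak{q},(A,\varPhi)^{\#}}^{-1} v\|_{L^2_{k+3}}\leq c_k\|v\|_{L^2_{k+1}}$ with $c_k$ independent of $\tau$. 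The uniformity of $c_k$ in $\tau$ requires carefully tracking that all elliptic constants depend only on the bounded geometry data of $(M_{\tau}',g_{\tau}')$, which is part of the AFAK definition.
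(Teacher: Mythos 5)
Your proposal follows the Mrowka--Rollin parametrix philosophy, but it misses the key idea the paper introduces precisely to make this theorem work, and the concrete splicing scheme you propose would not close up.

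The central gap is the cylindrical piece. You propose to invert the model operator on $\mathbb{R}^+\times(-Y)$ using hyperbolicity of $\mathrm{Hess}_{\mathfrak{q}}^{\sigma}$ at the critical point $[\mathfrak{c}]$ together with the APS parametrix from Lemma~\ref{Lemma patching}. That lemma, however, gives only \emph{Fredholmness}, not invertibility: the translation-invariant model at $t=\infty$ does not control the kernel or cokernel of $\triangle_{2,\mathfrak{q},(A,\varPhi)}$ over the whole half-cylinder, where the coefficients are determined by the actual solution $(A,\varPhi)$ and not merely by its asymptotics. The paper's Lemma~\ref{Lemma inverse of Laplacians} obtains an honest inverse $\triangle_{2,\mathfrak{q},(A,\varPhi),cyl}^{-1}$ in a completely different way: regularity of the moduli space $\mathcal{M}(M_{\tau},\mathfrak{s}_{\tau},[\mathfrak{c}])$ at $[(A,\varPhi)]$ means $Q_{\mathfrak{q},(A,\varPhi)}$ is surjective and Fredholm on $M_{\tau}$, hence $\mathcal{D}_{(A,\varPhi)}\mathfrak{F}_{\mathfrak{q}}$ is surjective and $(\mathcal{D}_{(A,\varPhi)}\mathfrak{F}_{\mathfrak{q}})^{*}$ is injective, from which invertibility of the Laplacian follows formally. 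Notice this is also exactly where the hypothesis that $[(A,\varPhi)]$ lies in the \emph{zero-dimensional strata} does real work (later combined with compactness of those strata and gauge-equivariance of the norms to get a $\tau$-independent bound); in your write-up this hypothesis is stated but never actually used, and "bounded geometry of $(M_{\tau}',g_{\tau}')$" is not enough to uniformly bound an inverse whose norm could degenerate as $(A,\varPhi)$ varies.

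Two further problems. First, your three-piece splicing with a "local parametrix from interior elliptic regularity" on a compact middle region produces an error term that is smoothing but has no reason to be small, and it is independent of $\tau$; the Neumann-series step $\|E_{\tau}\|<1/2$ will simply fail for the middle contribution. The paper instead uses a \emph{two}-piece decomposition in which both models are on non-compact manifolds and both come with genuine inverses (the cylindrical one as above, the conical one by capping off with an almost-complex four-manifold $N$ to make $N_{\tau}^{+}$ and appealing to the argument in Section 3.3 of \cite{MR2199446}), so the only error is supported on a fixed overlap annulus $[1,T]\times Y$ and is $O(1/T)$ after compactness considerations. Second, your claim that the linearization of the abstract perturbation $\hat{\mathfrak{q}}$ "is absorbed into $E_{\tau}$ without spoiling the estimate" because it is bounded does not help: a bounded but not small term cannot be beaten by a Neumann series. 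In the paper the perturbation is not treated as an error term at all; it is built into the operator $\triangle_{2,\mathfrak{q},(A,\varPhi),cyl}$ whose invertibility is supplied wholesale by transversality, as above. Finally, the paper's Remark~\ref{Remark invertibility}(c) flags that the approach of Proposition 3.1.2 in \cite{MR2199446} cannot be mimicked globally, precisely because the abstract perturbations break the relation between $\triangle_{2}$ and $\square_{(A,\varPhi)^{\#}}$ via the almost-harmonicity of $\varPhi^{\#}$; your opening sentence proposes to do exactly what the paper says cannot be done.
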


Before proceeding we make a few clarifications:
\begin{rem}
\label{Remark invertibility}a) The norms used for the gluing arguments
are gauge equivariant norms, which depend on the configuration $(A,\varPhi)^{\#}$
being used, as can be seen from our use of subscript in the formulas
for the Sobolev spaces. 

b) Our hypothesis regarding the fact that the solution $[(A,\varPhi)]$
belongs to the $0$ dimensional strata of the moduli space $\mathcal{M}(M_{\tau},\mathfrak{s}_{\tau},[\mathfrak{c}])$
has to do with the fact that we will need to find uniform bounds which
we will depend (partly) on the norms of these solutions. Since we
are using gauge equivariant norms and the zero dimensional moduli
spaces $\mathcal{M}_{0}(M_{\tau},\mathfrak{s}_{\tau},[\mathfrak{c}])$
are compact, for a fixed $\tau$ there can only be finitely many terms
to worry about. Clearly, a priori the bounds that we get still depend
on the value of $\tau$ chosen, but we will see that a transversality
argument will help us control these quantities in a way that is $\tau$-independent.
It should be pointed out that this assumption regarding the zero dimensional
strata is not that different from the hypothesis used in other gluing
arguments. See for example Theorem 4.17 in \cite{MR1883043} (which
uses a compactness assumption as well) or Theorem 18.3.5 in \cite{MR2388043}
(which describes all small solutions of a moduli space).

c) The strategy that we will use to prove the invertibility of $\triangle_{2,\mathfrak{q},(A,\varPhi)^{\#}}$
differs from the one employed by \cite{MR2199446} mainly because
of the following reasons. The way \cite{MR2199446} controlled the
norm $\triangle_{2,\mathfrak{q},(A,\varPhi)^{\#}}$ was by first controlling
the norm a different operator $\square_{(A,\varPhi)^{\#}}=Q_{\mathfrak{q},(A,\varPhi)}\circ Q_{\mathfrak{q},(A,\varPhi)}^{*}$
(defined in the proof of Proposition 3.1.2) and then relating the
norms of these two operators through equation (3.6) in their paper.
However, these norms were only comparable because their equation (3.7),
which uses the fact that $D_{A^{\#}}\varPhi^{\#}$ is almost zero.
This was true in their case because the usual Seiberg-Witten equations
do not perturb the Dirac equation and since $(A,\varPhi)^{\#}$ is
very close to being a solution this means that $\varPhi^{\#}$ is
very close to being a harmonic spinor with respect to $D_{A^{\#}}$.
However, the abstract perturbations $\mathfrak{q}$ used in \cite{MR2388043}
do modify the Dirac equation, so any clear relationship between $\square_{(A,\varPhi)^{\#}}$
and $D_{A^{\#}}$ is lost. 
\end{rem}

The proof of this theorem will follow a splicing argument similar
to the one used in section 4.2.2 of \cite{Morgan-Mrowka[2014]} (or
section 4.4 in \cite{MR1883043}). Namely, we will separate the manifold
$M_{\tau}'$ into two pieces (see Figure (\ref{fig:Defining-the-spinor}))
and show the following:
\begin{lem}
\label{Lemma inverse of Laplacians}We can find two operators 
\[
\triangle_{2,\mathfrak{q},(A,\varPhi),cyl}^{-1}:L^{2}(M_{\tau})\rightarrow L_{2,A}^{2}(M_{\tau})
\]
and 
\begin{equation}
\triangle_{2,\widetilde{(A,\varPhi)},end}^{-1}:L^{2}(N_{\tau}^{+})\rightarrow L_{2,\tilde{A}}^{2}(N_{\tau}^{+})\label{inverse LAP}
\end{equation}
 with the following properties:

a) $\triangle_{2,\mathfrak{q},(A,\varPhi),cyl}^{-1}$ is the inverse
to the Seiberg-Witten ``Laplacian'' defined as in equation (\ref{Laplacian})
\[
\triangle_{2,\mathfrak{q},(A,\varPhi)}=\left(\mathcal{D}_{(A,\varPhi)}\mathfrak{F}_{\mathfrak{q}}\right)\circ\left(\mathcal{D}_{(A,\varPhi)}\mathfrak{F}_{\mathfrak{q}}\right)^{*}:L_{2,A}^{2}(M_{\tau})\rightarrow L^{2}(M_{\tau})
\]

b) Likewise, there is a Seiberg-Witten Laplacian
\[
\triangle_{2,(A,\varPhi)^{\#},end}=\left(\mathcal{D}_{(A,\varPhi)^{\#}}\mathfrak{F}\right)\circ\left(\mathcal{D}_{(A,\varPhi)^{\#}}\mathfrak{F}\right)^{*}:L_{2,A}^{2}(M_{\tau}'\backslash(\mathbb{R}^{+}\times-Y))\rightarrow L^{2}(M_{\tau}'\backslash(\mathbb{R}^{+}\times-Y))
\]
which can be extended to an invertible Seiberg-Witten Laplacian 
\[
\triangle_{2,\widetilde{(A,\varPhi)},end}=\left(\mathcal{D}_{\widetilde{(A,\varPhi)}}\mathfrak{F}_{\eta}\right)\circ\left(\mathcal{D}_{\widetilde{(A,\varPhi)}}\mathfrak{F}_{\eta}\right)^{*}:L_{2,\tilde{A}}^{2}(N_{\tau}^{+})\rightarrow L^{2}(N_{\tau}^{+})
\]
where \textup{$N_{\tau}^{+}$} is a capped-off version of $M_{\tau}'\backslash(\mathbb{R}^{+}\times-Y)$
defined in the proof of this lemma.

\end{lem}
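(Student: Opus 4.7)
The plan is to construct the two inverses separately on domains where the corresponding Laplacian is genuinely invertible, so that later they can be spliced into a parametrix on $M_\tau'$ as in the splicing strategy of Section 4.2.2 of \cite{Morgan-Mrowka[2014]}. The two situations are geometrically distinct: on $M_\tau$ one works with a genuine perturbed Seiberg-Witten solution and invokes transversality, while on the capped-off end $N_\tau^+$ one works with a nearly-canonical configuration on an AFAK-type space and exploits Weitzenböck positivity.

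For part (a), the configuration $(A,\varPhi)$ belongs to the zero-dimensional stratum of $\mathcal{M}(M_\tau,\mathfrak{s}_\tau,[\mathfrak{c}])$, and because the perturbation lies in the residual set $\mathfrak{p}_{\text{all}} \in (\cap_n \mathcal{P}_{\tau_n}) \cap \mathcal{P}_\infty$ selected earlier, this moduli space is cut out transversely at $(A,\varPhi)$. Equivalently, $\mathcal{D}_{(A,\varPhi)}\mathfrak{F}_\mathfrak{q}$ is surjective onto the Coulomb slice of the target; by gauge invariance of the Seiberg-Witten map, which forces its linearization to vanish on infinitesimal gauge transformations, surjectivity onto the slice equals full surjectivity. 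Combined with the Fredholm property, which I would establish by a parametrix construction essentially identical to Lemma \ref{Lemma patching} (patching a half-cylinder parametrix for $-Y$ coming from Theorem 17.1.4 of \cite{MR2388043} with a conical parametrix on the symplectic end coming from Theorem 3.3 of \cite{MR1474156}), surjectivity of $\mathcal{D}\mathfrak{F}_\mathfrak{q}$ immediately forces trivial kernel of the self-adjoint operator $\triangle_{2,\mathfrak{q},(A,\varPhi)} = \mathcal{D}\mathfrak{F}_\mathfrak{q}(\mathcal{D}\mathfrak{F}_\mathfrak{q})^*$, hence invertibility, which defines $\triangle_{2,\mathfrak{q},(A,\varPhi),cyl}^{-1}$.

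For part (b), I would cap off the boundary created in $M_\tau' \setminus (\mathbb{R}^+ \times -Y)$ by gluing a standard almost-Kähler piece along $\{0\}\times(-Y)$, producing an AFAK-type manifold $N_\tau^+$ in the sense of Definition \ref{def:AKAK end }. The configuration $(A,\varPhi)^{\#}$ is extended across the cap using the canonical pair $(A_0,\varPhi_0)$ via a cut-off construction analogous to the pre-gluing map \ref{PREgluing map}. The auxiliary perturbation $\eta$ is introduced precisely to absorb the residual discrepancy, so that $\widetilde{(A,\varPhi)}$ becomes an exact solution of $\mathfrak{F}_\eta$ on $N_\tau^+$ and the analysis reduces to the clean situation of Mrowka and Rollin. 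Invertibility of $\triangle_{2,\widetilde{(A,\varPhi)},end}$ then follows from the positivity argument of Section 2 of \cite{MR2199446} and Theorem 3.3 of \cite{MR1474156}: Lemma \ref{C0 control} provides the $\tau$-independent lower bound $|\widetilde{\varPhi}|^2 \geq 1/4$ on the symplectic portion of $N_\tau^+$, which supplies the positive zeroth-order term in the relevant Weitzenböck inequality and rules out $L^2$ null vectors.

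The main obstacle, as I expect, is obtaining bounds on both inverses that are uniform in $\tau$, since the manifolds $M_\tau$ and $N_\tau^+$ themselves change with $\tau$. For part (a) this follows from compactness of the zero-dimensional strata along the sequence $\tau_n$ together with the fact that $\mathfrak{p}_{\text{all}}$ was chosen simultaneously in every $\mathcal{P}_{\tau_n}$, so the transversality constants stay under control. For the harder part (b), the metric rescaling $g_{\tau,Z} = \tau^2 g_Z$ means one must verify that the positivity constant in the Weitzenböck inequality and the constants in the elliptic estimate do not degenerate as $\tau \to \infty$; this is ensured by the bounded-geometry axioms (c)--(e) of Definition \ref{def:AKAK end } together with the rescaling properties of the canonical configuration $(A_{0,\tau},\varPhi_{0,\tau})$. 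Finally, the subtlety flagged in Remark \ref{Remark invertibility}(c), namely that the abstract perturbation $\mathfrak{q}$ modifies the Dirac equation and therefore $D_{A^\#}\widetilde{\varPhi}$ is no longer nearly zero, is handled by observing that this contribution is supported in the fixed region $[0,1]\times(-Y)$, well away from the AFAK end where the positivity is being used, and can thus be absorbed as a lower-order perturbation of an already invertible operator.
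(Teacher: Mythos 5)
Your treatment of part (a) matches the paper's argument essentially step for step: transversality of the moduli space at the solution gives surjectivity of $Q_{\mathfrak{q},(A,\varPhi)}$, hence of $\mathcal{D}_{(A,\varPhi)}\mathfrak{F}_{\mathfrak{q}}$; the Fredholm property gives injectivity of the adjoint; and these two facts give invertibility of the Laplacian. Part (b), however, contains a misconception that breaks your argument.

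You describe $\eta$ as an "auxiliary perturbation... introduced precisely to absorb the residual discrepancy, so that $\widetilde{(A,\varPhi)}$ becomes an exact solution of $\mathfrak{F}_\eta$." This is not what $\eta$ is, and the consequence you draw is false. The form $\eta$ is the Lee--Gauduchon (torsion) form of the almost complex cap $N$, characterized by $d\omega_N=\omega_N\wedge\eta$; it is a fixed geometric invariant of $N$, not a tunable parameter, and in general the interpolated configuration $\widetilde{(A,\varPhi)}$ is \emph{not} a solution of $\mathfrak{F}_\eta$ on $N_\tau^+$. The configuration that \emph{is} a solution is the globally defined canonical pair $\widetilde{(A_{0,\tau}',\varPhi_{0,\tau}')}$, and the paper's route is to (i) show that the moduli space is unobstructed at this canonical solution --- so that $Q_{\eta,(\tilde A_{0,\tau}',\tilde\varPhi_{0,\tau}')}$ and its adjoint are both invertible, hence $\triangle_{2,\widetilde{(A_{0,\tau}',\varPhi_{0,\tau}')},\text{end}}$ is invertible --- and then (ii) observe that $\widetilde{(A,\varPhi)}$ differs from $\widetilde{(A_{0,\tau}',\varPhi_{0,\tau}')}$ only on a region $[T/4,\tau]\times Y$ where, thanks to the exponential decay from Lemma \ref{exponential decay SW map}, the difference is as small as one likes once $T$ is large, so invertibility persists by openness in the operator norm. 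Trying instead to run a Weitzenb\"ock positivity argument directly at the cut-off configuration $\widetilde{(A,\varPhi)}$, as you propose, would require controlling the derivatives of the interpolation (the $\chi_\tau$ cut-offs) inside the Weitzenb\"ock identity, which is considerably harder and is precisely what the openness trick avoids.

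Separately, your final paragraph attributes the difficulty of Remark \ref{Remark invertibility}(c) to the operator $\triangle_{2,\widetilde{(A,\varPhi)},\text{end}}$, saying the $\mathfrak{q}$-contribution "is supported in the fixed region $[0,1]\times(-Y)$." But that region lies on the cylindrical end, which has been \emph{removed} from $M_\tau'\backslash(\mathbb{R}^+\times -Y)$ before capping off; the end-operator carries no abstract $\mathfrak{q}$-perturbation at all (note the absence of $\mathfrak{q}$ in the displayed formulas for $\triangle_{2,(A,\varPhi)^\#,\text{end}}$ and $\triangle_{2,\widetilde{(A,\varPhi)},\text{end}}$). The difficulty with $\mathfrak{q}$ raised in Remark \ref{Remark invertibility}(c) concerns the full operator $\triangle_{2,\mathfrak{q},(A,\varPhi)^\#}$ on $M_\tau'$ and is handled at the splicing stage, not inside part (b).
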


\begin{rem}
As we will explain in the proof, $\widetilde{(A,\varPhi)}$ denotes
an extension of $(A,\varPhi)^{\#}$ to the manifold $N_{\tau}^{+}$,
which agrees with $(A,\varPhi)^{\#}$ on the region $Z_{\tau}\backslash\left([1,T/2)\times Y\right)$.
Because of this we will write in the following lemmas $\triangle_{2,\widetilde{(A,\varPhi)},end}^{-1}$
as $\triangle_{2,(A,\varPhi)^{\#},end}^{-1}$.

Also, our construction of $\triangle_{2,\widetilde{(A,\varPhi)},end}$
is not very sharp since we are using this operator as a proxy for
a parametrix argument that will be important in the next section.

\end{rem}

\begin{proof}
We will divide $M_{\tau}'$ into two regions, each of which is the
natural location where each of the operators from the statement of
the lemma can be considered.

\textbf{$\bullet$ The unperturbed region} $(\mathbb{R}^{+}\times-Y)\cup[1,T]\times Y$
{[}\textbf{construction of }$\triangle_{2,\mathfrak{q},(A,\varPhi),cyl}^{-1}${]}:

This refers to the region where $(A,\varPhi)^{\#}=(A,\varPhi)$, that
is, the solution $(A,\varPhi)$ was not modified. Notice that this
includes the cylinder $\mathbb{R}^{+}\times-Y$ and the section of
the cone $[1,T]\times Y$ and we can use the fact that the moduli
space on $(M_{\tau},g_{\tau})$ is regular to conclude that $Q_{\mathfrak{q},(A,\varPhi)}$
is surjective on $M_{\tau}$ \cite[Def. 14.5.6]{MR2388043}. Using
that $Q_{\mathfrak{q},(A,\varPhi)}=\mathcal{D}_{(A,\varPhi)}\mathfrak{F}_{\mathfrak{q}}\oplus\mathbf{d}_{(A,\varPhi)}^{*}$
we conclude that $\mathcal{D}_{(A,\varPhi)}\mathfrak{F}_{\mathfrak{q}}$
must be surjective as well.

Now, since $Q_{\mathfrak{q},(A,\varPhi)}$ is a Fredholm operator
we can easily see that $Q_{\mathfrak{q},(A,\varPhi)}^{*}$ must be
injective. Moreover $Q_{\mathfrak{q},(A,\varPhi)}^{*}=\left(\mathcal{D}_{(A,\varPhi)}\mathfrak{F}_{\mathfrak{q}}\right)^{*}\oplus\mathbf{d}_{(A,\varPhi)}$
so $\left(\mathcal{D}_{(A,\varPhi)}\mathfrak{F}_{\mathfrak{q}}\right)^{*}$
must be injective as well.

In particular, it is not difficult to check that because of this $\triangle_{2,\mathfrak{q},(A,\varPhi)}$
will be invertible as an operator on $M_{\tau}$. To emphasize that
we care about this operator when applied to sections supported on
the unperturbed region (which contains the cylinder) we will write
the inverse as a map 
\[
\triangle_{2,\mathfrak{q},(A,\varPhi),cyl}^{-1}:L^{2}(M_{\tau})\rightarrow L_{2,A}^{2}(M_{\tau})
\]

\textbf{$\bullet$ The perturbed region $M_{\tau}'\backslash(\mathbb{R}^{+}\times-Y)$
{[}construction of $\triangle_{2,\widetilde{(A,\varPhi)},end}^{-1}${]}:}

This refers to the region where $(A,\varPhi)^{\#}$ and $(A,\varPhi)$
do not necessarily agree. Notice that this includes the region $[1,\tau]\times Y$
together with remaining piece of the $AFAK$ end $Z'$. Moreover $\triangle_{2,\mathfrak{q},(A,\varPhi)^{\#}}=\triangle_{2,(A,\varPhi)^{\#}}$
on this part of the manifold.

We now follow section 3.3 in \cite{MR2199446}, which uses a similar
idea to what we are about to do, albeit for dealing with the orientability
of the moduli spaces.

Recall that we can find an almost complex manifold $N$ with boundary
$\partial N=Y$ \cite[Lemma 4.4]{MR1668563}. We can glue this piece
to $M_{\tau}'\backslash(\mathbb{R}^{+}\times-Y)$ along the cone $[1,\tau]\times Y$
to obtain a manifold $N_{\tau}^{+}$ without boundary and with an
AFAK end (so these are precisely the class of manifolds that were
studied in \cite{MR1668563}.

In particular, since an almost complex structure now exists globally
on $N_{\tau}^{+}$ we can talk about the canonical pair $(A_{0,\tau}',\varPhi_{0,\tau}')$
globally, which we will denote as $\widetilde{(A_{0,\tau}',\varPhi'_{0,\tau}})$.
This configuration is almost a solution to the Seiberg-Witten equations
\[
\begin{cases}
\frac{1}{2}\rho(F_{A^{t}}^{+})-(\varPhi\varPhi^{*})_{0}=\frac{1}{2}\rho(F_{\tilde{A}_{0,\tau}^{\prime}}^{+})-(\tilde{\varPhi}_{0,\tau}^{\prime}\tilde{\varPhi}_{0,\tau}^{\prime*})_{0}\\
D_{A}\varPhi=0
\end{cases}
\]
on $N_{\tau}^{+}$. The only reason why we say almost is that on the
almost complex manifold $N$ we attached, the nondegenerate two form
$\omega_{N}$ induced by the metric and almost complex structure on
$N$ may not give rise to a symplectic form. In general, there is
torsion (a Lee-Gauduchon) form $\eta_{N}$ characterized by 
\[
d\omega_{N}=\omega_{N}\wedge\eta
\]
In particular, $\widetilde{(A_{0,\tau}',\varPhi'_{0,\tau}})$ will
be a solution to the equations
\[
\begin{cases}
\frac{1}{2}\rho(F_{A^{t}}^{+})-(\varPhi\varPhi^{*})_{0}=\frac{1}{2}\rho(F_{\tilde{A}_{0,\tau}^{\prime}}^{+})-(\tilde{\varPhi}_{0,\tau}^{\prime}\tilde{\varPhi}_{0,\tau}^{\prime*})_{0}\\
D_{A}\varPhi=\frac{1}{4}\rho(\eta)\varPhi
\end{cases}
\]
on $N_{\tau}^{+}$. Associated to these Seiberg-Witten equations,
there is a corresponding operator $Q_{\eta,(\tilde{A}_{0,\tau}^{\prime},\tilde{\varPhi}_{0,\tau}^{\prime})}$.
The discussion in \cite[Section 3.3.1]{MR2199446} in fact shows that
the moduli space of solutions is unobstructed at $\widetilde{(A_{0,\tau}',\varPhi'_{0,\tau}})$,
that is, both of the operators $Q_{\eta,(\tilde{A}_{0,\tau}^{\prime},\tilde{\varPhi}_{0,\tau}^{\prime})}$
and $Q_{\eta,(\tilde{A}_{0,\tau}^{\prime},\tilde{\varPhi}_{0,\tau}^{\prime})}^{*}$
are invertible. Hence, as in the argument from the first part of this
lemma we can conclude that the corresponding operator 
\[
\triangle_{2,\widetilde{(A_{0,\tau}',\varPhi'_{0,\tau}}),end}=\left(\mathcal{D}_{\widetilde{(A_{0,\tau}',\varPhi'_{0,\tau}})}\mathfrak{F}_{\eta}\right)\circ\left(\mathcal{D}_{\widetilde{(A_{0,\tau}',\varPhi'_{0,\tau}})}\mathfrak{F}_{\eta}\right)^{*}:L_{2,\tilde{A}_{0,\tau}^{\prime}}^{2}(N_{\tau}^{+})\rightarrow L^{2}(N_{\tau}^{+})
\]
 is invertible.

Now we explain what to do with the other configurations $(A,\varPhi)^{\#}$.
We define $\widetilde{(A,\varPhi)}$ as an extension to $N_{\tau}^{+}$
in such a way that:

a) $\widetilde{(A,\varPhi)}$ agrees with $\widetilde{(A_{0,\tau}',\varPhi'_{0,\tau}})$
on the region $N\cup\left([1,T/4]\times Y\right)$.

b) $\widetilde{(A,\varPhi)}$ agrees with $(A,\varPhi)^{\#}$ on the
region $N_{\tau}^{+}\backslash\left(N\cup\left([1,T/2)\times Y\right)\right)=Z_{\tau}\backslash\left([1,T/2)\times Y\right)$.

c) $\widetilde{(A,\varPhi)}$ interpolates between $\widetilde{(A_{0,\tau}',\varPhi'_{0,\tau}})$
and $(A,\varPhi)^{\#}$ on the remaining region $[T/4,T/2]\times Y$.

In particular, since $(A,\varPhi)^{\#}$ also agrees $\widetilde{(A_{0,\tau}',\varPhi'_{0,\tau}})$
whenever $t>\tau$, we find that with $\widetilde{(A,\varPhi)}$ differs
from $\widetilde{(A_{0,\tau}',\varPhi'_{0,\tau}})$ at most on the
region $[T/4,\tau]\times Y$ . Since being an invertible operator
is an open condition, if we wanted to show that the corresponding
operator $\triangle_{2,(\tilde{A},\tilde{\varPhi}),end}$ is invertible,
it would suffice to show that it is close in the operator norm to
$\triangle_{2,\widetilde{(A_{0,\tau}',\varPhi'_{0,\tau}}),end}$.

In fact, one can compare in a very explicit fashion both operators
(we do something very similar in the next lemma for example hence
we omit some of the details for now), so we just need to have chosen
$T$ from the beginning of our gluing construction in section $6.3$
in such a way that this norm condition is satisfied, so that $\triangle_{2,(\tilde{A},\tilde{\varPhi}),end}$
ends up being invertible (again, a similar argument appears in the
next lemma).
\end{proof}
\begin{lem}
\label{Uniform parametrix}For any solution $(A,\varPhi)$ to the
Seiberg Witten equations on the manifold $M_{\tau}$, the operator
$\triangle_{2,\mathfrak{q},(A,\varPhi)^{\#}}$ is a Fredholm operator
on $M_{\tau}'$.
\end{lem}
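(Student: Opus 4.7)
The plan is to establish Fredholmness by the classical parametrix method, using Lemma \ref{Lemma inverse of Laplacians} to furnish local inverses on two overlapping pieces of $M_{\tau}'$ and splicing them with cutoff functions. Concretely, decompose $M_{\tau}'$ as $U_1 \cup U_2$, where $U_1 = (\mathbb{R}^{+}\times-Y)\cup([1,3T/4)\times Y)$ is the ``unperturbed'' region (naturally sitting inside both $M_{\tau}$ and $M_{\tau}'$) and $U_2 = M_{\tau}'\setminus\left((\mathbb{R}^{+}\times-Y)\cup([1,T/2]\times Y)\right)$ is contained in $N_{\tau}^{+}$. On $U_1$ the operator $\triangle_{2,\mathfrak{q},(A,\varPhi)^{\#}}$ coincides with $\triangle_{2,\mathfrak{q},(A,\varPhi)}$ (since $(A,\varPhi)^{\#} = (A,\varPhi)$ there), and on $U_2$ it agrees with $\triangle_{2,\widetilde{(A,\varPhi)},end}$ (since the abstract perturbation $\hat{\mathfrak{q}}$ is supported only near the cylindrical end and $\widetilde{(A,\varPhi)}$ was defined to match $(A,\varPhi)^{\#}$ there).

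Next I would choose a partition of unity $1 = \eta_1 + \eta_2$ subordinate to $\{U_1,U_2\}$ together with auxiliary cutoffs $\gamma_i$ supported in $U_i$ and equal to $1$ on $\mathrm{supp}\,\eta_i$, with all cutoffs varying only in the collar $[T/2, 3T/4]\times Y$ (whose width is fixed independently of $\tau$). Define the candidate parametrix
\[
P : L^2_{k,A^{\#}}(M_{\tau}') \longrightarrow L^2_{k+2,A^{\#}}(M_{\tau}'), \qquad P(f) = \gamma_1 \,\triangle_{2,\mathfrak{q},(A,\varPhi),cyl}^{-1}(\eta_1 f) + \gamma_2\,\triangle_{2,(A,\varPhi)^{\#},end}^{-1}(\eta_2 f),
\]
interpreting the first summand as extended by zero outside $U_1$ and similarly for the second. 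The standard computation gives
\[
\triangle_{2,\mathfrak{q},(A,\varPhi)^{\#}} P - I = [\triangle_{2,\mathfrak{q},(A,\varPhi)^{\#}},\gamma_1]\,\triangle_{2,\mathfrak{q},(A,\varPhi),cyl}^{-1}\eta_1 + [\triangle_{2,\mathfrak{q},(A,\varPhi)^{\#}},\gamma_2]\,\triangle_{2,(A,\varPhi)^{\#},end}^{-1}\eta_2,
\]
and an analogous expression for $P\,\triangle_{2,\mathfrak{q},(A,\varPhi)^{\#}} - I$. Each commutator $[\triangle_{2,\mathfrak{q},(A,\varPhi)^{\#}},\gamma_i]$ is a first-order differential operator supported inside the compact collar $[T/2,3T/4]\times Y$, so the composition drops regularity by one but gains compact support; by Rellich, both error terms are compact operators between the relevant Sobolev spaces.

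The main technical obstacle is that the abstract perturbation $\mathfrak{q}$ of Kronheimer--Mrowka is nonlocal (it is built from cylindrical functions involving integrals over slices), so $[\triangle_{2,\mathfrak{q},(A,\varPhi)^{\#}},\gamma_i]$ is not literally a differential operator. However, $\hat{\mathfrak{q}}$ acts only on the half-cylinder $\mathbb{R}^{+}\times-Y$, which lies entirely in $U_1$, and the cutoffs $\gamma_i$ are constant on this region; therefore $[\hat{\mathfrak{q}},\gamma_i] = 0$ on the cylinder and the commutator reduces to the differential-operator case on the compact collar, where the argument above applies. The existence of a two-sided parametrix with compact error yields the Fredholm property. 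Moreover, because the collar has a fixed geometry independent of $\tau$ and we already have uniform control on the local inverses from Lemma \ref{Lemma inverse of Laplacians}, this construction will also be the input needed in the next section to prove the \emph{uniform} invertibility estimate of Theorem \ref{thm invertibility triangle-1}.
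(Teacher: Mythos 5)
Your proposal follows the same parametrix-and-splicing strategy as the paper: split $M_{\tau}'$ into an unperturbed cylinder-plus-collar piece and a cone/AFAK end piece, use the two local inverses from Lemma~\ref{Lemma inverse of Laplacians}, patch with cutoffs that vary only in a fixed collar independent of~$\tau$, and conclude compactness of the error via Rellich on that compact collar. The only cosmetic difference is the form of the parametrix --- you use $\gamma_i\triangle^{-1}\eta_i$ with $\eta_1+\eta_2=1$ and $\gamma_i\equiv 1$ on $\operatorname{supp}\eta_i$, whereas the paper uses the symmetric form $\eta\,\triangle^{-1}\eta$ with $\eta_{cyl}^2+\eta_{end}^2=1$; both yield the identity on summing and differ only in how the error commutators are organized. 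You also make explicit a point the paper treats somewhat implicitly: that the nonlocal abstract perturbation $\hat{\mathfrak{q}}$ lives entirely on $\mathbb{R}^{+}\times(-Y)$, where the cutoffs are constant, so the nonlocality never enters the commutator terms. This is a correct and worthwhile clarification.
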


\begin{proof}
We will introduce some cutoff functions that will allow us to follow
the usual splicing arguments: these will be denoted $\eta_{cyl}$
and $\eta_{end}$. They satisfy the following properties:

$\bullet$ $0\leq\eta_{cyl},\eta_{end}\leq1$ and $\eta_{cyl}^{2}+\eta_{end}^{2}=1$.

$\bullet$ $\eta_{cyl}$ is supported on the unperturbed region. Moreover,
$\eta_{cyl}\equiv1$ on a small neighborhood of the region $\mathbb{R}^{+}\times-Y\cup[1,T/2]\times Y$
. In particular, \textbf{the gradient of $\eta_{cyl}$ is supported
on the fixed region $[1,T]\times Y$.}

$\bullet$ $\eta_{end}$ is supported on the perturbed region. Moreover,
$\eta_{end}\equiv1$ on a small neighborhood of $([T,\tau]\times Y)\cup\{Z'\cap\{\sigma_{Z'}>1/\tau\}$.
In particular, \textbf{the gradient of $\eta_{end}$ is supported
on the fixed region $[1,T]\times Y$. }Notice that also $\eta_{end}$
will vanish in a small neighborhood of $\{T/2\}\times Y$, since $\eta_{cyl}$
is equal to $1$ near that slice.

$\bullet$ For $\eta=\eta_{cyl},\eta_{end}$ we have $|\nabla^{n}\eta|\leq\left(\frac{2}{T}\right)^{n}$
.

Our proto-inverse will be the operator 
\begin{align*}
\tilde{\triangle}_{2,\mathfrak{q},(A,\varPhi)^{\#}}^{-1}:L^{2}(M_{\tau}',g_{\tau}')\rightarrow L_{2}^{2}(M_{\tau}',g_{\tau}')\\
(b',\psi')\rightarrow\eta_{cyl}\triangle_{2,\mathfrak{q},(A,\varPhi),cyl}^{-1}[\eta_{cyl}(b',\psi')]+\eta_{end}\triangle_{2,(A,\varPhi)^{\#},end}^{-1}[\eta_{end}(b',\psi')]
\end{align*}

We will see that $\tilde{\triangle}_{2,\mathfrak{q},(A,\varPhi)^{\#}}^{-1}$
provides a parametrix for $\triangle_{2,\mathfrak{q},(A,\varPhi)^{\#}}$:
this is because 
\begin{align*}
 & \triangle_{2,\mathfrak{q},(A,\varPhi)^{\#}}\left[\tilde{\triangle}_{2,\mathfrak{q},(A,\varPhi)^{\#}}^{-1}(b',\psi')\right]\\
= & \triangle_{2,\mathfrak{q},(A,\varPhi)^{\#}}\left[\eta_{cyl}\triangle_{2,\mathfrak{q},(A,\varPhi),cyl}^{-1}[\eta_{cyl}(b',\psi')]+\eta_{end}\triangle_{2,(A,\varPhi)^{\#},end}^{-1}[\eta_{end}(b',\psi')]\right]\\
= & \left\{ \mathcal{D}_{(A,\varPhi)}\eta_{cyl},\triangle_{2,\mathfrak{q},(A,\varPhi),cyl}^{-1}[\eta_{cyl}(b',\psi')]\right\} +\eta_{cyl}^{2}(b',\psi')\\
 & +\left\{ \mathcal{D}_{(A,\varPhi)}\eta_{end},\triangle_{2,(A,\varPhi)^{\#},end}^{-1}[\eta_{end}(b',\psi')]\right\} +\eta_{end}^{2}(b',\psi')\\
 & +\left\{ \mathcal{D}_{(A,\varPhi)}\eta_{cyl},\mathcal{D}_{(A,\varPhi)}\left(\triangle_{2,\mathfrak{q},(A,\varPhi),cyl}^{-1}[\eta_{cyl}(b',\psi')]\right)\right\} +\left\{ \mathcal{D}_{(A,\varPhi)}\eta_{end},\mathcal{D}_{(A,\varPhi)}\left(\triangle_{2,(A,\varPhi)^{\#},end}^{-1}[\eta_{end}(b',\psi')]\right)\right\} \\
= & \left\{ \mathcal{D}_{(A,\varPhi)}\eta_{cyl},\triangle_{2,\mathfrak{q},(A,\varPhi),cyl}^{-1}[\eta_{cyl}(b',\psi')]\right\} +\left\{ \mathcal{D}_{(A,\varPhi)}\eta_{end},\triangle_{2,(A,\varPhi)^{\#},end}^{-1}[\eta_{end}(b',\psi')]\right\} +(b',\psi')\\
 & +\left\{ \mathcal{D}_{(A,\varPhi)}\eta_{cyl},\mathcal{D}_{(A,\varPhi)}\left(\triangle_{2,\mathfrak{q},(A,\varPhi),cyl}^{-1}[\eta_{cyl}(b',\psi')]\right)\right\} +\left\{ \mathcal{D}_{(A,\varPhi)}\eta_{end},\mathcal{D}_{(A,\varPhi)}\left(\triangle_{2,(A,\varPhi)^{\#},end}^{-1}[\eta_{end}(b',\psi')]\right)\right\} 
\end{align*}

Here the notation $\{\cdot,\cdot\}$ is used to indicate a bilinear
pointwise multiplication between some (higher order) derivatives of
the cutoff functions and the elements in the domain. Also, the notation
$\mathcal{D}_{(A,\varPhi)}\eta_{cyl}$ means that this expression
involves (higher order) derivatives of the perturbation (and a priori
the configuration $(A,\varPhi)$, but whose precise form is not important
to us.

Notice that the first and last two terms are supported on the compact
subset $[1,T]\times Y$, where $(A,\varPhi)^{\#}=(A,\varPhi)$. Also,
we dropped the dependence on $\mathfrak{q}$ for the derivatives $\mathcal{D}_{(A,\varPhi)}\eta_{\bullet}$
since this perturbation affects only the cylindrical region. To analyze
if there is any dependence of $\mathcal{D}_{(A,\varPhi)}\eta_{\bullet}$
on $(A,\varPhi)$, we will study $\mathcal{D}_{(A,\varPhi)}\eta_{cyl}$
since the other case is exactly the same. We need to compute
\begin{equation}
\triangle_{2,\mathfrak{q},(A,\varPhi)^{\#}}\left(\eta_{cyl}(b_{cyl},\psi_{cyl})\right)\label{computation derivatives}
\end{equation}
where we defined
\[
(b_{cyl},\psi_{cyl})\equiv\triangle_{2,\mathfrak{q},(A,\varPhi),cyl}^{-1}[\eta_{cyl}(b',\psi')]
\]
Notice that we may write 
\[
\triangle_{2,\mathfrak{q},(A,\varPhi)^{\#}}=\left(\mathcal{D}_{(A,\varPhi)^{\#}}\mathfrak{F}_{\mathfrak{q}}\right)\circ\left(\mathcal{D}_{(A,\varPhi)^{\#}}\mathfrak{F}_{\mathfrak{q}}\right)^{*}=\left(\mathcal{D}_{(A,\varPhi)^{\#}}\mathfrak{F}_{\mathfrak{}}\right)\circ\left(\mathcal{D}_{(A,\varPhi)^{\#}}\mathfrak{F}_{\mathfrak{}}\right)^{*}=\triangle_{2,(A,\varPhi)^{\#}}
\]
since we are only interested in computing (\ref{computation derivatives})
on the region $[1,T]\times Y$, where $\eta_{cyl}$ is not constant.
The advantage of using this unperturbed Seiberg-Witten 'Laplacian'
is that we can give an explicit formula for it based on the equations
(\ref{derivative SW gauge-1}) and (\ref{dual derivative SW}). We
find that for arbitrary $(b',\psi')$ 
\begin{align*}
\triangle_{2,(A,\varPhi)^{\#}}(b',\psi')= & \left(\mathcal{D}_{(A,\varPhi)^{\#}}\mathfrak{F}\right)\left((d^{+})^{*}\rho^{*}b'+\rho^{*}(\psi'(\varPhi^{\#})^{*}),D_{A^{\#}}^{*}\psi'-b'\varPhi^{\#}\right)\\
= & [\rho(d^{+}\left[(d^{+})^{*}\rho^{*}b'+\rho^{*}(\psi'(\varPhi^{\#})^{*})\right]-\left\{ \varPhi^{\#}\left(D_{A^{\#}}^{*}\psi'-b'\varPhi^{\#}\right)^{*}+\left(D_{A^{\#}}^{*}\psi'-b'\varPhi^{\#}\right)(\varPhi^{\#})^{*}\right\} _{0},\\
 & D_{A^{\#}}\left(D_{A^{\#}}^{*}\psi'-b'\varPhi^{\#}\right)+\rho\left((d^{+})^{*}\rho^{*}b'+\rho^{*}(\psi'(\varPhi^{\#})^{*})\right)\varPhi^{\#}]
\end{align*}
therefore $\triangle_{2,(A,\varPhi)^{\#}}\left[(\eta_{cyl}b_{cyl},\eta_{cyl}\psi_{cyl})\right]$
becomes 
\begin{align*}
 & [\rho(d^{+}\left[(d^{+})^{*}\rho^{*}\left(\eta_{cyl}b_{cyl}\right)+\rho^{*}(\eta_{cyl}\psi_{cyl}(\varPhi^{\#})^{*})\right]\\
- & \left\{ \varPhi^{\#}\left(D_{A^{\#}}^{*}\left(\eta_{cyl}\psi_{cyl}\right)-(\eta_{cyl}b_{cyl})\varPhi^{\#}\right)^{*}+\left(D_{A^{\#}}^{*}\left(\eta_{cyl}\psi_{cyl}\right)-(\eta_{cyl}b_{cyl})\varPhi^{\#}\right)(\varPhi^{\#})^{*}\right\} _{0}\\
+ & [\rho(d^{+}\left[(d^{+})^{*}\rho^{*}(\eta_{cyl}b_{cyl})+\rho^{*}(\left(\eta_{cyl}\psi_{cyl}\right)(\varPhi^{\#})^{*})\right]\\
- & \left\{ \varPhi^{\#}\left(D_{A^{\#}}^{*}\left(\eta_{cyl}\psi_{cyl}\right)-(\eta_{cyl}b_{cyl})\varPhi^{\#}\right)^{*}+\left(D_{A^{\#}}^{*}\left(\eta_{cyl}\psi_{cyl}\right)-(\eta_{cyl}b_{cyl})\varPhi^{\#}\right)(\varPhi^{\#})^{*}\right\} 
\end{align*}
Since the Dirac operator $D$ satisfies the Leibniz Rule \cite[Prop. 3.38]{MR2273508}
\[
D(\eta\psi)=\rho(d\eta)\psi+\eta D\psi
\]
the only derivatives of $\eta_{cyl}$ that appear are those involving
its exterior derivative, which is independent of the configuration
$(A,\varPhi)^{\#}$ that is used. A similar story is be true for $\eta_{end}$.
Since $\triangle_{2,\mathfrak{q},(A,\varPhi),cyl}^{-1}[\eta_{cyl}(b',\psi')]$
and $\triangle_{2,(A,\varPhi)^{\#},end}^{-1}[\eta_{end}(b',\psi')]$
are elements of $L_{2,A^{\#}}^{2}(M_{\tau}')$, our previous discussion
in fact tells us that the operator 
\begin{align*}
K_{(A,\varPhi)}:L^{2}([1,T]\times Y)\rightarrow & L^{2}([1,T]\times Y)\\
(b',\psi')\rightarrow & \left\{ \mathcal{D}\eta_{cyl},\triangle_{2,\mathfrak{q},(A,\varPhi),cyl}^{-1}[\eta_{cyl}(b',\psi')]\right\} +\left\{ \mathcal{D}\eta_{end},\triangle_{2,(A,\varPhi)^{\#},end}^{-1}[\eta_{end}(b',\psi')]\right\} \\
 & +\left\{ \mathcal{D}_{(A,\varPhi)}\eta_{cyl},\mathcal{D}_{(A,\varPhi)}\left(\triangle_{2,\mathfrak{q},(A,\varPhi),cyl}^{-1}[\eta_{cyl}(b',\psi')]\right)\right\} \\
 & +\left\{ \mathcal{D}_{(A,\varPhi)}\eta_{end},\mathcal{D}_{(A,\varPhi)}\left(\triangle_{2,(A,\varPhi)^{\#},end}^{-1}[\eta_{end}(b',\psi')]\right)\right\} 
\end{align*}
can in fact be regarded as an operator 
\[
K_{(A,\varPhi)}:L^{2}([1,T]\times Y)\rightarrow L_{2}^{2}([1,T]\times Y)
\]
 and using the compact inclusion $L_{2}^{2}([1,T]\times Y)\hookrightarrow L^{2}([1,T]\times Y)$
on a compact manifold we conclude that $K_{(A,\varPhi)}$ is a compact
operator. This provides the desired parametrix.
\end{proof}
The next lemma addresses the bounds on the norms of this parametrix.
\begin{lem}
\label{uniform parametrices}The norms of the parametrices can be
chosen in a uniform way. That is, that there exist a constant $C_{T}$
so that for all solutions $(A,\varPhi)$ we have that $\|K_{(A,\varPhi)}\|\leq\frac{C_{T}}{T}.$
In fact, one can find a constant $C_{\infty}$, independent of $T$
such that $\|K_{(A,\varPhi)}\|\leq\frac{C_{\infty}}{T}$.
\end{lem}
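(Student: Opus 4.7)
The plan is to exploit two ingredients already in place: first, the cut-off functions $\eta_{cyl},\eta_{end}$ were constructed so that $|\nabla^{n}\eta_{\bullet}|\leq (2/T)^{n}$; second, the partial inverses $\triangle_{2,\mathfrak{q},(A,\varPhi),cyl}^{-1}$ and $\triangle_{2,(A,\varPhi)^{\#},end}^{-1}$ from Lemma \ref{Lemma inverse of Laplacians} are bounded as operators from $L^{2}$ to $L_{2}^{2}$, and I claim their norms can be controlled independently of the solution $(A,\varPhi)$ and of the stretching parameter $\tau$ (for $\tau$ sufficiently large).

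First I would unpack the expression for $K_{(A,\varPhi)}$ derived in the proof of Lemma \ref{Uniform parametrix}. Every term there is a bilinear pointwise pairing of the form $\{\mathcal{D}^{\alpha}\eta_{\bullet},\triangle_{\bullet}^{-1}[\eta_{\bullet}(b',\psi')]\}$ with $|\alpha|\leq 2$, supported in the fixed region $[1,T]\times Y$. Using the pointwise bound $|\nabla^{n}\eta_{\bullet}|\leq (2/T)^{n}$ together with the fact that the image of $\triangle_{\bullet}^{-1}$ lands in $L_{2}^{2}$, each pairing contributes an $L^{2}\to L^{2}$ operator norm bounded by $(2/T)\,\|\triangle_{\bullet}^{-1}\|_{L^{2}\to L_{2}^{2}}$ (with the $|\alpha|=2$ terms giving an even better factor $(2/T)^{2}$). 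This is the mechanism that produces the $1/T$ factor.

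Next I would prove the uniform bounds on the two parametrix norms. For $\triangle_{2,\mathfrak{q},(A,\varPhi),cyl}^{-1}$, the transversality condition built into the choice $\mathfrak{p}_{all}\in(\cap_{n}\mathcal{P}_{\tau_{n}})\cap\mathcal{P}_{\infty}$ gives invertibility at every relevant solution, and compactness of the zero-dimensional stratum $\mathcal{M}_{0}(M_{\tau},\mathfrak{s}_{\tau},[\mathfrak{c}])$ gives a uniform bound for fixed $\tau$; uniformity in $\tau$ will then follow from the $C^{k}$ exponential decay of such solutions to the translation-invariant model away from a fixed compact set, so that the operator differs only by a compact, exponentially small perturbation from an operator with $\tau$-independent inverse. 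For $\triangle_{2,\widetilde{(A,\varPhi)},end}^{-1}$, I would argue as in Lemma \ref{Lemma inverse of Laplacians}: the canonical configuration $\widetilde{(A_{0,\tau}',\varPhi'_{0,\tau})}$ gives an invertible Laplacian with operator norm controlled by dilation-invariance of the AFAK geometry (Proposition 3.1.2 in \cite{MR2199446}), and the $C^{0}$ control from Lemma \ref{C0 control} keeps $\widetilde{(A,\varPhi)}$ close enough to the canonical configuration that the perturbed inverse is bounded by twice the canonical one.

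Combining these estimates yields the first bound $\|K_{(A,\varPhi)}\|\leq C_{T}/T$. For the sharper bound $\|K_{(A,\varPhi)}\|\leq C_{\infty}/T$ with $C_{\infty}$ independent of $T$, the key observation is that on the support region $[1,T]\times Y$ both parametrices agree, up to exponentially small error controlled by Lemma \ref{exponential decay SW map}, with the inverse of the translation-invariant Seiberg--Witten Laplacian at the canonical configuration on $(\mathbb{R}\times Y,\,dt\otimes dt+t^{2}g_{\theta})$, whose operator norm is a fixed constant. The main obstacle in executing this plan is making the $\tau$-uniformity rigorous for $\triangle_{2,\widetilde{(A,\varPhi)},end}^{-1}$, since the manifold $N_{\tau}^{+}$ itself depends on $\tau$ and the perturbation $\widetilde{(A,\varPhi)}-\widetilde{(A_{0,\tau}',\varPhi'_{0,\tau})}$ is not immediately known to have small $L^{\infty}$ norm; this requires coupling the dilation argument of \cite{MR2199446} with the uniform exponential decay of $(A,\varPhi)^{\#}$ inherited from zero-dimensional solutions on $M_{\tau}$.
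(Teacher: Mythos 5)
Your argument for the weaker bound $\|K_{(A,\varPhi)}\|\leq C_T/T$ is in the right spirit (pointwise bounds on $\nabla\eta_{\bullet}$, transversality and compactness of the zero-dimensional strata for fixed $\tau$), but your mechanism for upgrading $C_T$ to a $T$-independent constant $C_\infty$ does not work. You claim that on $[1,T]\times Y$ both parametrices agree, up to an exponentially small error controlled by Lemma \ref{exponential decay SW map}, with the inverse of the translation-invariant Laplacian at the \emph{canonical} configuration $(A_0,\varPhi_0)$. That lemma only controls $\mathfrak{F}_{\mathfrak{p}_{M_{\tau}'}}(A,\varPhi)^{\#}$ on $\{\sigma_{\tau}'\geq T\}$, and Lemma \ref{C0 control} only gives $|\alpha|\geq 1/2$ \emph{outside} $[1,T]\times Y$ — on the support region $[1,T]\times Y$ the solutions $(A,\varPhi)$ are honest Seiberg--Witten solutions with no a priori closeness to $(A_0,\varPhi_0)$, so the asserted comparison to a fixed translation-invariant model operator is unjustified there.

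The actual mechanism in the paper is compactness, not exponential decay: one takes a sequence of solutions $(A_n,\varPhi_n)$ on $M_{\tau_n}$, notes (Lemma \ref{Mrowka Rohlin compactnessx}) that after gauge they converge strongly on $[1,T]\times Y$ to a solution $(A_\infty,\varPhi_\infty)$ on $Z_{Y,\xi}^+$, so the troublesome operator norms converge to $\|\triangle_{2,\mathfrak{q},(A_\infty,\varPhi_\infty)}^{-1}\|$. The point is then that $[(A_\infty,\varPhi_\infty)]$ must lie in the zero-dimensional stratum $\mathcal{M}_0(Z_{Y,\xi}^+,\mathfrak{s},[\mathfrak{c}])$, which is compact and zero-dimensional, hence a \emph{finite} set; since the norms are gauge-equivariant, only finitely many values of $\|\triangle_{2,\mathfrak{q},(A_\infty,\varPhi_\infty)}^{-1}\|$ can occur, and their maximum is the desired $T$- and $\tau$-independent constant. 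Note also that your framing inverts which parametrix is the hard one: the paper treats $\triangle_{2,(A,\varPhi)^\#,end}^{-1}$ as already controlled by the Mrowka--Rollin dilation argument, and singles out the cylindrical terms involving $\triangle_{2,\mathfrak{q},(A,\varPhi),cyl}^{-1}$ as the only ones not controllable a priori.
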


\begin{proof}
It is possible to show that the norms of the parametrices are uniform,
that is, that there exist a constant $C_{T}$ so that for all solutions
$(A,\varPhi)$ we have that $\|K_{(A,\varPhi)}\|\leq\frac{C_{T}}{T}.$
In fact, we will show something better, which is that one could have
chosen a constant $C_{\infty}$ which is independent of $T$\textbf{
}, in other words, $\|K_{(A,\varPhi)}\|\leq\frac{C_{\infty}}{T}$.

Notice that a priori the only term that may not seem controllable
in terms of $T$ are
\begin{equation}
\left\{ \mathcal{D}\eta_{cyl},\triangle_{2,\mathfrak{q},(A,\varPhi),cyl}^{-1}[\eta_{cyl}(b',\psi')]\right\} \text{ and }\left\{ \mathcal{D}_{(A,\varPhi)}\eta_{cyl},\mathcal{D}_{(A,\varPhi)}\left(\triangle_{2,\mathfrak{q},(A,\varPhi),cyl}^{-1}[\eta_{cyl}(b',\psi')]\right)\right\} \label{control terms parametrix}
\end{equation}

We will discuss how to control the first term in (\ref{control terms parametrix})
since the second is exactly the same. If we take a sequence of solutions
$(A_{n},\varPhi_{n})$ on $M_{\tau_{n}}$ then on $[1,T]$ it will
converge strongly to a solution $(A_{\infty},\varPhi_{\infty})$ on
$Z_{Y,\xi}^{+}$ {[}this is because of Lemma (\ref{Mrowka Rohlin compactnessx})
in the Appendix, based on the compactness theorem 2.2.11 in \cite{MR2199446}{]}
and hence for all $(b',\psi')$ 
\[
\left\{ \mathcal{D}\eta_{cyl},\triangle_{2,\mathfrak{q},(A_{n},\varPhi_{n}),cyl}^{-1}[\eta_{cyl}(b',\psi')]\right\} 
\]
 converges to 
\[
\left\{ \mathcal{D}\eta_{cyl},\triangle_{2,\mathfrak{q},(A_{\infty},\varPhi_{\infty}),cyl}^{-1}[\eta_{cyl}(b',\psi')]\right\} 
\]

It is clear then that it would be enough to have a uniform bound on
the operator norms 
\[
\left\Vert \triangle_{2,\mathfrak{q},(A_{\infty},\varPhi_{\infty})}^{-1}\right\Vert _{L^{2}(Z_{Y,\xi}^{+})\rightarrow L_{2,A_{\infty}}^{2}(Z_{Y,\xi}^{+})}
\]

As we will make more explicitly in the next proof, since we are taking
a sequence of solutions $(A_{n},\varPhi_{n})$ which belong to the
zero dimensional strata of the moduli spaces $\mathcal{M}(M_{\tau_{n}},\mathfrak{s}_{\tau_{n}},[\mathfrak{c}])$,
the limiting solution $(A_{\infty},\varPhi_{\infty})$ must belong
to the zero dimensional strata of $\mathcal{M}(Z_{Y,\xi}^{+},\mathfrak{s},[\mathfrak{c}])$,
and since we are using gauge equivariant norms, there are only finitely
many values the previous operator norm can take (this is related to
the second point in the remarks we made after stating the invertibility
of the Laplacian). Therefore, we will have the uniform bound for the
operator $K_{(A,\varPhi)}$, that is, $\|K_{(A,\varPhi)}\|\leq\frac{C_{\infty}}{T}$
where $C_{\infty}$ is independent of $\tau,T$ and the solutions
$(A,\varPhi)$ used.

Therefore, there is no loss of generality in assuming that $T$ was
chosen from the beginning so that it would also satisfy the condition
\[
\|K_{(A,\varPhi)}\|_{L^{2}([1,T]\times Y)\rightarrow L_{2}^{2}([1,T]\times Y)}\leq\frac{C_{\infty}}{T}\leq\frac{1}{2}
\]
for all the solutions of the Seiberg Witten equations on $M_{\tau}$.
In particular, from the identity 
\[
\triangle_{2,\mathfrak{q},(A,\varPhi)^{\#}}\left[\tilde{\triangle}_{2,\mathfrak{q},(A,\varPhi)^{\#}}^{-1}(b',\psi')\right]=K_{(A,\varPhi)}(b',\psi')+(b',\psi')
\]
 we see that the operator norms satisfy 
\[
\|\triangle_{2,\mathfrak{q},(A,\varPhi)^{\#}}\tilde{\triangle}_{2,\mathfrak{q},(A,\varPhi)^{\#}}^{-1}-\text{Id}\|_{L_{2,A^{\#}}^{2}(M_{\tau}',g_{\tau}')\rightarrow L^{2}(M_{\tau}',g_{\tau'})}\leq\frac{1}{2}
\]

In particular we conclude that $\triangle_{2,\mathfrak{q},(A,\varPhi)^{\#}}\tilde{\triangle}_{2,\mathfrak{q},(A,\varPhi)^{\#}}^{-1}$
is invertible and they (and their inverses) are uniformly bounded
since 
\[
\frac{1}{2}\leq\|\triangle_{2,\mathfrak{q},(A,\varPhi)^{\#}}\tilde{\triangle}_{2,\mathfrak{q},(A,\varPhi)^{\#}}^{-1}\|_{L_{2,A^{\#}}^{2}(M_{\tau}',g_{\tau}')\rightarrow L^{2}(M_{\tau}',g_{\tau'})}\leq\frac{3}{2}
\]

Therefore the inverse of $\triangle_{2,\mathfrak{q},(A,\varPhi)^{\#}}$
is $\tilde{\triangle}_{2,\mathfrak{q},(A,\varPhi)^{\#}}^{-1}\left(\triangle_{2,\mathfrak{q},(A,\varPhi)^{\#}}\tilde{\triangle}_{2,\mathfrak{q},(A,\varPhi)^{\#}}^{-1}\right)^{-1}$. 
\end{proof}
Returning to our proof of Theorem (\ref{thm invertibility triangle-1}),
since $\left(\triangle_{2,\mathfrak{q},(A,\varPhi)^{\#}}\tilde{\triangle}_{2,\mathfrak{q},(A,\varPhi)^{\#}}^{-1}\right)^{-1}$
is uniformly bounded we just need to check that $\tilde{\triangle}_{2,\mathfrak{q},(A,\varPhi)^{\#}}^{-1}$
is uniformly bounded to conclude that $\triangle_{2,\mathfrak{q},(A,\varPhi)^{\#}}$
is uniformly bounded {[}a similar argument would work to give uniform
bounds on $\triangle_{2,\mathfrak{q},(A,\varPhi)^{\#}}^{-1}${]} .
Looking at the definition of $\tilde{\triangle}_{2,\mathfrak{q},(A,\varPhi)^{\#}}^{-1}$
it becomes clear that it suffices to show that $\eta_{cyl}\triangle_{2,\mathfrak{q},(A,\varPhi),cyl}^{-1}[\eta_{cyl}(b',\psi')]$
is uniformly bounded. 

Here we will use again the assumption we mentioned at the end of the
previous proof. Namely, we are now assuming that\textbf{ }the gauge
equivalence classes of our solutions $(A,\varPhi)\in\mathcal{M}(M_{\tau},\mathfrak{s}_{\tau},[\mathfrak{c}])$
all belong to the zero dimensional strata of the moduli spaces. Since
the Laplacians are gauge equivariant in the sense that
\[
\triangle_{2,\mathfrak{q},u\cdot(A,\varPhi)}[u\cdot(b,\psi)]=u\cdot\triangle_{2,\mathfrak{q},(A,\varPhi)}(b,\psi)
\]
and we are using the gauge equivariant norms $\|\cdot\|_{L_{k,A}^{2}}$
, then for each $\tau$ there are only finitely gauge equivalence
classes we need to worry about, which immediately implies that for
each $\tau$ we have a control on the Laplacians (and their inverses).
Clearly we still need to see what happens as we vary $\tau$.

Let $K$ be a subset of $(\mathbb{R}^{+}\times-Y)\cup([1,\infty)\times Y)$
and use $\|\triangle_{2,\mathfrak{q},(A,\varPhi)}\|_{A,K}$ or $\|\triangle_{2,\mathfrak{q},(A,\varPhi)}^{-1}\|_{A,K}$
to denote the operator norms of $\triangle_{2,\mathfrak{q},(A,\varPhi)}$
and $\triangle_{2,\mathfrak{q},(A,\varPhi)}^{-1}$ when restricted
to sections supported on $K$. Clearly if $K\subset K'$ then $\|\triangle_{2,\mathfrak{q},(A,\varPhi)}^{-1}\|_{A,K}\leq\|\triangle_{2,\mathfrak{q},(A,\varPhi)}^{-1}\|_{A,K'}$
. 

Now, recall that we are actually working with a sequence $\tau_{n}$
increasing to $\infty$, so for each $\tau_{n}$ let $[(A_{n},\varPhi_{n})]\in\mathcal{M}_{0}(M_{\tau_{n}},\mathfrak{s}_{\tau},[\mathfrak{c}])$
be a (gauge equivalence class of) solution belonging to the zero dimensional
strata. Notice that each compact subset $K\subset(\mathbb{R}^{+}\times-Y)\cup([1,\infty)\times Y)$
eventually belongs to all $M_{\tau_{n}}$ (once $\tau_{n}$ is sufficiently
large) so the compactness theorem in this case says that we can choose
representatives $(A_{n},\varPhi_{n})$ which converge to a solution
$(A_{\infty},\varPhi_{\infty})$ which solves the equations on $Z_{\xi,Y}^{+}$
and this convergence is strong when restricted to the compact subset
$K$. In particular, it is clear from this that
\begin{equation}
\|\triangle_{2,\mathfrak{q},(A_{n},\varPhi_{n})}^{-1}\|_{A_{n},K}\rightarrow\|\triangle_{2,\mathfrak{q},(A_{\infty},\varPhi_{\infty})}^{-1}\|_{A_{\infty},K}\label{convergence norms}
\end{equation}
In fact, we must also have that the limiting solution $(A_{\infty},\varPhi_{\infty})$
belongs to the zero dimensional strata because the different strata
are labeled by the index of the operator $Q_{\mathfrak{q},(A,\varPhi)}$
and this index can only decrease (this is how the broken trajectories
appear). However, since the index of each element in the sequence
was already zero then the index of the limiting configuration would
need to be negative if it were to decrease but transversality rules
this out, since we do not have negative dimensional moduli spaces.
Therefore the convergence is without broken trajectories, that is,
$[(A_{\infty},\varPhi_{\infty})]\in\mathcal{M}_{0}(Z_{Y,\xi}^{+},\mathfrak{s},[\mathfrak{c}])$
. In particular, the fact that no energy is lost along the half-cylinder
allows us to improve the convergence in (\ref{convergence norms})
to (we will say more about this in a moment)
\begin{equation}
\|\triangle_{2,\mathfrak{q},(A_{n},\varPhi_{n})}^{-1}\|_{A_{n},K_{t}}\rightarrow\|\triangle_{2,\mathfrak{q},(A_{\infty},\varPhi_{\infty})}^{-1}\|_{A_{\infty},K_{t}}\label{eq:convergence norms}
\end{equation}
where now $K_{t}=(\mathbb{R}^{+}\times-Y)\cup([1,t]\times Y)$ ($t>1$
is arbitrary). In particular, 
\[
\|\triangle_{2,\mathfrak{q},(A_{\infty},\varPhi_{\infty})}^{-1}\|_{A_{\infty},K_{t}}\leq\|\triangle_{2,\mathfrak{q},(A_{\infty},\varPhi_{\infty})}^{-1}\|_{A_{\infty},Z_{Y,\xi}^{+}}\leq C
\]
where 
\[
C=\max\left\{ \|\triangle_{2,\mathfrak{q},(A_{\infty},\varPhi_{\infty})}^{-1}\|_{A_{\infty},Z_{Y,\xi}^{+}}\mid[(A_{\infty},\varPhi_{\infty})]\in\mathcal{M}_{0}(Z_{Y,\xi}^{+},\mathfrak{s},[\mathfrak{c}])\right\} 
\]
Since $t$ and the sequence was arbitrary this clearly gives us the
uniform bound that we were after so we have proven Theorem (\ref{thm invertibility triangle-1}).

We will now say more about why the convergence (\ref{eq:convergence norms})
is true. For this we need to recall that thanks to the fiber product
description of our moduli spaces, we can restrict each solution $[(A_{n},\varPhi_{n})]$
to a solution on the cylindrical end moduli space $\mathcal{M}(\mathbb{R}^{+}\times-Y,\mathfrak{s}_{\xi},[\mathfrak{c}])$
, which we will denote as $[(A_{n},\varPhi_{n})]_{cyl}\in\mathcal{M}(\mathbb{R}^{+}\times-Y,\mathfrak{s}_{\xi},[\mathfrak{c}])$.

Likewise, the limiting solution $[(A_{\infty},\varPhi_{\infty})]$
can also be restricted to this moduli space so we have as well that
$[(A_{\infty},\varPhi_{\infty})]_{cyl}\in\mathcal{M}(\mathbb{R}^{+}\times-Y,\mathfrak{s}_{\xi},[\mathfrak{c}])$.
When we described the configuration spaces at the beginning of the
paper we used the topology of strong convergence on compact subsets
$L_{k,loc}^{2}$ to define the moduli spaces. However, as explained
in Theorem 13.3.5 of \cite{MR2388043}, the same moduli space $\mathcal{M}(\mathbb{R}^{+}\times-Y,\mathfrak{s}_{\xi},[\mathfrak{c}])$
can also be obtained if we had used the stronger topology of $L_{k}^{2}$
convergence along the entire half-cylinder $\mathbb{R}^{+}\times-Y$
{[}they really did this for the moduli space on the cylinder $\mathbb{R}\times Y$
but it does not affect our claim{]}. Therefore, the convergence of
$[(A_{n},\varPhi_{n})]_{cyl}$ towards $[(A_{\infty},\varPhi_{\infty})]_{cyl}$
can be regarded as a strong convergence with respect to the $L_{k,A_{\mathfrak{c}}}^{2}$
norm, where $A_{\mathfrak{c}}$ represents the translation invariant
connection associated to a smooth representative $\mathfrak{c}$ of
the critical point $[\mathfrak{c}]$. In other words, we can choose
representatives of $[(A_{n},\varPhi_{n})]_{cyl}$ and $[(A_{\infty},\varPhi_{\infty})]_{cyl}$
so that
\[
\begin{cases}
A_{n}=A_{\mathfrak{c}}+a_{n}\\
A_{\infty}=A_{\mathfrak{c}}+a_{\infty}\\
\varPhi_{n}=\varPhi_{\mathfrak{c}}+\phi_{n}\\
\varPhi_{\infty}=\varPhi_{\mathfrak{c}}+\phi_{\infty}
\end{cases}
\]
where $\varPhi_{\mathfrak{c}}$ is a translation invariant representative
of $\mathfrak{c}$ and we have that 
\begin{align*}
\lim_{n\rightarrow\infty}\|A_{n}-A_{\infty}\|_{L_{k}^{2}(\mathbb{R}^{+}\times-Y)}=\lim_{n\rightarrow\infty}\|a_{n}-a_{\infty}\|_{L_{k}^{2}(\mathbb{R}^{+}\times-Y)}=0\\
\lim_{n\rightarrow\infty}\|\varPhi_{n}-\varPhi_{\infty}\|_{L_{k,A_{\mathfrak{c}}}^{2}(\mathbb{R}^{+}\times-Y)}=\lim_{n\rightarrow\infty}\|\phi_{n}-\phi_{\infty}\|_{L_{k,A_{\mathfrak{c}}}^{2}(\mathbb{R}^{+}\times-Y)}=0
\end{align*}
The norms $\|\cdot\|_{L_{k,A_{n}}^{2}}$ and $\|\cdot\|_{L_{k,A_{\mathfrak{c}}}^{2}}$
can now be compared thanks to the Sobolev multiplication theorems
(since for example $\nabla_{A_{n}}\bullet=\nabla_{A_{\infty}}\bullet+(a_{n}-a_{\infty})\otimes\bullet$
with similar formulas for the higher derivatives{]} and the previous
limits make it clear that the operator norm convergence (\ref{convergence norms})
on compact subsets $K$ can be improved to the operator norm convergence
(\ref{eq:convergence norms}) on sets of the form ``half-cylinder
+compact''.

In the next section we explain the properties of the gluing map one
obtains using the invertibility of the Laplacian.
\begin{rem}
Many of the following arguments will have a similar structure to the
one before. Namely, because we are taking solutions belonging to the
zero dimensional strata for an individual $\tau$ we will find a bound,
but a priori this may depend on $\tau$. However, as we take $\tau_{n}$
sufficiently large the bounds end up being controlled by limiting
case '$\tau=\infty$', since we can invoke the strong convergence
on the half-cylindrical end. Since the arguments are essentially the
same in each case we will not repeat the strategy so we will just
say that it ``follows by similar arguments''.
\end{rem}

\subsection{6.5 Definition and some Properties of the Gluing Map:}

\ 

As explained before, if we write $V_{\mathfrak{q}}=\triangle_{2,\mathfrak{q},(A,\varPhi)^{\#}}(b',\psi')$
then the gluing equation (\ref{NEW GLUING-1}) is equivalent to solving
the equation 
\[
V_{\mathfrak{q}}=S_{\mathfrak{q},(A,\varPhi)^{\#}}(V_{\mathfrak{q}})-\mathfrak{F}_{\mathfrak{p}_{M_{\tau}'}}(A,\varPhi)^{\#}
\]

The solution of this equation requires an application of the contraction
mapping theorem, which requires us to first show that the map $S_{\mathfrak{q}}$
is a uniform contraction in the following sense (this is analogue
of lemma 3.1.8 in \cite{MR2199446}):
\begin{thm}
\label{Fixed Point Theorem}For every $k$ large enough there exist
constants $\alpha_{k}>0$, $\kappa_{k}\in(0,1/2)$ such that for every
$\tau$ large enough, every $N_{0}\geq1$ and every approximate solution
of the Seiberg Witten equations $(A,\varPhi)^{\#}$ on $M_{\tau}'$
, which comes from an actual solution $(A,\varPhi)$ on $M_{\tau}$
whose gauge equivalence class $[A,\varPhi]$ belongs to the zero dimensional
strata of the moduli space $\mathcal{M}_{0}(M_{\tau};\mathfrak{s}_{\tau};[\mathfrak{c}])$
,we have for all $V_{1},V_{2}\in L_{k}^{2}(M_{\tau}';i\mathfrak{su}(S^{+})\oplus S^{-},g_{\tau}';A^{\#})$
\[
\|V_{1}\|_{L_{k}^{2}(g_{\tau}',A^{\#})},\|V_{2}\|_{L_{k}^{2}(g_{\tau}';A^{\#})}\leq\alpha_{k}\implies\left\Vert S_{\mathfrak{q},(A,\varPhi)^{\#}}(V_{2})-S_{\mathfrak{q},(A,\varPhi)^{\#}}(V_{1})\right\Vert _{L_{k}^{2}(g_{\tau}',A^{\#})}\leq\kappa_{k}\|V_{2}-V_{1}\|_{L_{k}^{2}(g_{\tau}',A^{\#})}
\]
\end{thm}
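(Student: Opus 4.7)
The plan is to decompose
\[
S_{\mathfrak{q},(A,\varPhi)^{\#}} = S_{Q} + S_{P}, \quad S_{Q}(V) = -\mathcal{Q}\circ\left(\mathcal{D}_{(A,\varPhi)^{\#}}\mathfrak{F}_{\mathfrak{q}}\right)^{*}\triangle_{2,\mathfrak{q},(A,\varPhi)^{\#}}^{-1}V, \quad S_{P}(V) = -P\!\left(\triangle_{2,\mathfrak{q},(A,\varPhi)^{\#}}^{-1}V\right),
\]
and to estimate each piece separately, trading smallness of the radius $\alpha_k$ against the operator norms of $\triangle_{2,\mathfrak{q},(A,\varPhi)^{\#}}^{-1}$ and $(\mathcal{D}\mathfrak{F}_{\mathfrak{q}})^{*}$. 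Throughout I would abbreviate $R = \triangle_{2,\mathfrak{q},(A,\varPhi)^{\#}}^{-1}$ and use the gauge-equivariant Sobolev norms $\|\cdot\|_{L^{2}_{k}(g_{\tau}', A^{\#})}$, relying crucially on the fact that the cylindrical-plus-AFAK geometry has bounded geometry so that the Sobolev multiplication theorems hold with constants independent of $\tau$.

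First I would handle $S_Q$. Since $\mathcal{Q}(a,\phi) = (-(\phi\phi^{*})_{0}, \rho(a)\phi)$ is quadratic, the polarization identity gives
\[
\mathcal{Q}(u_{1}) - \mathcal{Q}(u_{2}) = B(u_{1}-u_{2},\,u_{1}) + B(u_{2},\,u_{1}-u_{2})
\]
for a bounded bilinear form $B$. Bounded geometry together with $k$ sufficiently large (to make $L^{2}_{k}$ a Banach algebra under multiplication by $L^{2}_{k}$) yields
\[
\|\mathcal{Q}(u_{1}) - \mathcal{Q}(u_{2})\|_{L^{2}_{k}} \le C \bigl(\|u_{1}\|_{L^{2}_{k+1}} + \|u_{2}\|_{L^{2}_{k+1}}\bigr)\|u_{1}-u_{2}\|_{L^{2}_{k+1}}.
\]
Applying this with $u_{i} = (\mathcal{D}\mathfrak{F}_{\mathfrak{q}})^{*}R V_{i}$, and using that $(\mathcal{D}\mathfrak{F}_{\mathfrak{q}})^{*}$ is a first-order operator of bounded norm and that Theorem \ref{thm invertibility triangle-1} gives $\|R V_{i}\|_{L^{2}_{k+2}} \le c_{k}\|V_{i}\|_{L^{2}_{k}}$ with $c_{k}$ independent of $\tau$, yields
\[
\|S_{Q}(V_{1}) - S_{Q}(V_{2})\|_{L^{2}_{k}} \le C' c_{k}^{2}\bigl(\|V_{1}\|_{L^{2}_{k}}+\|V_{2}\|_{L^{2}_{k}}\bigr)\|V_{1}-V_{2}\|_{L^{2}_{k}}.
\]
Taking $\alpha_{k}$ small enough so that $2C' c_{k}^{2}\alpha_{k} \le 1/4$ makes $S_{Q}$ a contraction with constant $\le 1/4$ on the ball of radius $\alpha_{k}$.

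Next I would handle $S_{P}$, where the point emphasized in the introduction is that the abstract perturbation contribution
\[
P(b',\psi') = \mathfrak{p}_{M_{\tau}'}(\mathfrak{G}(A,\varPhi)) - \mathfrak{p}_{M_{\tau}'}(A,\varPhi)^{\#} - (\mathcal{D}_{(A,\varPhi)}\hat{\mathfrak{q}})\circ(\mathcal{D}_{(A,\varPhi)^{\#}}\mathfrak{F}_{\mathfrak{q}})^{*}
\]
is precisely the full perturbation minus its linearization around $(A,\varPhi)^{\#}$, so by Taylor's theorem it has a leading quadratic term in the deviation $(\mathcal{D}\mathfrak{F}_{\mathfrak{q}})^{*}(b',\psi')$. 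Using the tameness estimates for $\mathfrak{q}$ from section 10 of \cite{MR2388043} (which give uniform bounds on the second derivative of $\hat{\mathfrak{q}}$ as a map between the appropriate Sobolev spaces) and the fact that $\hat{\mathfrak{q}}$ is supported on a fixed collar, one obtains a bound of the same shape as the one for $S_{Q}$:
\[
\|S_{P}(V_{1})-S_{P}(V_{2})\|_{L^{2}_{k}} \le C'' \bigl(\|V_{1}\|_{L^{2}_{k}}+\|V_{2}\|_{L^{2}_{k}}\bigr)\|V_{1}-V_{2}\|_{L^{2}_{k}}.
\]
This is the step where the remark in the summary becomes essential: had $P$ contained a linear-in-deviation piece, the bound would have been of the form $C\|V_{1}-V_{2}\|$ with $C$ not small, destroying the contraction property.

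Adding the two estimates and shrinking $\alpha_{k}$ further so that $2(C'c_{k}^{2} + C'')\alpha_{k} \le \kappa_{k} < 1/2$ gives the claim. The main obstacle I anticipate is verifying that the constants $C, C', C''$ can be chosen independently of both $\tau$ and of the particular solution $(A,\varPhi)$ in the zero-dimensional strata. As in the proof of Lemma \ref{uniform parametrices}, this requires a compactness argument: for a fixed $\tau$ there are finitely many gauge classes so the bound is automatic; to pass to uniformity in $\tau$ one invokes that any sequence $[(A_{n},\varPhi_{n})] \in \mathcal{M}_{0}(M_{\tau_{n}},\mathfrak{s}_{\tau_{n}},[\mathfrak{c}])$ converges strongly on the half-cylinder $L^{2}_{k,A_{\mathfrak{c}}}$ to a zero-dimensional limit in $\mathcal{M}_{0}(Z^{+}_{Y,\xi},\mathfrak{s},[\mathfrak{c}])$, and the constants for the limit solution control the tail of the sequence.
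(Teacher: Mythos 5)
Your overall strategy matches the paper's: split $S_{\mathfrak{q},(A,\varPhi)^{\#}}$ into the quadratic $\mathcal{Q}$-piece and the perturbation piece $P$, show both have quadratic leading behavior in $V$, and use compactness on the zero-dimensional strata to make all constants independent of $\tau$ and of the solution. Your treatment of $S_{Q}$ via polarization, Theorem \ref{thm invertibility triangle-1}, and Sobolev multiplication is the same Mrowka--Rollin route that the paper invokes. But there is a genuine gap in your treatment of $S_{P}$. You assert that ``$\hat{\mathfrak{q}}$ is supported on a fixed collar,'' and this is false: the abstract perturbation $-\hat{\mathfrak{q}}_{Y,g_{\theta},\mathfrak{s}_{\xi}}$ is translation-invariant and lives on the entire half-infinite cylinder $\mathbb{R}^{+}\times(-Y)$ (see equation (\ref{perturbed tau}) and the discussion after (\ref{eq:glued perturbation})); only the interpolating terms $\beta\hat{\mathfrak{q}}+\beta_{0}'\hat{\mathfrak{p}}_{0}$ are supported on the finite collar $[0,1]\times(-Y)$. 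If the perturbation really were compactly supported, the uniform bounds on $\mathcal{D}^{l}\hat{\mathfrak{q}}$ would be immediate and there would be nothing to prove. What actually requires an argument is that $\mathcal{D}^{l}_{(A,\varPhi)}\hat{\mathfrak{q}}$ is a bounded operator on the infinite cylinder. The paper does this by decomposing $\mathbb{R}^{+}\times(-Y)=\bigcup_{n}Z_{n}$ into unit-length pieces, applying the estimate of Proposition 11.4.1 in \cite{MR2388043} on each $Z_{n}$, and then invoking the exponential decay of $(A,\varPhi)$ toward the translation-invariant $(A_{\mathfrak{c}},\varPhi_{\mathfrak{c}})$ to control $\sup_{n}C_{n,(A,\varPhi)}$ and sum the series. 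Your appeal to a compactly supported $\hat{\mathfrak{q}}$ bypasses precisely this step.

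A softer point: ``by Taylor's theorem it has a leading quadratic term'' is the right intuition but not quite a proof. The paper verifies the cancellation of the linear term explicitly, by writing $f=f_{1}-f_{2}$ and computing $f_{1}'(V)=f_{2}(V)$, so that the first-order terms cancel identically and $f(V)=\tfrac{1}{2}(\mathcal{D}^{2}_{(A,\varPhi)^{\#}}\mathfrak{q})V^{(2)}+\cdots$. Since the entire contraction estimate hinges on there being no non-small linear contribution from $P$, this cancellation should be exhibited rather than asserted. Once these two pieces are supplied, the rest of your argument — shrinking $\alpha_{k}$ against the constants and using the strong convergence of zero-dimensional solutions along the cylindrical end to make the bounds $\tau$-independent — is correct and agrees with the paper.
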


\begin{proof}
Recall that
\[
S_{\mathfrak{q},(A,\varPhi)^{\#}}(V_{\mathfrak{q}})=-\mathcal{Q}\circ\left[\left(\mathcal{D}_{(A,\varPhi)^{\#}}\mathfrak{F}_{\mathfrak{q}}\right)^{*}\right]\left(\triangle_{2,\mathfrak{q},(A,\varPhi)^{\#}}^{-1}V_{\mathfrak{q}}\right)-P\left(\triangle_{2,\mathfrak{q},(A,\varPhi)^{\#}}^{-1}V_{\mathfrak{q}}\right)
\]
We will mention the main differences compared with the proof given
in \cite{MR2199446}. First of all, we need the bounds in proposition
11.4.1 in \cite{MR2388043}, which say that for $k\geq2$
\begin{equation}
\|\mathcal{D}_{(A,\varPhi)}^{l}\hat{\mathfrak{q}}\|\leq C(1+\|a\|_{L_{k}^{2}(Z)})^{2k(l+1)}(1+\|\varPhi\|_{L_{k,A}^{2}(Z)}))^{l+1}\label{bounds derivatives of the perturation}
\end{equation}

Here $C$ is a constant independent of the configuration and in this
theorem $Z$ denotes a finite cylinder, while $A=A_{ref}+a\otimes1$
for some reference configuration $A_{ref}$. First of all these bounds
can be used on the half-cylinder $\mathbb{R}^{+}\times(-Y)$ as well.
Simply decompose it as 
\[
\mathbb{R}^{+}\times(-Y)=\bigcup_{n\geq0}\underbrace{[n,n+1]\times(-Y)}_{Z_{n}}
\]
If $\bullet$ denotes an element in the domain of $\mathcal{D}_{(A,\varPhi)}^{l}\hat{\mathfrak{q}}$
then we have 
\begin{align*}
 & \|\mathcal{D}_{(A,\varPhi)}^{l}\hat{\mathfrak{q}}(\bullet)\|_{\mathbb{R}^{+}\times-Y}\\
= & \sum_{n=0}^{\infty}\|\mathcal{D}_{(A,\varPhi)}^{l}\hat{\mathfrak{q}}(\bullet)\|_{Z_{n}}\\
\leq & C\sum_{n=0}^{\infty}(1+\|a\|_{L_{k}^{2}(Z_{n})})^{2k(l+1)}(1+\|\varPhi\|_{L_{k,A}^{2}(Z_{n})}))^{l+1}\|\bullet\|_{Z_{n}}
\end{align*}
where in the last step we used the bounds coming from the operator
norm (\ref{bounds derivatives of the perturation}). If we define
\[
C_{n,(A,\varPhi)}=(1+\|a\|_{L_{k}^{2}(Z_{n})})^{2k(l+1)}(1+\|\varPhi\|_{L_{k,A}^{2}(Z_{n})}))^{l+1}
\]
then it is not too difficult to see that 
\[
C_{\max,(A,\varPhi)}=\max_{n}C_{n,(A,\varPhi)}<\infty
\]
One reason for this is that the previous quantities $C_{n,(A,\varPhi)}$
do not differ too much from those for the translation invariant solution
$C_{n,(A_{\mathfrak{c}},\varPhi_{\mathfrak{c}})}$, which are independent
of $n$.

To compare $C_{n,(A,\varPhi)}$ with $C_{n,(A_{\mathfrak{c}},\varPhi_{\mathfrak{c}})}$,
take our reference connection to be $A_{\mathfrak{c}}$ and use the
fact in this case $a=A-A_{\mathfrak{c}}$ can be chosen to be exponentially
decaying as well as $\varPhi-\varPhi_{\mathfrak{c}}$ , since $(A,\varPhi)$
is asymptotic to $(A_{\mathfrak{c}},\varPhi_{\mathfrak{c}})$ (\cite[Section 13.5 and Proposition 13.6.1]{MR2388043}
or the Appendix). Hence a term like 
\[
(1+\|\varPhi\|_{L_{k,A}^{2}(Z_{n})}))^{l+1}
\]
 can be bounded by 
\[
(1+\|\varPhi-\varPhi_{\mathfrak{c}}\|_{L_{k,A}^{2}(Z_{n})}+\|\varPhi_{\mathfrak{c}}\|_{L_{k,A}^{2}(Z_{n})}))^{l+1}
\]
where the term $\|\varPhi-\varPhi_{\mathfrak{c}}\|_{L_{k,A}^{2}(Z_{n})}$
contributes less as $n$ increases, say less than $\frac{1}{n^{2}}$
for sufficiently large $n$.

In any case we end up with 
\[
\|\mathcal{D}_{(A,\varPhi)}^{l}\hat{\mathfrak{q}}(\bullet)\|_{\mathbb{R}^{+}\times-Y}\leq CC_{\max,(A,\varPhi)}\sum_{n=0}^{\infty}\|\bullet\|_{Z_{n}}=CC_{\max,(A,\varPhi)}\|\bullet\|_{\mathbb{R}^{+}\times-Y}
\]
Since $\bullet$ was arbitrary this says that each $\mathcal{D}_{(A,\varPhi)}^{l}\hat{\mathfrak{q}}$
is a bounded operator on the half-cylinder. For each $\tau$, we are
only dealing with finitely many gauge equivalence classes of solutions
because of our assumption on the strata so the bounds are once again
controlled for a fixed $\tau$. By analogous arguments, one can find
bounds which actually become independent of $\tau$ so that $\|\mathcal{D}_{(A,\varPhi)}^{l}\hat{\mathfrak{q}}\|\leq C_{l}$
for some constant $C_{l}$ on the half-infinite cylinder. 

The other ingredient is that the leading term of $P\left(\triangle_{2,\mathfrak{q},(A,\varPhi)^{\#}}^{-1}V_{\mathfrak{q}}\right)$
is quadratic in the following sense. To emphasize its dependence on
$V$, we will write $P\left(\triangle_{2,\mathfrak{q},(A,\varPhi)^{\#}}^{-1}V_{\mathfrak{q}}\right)$
as 
\[
f(V)=\mathfrak{q}\left((A,\varPhi)^{\#}+(\mathcal{D}_{(A,\varPhi)^{\#}}\mathfrak{F}_{\mathfrak{q}})^{*}\triangle_{2,\mathfrak{q},(A,\varPhi)^{\#}}^{-1}V\right)-\mathfrak{q}(A,\varPhi)^{\#}-\left(\mathcal{D}_{(A,\varPhi)^{\#}}\mathfrak{q}\right)\circ(\mathcal{D}_{(A,\varPhi)^{\#}}\mathfrak{F}_{\mathfrak{q}})^{*}\triangle_{2,\mathfrak{q},(A,\varPhi)^{\#}}^{-1}V
\]
We want to compute $f'(V)$ and $f^{\prime\prime}(V)$ , that is,
the Banach spaces derivatives with respect to $V$ . For this define
the functions 
\[
\begin{cases}
f_{1}(V)=\mathfrak{q}\left((A,\varPhi)^{\#}+(\mathcal{D}_{(A,\varPhi)^{\#}}\mathfrak{F}_{\mathfrak{q}})^{*}\triangle_{2,\mathfrak{q},(A,\varPhi)^{\#}}^{-1}V\right)-\mathfrak{q}(A,\varPhi)^{\#}\\
f_{2}(V)=\left(\mathcal{D}_{(A,\varPhi)^{\#}}\mathfrak{q}\right)\circ(\mathcal{D}_{(A,\varPhi)^{\#}}\mathfrak{F}_{\mathfrak{q}})^{*}\triangle_{2,\mathfrak{q},(A,\varPhi)^{\#}}^{-1}V
\end{cases}
\]
so that 
\[
f(V)=f_{1}(V)-f_{2}(V)
\]
Since $f_{2}(V)$ is linear in $V$ it is easy to determine that 
\[
f'_{2}(V)=\left(\mathcal{D}_{(A,\varPhi)^{\#}}\mathfrak{q}\right)\circ(\mathcal{D}_{(A,\varPhi)^{\#}}\mathfrak{F}_{\mathfrak{q}})^{*}\triangle_{2,\mathfrak{q},(A,\varPhi)^{\#}}^{-1}(V)
\]
Clearly $f_{2}'$ is independent as a linear transformation of the
``basepoint'' (which is hidden in our notation) so we will have
that $f_{2}^{(n)}=0$ for $n\geq2$. To compute the derivative of
$f_{1}(V)$ think of the Taylor expansion of $\mathfrak{q}$ about
$(A,\varPhi)^{\#}$ (which plays the role of $0$ in our affine space
interpretation for the domain of $\mathfrak{q}$ so we can use corollary
4.4 in Chapter 1 from \cite{MR1666820}). In this way 
\[
f_{1}(V)=\left(\mathcal{D}_{(A,\varPhi)^{\#}}\mathfrak{q}\right)\circ\left((\mathcal{D}_{(A,\varPhi)^{\#}}\mathfrak{F}_{\mathfrak{q}})^{*}\triangle_{2,\mathfrak{q},(A,\varPhi)^{\#}}^{-1}\right)V+\frac{1}{2}\left(\mathcal{D}_{(A,\varPhi)^{\#}}^{2}\mathfrak{q}\right)V^{(2)}+\cdots+
\]
where $V^{(2)}=(V,V)$. Notice that the first term is exactly $f_{2}(V)$!
Therefore 
\[
\begin{cases}
f_{1}'(V)=f_{2}(V)\\
f_{2}''=\left(\mathcal{D}_{(A,\varPhi)^{\#}}^{2}\mathfrak{q}\right)
\end{cases}
\]
This means that the leading term for the Taylor expansion of $f(V)$
will be quadratic, that is 
\begin{equation}
f(V)=\frac{1}{2}\left(\mathcal{D}_{(A,\varPhi)^{\#}}^{2}\mathfrak{q}\right)V^{(2)}+\cdots+\label{quadratic expansion}
\end{equation}
 To see why this is important notice that in the case of $-\mathcal{Q}\circ\left[\left(\mathcal{D}_{(A,\varPhi)^{\#}}\mathfrak{F}_{\mathfrak{q}}\right)^{*}\right]\left(\triangle_{2,\mathfrak{q},(A,\varPhi)^{\#}}^{-1}V_{\mathfrak{q}}\right)$
Mrowka and Rollin found a bound (after eq. 3.14 \cite{MR2199446})
which can be adapted to our case to read 
\begin{align}
 & \left\Vert \mathcal{Q}\circ\left[\left(\mathcal{D}_{(A,\varPhi)^{\#}}\mathfrak{F}_{\mathfrak{q}}\right)^{*}\right]\left(\triangle_{2,\mathfrak{q},(A,\varPhi)^{\#}}^{-1}V_{2}\right)-\mathcal{Q}\circ\left[\left(\mathcal{D}_{(A,\varPhi)^{\#}}\mathfrak{F}_{\mathfrak{q}}\right)^{*}\right]\left(\triangle_{2,\mathfrak{q},(A,\varPhi)^{\#}}^{-1}V_{1}\right)\right\Vert _{L_{k}^{2}(g_{\tau}',A^{\#})}\label{eq:inequality Q}\\
\leq & C_{k}'\|V_{2}+V_{1}\|_{L_{k}^{2}(g_{\tau},A^{\#})}\|V_{2}-V_{1}\|_{L_{k}^{2}(g_{\tau}',A^{\#})}\nonumber 
\end{align}
where $C_{k}'$ is a constant which is independent of $\tau$ (once
it is large enough), the approximate solution $(A,\varPhi)$ and the
constant $N_{0}\geq1$ used in the perturbations defining the connected
sum along $Y$ operation.

Since we are assuming that $\|V_{1}\|_{L_{k}^{2}},\|V_{2}\|_{L_{k}^{2}}\leq\alpha_{k}$,
we can use the triangle inequality to obtain that 
\[
\|V_{2}+V_{1}\|_{L_{k}^{2}(g_{\tau},A^{\#})}\leq\|V_{2}\|_{L_{k}^{2}(g_{\tau},A^{\#})}+\|V_{1}\|_{L_{k}^{2}(g_{\tau},A^{\#})}\leq2\alpha_{k}
\]
so the inequality (\ref{eq:inequality Q}) reads
\[
\left\Vert \mathcal{Q}\circ\left[\left(\mathcal{D}_{(A,\varPhi)^{\#}}\mathfrak{F}_{\mathfrak{q}}\right)^{*}\right]\left(\triangle_{2,\mathfrak{q},(A,\varPhi)^{\#}}^{-1}V_{2}\right)-\mathcal{Q}\circ\left[\left(\mathcal{D}_{(A,\varPhi)^{\#}}\mathfrak{F}_{\mathfrak{q}}\right)^{*}\right]\left(\triangle_{2,\mathfrak{q},(A,\varPhi)^{\#}}^{-1}V_{1}\right)\right\Vert _{L_{k}^{2}(g_{\tau}',A^{\#})}\leq2\alpha_{k}C_{k}^{\prime}\|V_{2}-V_{1}\|_{L_{k}^{2}(g_{\tau}',A^{\#})}
\]
 Hence to make this contribution less than $\frac{\kappa_{k}}{2}\|V_{2}-V_{1}\|_{L_{k}^{2}(g_{\tau}',A^{\#})}$
we just need to take $\alpha_{k}<\frac{\kappa}{4C_{k}'}$. 

Likewise, since 
\[
P\left(\triangle_{2,\mathfrak{q},(A,\varPhi)^{\#}}^{-1}V_{2}\right)-P\left(\triangle_{2,\mathfrak{q},(A,\varPhi)^{\#}}^{-1}V_{1}\right)
\]
 is the same as 
\[
f(V_{2})-f(V_{1})
\]
and each has quadratic leading terms according to equation (\ref{quadratic expansion})
, the norm
\[
\left\Vert P\left(\triangle_{2,\mathfrak{q},(A,\varPhi)^{\#}}^{-1}V_{2}\right)-P\left(\triangle_{2,\mathfrak{q},(A,\varPhi)^{\#}}^{-1}V_{1}\right)\right\Vert _{L_{k}^{2}(g_{\tau}',A^{\#})}
\]
can now be bounded by and expression of the form 
\[
f(\alpha_{k},C_{k}^{\prime\prime})\|V_{2}-V_{1}\|_{L_{k}^{2}(g_{\tau}',A^{\#})}
\]
 where $f(\alpha_{k},C_{k}^{\prime\prime})$ will be some expression
in $\alpha_{k}$ whose particular details do not interest us and $C_{k}^{\prime\prime}$
denotes constants that do not depend on $\tau$ or the solution used.
In any case, the important thing is that we can again choose $\alpha_{k}$
so that $f(\alpha_{k},C_{k}^{\prime\prime})<\frac{\kappa_{k}}{2}$
and so combining both inequalities the result follows. 
\end{proof}
At this point we can use the Contraction Mapping Theorem (proposition
2.3.5 \cite{MR2199446}) to obtain our definition of the \textbf{gluing
map} (Theorem 3.1.9 \cite{MR2199446}): 
\begin{thm}
\label{Gluing Theorem}There exists constants $\alpha_{k},c_{k}>0$
such that for every $\tau$ large enough, every solution $(A,\varPhi)$
of the Seiberg-Witten equations on $M_{\tau}$ whose gauge equivalence
class belongs to the zero dimensional strata of the moduli space $\mathcal{M}(M_{\tau};\mathfrak{s}_{\tau};[\mathfrak{c}])$
and every constant $N_{0}\geq1$, there is a unique section $(b',\psi')$
on $M_{\tau}'$ such that 
\[
\mathfrak{G}_{\tau}(A,\varPhi)=(A,\varPhi)^{\#}+(\mathcal{D}_{\mathfrak{q},(A,\varPhi)^{\#}}\mathfrak{F}_{\mathfrak{q}})^{*}(b',\psi')
\]
is a solution of the Seiberg-Witten equations with $\|(b',\psi)\|_{L_{k+2}^{2}(g_{\tau'},A^{\#})}\leq\alpha_{k}$.
Furthermore, the map is gauge equivariant and induces a map 
\[
\mathfrak{G}_{\tau}:\mathcal{M}_{0}(M_{\tau};\mathfrak{s}_{\tau};[\mathfrak{c}])\rightarrow\mathcal{M}_{0}(M_{\tau}';\mathfrak{s}_{\tau}';[\mathfrak{c}])
\]
where $\mathcal{M}_{0}(M_{\tau};\mathfrak{s}_{\tau};[\mathfrak{c}])$
denotes the zero dimensional strata of $\mathcal{M}(M_{\tau};\mathfrak{s}_{\tau};[\mathfrak{c}])$.
Moreover both $\|(b',\psi')\|_{L_{k+2}^{2}(g_{\tau}',A^{\#})}$ and
$\|\mathfrak{G}_{\mathfrak{\tau}}(A,\varPhi)-(A,\varPhi)^{\#}\|_{L_{k+1}^{2}(g_{\tau}',A^{\#})}$
are bounded by
\begin{equation}
\|(b',\psi')\|_{L_{k+2}^{2}(g_{\tau}',A^{\#})},\;\;\|\mathfrak{G}_{\mathfrak{\tau}}(A,\varPhi)-(A,\varPhi)^{\#}\|_{L_{k+1}^{2}(g_{\tau}',A^{\#})}\leq c_{k}\|\mathfrak{F}_{\mathfrak{p}_{M_{\tau}'}}(A,\varPhi)^{\#}\|_{L_{k}^{2}(g_{\tau}',A^{\#})}\label{inequalities gluing}
\end{equation}
Furthermore, this map is an injection and since the construction is
reversible it is a bijection. Hence the $\mod2$ cardinality of $\mathcal{M}_{0}(M_{\tau};\mathfrak{s}_{\tau};[\mathfrak{c}])$
and $\mathcal{M}_{0}(M_{\tau}';\mathfrak{s}_{\tau}';[\mathfrak{c}])$
is the same.
\end{thm}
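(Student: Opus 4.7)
The plan is to combine the two previous theorems (the invertibility of $\triangle_{2,\mathfrak{q},(A,\varPhi)^{\#}}$ in Theorem \ref{thm invertibility triangle-1} and the contraction estimate in Theorem \ref{Fixed Point Theorem}) with the exponential decay of the pre-gluing error \eqref{exponential decay approximation} to run a contraction-mapping argument producing the correction $(b',\psi')$, and then check gauge equivariance, injectivity, and reversibility.

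First I would set up the fixed point problem. With $V_{\mathfrak{q}}=\triangle_{2,\mathfrak{q},(A,\varPhi)^{\#}}(b',\psi')$, solving the perturbed Seiberg--Witten equations $\mathfrak{F}_{\mathfrak{p}_{M_{\tau}'}}\mathfrak{G}_{\tau}(A,\varPhi)=0$ is equivalent, by Theorem \ref{GLUING EQUATION}, to $V_{\mathfrak{q}}=S_{\mathfrak{q},(A,\varPhi)^{\#}}(V_{\mathfrak{q}})-\mathfrak{F}_{\mathfrak{p}_{M_{\tau}'}}(A,\varPhi)^{\#}$. Theorem \ref{Fixed Point Theorem} says that $S_{\mathfrak{q},(A,\varPhi)^{\#}}$ is a $\kappa_k$-contraction (with $\kappa_k<1/2$) on the $L^{2}_k$-ball of radius $\alpha_k$, uniformly in $\tau$, $N_0$, and in $(A,\varPhi)$ ranging over the zero dimensional strata. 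Meanwhile \eqref{exponential decay approximation} gives $\|\mathfrak{F}_{\mathfrak{p}_{M_{\tau}'}}(A,\varPhi)^{\#}\|_{L^2_k(g_\tau',A^\#)}\leq c_k e^{-\delta T}$, which is $\leq \alpha_k/2$ once $\tau$ (hence $T$) is large enough. Proposition 2.3.5 in \cite{MR2199446} then produces a unique fixed point $V_{\mathfrak{q}}$ with norm bounded by a multiple of $\|\mathfrak{F}_{\mathfrak{p}_{M_{\tau}'}}(A,\varPhi)^{\#}\|_{L^2_k}$; applying the uniform inverse bound from Theorem \ref{thm invertibility triangle-1} to define $(b',\psi')=\triangle_{2,\mathfrak{q},(A,\varPhi)^{\#}}^{-1}V_{\mathfrak{q}}$ gives the desired correction and the inequalities \eqref{inequalities gluing}.

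Next I would verify that the assignment $(A,\varPhi)\mapsto \mathfrak{G}_{\tau}(A,\varPhi)$ descends to moduli spaces. Gauge equivariance is essentially built in: the pre-gluing satisfies $u^{\#}\cdot(A,\varPhi)^{\#}=(u\cdot(A,\varPhi))^{\#}$, while $\triangle_{2,\mathfrak{q},\cdot}$ and $(\mathcal{D}\mathfrak{F}_{\mathfrak{q}})^{*}$ are gauge equivariant; uniqueness of the small-norm fixed point then forces $\mathfrak{G}_{\tau}(u\cdot(A,\varPhi))=u^{\#}\cdot\mathfrak{G}_{\tau}(A,\varPhi)$. That $\mathfrak{G}_{\tau}$ lands in the zero dimensional strata follows because the correction $(\mathcal{D}_{(A,\varPhi)^{\#}}\mathfrak{F}_{\mathfrak{q}})^{*}(b',\psi')$ lies in the orthogonal complement of the gauge orbit, so the linearization at $\mathfrak{G}_\tau(A,\varPhi)$ is a small compact perturbation of the one at $(A,\varPhi)^{\#}$ and has the same Fredholm index. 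Injectivity comes from the uniqueness part of the contraction mapping: if two preglued configurations had the same image, then the differences would live in two transverse directions (gauge/Coulomb versus $\mathrm{im}(\mathcal{D}\mathfrak{F}_{\mathfrak{q}})^{*}$), forcing equality.

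The main obstacle, and the last step, is the reversibility/surjectivity claim. Here I would exploit the symmetry of the connected-sum-along-$Y$ construction: since we took $Z'=(0,\infty)\times Y$, the role of $M_\tau'$ is played by $Z_{Y,\xi}^{+}$, and the same pre-gluing and contraction construction run in the opposite direction produces a map $\mathfrak{G}_{\tau}':\mathcal{M}_{0}(M_{\tau}';\mathfrak{s}_{\tau}';[\mathfrak{c}])\to \mathcal{M}_{0}(M_{\tau};\mathfrak{s}_{\tau};[\mathfrak{c}])$ with the analogous norm estimates. The uniqueness of the small-norm fixed point used in both directions, together with the fact that $\mathfrak{G}_{\tau}\circ\mathfrak{G}_{\tau}'$ and $\mathfrak{G}_{\tau}'\circ\mathfrak{G}_{\tau}$ differ from the identity by corrections whose norms are controlled by the $e^{-\delta T}$ errors, forces these compositions to be the identity on the small-norm ball, hence on the zero dimensional strata once $\tau$ is large enough. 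The hardest point is to make this a priori $L^{2}_k$ smallness of the correction genuinely $\tau$-uniform, which is exactly what the assumption that the solutions belong to the compact zero-dimensional strata (combined with the $C^{0}$ bounds of Lemma \ref{C0 control} and the strong convergence on the half-cylinder already used in Lemma \ref{uniform parametrices}) gives us. With the bijection established, the $\mathbb{F}_{2}$ cardinalities of $\mathcal{M}_{0}(M_{\tau};\mathfrak{s}_{\tau};[\mathfrak{c}])$ and $\mathcal{M}_{0}(M_{\tau}';\mathfrak{s}_{\tau}';[\mathfrak{c}])$ coincide, proving the theorem.
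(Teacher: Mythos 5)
Your first stage (setting up the fixed point problem $V_{\mathfrak{q}}=S_{\mathfrak{q},(A,\varPhi)^{\#}}(V_{\mathfrak{q}})-\mathfrak{F}_{\mathfrak{p}_{M_{\tau}'}}(A,\varPhi)^{\#}$, invoking the uniform contraction and the uniform inverse of the Laplacian, and deducing the estimates~(\ref{inequalities gluing}) via the version of the contraction mapping theorem in Proposition 2.3.5 of Mrowka--Rollin) is exactly the argument the paper uses, and the gauge-equivariance observation is correct for the same reasons. The remaining claims, however, have genuine gaps.

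For the claim that $\mathfrak{G}_{\tau}$ lands in the zero dimensional strata, you assert that the linearization at $\mathfrak{G}_{\tau}(A,\varPhi)$ is a ``small compact perturbation'' of the one at $(A,\varPhi)^{\#}$ and hence has the same Fredholm index. Two issues: first, you conflate compactness (which preserves index) with smallness in operator norm (which preserves invertibility); preservation of the index alone does not place the glued solution in the zero dimensional stratum, because that requires the linearization to be \emph{surjective} at the new point. Second, and more seriously, you never establish that the linearization $Q_{\mathfrak{q},(A,\varPhi)^{\#}}$ at the \emph{preglued} configuration on $M_{\tau}'$ is invertible. Transversality on $M_{\tau}$ gives invertibility of $Q_{\mathfrak{q},(A,\varPhi)}$ there, but the pre-gluing changes both the manifold and the configuration, so this must be re-proved. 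The paper handles this with a splicing argument: it patches the inverse of $Q_{\mathfrak{q},(A,\varPhi)}$ on the cylindrical/unperturbed region together with the inverse of $Q_{(A_{0,\tau'},\varPhi_{\tau}')}$ on the AFAK end (the latter relying on Lemma 3.1.4 of Mrowka--Rollin), exactly paralleling the construction of the parametrix for $\triangle_{2,\mathfrak{q},(A,\varPhi)^{\#}}$. Once $Q_{\mathfrak{q},(A,\varPhi)^{\#}}$ is known invertible, the operator-norm closeness from (\ref{exponential decay approximation}) and (\ref{inequalities gluing}) does give invertibility of $Q_{\mathfrak{q},\mathfrak{G}_{\tau}(A,\varPhi)}$, which is what you want. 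Your sketch skips the splicing step entirely.

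Your injectivity argument also does not work as stated. You claim that if two preglued configurations have the same image then ``the differences would live in two transverse directions, forcing equality,'' but the corrections $(\mathcal{D}_{(A_{i},\varPhi_{i})^{\#}}\mathfrak{F}_{\mathfrak{q}})^{*}(b_{i}',\psi_{i}')$ are taken at different basepoints $(A_{1},\varPhi_{1})^{\#}\neq(A_{2},\varPhi_{2})^{\#}$, so there is no single decomposition into transverse pieces to invoke. A transversality argument of this kind would give \emph{local} injectivity near a single preglued configuration, but two distinct elements of $\mathcal{M}_{0}(M_{\tau};\mathfrak{s}_{\tau};[\mathfrak{c}])$ could in principle be far apart and yet have corrections that accidentally bring them together. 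The paper's proof is instead a compactness argument by contradiction: if injectivity failed for $\tau$ arbitrarily large, one extracts a sequence $\tau_{j}\to\infty$ of counterexamples $(A_{j},\varPhi_{j})\neq(\tilde{A}_{j},\tilde{\varPhi}_{j})$, uses the uniform estimates (\ref{inequalities gluing}) and (\ref{exponential decay approximation}) to show both sequences converge (after gauge) to the same limit on $Z_{Y,\xi}^{+}$, upgrades this convergence to strong $L_{k}^{2}$ convergence along the half-cylinder (Theorem 13.3.5 of Kronheimer--Mrowka), and then invokes a ``small radius'' statement (the analogue of Proposition 3.2.1 in Mrowka--Rollin) to conclude the pair is eventually gauge-equivalent, contradicting distinctness. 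This global/compactness step is the heart of the injectivity proof and is missing from your sketch. Your proposal for surjectivity via showing $\mathfrak{G}_{\tau}\circ\mathfrak{G}_{\tau}'=\mathrm{id}$ is more ambitious than needed; once both maps are injective between finite sets the bijectivity follows by counting, so the effort should instead go into fixing the injectivity argument.
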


\begin{proof}
We need to verify that the gluing map preserves the dimensionality
of the zero dimensional strata. For this recall that if $[(A,\varPhi)]$
belongs to $\mathcal{M}_{0}(M_{\tau};\mathfrak{s}_{\tau};[\mathfrak{c}])$,
then the index of the operator 
\[
Q_{\mathfrak{q},(A,\varPhi)}=\mathbf{d}_{(A,\varPhi)}^{*}\oplus\mathcal{D}_{(A,\varPhi)}\mathfrak{F}_{\mathfrak{p}_{M_{\tau}}}
\]
is precisely the dimension of the strata to which $[(A_{n},\varPhi_{n})]$
belongs. Since the transversality condition already implied that $Q_{\mathfrak{q},(A,\varPhi)}$
was surjective we conclude that in fact $Q_{\mathfrak{q},(A,\varPhi)}$
is an invertible operator. 

Now we use the same splicing procedure as in the case of finding the
inverse for the Seiberg Witten Laplacians $\triangle_{2,\mathfrak{q},(A,\varPhi)^{\#}}$.
Namely, the operator $\eta_{end}Q_{(A_{0,\tau}',\varPhi_{\tau}')}(\eta_{end}\cdot)$
associated to the canonical solution $(A_{0,\tau'},\varPhi_{\tau}')$
on the AFAK end $Z'$ will be invertible on a suitable domain using
Lemma 3.1.4 in \cite{MR2199446}. Therefore, we can patch together
$\eta_{cyl}Q_{\mathfrak{q},(A,\varPhi)}^{-1}(\eta_{cyl}\cdot)$ and
$\eta_{end}Q_{(A_{0,\tau'},\varPhi_{\tau}')}^{-1}(\eta_{end}\cdot)$
to show that $Q_{\mathfrak{q},(A,\varPhi)^{\#}}$ will become invertible. 

To compare $Q_{\mathfrak{q},(A,\varPhi)^{\#}}$ and $Q_{\mathfrak{q},\mathfrak{G}_{\mathfrak{q}}(A,\varPhi)}$
notice that inequality (\ref{exponential decay approximation}) and
the bound in (\ref{inequalities gluing}) allow us to conclude that
the operator norms of $Q_{\mathfrak{q},(A,\varPhi)^{\#}}$ and $Q_{\mathfrak{q},\mathfrak{G}_{\mathfrak{q,\tau}}(A,\varPhi)}$
are very close to each other. Since being an invertible operator is
an open condition it follows that $Q_{\mathfrak{q},\mathfrak{G}_{\mathfrak{q},\tau}(A,\varPhi)}$
will have to be invertible as well.

Now we must address the injectivity of our map. It is essentially
the same as the proof of Corollary 3.2.2 in \cite{MR2199446}. If
the injectivity of the map is not true for $\tau$ large enough then
we obtain a sequence $\tau_{j}\rightarrow\infty$ and solutions to
the Seiberg Witten equations $(A_{j},\varPhi_{j})$ and $(\tilde{A}_{j},\tilde{\varPhi}_{j})$
on $M_{\tau_{j}}$ such that for all $j$, $[A_{j},\varPhi_{j}]\neq[\tilde{A}_{j},\tilde{\varPhi}_{j}]$
while $[\mathfrak{G}_{\tau_{j}}(A_{j},\varPhi_{j})]=[\mathfrak{G}_{\tau_{j}}(\tilde{A}_{j},\tilde{\varPhi}_{j})]$.
Moreover, after taking gauge transformation we can assume that they
have exponential decay and converge on every compact subset of $Z_{Y,\xi}^{+}$
to some solutions $(A_{\infty},\varPhi_{\infty})$ and $(\tilde{A}_{\infty},\tilde{\varPhi}_{\infty})$
. Moreover, for all $j$ we have $[A_{j},\varPhi_{j}]\neq[\tilde{A}_{j},\tilde{\varPhi}_{j}]$
as gauge equivalence classes . We want to show that if $(A_{\infty},\varPhi_{\infty})=(\tilde{A}_{\infty},\tilde{\varPhi}_{\infty})$
then 
\begin{equation}
\|(A_{j},\varPhi_{j})-(\tilde{A}_{j},\tilde{\varPhi}_{j})\|_{L_{k+1}^{2}(g_{\tau},A_{j})}\rightarrow0\label{strong convergence solutions}
\end{equation}

First of all, from (\ref{inequalities gluing}) and (\ref{exponential decay approximation})
we already know that have that 
\begin{equation}
\|\mathfrak{G}_{\tau_{j}}(A_{j},\varPhi_{j})-(A_{j},\varPhi_{j})^{\#}\|_{L_{k+1}^{2}(g_{\tau}',A^{\#})}\rightarrow0\label{eq: approaching norms}
\end{equation}
hence $\mathfrak{G}_{\tau_{j}}(A_{j},\varPhi_{j})$ converges on every
compact towards $(A_{\infty},\varPhi_{\infty})$ since $(A_{j},\varPhi_{j})$
does. Similarly $\mathfrak{G}_{\tau_{j}}(\tilde{A}_{j},\tilde{\varPhi}_{j})$
converges to $(\tilde{A}_{\infty},\tilde{\varPhi}_{\infty})$. The
fact that $\mathfrak{G}(A_{j},\varPhi_{j})$ and $\mathfrak{G}(\tilde{A}_{j},\tilde{\varPhi}_{j})$
are gauge equivalent for each $j$ implies that the limits are also
gauge equivalent. Hence the limits of $(A_{j},\varPhi_{j})$ and $(\tilde{A}_{j},\tilde{\varPhi}_{j})$
are gauge equivalent. After making further gauge transformations,
we can then assume that $(A_{j},\varPhi_{j})$ and $(\tilde{A}_{j},\tilde{\varPhi}_{j})$
converge toward the same limit $(A_{\infty},\varPhi_{\infty})$ on
$Z_{Y,\xi}^{+}$. In principle, this would be weak convergence along
the cylindrical end $\mathbb{R}^{+}\times Y$. However, by the discussion
from before when we analyzed the restriction of a solution to the
cylindrical moduli space $\mathcal{M}(\mathbb{R}^{+}\times-Y,\mathfrak{s}_{\xi},[\mathfrak{c}])$
, we can actually assume that the convergence is strong along the
entire cylindrical end, in other words, $(A_{j},\varPhi_{j})$ and
$(\tilde{A}_{j},\tilde{\varPhi}_{j})$ are converging strongly towards
$(A_{\infty},\varPhi_{\infty})$ on the cylindrical end as well. This
allows us to conclude that (\ref{strong convergence solutions}) is
true. 

Since we now have strong convergence along the cylinder then the estimates
in \cite{MR2199446} continue to hold in that we can find a ``radius''
$r$ small enough {[}independent of $\tau${]} for which whenever
there is $j$ such that $\|(A_{j},\varPhi_{j})-(\tilde{A}_{j},\tilde{\varPhi}_{j})\|_{L_{k+1}^{2}(g_{\tau},A_{j})}<r$
then $(A_{j},\varPhi_{j})$ and $(\tilde{A}_{j},\tilde{\varPhi}_{j})$
are gauge equivalent {[}this is a much weaker version of their proposition
3.2.1{]}. From (\ref{strong convergence solutions}) it is clear that
such $j$ will exist and hence we are done. 
\end{proof}
We have reached the proof of the naturality property for the contact
invariant under strong symplectic cobordisms, that is, Theorem (\ref{thm: Naturality}).
To see why $\widecheck{HM}_{\bullet}(W^{\dagger},\mathfrak{s}_{\omega})\mathbf{c}(\xi')=\mathbf{c}(\xi)$
recall that in the first part of this paper (section 5 to be more
specific) we showed that 
\[
\widecheck{HM}_{\bullet}(W^{\dagger},\mathfrak{s}_{\omega})\mathbf{c}(\xi')=\mathbf{c}(\xi',Y)
\]
The gluing theorem we just proved was aimed at showing that 
\begin{equation}
\mathbf{c}(\xi',Y)=\mathbf{c}(\xi)\label{iden 1}
\end{equation}
To see why $\mathbf{c}(\xi',Y)=\mathbf{c}(\xi)$ we need to apply
Theorem (\ref{Gluing Theorem}) to the case in which the second AFAK
end is $Z'=(0,\infty)\times Y$. As explained before, it is not difficult
to see that for this choice the corresponding manifolds $M_{\tau}'$
in fact all agree with each other in the sense that their metrics,
spinor bundles, symplectic forms, etc are the same, and in fact coincide
with the manifold $Z_{Y,\xi}^{+}$ used to define the contact invariant
of $(Y,\xi)$. In particular, we have that for all $\tau>0$ that
\[
|\mathcal{M}_{0}(M_{\tau}',\mathfrak{s}',[\mathfrak{c}])|=|\mathcal{M}_{0}(Z_{Y,\xi}^{+},\mathfrak{s},[\mathfrak{c}])|\;\mod2
\]
 Now choose $\tau_{large}$ such that 
\[
|\mathcal{M}_{0}(M_{\tau};\mathfrak{s}_{\tau_{large}};[\mathfrak{c}])|=|\mathcal{M}_{0}(M_{\tau}',\mathfrak{s}',[\mathfrak{c}])|=|\mathcal{M}_{0}(Z_{Y,\xi}^{+},\mathfrak{s},[\mathfrak{c}])|\mod2
\]
If we think of using the numbers $|\mathcal{M}_{0}(M_{\tau};\mathfrak{s}_{\tau_{large}};[\mathfrak{c}])|\mod2$
in order to define a chain-level element $c(\xi',Y,\tau_{large})\in\check{C}_{*}(-Y,\mathfrak{s}_{\xi})$
as in formula (\ref{chain level def}) (the $\tau$-hybrid invariant
we discussed before), then the previous identity says that at the
chain level
\[
c(\xi',Y,\tau_{large})=c(\xi)
\]
which in particular gives the identity of homology classes 
\begin{equation}
\mathbf{c}(\xi',Y,\tau_{large})=\mathbf{c}(\xi)\label{iden 2}
\end{equation}
Now, $c(\xi',Y,\tau_{large})$ is not the same chain-level element
as the element $c(\xi',Y)$ we used during the initial sections of
this paper. However, it is not difficult to see that we can use a
one parameter family of metrics $g(t)$ and perturbations $\mathfrak{p}_{0}(t)$
on $M_{\tau_{large}}$ (which is diffeomorphic to $W_{\xi',Y}^{\dagger}$)
to go from one element to the other. Therefore, one can consider a
parameterized moduli space and use the same argument as in section
5 to conclude that $c(\xi',Y,\tau_{large})$ and $c(\xi',Y)$ do define
the same homology element in $\widecheck{HM}_{\bullet}(-Y,\mathfrak{s}_{\xi})$,
in other words 
\begin{equation}
\mathbf{c}(\xi',Y,\tau_{large})=\mathbf{c}(\xi',Y)\label{iden 3}
\end{equation}
Combining the identities (\ref{iden 1}), (\ref{iden 2}) and (\ref{iden 3})
the naturality result follows, i.e, we have shown that for a strong
symplectic cobordism $(W,\omega):(Y,\xi)\rightarrow(Y',\xi')$ one
has 
\[
\widecheck{HM}_{\bullet}(W^{\dagger},\mathfrak{s}_{\omega})\mathbf{c}(\xi')=\mathbf{c}(\xi',Y)
\]

\section{7. Appendix. Energy and Compactness}

Now we will briefly discuss the compactness arguments invoked during
this paper. We will explain why the results in \cite{Zhang[2016]}
allows us to extend the compactness results of \cite{MR2199446} to
the corresponding versions with cylindrical ends. First we will explain
why the ``dilating the cone'' operation we have discussed in this
paper can be regarded as a geometric way to implement Taubes' perturbation
for the Seiberg-Witten equations on symplectic manifolds \cite[eq. 1.18]{MR1362874}.

We recall the perturbations used by Taubes. Let $(X,\omega)$ be a
symplectic manifold. The analysis will be entirely local, so in fact
$X$ may be regarded as an open ball or a non-compact manifold if
preferred. The positive part $S_{\omega}^{+}$ of the spinor bundle
determined by $\omega$ can be decomposed as 
\[
S_{\omega}^{+}=\mathbb{C}\varPhi_{0}\oplus\left\langle \varPhi_{0}\right\rangle ^{\perp}
\]
Here $\varPhi_{0}$ is the canonical spinor that appeared before in
our paper: we can regard it as the constant function $\varPhi_{0}:X\rightarrow\{1\}\subset\mathbb{C}$
identically equal to $1$. The canonical connection $A_{0}$ is then
the unique spin-c connection for which $D_{A_{0}}\varPhi_{0}=0$.
The perturbed Seiberg-Witten equations on symplectic manifolds due
to Taubes are
\[
SW_{Taubes}^{r}:\begin{cases}
\frac{1}{2}\rho(F_{A}^{+})-(\varPhi\varPhi^{*})_{0}=\frac{1}{2}\rho(F_{A_{0}}^{+})-\frac{i}{4}r\rho(\omega)\\
D_{A}\varPhi=0
\end{cases}
\]
Here $r$ is a parameter that eventually Taubes takes to be very large.
For $r=1$, the perturbations are cooked up in such a way that $(A_{0},\varPhi_{0})$
solves $SW_{Taubes}^{1}$, since the Clifford identities say that
$(\varPhi_{0}\varPhi_{0}^{*})_{0}=\frac{i}{4}\rho(\omega)$ . Moreover,
$(A_{0},\sqrt{r}\varPhi_{0})$ then solve $SW_{Taubes}^{r}$, for
essentially the same reasons. At this point it is useful to note that
one can also rescale the spinor as Taubes usually does. That is, one
can write $\varPhi=\sqrt{r}\phi$ so that $SW_{Taubes}^{r}$ become
\begin{equation}
SW_{Taubes}^{r}:\begin{cases}
\frac{1}{2}\rho(F_{A}^{+})-r(\phi\phi^{*})_{0}=\frac{1}{2}\rho(F_{A_{0}}^{+})-\frac{i}{4}r\rho(\omega)\\
D_{A}\varPhi=0
\end{cases}\label{SWR}
\end{equation}
To see how to obtain $SW_{Taubes}^{r}$ from $SW_{Taubes}^{1}$
\[
SW_{Taubes}:\begin{cases}
\frac{1}{2}\rho(F_{A}^{+})-(\varPhi\varPhi^{*})_{0}=\frac{1}{2}\rho(F_{A_{0}}^{+})-\frac{i}{4}\rho(\omega)\\
D_{A}\varPhi=0
\end{cases}
\]
suppose that we dilate the metric. That is, for a parameter $\tau>0$,
we define 
\[
g_{\tau}=\tau^{2}g
\]
where $g$ was a metric compatible with $\omega$, i.e, $\omega$
is a self-dual harmonic 2-form of point-wise norm $\sqrt{2}$ with
respect to $g$. Since $\tau$ is a constant then 
\[
\omega_{\tau}=\tau^{2}\omega
\]
 will continue to be a symplectic form, now compatible with the metric
$g_{\tau}$. The spinor bundle will clearly not change with respect
to this new metric, that is, 
\[
S_{\omega,\tau}=S_{\omega}
\]
while the Clifford map on one-forms is rescaled as 
\[
\rho_{\tau}=\frac{\rho}{\tau}
\]
On two forms we find that $\rho_{\tau}=\frac{\rho}{\tau^{2}}$, so
in particular $\rho_{\tau}(\omega_{\tau})=\rho(\omega)$. A dilation
is a very simple conformal change of metric, for which it is understood
how the Dirac operator changes \cite[eq. D.1]{MR2717225}. In our
case we find that 
\[
D_{A,g_{\tau}}\varPhi=\tau^{-1}D_{A,g}\varPhi
\]
 so in particular being a harmonic spinor (i.e, $D_{A}\varPhi=0$)
is a condition independent of the metric $g_{\tau}$ used, and the
canonical connection will be preserved under the dilation, since $A_{0}$
satisfies the property $D_{A_{0},g_{\tau}}\varPhi_{0}=0$ regardless
of the value of $\tau$. In other words
\[
A_{0,\tau}=A_{0}
\]
Moreover, the notion of self-duality is preserved under conformal
changes of the metric, in particular, dilations, which means that
the \textbf{Taubsian geometric perturbations }of the Seiberg-Witten
equations on the dilated symplectic manifolds $(X,g_{\tau},\omega_{\tau})$
\[
SW_{Taubes}^{\tau}:\begin{cases}
\frac{1}{2}\rho_{\tau}(F_{A}^{+})-(\varPhi\varPhi^{*})_{0}=\frac{1}{2}\rho_{\tau}(F_{A_{0},\tau}^{+})-\frac{i}{4}\rho_{\tau}(\omega_{\tau})\\
D_{A,g_{\tau}}\varPhi=0
\end{cases}
\]
can be rewritten in terms of the geometric structures on the original
data $(X,g,\omega)$ as {[}recall that on two-forms $\rho_{\tau}(F_{A}^{+})=\frac{\rho(F_{A}^{+})}{\tau^{2}}${]}
\begin{equation}
SW_{Taubes}^{\tau}:\begin{cases}
\frac{1}{2}\rho(F_{A}^{+})-\tau^{2}(\varPhi\varPhi^{*})_{0}=\frac{1}{2}\rho(F_{A_{0},\tau}^{+})-\frac{i}{4}\tau^{2}\rho(\omega)\\
D_{A,g}\varPhi=0
\end{cases}\label{SWR-1}
\end{equation}
Setting $\tau=\sqrt{r}$, we see that $SW_{Taubes}^{\tau}$ is indistinguishable
from $SW_{Taubes}^{r}$! Therefore, the results from papers which
work with the perturbations $SW_{Taubes}^{r}$ can be translated immediately
to our paper. In particular, the theorems from \cite{Zhang[2016]}
are readily available to our situation. The only caveat is that \cite{Zhang[2016]}
interprets the scaling of the spinors in a slightly different way.
For example, definition 3.5 of the configuration space in \cite{Zhang[2016]}
writes the decay condition as $(\varPhi-\sqrt{r}\varPhi_{0})\in L_{k,A_{0}}^{2}$,
while our definition (\ref{configuration space}) of the configuration
space write the decay condition as $(\varPhi-\varPhi_{0})\in L_{k,A_{0}}^{2}$.
In particular, when the results of \cite{Zhang[2016]} are translated
into our context, there are additional factors of $\sqrt{r}$ one
needs to remove.

The most important result proven from \cite{Zhang[2016]} for the
purposes of our problem is the uniform bound on the symplectic energy
for solutions of the Seiberg-Witten equations on manifolds with both
symplectic and cylindrical ends. More precisely, when $(A,\varPhi)$
is a configuration on $M_{\tau}$ we can restrict it to the symplectic
region $M_{\tau}\backslash(\mathbb{R}^{+}\times-Y)$ and consider
the \textbf{symplectic energy }
\begin{equation}
E_{\tau}(A,\varPhi)=\int_{M_{\tau}\backslash(\mathbb{R}^{+}\times-Y)}\left[\left(1-|\alpha|^{2}-|\beta|^{2}\right)^{2}+|F_{a}|^{2}+|\widehat{\nabla}_{A}\varPhi|^{2}\right]\text{vol}^{g_{\tau}}\label{eq:symplectic energy}
\end{equation}
which is defined in section 5.3 of \cite{Zhang[2016]}. Here we decomposed
$\varPhi$ as $(\alpha,\beta)$ and $A$ as $A=A_{0}+a$. In the formula
for \ref{eq:symplectic energy}, $\widehat{\nabla}_{A}$ refers to
the \textbf{twisted Chern connection, }which can be defined as 
\[
\widehat{\nabla}_{A}\varPhi=\nabla^{C}\varPhi+a\otimes\varPhi
\]
where $\nabla^{C}$ is the Chern connection the $AFAK$ end structure
determines (this is explained in section 1.3.2 of \cite{MR2199446}).
Notice that \cite{Zhang[2016]} writes $|\widehat{\nabla}_{A}\varPhi|^{2}$
as $|\nabla_{a}\alpha|^{2}+|\nabla_{A}'\beta|^{2}$, but these two
things in fact mean the same. The uniform bound on the symplectic
energy $E_{\tau}$ that we need is the following.
\begin{lem}
\label{uniform bound symplectic energy} Consider the sequence of
manifolds $M_{\tau}$ defined in section 6.2, equation (\ref{manifolds Mtau}).
There exists a constant $\kappa$ such that for every $\tau$ large
enough and every solution to the Seiberg-Witten equations $(A,\varPhi)$
on $M_{\tau}$ asymptotic along the symplectic end to $(A_{0},\varPhi_{0})$
and along the cylindrical end to a critical point $\mathfrak{c}$
of the three dimensional Seiberg-Witten equations, we have that 
\[
E_{\tau}(A,\varPhi)\leq\kappa
\]
\end{lem}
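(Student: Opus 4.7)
The plan is to follow the strategy of Section 5.3 in \cite{Zhang[2016]}, where a uniform energy bound is proved for Seiberg--Witten solutions on manifolds with symplectic ends, and adapt it to incorporate both the cylindrical end $\mathbb{R}^+ \times -Y$ and the dependence on the parameter $\tau$. The starting point is the observation, made earlier in this appendix, that the ``dilating the cone'' operation is geometrically equivalent to Taubes' $r$-perturbation (with $r = \tau^2$), so Taubes-type energy identities are directly available on $M_\tau \setminus (\mathbb{R}^+ \times -Y)$.

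First, I would establish a local energy identity. Applying the Weitzen\"{o}ck formula to the (perturbed) Seiberg--Witten equations on $M_\tau \setminus (\mathbb{R}^+ \times -Y)$, together with the Clifford algebra identity $(\Phi_0\Phi_0^*)_0 = \tfrac{i}{4}\rho(\omega)$, one writes
\[
E_\tau(A,\Phi) = T_\tau + B_\infty + B_0 + P_\tau,
\]
where $T_\tau$ is a topological/geometric integral (involving $c_1(\mathfrak{s}_\tau) \cup [\omega_\tau]$ and the curvature of $A_0$), $B_\infty$ is a boundary contribution at $\sigma_\tau \to \infty$, $B_0$ is a boundary contribution at $\{0\} \times -Y$, and $P_\tau$ collects the extra terms produced by the abstract perturbations $\mathfrak{p}_{M_\tau}$ that do not appear in Taubes' original identity.

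Second, I would control each term. The boundary term $B_\infty$ vanishes by the exponential decay of $(A,\Phi)$ to the canonical configuration $(A_0,\Phi_0)$ on the symplectic end, which follows from Proposition 3.15 of \cite{MR1474156} and Propositions 5.7, 5.10 of \cite{Zhang[2016]}. The boundary term $B_0$ is bounded by the value of the Chern--Simons--Dirac functional at the asymptotic critical point $\mathfrak{c}$; since $\mathfrak{c}$ ranges over a \emph{finite} set of gauge equivalence classes of critical points, $B_0$ is uniformly bounded. The topological term $T_\tau$ is the delicate one: under the dilation, $\omega_\tau = \tau^2 \omega_Z$ while the integration region $Z \cap \{\sigma_Z > 1/\tau\}$ shrinks, producing precisely the cancellation needed for $T_\tau$ to remain bounded as $\tau \to \infty$ (this is exactly analogous to the uniform bounds in Section 2.2 of \cite{MR2199446}). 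Finally, $P_\tau$ is controlled using the tameness estimates \eqref{bounds derivatives of the perturation} together with exponential decay of $\Phi - \Phi_{\mathfrak{c}}$ along the half-cylinder, summed slice-by-slice as in the proof of Theorem~\ref{Fixed Point Theorem}.

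The main obstacle will be the uniform control of $T_\tau$: one must track carefully how the Chern class pairing and the curvature of the canonical connection $A_0$ behave under the cut-and-paste construction of equation~\eqref{manifolds Mtau}, and verify that the growing annular region $(1,\tau) \times Y$ does not introduce an unbounded contribution. This is handled by splitting the integral over $M_\tau \setminus (\mathbb{R}^+ \times -Y)$ into the piece $[1,\tau) \times Y$, where $A_0$ is the cone canonical connection and the pointwise norm of $F_{A_0}$ is $O(1/t^2)$ against a volume form of growth $O(t^3)$, and the piece $Z \cap \{\sigma_Z > 1/\tau\}$, where the same integrand is controlled by fixed geometric data of $Z$ independent of $\tau$. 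Adding the two yields a bound uniform in $\tau$, completing the proof.
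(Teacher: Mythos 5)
Your overall plan is in the spirit of the energy estimates underlying the result, but it takes a genuinely different route from the paper and contains a concrete gap in the part you flag as the main obstacle. The paper does not re-derive an energy identity on $M_\tau$ and bound its terms one by one. It instead reduces in two steps: first, it uses the uniform bounded geometry of the AFAK family $(M_\tau)$ --- injectivity radius, curvature bounds, etc., as in Lemma 2.1.6 and Lemma 2.2.7 of \cite{MR2199446} --- to conclude that a bound $E_{\tau_0}(A,\varPhi)\leq\kappa_0$ for a single sufficiently large $\tau_0$ propagates to a bound uniform in $\tau$; second, for that fixed $\tau_0$ it invokes Proposition 5.12 of \cite{Zhang[2016]}, applicable because the ``dilating the cone'' operation is the same as a Taubes $r$-perturbation as explained at the start of the appendix. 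Your from-scratch Weitzenb\"ock route would require you to essentially reprove both of those external inputs, and as written the sketch does not close.

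The step that fails is your ``main obstacle'' paragraph. You argue that on the annulus $(1,\tau)\times Y$ the pointwise norm $|F_{A_0}|_{g_\tau}$ is $O(1/t^2)$ while the volume form grows like $O(t^3)$, and conclude a uniform bound. But this pointwise estimate does not give what you want: for a quadratic density such as $|F_{A_0}|^2$ you get $\int_1^\tau t^{-4}\cdot t^3\,dt=\log\tau$, and for the topological density $F_{A_0}\wedge\omega_\tau$ (with $|\omega_\tau|_{g_\tau}=\sqrt{2}$) you get $\int_1^\tau t^{-2}\cdot t^3\,dt\sim\tau^2$. In both cases the integral is \emph{not} uniformly bounded in $\tau$ by that naive reckoning. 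The true reason the cone contribution to the topological term stays finite is that on the cone $\omega_\tau=\tfrac12 d(t^2\theta)$ is exact and Stokes' theorem converts the growing annular integral into boundary terms controlled by exponential decay; and the uniformity in $\tau$ comes from the bounded-geometry comparison between the different $M_\tau$, not from a pointwise decay count. Separately, your bound on the boundary term $B_0$ in terms of the Chern--Simons--Dirac value at the critical point $\mathfrak{c}$ is at risk of circularity: relating the CSD value at the slice $\{0\}\times -Y$ to the CSD value at $\mathfrak{c}$ already requires control of the analytical energy along the half-cylinder, which is part of what the lemma is trying to establish. The paper avoids both pitfalls by delegating them to \cite[Prop.~5.12]{Zhang[2016]}, whose statement already packages the required control of both the symplectic and cylindrical ends.
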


\begin{proof}
Notice that if the cylindrical end is not present, then this result
would follow from \cite[Lemma 2.2.7]{MR2199446}. In our case the
perturbation term $\varpi_{\tau}$ which appears in the previously
cited lemma of Mrowka and Rollin is not present, since this was only
introduced for transversality purposes, which is not needed for the
manifolds $M_{\tau}$, thanks to the use of the abstract perturbations
along the cylindrical end and the fiber product description of the
moduli spaces. 

As mentioned by Mrowka and Rollin, when there cylindrical end is not
present, the fact that $\kappa$ is independent of $\tau$ is a consequence
of the uniform boundedness of the injectivity radius and curvature,
as well as the other geometric quantities involved in the definition
of these families of $AFAK$ manifolds $M_{\tau}$, as described in
\cite[Lemma 2.1.6]{MR2199446}. In other words, when the cylindrical
end is not present, finding a uniform control on the symplectic energy
$E_{\tau}$ is a consequence of knowing that $E_{\tau_{0}}$ is bounded
for some $\tau_{0}$ sufficiently large (which was the content of
\cite[Lemma 3.17]{MR1474156}) and then appealing to the uniform geometry
of the manifolds $M_{\tau}$. 

Therefore, for our situation what we need to know is that in the presence
of a cylindrical end it is still the case that one can find $\tau_{0}$
sufficiently large so that $E_{\tau_{0}}(A,\varPhi)\leq\kappa_{0}$
for all solutions $(A,\varPhi)$ on $M_{\tau_{0}}$ and for some $\kappa_{0}$
independent of the Seiberg-Witten solution $(A,\varPhi)$ on $M_{\tau_{0}}$.
In other words, we want the analogue of the above mentioned \cite[Lemma 3.17]{MR1474156}
in the presence of a cylindrical end. The fact that $\kappa_{0}$
can be taken to be independent of $\tau_{0}$ will follow again from
the same comments we just made in the previous paragraph regarding
the uniform boundedness of the geometric quantities involved in the
family of the $AFAK$ manifolds. The existence of such a $\tau_{0}$
now follows from \cite[Proposition 5.12]{Zhang[2016]}, given that
we can reinterpret the equations on an arbitrary $M_{\tau}$ as equations
on the fixed $M_{1}$ , at the cost of perturbing the curvature equation
with factors of $\tau$, as we explained at the beginning of the appendix
when we discussed the Taubsian perturbations \ref{SWR} and \ref{SWR-1}.
\end{proof}
Now we proceed to explain some of the auxiliary lemmas used in some
of the proofs of our paper. The first one appeared in Section 6.3,
where the pregluing map was being constructed.
\begin{lem}
\label{C0 control}Consider the sequence of manifolds $M_{\tau}$
defined in section 6.2, equation (\ref{manifolds Mtau}). We can find
a compact set $C=[1,T]\times Y$ contained in the symplectic region
$M_{\tau}\backslash(\mathbb{R}^{+}\times-Y)$ with the following significance:
for every $\tau$ large enough and for every solution to the Seiberg-Witten
equations $(A,\varPhi)$ on $M_{\tau}$ asymptotic along the symplectic
end to $(A_{0},\varPhi_{0})$ and along the cylindrical end to a critical
point $\mathfrak{c}$ of the three dimensional Seiberg-Witten equations,
we have $|\alpha|\geq\frac{1}{2}$ on $M_{\tau}\backslash[(\mathbb{R}^{+}\times-Y)\cup C]$,
where we wrote $\varPhi$ as $(\alpha,\beta)$ along the symplectic
end.
\end{lem}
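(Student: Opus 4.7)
The plan is to argue by contradiction and reduce the statement to the Mrowka--Rollin version proved in \cite{MR2199446} (which is an analogue of Taubes' pointwise estimates on a closed symplectic four-manifold with large perturbation parameter), using the uniform symplectic energy bound of Lemma \ref{uniform bound symplectic energy} as the key input. Suppose the conclusion fails. Then we can extract sequences $T_n\to\infty$, parameters $\tau_n$ (which we may take to infinity), solutions $(A_n,\varPhi_n)$ on $M_{\tau_n}$ satisfying the asymptotic conditions, and points $x_n$ lying in the symplectic region of $M_{\tau_n}$ outside $(\mathbb{R}^{+}\times-Y)\cup([1,T_n]\times Y)$ with $|\alpha_n(x_n)|<1/2$. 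In particular $\sigma_{\tau_n}(x_n)>T_n\to\infty$, so $x_n$ moves deep into the AFAK region, uniformly away from both the cobordism $W^{\dagger}$ and the cylindrical end $\mathbb{R}^{+}\times-Y$.

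Next I would perform the standard Taubsian rescaling around $x_n$ using the AFAK exponential map $e_{x_n}$ at scale $\sigma_{\tau_n}(x_n)$. By properties (c)--(e) of Definition \ref{def:AKAK end }, the pulled-back metric and symplectic form on the unit ball converge smoothly to the flat translation-invariant structure on $\mathbb{R}^4$. As explained in the first half of the appendix, after reinterpreting the ``dilating the cone'' construction as a Taubsian perturbation parameter $r=\tau^{2}$, the Seiberg-Witten equations on the rescaled balls become the equations $SW^{r}_{\mathrm{Taubes}}$ with $r$ large. The uniform energy bound $E_{\tau_n}(A_n,\varPhi_n)\leq\kappa$ ensures that, under rescaling, the energies of the rescaled configurations on the unit ball remain uniformly bounded, and in fact tend to zero on shrinking balls because the total energy is already finite.

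Now invoking the standard Uhlenbeck--type compactness together with the elliptic bootstrap for the Seiberg-Witten equations (on balls with bounded geometry and no cylindrical-end abstract perturbations, which vanish identically on the symplectic region beyond a fixed collar), one extracts along a subsequence a smooth limit $(A_\infty,\varPhi_\infty=(\alpha_\infty,\beta_\infty))$ solving the unperturbed Seiberg-Witten equations on $\mathbb{R}^4$ with the canonical symplectic structure, satisfying $|\alpha_\infty(0)|\leq 1/2$ and having finite symplectic energy. Taubes' vanishing theorem on $\mathbb{R}^4$ then forces $(A_\infty,\varPhi_\infty)$ to be gauge-equivalent to the canonical solution $(A_0,\varPhi_0)$, giving $|\alpha_\infty|\equiv 1$, a contradiction.

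The main obstacle is to make sure the presence of the half-cylinder and its abstract perturbations $\hat{\mathfrak{q}}_{Y,g_{\theta},\mathfrak{s}_{\xi}}$ do not ruin the reduction to the Mrowka--Rollin compact setting. This is where the condition $\sigma_{\tau_n}(x_n)>T_n$ is crucial: the distance from $x_n$ to the support of the cylindrical perturbations is of order $\sigma_{\tau_n}(x_n)\to\infty$, so after rescaling the perturbation support escapes any compact set in the limit and plays no role. The rest of the proof then follows \emph{verbatim} the Mrowka--Rollin argument on AFAK manifolds, because the family $\{M_\tau\}$ has uniformly bounded geometry (property (d) of Definition \ref{def:AKAK end }) and the bound provided by Lemma \ref{uniform bound symplectic energy} is $\tau$-independent.
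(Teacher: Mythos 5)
Your proof is correct and takes essentially the same approach as the paper: both reduce to Mrowka--Rollin's Lemma 2.2.8 by noting that the contradiction balls $B(x_j,\sigma_{\tau_j}(x_j)/\kappa)$ lie entirely in the symplectic region far from the cylindrical perturbations, and both invoke the uniform energy bound of Lemma \ref{uniform bound symplectic energy} as the key analytic input. You have merely unpacked the Taubsian rescaling and vanishing argument on $\mathbb{R}^4$ that the paper's terser proof leaves implicit by citation.
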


\begin{rem}
Notice that $\tau$ will in fact not depend on the particular critical
point $\mathfrak{c}$ we are asymptotic to.
\end{rem}

\begin{proof}
Observe that this was lemma 2.2.8 in \cite{MR2199446}. The proof
Mrowka and Rollin provided proceeded by contradiction, and eventually
gives rise to a sequence of solutions to the Seiberg-Witten equations
$(A_{j},\varPhi_{j})$ on $M_{\tau_{j}}$. However, they just care
about the restrictions of these solutions to some balls $B(x_{j},\sigma_{\tau_{j}}(x_{j})/\kappa)$
centered at $x_{j}$ of radius $\sigma_{\tau_{j}}(x_{j})/\kappa$.
The important feature of these balls is that they are all contained
in the symplectic region $M_{\tau_{j}}\backslash(\mathbb{R}^{+}\times-Y)$,
so what is happening along the cylinder is of no importance, given
that we already know the uniform control on the symplectic energies
$E_{\tau}$ thanks to the previous lemma. Therefore, the proof they
give in fact goes through in our setup, since on the symplectic region
we are using the same perturbations as Mrowka and Rollin.
\end{proof}
\begin{rem}
The subsequence Mrowka and Rollin obtained in the proof of their lemma
2.2.8 in \cite{MR2199446} can also be obtained from Proposition 5.3
in \cite{Zhang[2016b]}, if in the notation of \cite{Zhang[2016b]}
we take the sequence $\{(M_{n},g_{n},p_{n})\}$ to be the sequence
of pointed balls $B(x_{j},\sigma_{\tau_{j}}(x_{j})/\kappa)$ that
arise from the proof by contradiction.
\end{rem}

The next lemma appeared right after the pre-gluing map (\ref{PREgluing map})
was defined.
\begin{lem}
\label{exponential decay SW map}Lemma 2.5.4 in \cite{MR2199446}
still holds. That is, there is a $\delta>0$ and $T$ large enough
such that for every $N_{0}\geq1,k\in\mathbb{N}$ , $\tau$ satisfying
$\tau\geq T+N_{0}$ and every solution $(A,\varPhi)$ of the Seiberg-Witten
equations on $M_{\tau}$, we have that $(A,\varPhi)^{\#}$ satisfies
the Seiberg-Witten equations on $\{\sigma_{\tau}'\leq T\}\subset M_{\tau}'$
and 
\begin{equation}
|\mathfrak{F}_{\mathfrak{p}_{M_{\tau}'}}(A,\varPhi)^{\#}|_{C^{k}(g_{\tau}',A^{\#})}\leq c_{k}e^{-\delta\sigma_{\tau}}\label{exponential decay approximation-1}
\end{equation}
 on $\{\sigma_{\tau}'\geq T\}\subset M_{\tau}'$.
\end{lem}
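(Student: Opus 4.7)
The plan is to imitate Mrowka--Rollin's proof of Lemma 2.5.4 in \cite{MR2199446}, taking advantage of the fact that the modifications made in the construction of $(A,\varPhi)^{\#}$ and the perturbations $\mathfrak{p}_{M_{\tau}^{\prime}}$ are all local to the symplectic end, so the presence of the cylindrical end $\mathbb{R}^{+}\times-Y$ enters only through the a priori exponential decay estimates, which are already available in the cylindrical setting thanks to the uniform symplectic energy bound (Lemma \ref{uniform bound symplectic energy}) combined with the decay theorems of Kronheimer--Mrowka (Proposition 3.15 of \cite{MR1474156}) and Zhang (Propositions 5.7 and 5.10 of \cite{Zhang[2016]}).

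First I would verify the easy claim: on $\{\sigma_{\tau}^{\prime}\leq T\}$ the identifications of metrics, spinor bundles, and perturbations between $M_{\tau}$ and $M_{\tau}^{\prime}$ coincide (both agree with the structures on $Z_{Y,\xi}^{+}$), and the pre-gluing recipe leaves $(A,\varPhi)$ untouched there, so $(A,\varPhi)^{\#}=(A,\varPhi)$ and solves the perturbed Seiberg--Witten equations automatically. Next I would partition $\{\sigma_{\tau}^{\prime}\geq T\}$ into three zones: (i) $\{T\leq\sigma_{\tau}^{\prime}\leq\tau-N_{0}\}$, where $\chi_{\tau}\equiv 1$, so $(A,\varPhi)^{\#}=h_{(A,\varPhi)}\cdot(A,\varPhi)$ is gauge equivalent to a genuine solution and $\mathfrak{F}_{\mathfrak{p}_{M_{\tau}^{\prime}}}(A,\varPhi)^{\#}=0$ identically; (ii) the interpolation annulus $\{\tau-N_{0}\leq\sigma_{\tau}^{\prime}\leq\tau\}$, where all the error comes from; and (iii) $\{\sigma_{\tau}^{\prime}\geq\tau\}$, where $(A,\varPhi)^{\#}=(A_{0,\tau}^{\prime},\varPhi_{0,\tau}^{\prime})$ and $\mathfrak{p}_{K}$ is designed precisely so that this canonical pair solves the perturbed equations.

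The heart of the estimate is zone (ii). Writing $h_{(A,\varPhi)}\cdot(A,\varPhi)=(A_{0,\tau}+\hat{a},(\hat{\alpha},\hat{\beta}))$, the exponential decay theorems cited above, applied uniformly in $\tau$ using Lemma \ref{uniform bound symplectic energy} and the $C^{0}$ control from Lemma \ref{C0 control}, yield a fixed rate $\delta>0$ and bounds
\[
|\hat{a}|_{C^{k}}+|\hat{\alpha}-1|_{C^{k}}+|\hat{\beta}|_{C^{k}}\leq C_{k}e^{-\delta\sigma_{\tau}}
\]
on $\{\sigma_{\tau}\geq T\}$, independently of $\tau$ and of the solution. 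Plugging $(A,\varPhi)^{\#}=(A_{0,\tau}^{\prime}+\chi_{\tau}\hat{a},(\hat{\alpha}^{\chi_{\tau}},\chi_{\tau}\hat{\beta}))$ into $\mathfrak{F}_{\mathfrak{p}_{M_{\tau}^{\prime}}}$ and expanding gives, after the standard algebraic manipulation (as in Mrowka--Rollin), two types of terms: terms that vanish when $(\hat{a},\hat{\alpha}-1,\hat{\beta})=0$ and are therefore at least linear in these exponentially decaying quantities, and terms proportional to $d\chi_{\tau}$ or to $\chi_{\tau}(1-\chi_{\tau})$, which are supported where $\tau-N_{0}\leq\sigma_{\tau}^{\prime}\leq\tau$ and thus where the decaying quantities are already of order $e^{-\delta\sigma_{\tau}}$. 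The derivatives of $\chi_{\tau}$ contribute factors bounded by $N_{0}^{-k}\leq 1$ (since $N_{0}\geq 1$), and for the $\hat{\alpha}^{\chi_{\tau}}$ piece one uses $\hat{\alpha}\geq 1/2$ together with the expansion $\hat{\alpha}^{\chi_{\tau}}-1=\chi_{\tau}\log(\hat{\alpha})+O((\hat{\alpha}-1)^{2})$, each factor of which decays exponentially.

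I expect the main obstacle to be booking the abstract-perturbation contribution. On the symplectic end the global perturbation $\mathfrak{p}_{M_{\tau}^{\prime}}$ reduces to $\beta_{K}\mathfrak{p}_{K}$ (the tame pieces $\hat{\mathfrak{q}},\hat{\mathfrak{p}}_{0},\hat{\mathfrak{q}}'$ are supported away from $\{\sigma_{\tau}^{\prime}\geq T\}$ by construction of the cutoffs $\beta,\beta_{0},\beta'$), so in the zone we care about the perturbation reduces to the Kronheimer--Mrowka--Rollin one and no new terms appear; checking this carefully is a bookkeeping matter but needs to be spelled out because the form of $\mathfrak{p}_{M_{\tau}^{\prime}}$ in \eqref{eq:glued perturbation} is more elaborate than in \cite{MR2199446}. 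Once the perturbation piece is under control, combining the pointwise exponential decay of each ingredient with the Leibniz rule for $C^{k}$ norms (using bounded geometry of $(M_{\tau}^{\prime},g_{\tau}^{\prime})$ uniformly in $\tau$) gives the claimed estimate $|\mathfrak{F}_{\mathfrak{p}_{M_{\tau}^{\prime}}}(A,\varPhi)^{\#}|_{C^{k}(g_{\tau}^{\prime},A^{\#})}\leq c_{k}e^{-\delta\sigma_{\tau}}$, with $c_{k}$ depending on $k$ but not on $\tau$, $N_{0}$, or the particular solution $(A,\varPhi)$.
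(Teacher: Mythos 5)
Your proposal is correct and follows essentially the same approach as the paper: both reduce to Mrowka--Rollin's Lemma 2.5.4, identifying the only new ingredient in the cylindrical-end setting as a gauge with uniform exponential decay along the symplectic end, which rests on the uniform symplectic energy bound of Lemma \ref{uniform bound symplectic energy}. Your zonal decomposition of $\{\sigma_{\tau}'\geq T\}$ and the tracking of where the abstract perturbations vanish simply spell out the content of the Mrowka--Rollin argument that the paper invokes by citation.
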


\begin{rem}
Again, we are assuming that the solutions are asymptotic along the
cylindrical end to some critical point $\mathfrak{c}$, but the constants
of the lemma are insensitive to the particular critical point being
used.
\end{rem}

\begin{proof}
Our proof in our lemma is essentially the same as the proof of lemma
2.5.4 in \cite{MR2199446}. The only thing we need to know is that
we can find a gauge with uniform exponential decay, which was Corollary
2.2.10 in \cite{MR2199446}. In return, the proof of this Corollary
was modeled on the proof of Corollary 3.16 in \cite{MR1474156}. That
last corollary required knowing that the symplectic energy $E_{\tau}$
is uniformly bounded. But this is precisely the content of our Lemma
\ref{uniform bound symplectic energy}.

\end{proof}

The following lemma was used in the proof of lemma (\ref{uniform parametrices}).
\begin{lem}
\label{Mrowka Rohlin compactnessx} Let $(A_{n},\varPhi_{n})$ be
a sequence of solutions to the Seiberg-Witten equations on $M_{\tau_{n}}$.
Then after making gauge transformations if necessary, we can assume
that $(A_{n},\varPhi_{n})$ converges weakly to a solution $(A_{\infty},\varPhi_{\infty})$
on $Z_{Y,\xi}^{+}$. In particular, on every compact set $K$ of $Z_{Y,\xi}^{+}$,
which can be regarded as a subset of $M_{\tau_{n}}$ for all $\tau_{n}$
sufficiently large, we have strong convergence of $(A_{n},\varPhi_{n})$
to $(A_{\infty},\varPhi_{\infty})$.

Therefore, if we take a sequence of solutions $(A_{n},\varPhi_{n})$
on $M_{\tau_{n}}$ and restrict them to $[1,T]\times Y\subset M_{\tau_{n}}$,
the solutions will converge (after gauge) strongly to a solution $(A_{\infty},\varPhi_{\infty})$
on $[1,T]\times Y\subset Z_{Y,\xi}^{+}$.
\end{lem}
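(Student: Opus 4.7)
The plan is to combine the compactness machinery on the symplectic side, which comes from the already-established uniform control of the symplectic energy $E_\tau$ and the pointwise lower bound on $|\alpha|$, with the standard Kronheimer--Mrowka compactness for Seiberg--Witten trajectories on the cylindrical end. Once these two pieces are in place, a diagonal/exhaustion argument produces the desired limit on $Z_{Y,\xi}^{+}$.

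First I would fix an exhaustion $K_1 \subset K_2 \subset \cdots$ of $Z_{Y,\xi}^{+}$ by compact four-manifolds with boundary, with the property that each $K_m$ embeds isometrically into $M_{\tau_n}$ for all $n$ sufficiently large (this is possible because the metrics, spin-c structures and perturbations on $M_{\tau_n}$ agree with those of $Z_{Y,\xi}^{+}$ on the region $\{\sigma_{\tau_n} \leq \tau_n\}$, and we are allowed to take $\tau_n \to \infty$). Split each $K_m$ as a piece $K_m^{cyl}$ sitting inside the half-cylinder $\mathbb{R}^+ \times -Y$ and a piece $K_m^{sym}$ sitting inside the symplectic end $[1,\infty)\times Y$, glued along the slice $\{0\}\times -Y \simeq \{1\}\times Y$.

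On $K_m^{sym}$, Lemma \ref{uniform bound symplectic energy} gives a uniform bound on the symplectic energy $E_{\tau_n}(A_n,\Phi_n)$, and Lemma \ref{C0 control} gives $|\alpha_n|\geq \tfrac{1}{2}$ outside a fixed compact set; together these are exactly the hypotheses under which Proposition 2.2.11 of \cite{MR2199446} (equivalently Proposition 5.3 of \cite{Zhang[2016b]}) applies. After passing to a subsequence and making gauge transformations supported on $K_m^{sym}$, one obtains strong $L^2_{k,\text{loc}}$ convergence on $K_m^{sym}$ to a solution of the Seiberg--Witten equations. On $K_m^{cyl}$, the solutions are restrictions of configurations asymptotic to a critical point $\mathfrak{c}$ along the cylindrical end with perturbation $-\hat{\mathfrak{q}}_{Y,g_\theta,\mathfrak{s}_\xi}$; the standard compactness theorem for Seiberg--Witten trajectories on a cylinder (Theorem 16.1.3 and Proposition 13.6.1 in \cite{MR2388043}) provides, after a further gauge transformation and subsequence, strong $L^2_{k,\text{loc}}$ convergence on $K_m^{cyl}$ to a solution of the cylindrical equations. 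The two gauge transformations can be patched into one defined on all of $K_m$ because both limits must agree (up to a constant gauge transformation, which can be absorbed) along the common slice, and the stabilizer on that slice is just $S^1$, constant on the connected component.

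A standard diagonal argument in $m$ then yields a subsequence, still called $(A_n,\Phi_n)$, and gauge transformations $u_n$ on $M_{\tau_n}$ such that $u_n\cdot(A_n,\Phi_n)$ converges in $L^2_{k,\text{loc}}$ on all of $Z_{Y,\xi}^{+}$ to a configuration $(A_\infty,\Phi_\infty)$; passing to the limit in the Seiberg--Witten equation on each $K_m$ shows that $(A_\infty,\Phi_\infty)$ is a solution on $Z_{Y,\xi}^{+}$. The final statement about $[1,T]\times Y$ is immediate, since $[1,T]\times Y$ is a fixed compact set lying in every $K_m$ for $m$ large and is contained in every $M_{\tau_n}$ once $\tau_n > T$. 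The main technical point to be careful about is the gauge matching across the neck $\{1\}\times Y$ when combining the symplectic-side and cylindrical-side gauge fixings; this is resolved exactly as in the fiber product description of the moduli space used in Section 4, and does not require any new input beyond the unique continuation principle and the fact that the residual gauge freedom on a 3-manifold slice is finite-dimensional.
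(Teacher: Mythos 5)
Your overall structure — split each exhausting compact set into a cylindrical piece and a symplectic piece, invoke a compactness theorem on each side, then patch the two gauge fixings and diagonalize — is the same as the paper's, and your handling of the symplectic side (appealing to Lemma~\ref{uniform bound symplectic energy} together with the pointwise bound from Lemma~\ref{C0 control} as the inputs to Mrowka--Rollin's compactness theorem~2.2.11) is consistent with what the paper does, if slightly differently packaged.

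There is, however, a genuine gap on the cylindrical side. You cite the ``standard compactness theorem for Seiberg--Witten trajectories on a cylinder'' (Theorem~16.1.3 and Proposition~13.6.1 in \cite{MR2388043}), but those results concern moduli spaces of trajectories with two prescribed asymptotic critical points, where the topological energy is determined by the endpoints and the homotopy class $z$, hence automatically bounded. In your situation the restriction of $(A_n,\Phi_n)$ to the half-cylinder $\mathbb{R}^{+}\times(-Y)$ is asymptotic to $\mathfrak{c}$ only as $t\to\infty$; at the slice where it meets the cobordism/cone region there is no prescribed limiting critical point, so the compactness theorem does not apply as stated and you must supply an \emph{a priori} bound on the perturbed topological energy $\mathcal{E}_{\mathfrak{q}}^{top}$ of $(A_n,\Phi_n)$ uniform in $n$. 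The paper instead invokes the local compactness of Proposition~24.6.4 in \cite{MR2388043}, which takes exactly such an energy bound as hypothesis, and points out that this bound is nontrivial in the presence of the conical end and is established in Remark~4 after Theorem~6.1 of \cite{Zhang[2016]}. Without this input your cylindrical-end convergence is unjustified: a sequence $(A_n,\Phi_n)$ could in principle carry ever more energy in the cylindrical region even while its $t\to\infty$ limit stays fixed. You should either add the topological energy bound explicitly or switch to the paper's citation of Proposition~24.6.4 together with Zhang's energy estimate.
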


\begin{proof}
Notice that when we restrict the solutions $(A_{n},\varPhi_{n})$
to the cylindrical ends $\mathbb{R}^{+}\times-Y$, which is the same
for all $M_{\tau_{n}}$, we already know that they will converge in
the weak sense to some solution on $\mathbb{R}^{+}\times-Y$ , and
hence strongly on compact subsets of $\mathbb{R}^{+}\times-Y$ (after
gauging if necessary). This compactness result is based on Proposition
24.6.4 of \cite{MR2388043}, which only needs a bound on the topological
energy $\mathcal{E}_{\mathfrak{q}}^{top}$ \cite[Definition 24.6.3]{MR2388043}.
That the topological energy continues to be bounded in our situation
is explained in remark $4$ after the proof of Theorem $6.1$ in \cite{Zhang[2016]}.

Therefore the only new thing to understand is why the convergence
still holds when we restrict the solutions to the symplectic ends,
since after that one can do a patching argument, like in the proof
of Proposition 24.6.4 in \cite{MR2388043}. The convergence on the
symplectic end follows the same argument as in the proof of the compactness
theorem 2.2.11 in \cite{MR2199446}, which is a consequence of being
able to find gauge transformations along the symplectic end which
give rise to a uniform exponential decay for $(A_{n},\varPhi_{n})$
with respect to $(A_{0},\varPhi_{0})$. As we mentioned previously,
in the proof of lemma (\ref{exponential decay SW map}), this is a
consequence of the uniform control on the symplectic energy $E_{\tau_{n}}$,
i.e, Lemma \ref{uniform bound symplectic energy}.
\end{proof}
\printindex{}

\textcolor{black}{\bibliographystyle{amsplain}
\bibliography{references}
}
\begin{quote}
Department of Mathematics, Rutgers University.

\textit{\small{}E-mail addresses}{\small{}:} \textsf{\footnotesize{}me3qr@virginia.edu, mariano.echeverria@rutgers.edu}{\footnotesize\par}
\end{quote}

\end{document}